\newcommand{\ceil}[1]{\left\lceil #1 \right\rceil}
\newcommand{\me}{\mathrm{e}}
\newcommand{\Li}{\operatorname{Li}}
\newcommand{\Ls}{\operatorname{Ls}}
\newcommand{\Lim}{\operatorname{Lim}}
\newcommand{\dmk}{d_{\operatorname{MK}}}
\newcommand{\Lip}{\operatorname{Lip}}
\newcommand{\tp}{.}
\newcommand{\tc}{,}
\newcommand{\bm}{\mathbf}
\theoremstyle{definition}
\newtheorem{theorem}{Theorem}[section]
\newtheorem{definition}[theorem]{Definition}
\newtheorem{corollary}[theorem]{Corollary}
\newtheorem{prop}[theorem]{Proposition}
\newtheorem{lemma}[theorem]{Lemma}
\newtheorem{example}[theorem]{Example}
\newtheorem{question}[theorem]{Question}
\theoremstyle{remark}
\newtheorem{remark}[theorem]{Remark}
\renewcommand{\emph}{\textsl}
\renewcommand{\textit}{\textsl}
\renewcommand{\inf}{\mathop{\mathrm{inf}\vphantom{\mathrm{sup}}}}
\newcommand{\Z}{{\mathbb Z}}
\newcommand{\R}{{\mathbb R}}
\newcommand{\N}{{\mathbb N}}
\newcommand{\A}{{\mathcal A}}
\newcommand{\mc}{\mathcal}
\newcommand{\im}{{\mathrm{i}}}
\newcommand{\dd}{\,\mathrm{d}}
\newcommand{\Leb}{\operatorname{Leb}}
\newcommand{\Sub}{\Theta}
\newcommand{\Conv}{\mathop{\scalebox{1.5}{\raisebox{-0.2ex}{$\ast$}}}}
\numberwithin{equation}{section}
\begin{document}

\allowdisplaybreaks

\title{Rauzy fractals of random substitutions}

\author{P.\,Gohlke$^{\,1}$, A.~Mitchell$^{\,2}$, D.~Rust$^{\,3}$ and T.\,Samuel$^{\,4}$}
 
% \author[P.\,Gohlke]{P.\,Gohlke}
% \author[A.\,Mitchell]{A.\,Mitchell}
% \author[D.\,Rust]{D.\,Rust}
% \author[T.\,Samuel]{T.\,Samuel}

% \address[P.\,Gohlke]{Faculty of Mathematics and Computer Science, Friedrich Schiller University, 07743 Jena, DE}
% \address[A.\,Mitchell]{Department of Mathematical Sciences, Loughborough University, LE11 3TU, UK}
% \address[D. \,Rust]{School of Mathematics and Statistics, The Open University, Milton Keynes, MK7 6AA, UK}
% \address[T.\,Samuel]{Department of Mathematics and Statistics, University of Exeter, Exeter, EX4 4QF, UK}

\subjclass[2020]{52C23, 28A80, 37A50, 37B10}

\keywords{\textls[-4]{Aperiodic sequence; random substitution; Rauzy fractal; graph-directed iterated function system.}}

\setcounter{tocdepth}{1}

\maketitle

{\centering{\footnotesize
{$^{\,1}$ Faculty of Mathematics and Computer Science, Friedrich Schiller University, 07743 Jena, Germany\\
$^{\,2}$ Department of Mathematical Sciences, Loughborough University, Loughborough, LE11 3TU, UK\\ 
$^{\,3}$ School of Mathematics and Statistics, The Open University, Milton Keynes, MK7 6AA, UK\\
$^{\,4}$ Department of Mathematics and Statistics, University of Exeter, Exeter, EX4 4QF, UK\\}
}}

\begin{abstract}
We develop a theory of Rauzy fractals for random substitutions, which are a generalisation of deterministic substitutions where the substituted image of a letter is determined by a Markov process.
We show that a Rauzy fractal can be associated with a given random substitution in a canonical manner, under natural assumptions on the random substitution.
Further, we show the existence of a natural measure supported on the Rauzy fractal, which we call the Rauzy measure, that captures geometric and dynamical information. 
We provide several different constructions for the Rauzy fractal and Rauzy measure, which we show coincide, and ascertain various analytic, dynamical and geometric properties.
While the Rauzy fractal is independent of the choice of (non-degenerate) probabilities assigned to a given random substitution, the Rauzy measure captures the explicit choice of probabilities.
Moreover, Rauzy measures vary continuously with the choice of probabilities, thus provide a natural means of interpolating between Rauzy fractals of deterministic substitutions. 
Additionally, we highlight connections between Rauzy fractals and Rauzy measures of random substitutions and related S-adic systems.
\end{abstract}

\section{Introduction}

Rauzy fractals are geometric objects that can be associated with sequences arising from substitutions.
They first appeared in 1982 in the work of Rauzy \cite{Rauzy_1982}, who constructed a domain exchange of a compact subset of $\R^2$ that reflects the action of the \emph{tribonacci substitution}.
Questions regarding diffraction spectra and dynamical properties of substitution sequences can often be reframed in terms of the geometry and topology of the associated Rauzy fractal \cite{arnoux-ito}.
Perhaps most notably, the Pisot substitution conjecture can be reformulated in terms of tiling properties of Rauzy fractals---see for instance \cite{Akiyama_2015, Siegel-Thuswaldner}. 
Similar criteria in terms of Rauzy fractals have been established in the context of \emph{S-adic systems}---a generalisation of substitution sequences that involves the action of several substitutions \cite{BMST16,BST19}.
We provide a general framework for Rauzy fractals and establish a theory of Rauzy fractals for \emph{random substitution systems}, which are positive entropy systems arising from a generalisation of substitutions where the substituted image is determined by a Markov process.

\subsection{Substitutions}\label{SS:subst-intro}

Sequences arising from (deterministic) substitutions are the prototypical examples of mathematical quasicrystals.
A substitution is a rule that replaces each symbol from a finite alphabet with a concatenation of symbols from the same alphabet.
For example, the \emph{tribonacci substitution} $\theta_t$ is defined over the three-letter alphabet $\mc A = \{1,2,3\}$ by the rule
\begin{align}\label{EQ:trib-intro}
    \theta_t \colon 
    \begin{cases}
    1 \mapsto 12 ,\\
    2 \mapsto 13 ,\\
    3 \mapsto 1 ,
    \end{cases}
\end{align}
and the \emph{twisted tribonacci substitution} $\tilde{\theta}_t$ is defined over the same alphabet by the rule
\begin{align}\label{EQ:twisted-trib-intro}
    \tilde{\theta}_t \colon 
    \begin{cases}
    1 \mapsto 21 ,\\
    2 \mapsto 13 ,\\
    3 \mapsto 1 .
    \end{cases}
\end{align}
Note that the only difference between the tribonacci substitution and the twisted version is that the order of letters in the image of $a$ has been reversed.
The action of a substitution extends naturally to finite words and infinite sequences, by applying the substitution to each letter in turn and concatenating the result in the order prescribed by the initial word or sequence.

To a given substitution, a subshift can be associated in a natural way.
Questions concerning characterisation of diffraction and dynamical spectra, order versus disorder, and the topological and dynamical features of substitution subshifts are central questions in the field of aperiodic order, which have been extensively investigated;
see \cite{Akiyama_Arnoux_2017,baake-grimm,Siegel-Thuswaldner,Solomyak_1997} and the references therein.

To a given substitution over an alphabet $\mc A = \{1,\ldots,d\}$, we associate an $d \times d$-matrix $M_{\theta}$, called the \emph{substitution matrix of $\theta$}, defined by $(M_{\theta})_{i,j} = \lvert \theta (j) \rvert_{i}$.
The matrix $M_\theta$ encodes how the number of occurrences of a given letter in a word changes under the substitution action.
If $M_{\theta}$ is a primitive matrix, then we say that the substitution $\theta$ is \emph{primitive}. 
In this case, the Perron--Frobenius theorem gives that $M_{\theta}$ has a unique largest (real) eigenvalue $\lambda \geqslant 1$ and corresponding left and right eigenvectors with positive entries, denoted by $\mathbf{L}$ and $\mathbf{R}$ respectively.
We take $\mathbf{L}$ and $\mathbf{R}$ to be normalised such that the entries of $\mathbf{R}$ sum to $1$ and $\mathbf{L}\cdot \mathbf{R} = 1$, and refer to the triple $(\lambda, \mathbf{L}, \mathbf{R})$ as the \emph{Perron-–Frobenius data} of $\theta$.

If $\lambda$ is a Pisot number, that is, $\lambda > 1$ and all of its Galois conjugates have modulus strictly less than $1$, then we call $\theta$ a \emph{Pisot} substitution. 
If, in addition, the characteristic polynomial of $M_{\theta}$ is irreducible over $\Z$, then we say that $\theta$ is \emph{irreducible Pisot}. 
If $|\det (M_\theta)|=1$, then we say that $\theta$ is \emph{unimodular}. 

Observe that both the tribonacci and twisted tribonacci substitutions $\theta_t$ and $\tilde{\theta}_t$ defined in \eqref{EQ:trib-intro} and \eqref{EQ:twisted-trib-intro} have the same substitution matrix, given by
\begin{align*}
    M = M_{\theta_t} = M_{\tilde{\theta}_t} =
    \begin{pmatrix}
        1 & 1 & 1\\
        1 & 0 & 0\\
        0 & 1 & 0 
    \end{pmatrix} ,
\end{align*}
which is primitive since all entries of $M^3$ are positive. 
The Perron--Frobenius eigenvalue of $M$ is the tribonacci constant $t \approx 1.83929$; in particular, $t$ is the unique real number satisfying $t^3 = t^2 + t + 1$.
Since $t$ is a degree-three Pisot number and the matrix $M$ has determinant one, it follows that $\theta_t$ and $\tilde{\theta}_t$ are both unimodular, irreducible Pisot substitutions.

\subsection{Rauzy fractals of substitutions}\label{SS:Rauzy-intro}

To an irreducible Pisot substitution  $\theta$, one can associate a compact set $\mathcal{R}_{\theta}$ called a \emph{Rauzy fractal}.
Many questions relating to Pisot substitutions can be reformulated in terms of the geometry of the associated Rauzy fractal \cite{Siegel-Thuswaldner}.

Let $(\lambda,\mathbf{L},\mathbf{R})$ be the Perron--Frobenius data of an irreducible Pisot substitution $\theta$. By irreducibility, the entries of $\mathbf L$ and $\mathbf R$ are rational functions in $\lambda$.
Let $\mathbb{H}$ denote the contracting hyperplane of $M_{\theta}$, namely, the $(d -1)$-dimensional subspace of $\mathbb{R}^{d}$ orthogonal to $\mathbf{L}$.
Let $h_{\theta}$ denote the action of $M_{\theta}$ restricted to $\mathbb{H}$. Since $\theta$ is irreducible Pisot, every eigenvalue of $M_{\theta}$ other than the Perron--Frobenius eigenvalue, has absolute value strictly less than $1$. 
It will sometimes be necessary to define a metric on the hyperplane $\mathbb{H}$. For our purposes, it will be convenient to choose a metric for which the action of $h_\theta$ is a contraction.
Such a metric always exists as a consequence of the matrix $M_\theta$ being diagonalisable, which follows from the fact that $\theta$ is irreducible Pisot.  Note that diagonalisation is not necessary, but sufficient, for such a metric to exist, see for instance \cite{Lind_1982}.

Every primitive substitution has a power that admits a substitution-fixed point, 
that is, a right-infinite sequence $x$ over the alphabet $\A$ such that $\theta^k (x) = x$ for some $k \geqslant 1$; see \cite{baake-grimm,queffelec} for further details. 
If $\theta$ is Pisot, then this sequence has the additional property of being \emph{$C$-balanced} \cite{Adamcewski_03}, which implies 
the abelianisation vector of any finite subword of $x$ lies within a uniformly bounded distance of the ray spanned by $\mathbf{R}$. 
In particular, this vector is $n$-times the right Perron--Frobenius eigenvector of the substitution matrix.

If $x$ denotes a substitution-fixed point of a Pisot substitution $\theta$, we let $\mc S(x)$ denote the subset of $\Z^d$ consisting of the abelianisation vectors of the finite words obtained by truncating $x$ after $n$ letters, for all positive integers $n$.
Here, by the abelianisation vector of a given word, we mean the vector whose entries are the number of occurrences of each letter in the word. 
For example, the abelianisation vector of the word $1213121$ is $(4,2,1)^{T}$.
We highlight that the set $\mc S (x)$ comprises the vertices in the \emph{broken line} of the sequence $x$---see \cite{Siegel-Thuswaldner} for more details.

Let $\pi$ denote the projection along $\mathbf{R}$ onto $\mathbb{H}$.
Since $x$ is $C$-balanced, the points in $\mc S (x)$ lie a bounded distance from the ray spanned by the Perron--Frobenius eigenvector $\bm R$.
Thus, it follows that the set $\mc R^{*} (x) = \pi (\mc S(x))$ is bounded. 
The \emph{Rauzy fractal} of $\theta$ is then defined by $\mc R(x) = \overline{\mc R^*(x)}$.
Every substitution-fixed point $x$ gives rise to the same set $\mc R (x)$---see \cite{Siegel-Thuswaldner} for more details.
As such, we write $\mc R_{\theta}$ for this common set.
The Rauzy fractals of the tribonacci and twisted tribonacci substitutions are plotted in \Cref{fig:rauzy-fractal-tribonacci-and-projection}, together with a visualisation of the construction.

\begin{figure}[t]
\begin{center}
\includegraphics[trim={3em 9.2em 3em 5.2em},clip,width=0.29\textwidth]{Images/Rauzy_n_22_p_0_16}
\hspace{0.5em}
\includegraphics[trim={3em 7.5em 1.5em 6.2em},clip,width=0.29\textwidth]{Images/Rauzy_n_22_p_16_16}
\hspace{-1.5em}
\hbox{\includegraphics[trim={33.5em 28em 20em 20em},clip,width=0.22\textwidth,angle=-35]{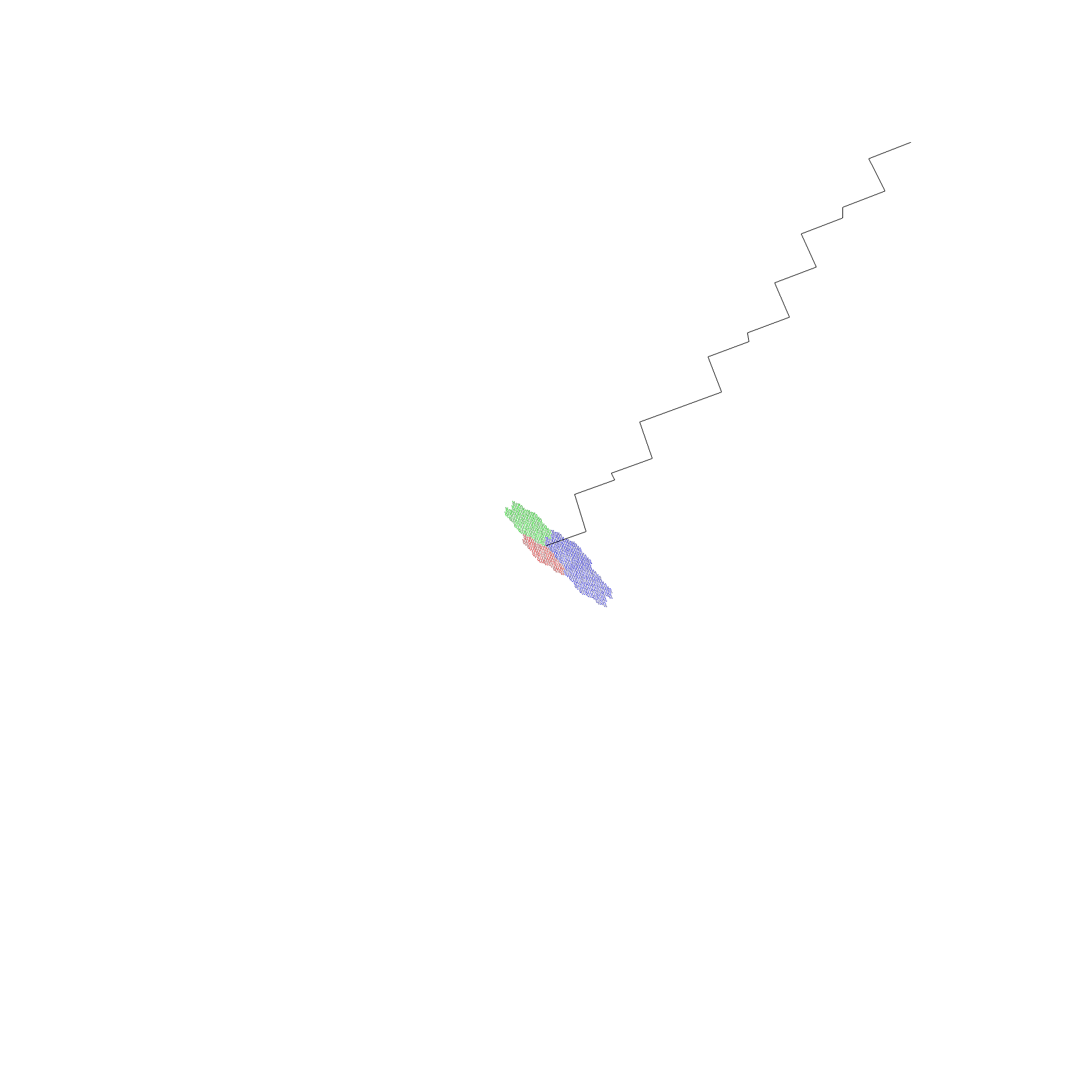}}
\hspace{-8em}
\begin{tikzpicture}[node distance=2.5cm, auto]
  				\node (A) {};
  				\node (B) [node distance=0.4cm, above of=A] {};
  				\node (C) [right of=B] {};
  				\node (D) [node distance=0.5cm, above of=C] {};
  				\draw[->] (D) to node [swap] {} (B);
\end{tikzpicture}
\vspace{-5em}
\caption{Rauzy fractals for the tribonacci (left) and twisted tribonacci (center) substitutions. Broken line and projection (right).}
\label{fig:rauzy-fractal-tribonacci-and-projection}
\end{center}
\end{figure}

Several fundamental topological properties of Rauzy fractals associated with substitutions are stated in the following. 

\begin{prop}[{\cite[Theorem~2.6]{Siegel-Thuswaldner}}]\label{PROP: deterministic-rauzy-properties}
Let $\theta$ be a unimodular, irreducible Pisot substitution over a $d$-letter alphabet, where $d \geq 2$.
The Rauzy fractal $\mc R_{\theta}$ is a compact subset of $\mathbb{H}$ with non-empty interior, and thus has full Hausdorff dimension.
Moreover, $\mc R_{\theta}$ is the closure of its interior. 
\end{prop}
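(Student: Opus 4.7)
The plan is to prove compactness, non-empty interior (which gives full Hausdorff dimension $d-1$ at once), and the equality $\mc R_\theta = \overline{\mathrm{int}(\mc R_\theta)}$ in turn, exploiting at each stage the graph-directed iterated function system (GIFS) structure carried by $\mc R_\theta$.

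Compactness is immediate from the construction: the $C$-balance property recalled above places every point of $\mc S(x)$ within a uniformly bounded tubular neighbourhood of the line $\R \bm R$, so the projection $\mc R^*(x) = \pi(\mc S(x))$ is bounded in the finite-dimensional space $\mathbb H$, and $\mc R_\theta$ is by definition its closure.

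For the non-empty interior I would first install the canonical subtile decomposition $\mc R_\theta = \bigcup_{a \in \mc A} \mc R_\theta(a)$, where $\mc R_\theta(a) := \overline{\pi(\{\bm s_n : n \geq 0,\, x_n = a\})}$ and $\bm s_n$ is the abelianisation of $x_0 \cdots x_{n-1}$. Using the identity $\theta(x) = x$ to rewrite each prefix of $x$ in terms of the prefix it descends from, together with the intertwining relation $\pi \circ M_\theta = h_\theta \circ \pi$ (which holds because $\mathbb H$ is $M_\theta$-invariant and $\pi$ projects along the $\lambda$-eigenline $\R \bm R$), one obtains the GIFS equation
\begin{equation*}
\mc R_\theta(a) \;=\; \bigcup_{(b, p)\,:\,\theta(b) \in p \cdot a \cdot \mc A^*} \bigl(h_\theta\, \mc R_\theta(b) + \pi(\vec p)\bigr),
\end{equation*}
with $\vec p$ the abelianisation of the prefix $p$. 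Here the unimodularity hypothesis enters decisively: $|\det M_\theta| = 1$ together with the Pisot condition forces $|\det h_\theta| = 1/\lambda$, so $h_\theta$ rescales $(d-1)$-dimensional Lebesgue measure by exactly $1/\lambda$. Applying $\Leb$ to the GIFS equation and showing that distinct pieces on the right-hand side overlap in a set of zero $\Leb$-measure, one deduces that the vector $\bm v = (\Leb(\mc R_\theta(a)))_{a \in \mc A}$ satisfies the eigenvalue equation $\lambda \bm v = M_\theta \bm v$. Primitivity of $M_\theta$ and Perron--Frobenius then force $\bm v$ to be either identically zero or strictly positive, and the former is excluded by exhibiting a single Lebesgue density point of $\mc R_\theta$, for instance via the GIFS approximation of $\mc R_\theta$ by finer and finer unions of scaled translates whose combined covering volume does not collapse. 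Non-empty interior is then automatic, as is full Hausdorff dimension $d-1$.

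Density of the interior finally follows from the GIFS itself: iterating the equation covers $\mc R_\theta$ by arbitrarily small translated copies of subtiles of the form $h_\theta^k \mc R_\theta(a)$, each containing interior points by the previous step, and since their diameters shrink to zero, every $y \in \mc R_\theta$ lies in the closure of $\mathrm{int}(\mc R_\theta)$. The principal obstacle throughout this outline is the zero-overlap step inside the non-empty interior argument: controlling the intersections $(h_\theta \mc R_\theta(b) + \pi(\vec p)) \cap (h_\theta \mc R_\theta(b') + \pi(\vec q))$ for distinct pairs $(b,p) \neq (b',q)$ is the non-trivial combinatorial ingredient, and it is precisely here that the full force of the irreducible-Pisot-unimodular hypothesis is used, typically via an algebraic-coincidence or prefix--suffix-balance argument that converts algebraic rigidity into codimension-one bounds on the overlap sets.
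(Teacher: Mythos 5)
First, note that the paper does not prove this proposition at all: it is imported verbatim from \cite[Theorem~2.6]{Siegel-Thuswaldner}, so there is no in-paper proof to compare against. Your outline follows the standard route (compactness from $C$-balance; the GIFS identity; a Perron--Frobenius argument on the vector of Lebesgue measures; density of the interior by iterating the GIFS), and the compactness and final density steps are fine. However, two steps in the middle have genuine gaps, and you have also misplaced where the real difficulty sits. The measure-disjointness of the pieces in the GIFS equation is \emph{not} ``the non-trivial combinatorial ingredient'' requiring an algebraic-coincidence argument: from $\mc R_\theta(a) = \bigcup (h_\theta \mc R_\theta(b) + \pi(\vec p))$ and $|\det h_\theta| = \lambda^{-1}$ one gets the componentwise inequality $\lambda \bm v \leqslant M_\theta \bm v$ for $\bm v = (\Leb(\mc R_\theta(a)))_a$, and pairing with the strictly positive left Perron eigenvector $\bm L$ gives $\lambda \bm L^T \bm v \leqslant \bm L^T M_\theta \bm v = \lambda \bm L^T \bm v$, forcing equality in every component; measure-disjointness and $\bm v \in \R_{\geqslant 0}\bm R$ drop out for free. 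The coincidence-type conditions you allude to govern overlaps between \emph{lattice translates} of $\mc R_\theta$ (the Pisot substitution conjecture), which is not needed for this proposition.

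The two actual gaps are as follows. (i) You exclude $\bm v = 0$ by ``exhibiting a single Lebesgue density point \dots via the GIFS approximation \dots whose combined covering volume does not collapse,'' but without the measure-disjointness you are in the middle of proving, nothing prevents the level-$n$ covers from collapsing onto a null set; the argument is circular as stated. The standard (and essentially unavoidable) input is the lattice covering property: $\mc S(x)$ translated by the lattice $V_0$ exhausts $\Z^d \cap E_{[0,\infty)}$, so $\mathbb{H} = \bigcup_{\gamma \in \mc J} (\mc R_\theta + \gamma)$ is a countable cover of $\mathbb{H}$ by translates of $\mc R_\theta$, whence $\Leb(\mc R_\theta) > 0$. (This is exactly the mechanism the present paper develops in \Cref{LEM:V+covering} and \Cref{PROP:Lebesgue-covering}, using irreducibility to ensure the relevant density statements.) (ii) ``Non-empty interior is then automatic'' from strict positivity of $\bm v$ is false: a fat Cantor set has positive measure and empty interior. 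Positive measure gives full Hausdorff dimension, but interior requires a further argument, classically: the cover $\{\mc R_\theta + \gamma\}_{\gamma \in \mc J}$ is locally finite (by compactness and $C$-balance), has a.e.\ constant finite multiplicity, so one finds a ball contained in a \emph{finite} union of translates and applies Baire's category theorem to conclude that some translate, hence $\mc R_\theta$, has non-empty interior. Without (i) and (ii) made precise, the proof of the central claim does not close.
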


The Rauzy fractal associated with a substitution over a $d$-letter alphabet can be decomposed in a natural way into $d$ regions called \emph{subtiles}, which are measure-disjoint.
These subtiles are related via a graph-directed iterated function system that arises naturally from the substitution action \cite{Siegel-Thuswaldner}.
For the tribonacci substitution, the subtiles $\mc R_1$, $\mc R_2$ and $\mc R_3$ are the unique non-empty compact sets satisfying 
\begin{align*}
    \mc R_1 = h (\mc R_1) \cup h (\mc R_2) \cup h (\mc R_3), \quad
    \mc R_2 = h (\mc R_1) + \bm v, \quad
    \mc R_3 = h (\mc R_2) + \bm v \tc
\end{align*}
where $h$ denotes the action of the substitution matrix on the contracting plane and $\bm v \in \mathbb{H}$. 
In \Cref{fig:rauzy-fractal-tribonacci-and-projection}, the subtiles are highlighted for the Rauzy fractals of the tribonacci and twisted tribonacci substitutions, where the blue subtile corresponds to the letter $1$, the green subtile corresponds to the letter $2$ and the red subtile corresponds to the letter $3$.

\subsection{Random substitutions}

Random substitutions are a generalisation of deterministic substitutions where the substituted image of a letter is chosen from a fixed finite set with respect to a probability distribution.
Given $p \in (0,1)$, a random analogue of the tribonacci substitution can be defined by
\begin{align*}
	\vartheta_{\mathbf P} \colon
		\begin{cases}
		1 \mapsto
			\begin{cases}
			12 & \text{with probability } p,\\
			21 & \text{with probability } 1-p,
		     \end{cases}\\
        2 \mapsto 13 \quad \text{with probability } 1,\\
        3 \mapsto 1\phantom{3} \quad \text{with probability } 1 \tp
		\end{cases}
	\end{align*}
The action of a random substitution extends naturally to finite words, by applying it \emph{independently} to each letter in turn and concatenating the result in the order prescribed by the initial word. Similarly to the deterministic setting, a subshift can be associated with a given random substitution in a natural way.
Moreover, a substitution matrix can be associated in a similar manner and the notion of primitivity extends naturally to random substitutions -- we give the precise definitions in \Cref{S:rand-subst-intro}.
However, in stark contrast to deterministic substitutions and S-adic systems,
subshifts associated with random substitutions typically have positive topological entropy.
In fact, in the primitive setting, a random substitution subshift has zero topological entropy if and only if it is the subshift of a deterministic substitution \cite{mitchell}.
Despite the presence of positive entropy, the corresponding diffraction measure can still admit a non-trivial pure point component \cite{baake-spindeler-strungaru}, indicating long-range correlations.
To a given primitive random substitution, an ergodic measure can be associated in a natural way, which we call the \emph{frequency measure} corresponding to the random substitution.
While the subshift of a random substitution is blind to the choice of (non-degenerate) probabilities, the frequency measure captures the explicit choice of probabilities.

The theory of random substitutions has developed significantly in recent years.
For example, dynamical and diffraction spectra have been studied for several classes of examples \cite{baake-spindeler-strungaru,godreche-luck,Moll_2014}; a systematic approach to topological and measure theoretic entropy has been developed \cite{gohlke,MT-entropy}; and sufficient conditions under which a random substitution subshift is topologically mixing have been ascertained \cite{escolano-manibo-miro,miro-et-al}.

In this paper, we develop a theory of Rauzy fractals for random substitutions.
Since subshifts of deterministic primitive substitutions are minimal, the Rauzy fractal obtained by projecting any sequence from the subshift is a translation of the Rauzy fractal associated with a substitution-fixed point.
However, subshifts of random substitutions are typically not minimal, and there is no direct analogue of a substitution-fixed point for random substitutions.
Consequently, the Rauzy fractals obtained by projecting sequences from random substitution subshifts are no longer uniform (up to translation). 
Nevertheless, there is a maximal Rauzy fractal, that emerges from every transitive point of the subshift and is therefore typical for every fully supported ergodic measure. 

Another difference to the deterministic setting is that projection of the set of vertices in the broken line can no longer be expected to be uniformly distributed on the Rauzy fractal. 
Rather, they typically follow a specific distribution that is determined by the probability parameters. 
We refer to this distribution as the \emph{Rauzy measure} and we show that the Rauzy measure is an almost-sure object with respect to several natural measures on the random substitution subshift. 
It can further be obtained by following a typical succession of randomly created inflation words, which makes it accessible to numerical approximation. 
A natural application of Rauzy measures is the explicit computation of diffraction measures, as has been demonstrated in \cite{baake-spindeler-strungaru} for the example of the random Fibonacci substitution.

\subsection{Outline and overview of main results}

We begin by presenting the preliminaries on random substitutions in \Cref{S:rand-subst-intro}.
In \Cref{S:Bal-seq-Rauzy}, we develop a general theory of Rauzy fractals for $C$-balanced sequences, which we utilise in the construction of the Rauzy fractal associated with a random substitution.
While we have developed this framework with applications to random substitutions in mind, we believe that the results will be of independent interest. Indeed Rauzy fractals for $C$-balanced sequences that do not arise naturally from substituions or S-adic sequences have recently appeared in the literature \cite{Berthe_etal_chairman,choi_etal_self_replicating}.
These include (semi-)continuity properties with respect to the underlying sequence and a dynamical interpretation via generic factors.
We also show that Rauzy measures satisfy the \emph{Lebesgue covering property}.
Namely, that repeating the Rauzy measure along an appropriately chosen lattice combine to a constant multiple of Lebesgue measure.

In \Cref{S: Rauzy random subs}, we return our attention to random substitutions.
Utilising the framework of Rauzy fractals for $C$-balanced sequences developed in \Cref{S:Bal-seq-Rauzy}, we provide two methods for constructing a canonical Rauzy fractal associated with an irreducible Pisot random substitution, which we prove coincide.
Using our construction, we show that Rauzy fractals of random substitutions are the closure of their interior and contain an open ball, thus have full Hausdorff dimension, analogously to the deterministic setting. 
Moreover, the subtiles of the
Rauzy fractal associated with a random substitution can be obtained as the attractors of a graph-directed iterated function system (GIFS).
However, we highlight that in contrast to the deterministic setting, this GIFS may not satisfy the open set condition.

\Cref{S: measures} concerns Rauzy measures associated with random substitutions.
While the Rauzy fractal is blind to the choice of (non-degenerate) probabilities assigned to a random substitution, the Rauzy measure captures the explicit choice of probabilities.
We show that this measure can be constructed both via the invariant distribution of the random substitution and via the Dirac masses associated with the projection of vertices in the broken line of a typical element of the subshift.
Further, we show that similarly to Rauzy fractals themselves, Rauzy measures are self-similar objects with respect to an appropriately chosen GIFS.
Harvesting this interpretation, we obtain that the Rauzy measure depends continuously on the probability parameters. 
As a consequence, the Rauzy measures of random substitutions provide a convenient tool to interpolate between two (or more) deterministic substitutions that share the same substitution matrix. 
Here, it is worth pointing out that every irreducible Pisot substitution (up to taking higher powers) shares its substitution matrix with a substitution that satisfies the Pisot substitution conjecture \cite{Barge_2016,BST23}.
Constructions for interpolating between Rauzy fractals via a parametrised family is not new \cite{berthe_shift_radix,BST19}, though we feel our construction is suitably novel to warrant further study.

It follows from the results in \Cref{S:Bal-seq-Rauzy} that Rauzy measures associated with random substitutions satisfy the Lebesgue covering property.
Consequently, the Rauzy measure is absolutely continuous with respect to Lebesgue measure.
As a byproduct of our proof, we demonstrate the absolute continuity of self-similar measures for a certain class of GIFS, which may be of independent interest.

In \Cref{S:S-adic}, we interpret our results in terms of S-adic sequences, where all generating substitutions share a common abelianisation. There is a natural correspondence between a random substitution and the set of deterministic substitutions that arises from choosing particular realisations for each letter. To each sequence of substitutions, there is a corresponding S-adic Rauzy fractal and a Rauzy measure. We prove that the union of all S-adic Rauzy fractals obtained in this way equals the Rauzy fractal of the original random substitution.
Additionally, if such sequences of substitutions are chosen independently with identical distribution, then the average S-adic Rauzy measure (where the notion of average is made precise in \Cref{SS:Rauzy_measures}) coincides with the Rauzy measure for an associated random substitution.

Finally, in \Cref{S:examples}, we present several examples illustrating the main results and in \Cref{S:open-probs} we put discuss our results in the context of open problems, both old and new, with a view of encouraging further study of Rauzy fractals and Rauzy measures for random substitutions.

\section{Preliminaries}\label{S:rand-subst-intro}

Throughout, we let $\N = \{1,2,3,\ldots\}$ denote the natural numbers and let $\N_0 = \N \cup \{0\}$. 
For a given set $B$, we write $\# B$ for the cardinality of $B$ and let $\mathcal{F}(B)$ be the set of non-empty finite subsets of $B$.

An alphabet $\mathcal{A} = \{ 1, \ldots, d \}$ is a finite set of symbols, which we call \emph{letters}. 
A \emph{word} with letters in $\mc A$ is a finite concatenation of letters in $\mc A$.
We write $\lvert u \rvert = n$ for the \emph{length} of a given word $u$ and, for $m \in \N$, we let $\mathcal{A}^{m}$ denote the set of all words of length $m$ with letters in $\mathcal{A}$. 
We let $\varepsilon$ denote the empty word, which has length zero by convention.
We write $\mathcal{A}^{+} = \bigcup_{m \in \N} \mathcal{A}^{m}$ for the set of all non-empty finite words with letters in $\mc A$ and $\mathcal{A}^{\ast} = \mathcal{A}^{+} \cup \{ \varepsilon \}$. The set $\mathcal{A}^\ast$ forms a monoid under concatenation of words with identity given by the empty word $\varepsilon$.
We let $\mathcal{A}^{\N} = \{x_1 x_n \cdots : x_i \in \mathcal{A} \ \text{ for all} \ i \in \N \}$ denote the set of all infinite sequences with elements in $\mathcal{A}$ and endow $\mathcal{A}^{\N}$ with the product topology, where $\mathcal{A}$ itself is discrete. 
With this topology, the space $\mathcal{A}^{\N}$ is compact and metrisable.
We let $S$ denote the (\emph{left}) \emph{shift map}, defined by $S (x)_{i} = x_{i+1}$ for $x = x_{1}x_{2} \cdots \in \mathcal{A}^{\N}$, and call a closed subset $X$ of $\mathcal{A}^{\N}$ a \emph{subshift} if it is shift-invariant, namely $S(X) = X$.

If $i,j \in \N$, and $x = x_{1} x_{2} \cdots \in \mathcal{A}^{\N}$, then we let $x_{[i,j]} = x_i x_{i+1} \cdots x_{j}$ if $i \leqslant j$, and $x_{[i,j]} = \varepsilon$ if $j < i$.
We use the same notation if $v \in \mathcal{A}^{+}$ and $1 \leqslant i \leqslant j \leqslant |v|$. For $u,v \in \mathcal{A}^{+}$ (or $v \in \mathcal{A}^{\N}$), we write $u \triangleleft v$ if $u$ is a subword of $v$, namely if there exist $i,j \in \N$ with $i \leqslant j$ so that $u = v_{[i,j]}$. 
For $u,v \in \mathcal{A}^{+}$, we write $\lvert v \rvert_u$ for the number of (possibly overlapping) occurrences of $u$ as a subword of $v$.

If $u = u_{1} \cdots u_{n}, v = v_{1} \cdots v_{m} \in \mathcal{A}^{*}$, for some $n,m \in \N_0$, we write $u v$ for the concatenation of $u$ and $v$, that is, $uv = u_{1} \cdots u_{n} v_{1} \cdots v_{m} \in \mathcal{A}^{n+m}$.
The \emph{abelianisation vector} of a word $u \in \mathcal{A}^{*}$ is the vector $\psi (u) \in \N_0^d$, defined by $\psi (u)_{i} = \lvert u \rvert_{i}$ for all $i \in \{ 1, \ldots, d \}$.

Given some probability space $(\Omega, \mc F, \mathbb{P})$, a \emph{random word} is an $\mc F$-measurable function $w \colon \Omega \to \mc A^+$. We will mostly be interested in the \emph{distribution} of $w$, given by the measure $\mathbb{P} \circ w^{-1}$ on $\mc A^+$, and hence the details of the underlying propability space are of less concern. For every function $f$ with domain $\mc A$ and a random word $w$, the term $f(w)$ is defined to be the random function $f \circ w$ on $\Omega$, with distribution $\mathbb{P} \circ w^{-1} \circ f^{-1}$. This applies in particular to $f$ being the length or the abelianisation vector, such that expressions like $|w|$ or $\psi(w)$ are well-defined for every random word $w$. Similar conventions apply to \emph{random sequences} $x \colon \Omega \to \mc A^\N$, assumed to be $(\mc F,\mc B)$-measurable, where $\mc B$ denotes the Borel $\sigma$-algebra on $\mc A^\N$.

\subsection{Random substitutions}

Throughout the rest of this work, we will usually denote a substitution (also called a deterministic substitution) by greek letters in their standard font, such as $\theta$, and random substitutions by greek letters in their variant font, such as $\vartheta$.
We define a random substitution via the data that is required to determine its action on letters. In the second step we extend it to a random map on words.

\begin{definition}
Let $\mathcal{A} = \{ 1, \ldots, d \}$ be a finite alphabet. A random substitution $\vartheta_{\mathbf{P}} = (\vartheta, \mathbf{P})$ is a finite-set-valued function $\vartheta \colon \mathcal{A} \rightarrow \mathcal{F}(\mathcal{A}^{+})$ together with a set of non-degenerate probability assignments 
	\begin{align*}
	\mathbf{P} = \left\{ p_i \colon \# \vartheta(i) \to (0,1], \text{ with } \sum_{v \in \vartheta(i)} p_i(v) = 1 \text{ for all } 1 \leqslant i \leqslant d \right\}.
	\end{align*}
We call each $v \in \vartheta(i)$ a \emph{realisation} of $\vartheta_{\mathbf{P}}(i)$. 
A \emph{marginal} of $\vartheta_{\mathbf{P}}$ is a deterministic substitution $\theta$ such that $\theta(i)$ is a realisation of $\vartheta_{\mathbf{P}}(i)$ for all $1 \leqslant i \leqslant d$.
\end{definition}
For each $i \in \mc A$ and $v \in \vartheta(i)$, we interpret $p_i(v)$ as the probability to map $i$ to $v$ under $\vartheta_{\mathbf{P}}$. Given $i \in \mc A$ and $\vartheta(i) = \{v_1,\ldots,v_r\}$, this is often represented as
	\begin{align*}
	\vartheta_{\mathbf{P}} \colon i \mapsto
		\begin{cases}
		v_1 & \text{with probability } p_i(v_1),\\
		\hfill \vdots \hfill & \hfill \vdots\hfill\\
		v_r & \text{with probability } p_i(v_r).
		\end{cases}
	\end{align*}

\begin{example}[Random tribonacci]\label{Ex R Fib}
Let $p \in (0,1)$.
The \emph{random tribonacci substitution} $\vartheta_{\mathbf{P}} = (\vartheta, \mathbf{P})$ is the random substitution defined over the alphabet $\mathcal{A} = \{ 1, 2, 3\}$ by
	\begin{align*}
	\vartheta_{\mathbf P} \colon
		\begin{cases}
		1 \mapsto
			\begin{cases}
			12 & \text{with probability } p,\\
			21 & \text{with probability } 1-p,
		     \end{cases}\\
       2 \mapsto 13 \quad \text{with probability } 1,\\
        3 \mapsto 1 \quad \text{with probability } 1,
		\end{cases}
	\end{align*}
with defining data $\mathbf{P} = \{p_1 \colon 12\mapsto p, 21 \mapsto p-1, \; p_2 \colon 13 \mapsto 1, \; p_3 \colon 1 \mapsto 1\}$, and corresponding set-valued substitution $\vartheta \colon 1 \mapsto \{12,21\}, 2 \mapsto \{13\}, 3 \mapsto \{1\}$. 
It is a local mixture of the tribonacci and twisted tribonacci substitutions introduced in Section \ref{SS:subst-intro}.
\end{example}

We now describe how a random substitution $\vartheta_{\mathbf{P}}$ determines a (countable state) Markov matrix $Q$, indexed by $\mathcal{A}^{+} \times \mathcal{A}^{+}$. 
We view the entry $Q_{u,v}$ of $Q$ as the probability to map $u$ to $v$ by the random substitution. 
Formally, $Q_{i, v} = p_i(v)$ for $v \in \vartheta(i)$ and $Q_{i,v} =0$ if $v \notin \vartheta(i)$.  
We extend the action of $\vartheta_{\mathbf{P}}$ to finite words by \emph{independently} mapping each letter to one of its realisations. 
Specifically, given $n \in \N$, $u = u_1 \cdots u_n \in \mathcal{A}^{n}$ and $v \in \mathcal{A}^{+}$, we let
	\begin{align*}
	\mathcal{D}_n(v) = \{ (v^{(1)},\ldots, v^{(n)}) \in (\mathcal{A}^{+})^{n} : v^{(1)} \cdots v^{(n)} = v \}
	\end{align*} 
be the set of all decompositions of $v$ into $n$ individual words and let
	\begin{align*}
	Q_{u,v} = \sum_{(v^{(1)},\ldots,v^{(n)}) \in \mathcal{D}_n(v)} \prod_{i = 1}^{n} Q_{u_i,v^{(i)}}.
	\end{align*} 
Namely, $\vartheta_{\mathbf{P}}(u) = v$ with probability $Q_{u,v}$.

For $u \in \mathcal{A}^{+}$, let $(\vartheta_{\mathbf{P}}^{n}(u))_{n \in \N}$ be a stationary Markov chain on a given probability space $(\Omega_u, \mathcal{F}_u, \mathbb{P}_u)$, with transition matrix $Q$, that is
	\begin{align}\label{eq:transition-prob}
	\mathbb{P}_u [\vartheta_{\mathbf{P}}^{n+1}(u) = w \mid \vartheta_{\mathbf{P}}^{n}(u) = v] = \mathbb{P}_v [\vartheta_{\mathbf{P}}(v) = w] = Q_{v,w},
	\end{align} 
for all $v,w \in \mathcal{A}^{+}$ and $n \in \N$. 
In particular, $\mathbb{P}_u [\vartheta_{\mathbf{P}}^{n}(u) = v] = (Q^{n})_{u,v}$, for all $u,v \in \mathcal{A}^{+}$, and $n \in \N$. A complete formal construction of the probability space $(\Omega_u, \mathcal{F}_u, \mathbb{P}_u)$ is detailed in the work of Gohlke--Spindeler \cite[Appendix]{gohlke-spindeler}, however, all that will be necessary for our purposes are the transition probabilities given in \eqref{eq:transition-prob}.

Likewise, in case that $u$ is a random word (that is, a word-valued random variable), we let $(\vartheta_{\mathbf{P}}^{n}(u))_{n \in \N}$ be a stationary Markov chain, induced by the transition matrix $Q$ as outlined above.
We typically write $\mathbb{P}$ for $\mathbb{P}_u$ if the initial (random) word is understood. 
In this case, we also write $\mathbb{E}$ for the expectation with respect to $\mathbb{P}$.
As above, we say that $v$ is a \emph{realisation} of $\vartheta^{n}_{\mathbf{P}}(u)$ if $(Q^{n})_{u,v} > 0$ and set 
	\begin{align*}
	\vartheta^{n}(u) = \{ v \in \mathcal{A}^{+} : (Q^{n})_{u,v} > 0\}
	\end{align*} 
to be the set of all realisations of $\vartheta_{\mathbf{P}}^{n}(u)$. Conversely, $\vartheta^{n}_{\mathbf{P}}(u)$ may be regarded as the set $\vartheta^{n}(u)$, equipped with the additional structure of a probability vector. If $u = a \in \mathcal{A}$ is a letter, then we call a word $v \in \vartheta^{k}(a)$ a  \emph{(level-$k$) inflation word}.
The approach of defining a random substitution in terms of a Markov chain can be traced back to work of Peyri\`{e}re \cite{peyriere} and was pursued further by Denker and Koslicki \cite{koslicki,koslicki-denker}.

Given a random substitution $\vartheta_{\mathbf{P}} = (\vartheta, \mathbf{P})$ over an alphabet $\mathcal{A} = \{ 1, \ldots, d \}$, with cardinality $d \in \N$, we define the \emph{substitution matrix} $M = M_{\vartheta_{\mathbf{P}}} \in \mathbb{R}^{d \times d}$ of $\vartheta_{\mathbf{P}}$ by
	\begin{align*}
	M_{i, j}
	= \mathbb{E}\left[\lvert \vartheta_{\mathbf{P}}  (j) \rvert_{i}\right]
	= \sum_{v \in \vartheta(j)} p_j(v)\, \lvert v \rvert_{i}.
	\end{align*}
Since $M$ has only non-negative entries, its spectral radius is also a real eigenvalue of maximal modulus, denoted by $\lambda$.
By construction, $\lambda \geqslant 1$, where $\lambda = 1$ occurs if and only if $M$ is column-stochastic. 
This corresponds to the trivial case of a non-expanding random substitution, which we discard in the following. 
If $M$ is primitive (that is, if there exists a $k \in \N_0$ so that all the entries of $M^{k}$ are positive), Perron--Frobenius theory implies that $\lambda$ is a simple eigenvalue and that the corresponding left and right eigenvectors $\mathbf{L} = (L_{1}, \ldots, L_{d})^{T}$ and $\mathbf{R} = (R_{1}, \ldots, R_{d})^{T}$ may be chosen to have strictly positive entries. 
We normalise these eigenvectors such that $\lVert \mathbf{R} \rVert_{1} = 1 = \mathbf{L}^{T} \, \mathbf{R}$. 
In this situation, we call $\lambda$ the \emph{Perron--Frobenius eigenvalue} of $\vartheta_{\mathbf{P}}$, and $\mathbf{L}$ and $\mathbf{R}$ the \emph{left} and \emph{right Perron--Frobenius eigenvectors} of $\vartheta_{\mathbf{P}}$, respectively.

\begin{definition}
We say that $\vartheta_{\mathbf{P}}$ is \emph{primitive} if $M = M_{\vartheta_{\mathbf{P}}}$ is primitive and its Perron--Frobenius eigenvalue satisfies $\lambda > 1$.
\end{definition}

We emphasise that for a random substitution $\vartheta_{\mathbf{P}}$, being primitive is in fact independent of the (non-degenerate) data $\mathbf{P}$. In this sense, primitivity is a property of $\vartheta$ rather than $\vartheta_{\mathbf{P}}$.

Another standard assumption in the study of random substitutions is \emph{compatibility}, see \cite{escolano-manibo-miro,fokkink-rust-salo,gohlke,miro-et-al}. In particular, see \cite{rust-periodic-points} for a short history on the use of the term \emph{compatible}. In the following, recall that we denote the abelianisation vector of a finite word $u$ by $\psi (u)$.

\begin{definition}
We say that a random substitution $\vartheta_{\mathbf{P}} = (\vartheta, \mathbf{P})$ is \emph{compatible} if for all $a \in \mathcal{A}$ and $u, v \in \vartheta(a)$, we have $\psi (u) = \psi (v)$. 
\end{definition}

Observe that compatibility is independent of the choice of probabilities, and that a random substitution $\vartheta_{\mathbf{P}} = (\vartheta, \mathbf{P})$ is compatible if and only if for all $u \in \mathcal{A}^{+}$, we have that $\lvert s \rvert_{a} = \lvert t \rvert_{a}$ for all $s$ and $t \in \vartheta (u)$, and $a \in \mathcal{A}$.
We write $\lvert \vartheta (u) \rvert_{a}$ to denote this common value, and let $\lvert \vartheta (u) \rvert$ denote the common length of words in $\vartheta (u)$.
In which case, letting $M = M_{\vartheta_{\mathbf{P}}}$ denote the substitution matrix of $\vartheta_{\mathbf{P}}$, we have that $M_{i, j} = \lvert \vartheta (j) \lvert_{i}$ for all $i, j \in \mathcal{A}$. Note that the random tribonacci substitution defined in \Cref{Ex R Fib} is compatible, since $\psi (12) = \psi (21) = (1,1,0)^{T}$.
It is also primitive, since the cube of its substitution matrix is positive.
Since the matrix of a compatible random substitution is independent of $\bm P$, we often drop the explicit dependence on $\bm P$ in the notion, and write $M_{\vartheta}$ for the matrix of $\vartheta_{\bm P}$.

For random substitutions which are both primitive and compatible, the notions of unimodular and (irreducible) Pisot extend naturally from the deterministic setting.

\begin{definition}
We say that a compatible primitive random substitution $\vartheta_{\bm P} = (\vartheta, \bm P)$ is \emph{Pisot} if the largest eigenvalue of the substitution matrix $M_{\vartheta}$ is a Pisot number.
If, in addition, the characteristic polynomial of $M_{\vartheta}$ is irreducible over $\Z$, then we say that $\vartheta_{\bm P}$ is \emph{irreducible Pisot}.
If $\lvert \operatorname{det} (M_{\vartheta}) \rvert = 1$, then we say that $\vartheta_{\bm P}$ is \emph{unimodular}.
\end{definition}

\begin{definition}
Given a random substitution $\vartheta_{\mathbf{P}} = (\vartheta, \mathbf{P})$, a word $u \in \mathcal{A}^{+}$ is called \emph{($\vartheta$-)\,legal} if there exists an $a \in \mathcal{A}$ and $k \in \N$ such that $u$ appears as a subword of some word in $\vartheta^{k} (a)$. 
We define the \emph{language} of $\vartheta$ by $\mc L_{\vartheta} = \{ u \in \mathcal{A}^{+} : u \text{ is $\vartheta$-legal} \}$.

The \emph{random substitution subshift} associated with $\vartheta_{\mathbf{P}} = (\vartheta, \mathbf{P})$ is the system $(X_{\vartheta}, S)$, where $S$ is the usual shift map, and
\begin{align*}
    X_{\vartheta} = \{ x \in \mathcal{A}^{\N} : \text{every subword of $x$ is $\vartheta$-legal} \}.
\end{align*}
\end{definition}

Under mild condition, the corresponding sequence space $X_{\vartheta}$ is always non-empty \cite{rust-spindeler}. In particular, this holds if $\vartheta_{\mathbf{P}}$ is primitive and compatible.
The notation $X_{\vartheta}$ mirrors the fact that the subshift of a random substitution does not depend on the choice of $\mathbf{P}$. 
We endow $X_{\vartheta}$ with the subspace topology inherited from $\mathcal{A}^{\N}$, and since $X_{\vartheta}$ is defined in terms of a language, it is a compact $S$-invariant subspace of $\mathcal{A}^{\N}$. 
Hence, $X_{\vartheta}$ is a subshift. 
For $n \in \N$, we write $\mc L_{\vartheta}^{n} = \mc L_\vartheta \cap \mathcal{A}^{n}$ to denote the subset of $\mc L_{\vartheta}$ consisting of words of length $n$.  
We also note that, when $\vartheta_{\bm P}$ is primitive, $X_{\vartheta^{k}} = X_{\vartheta}$ for all $k \in \N$.

\begin{prop}[{\cite[Proposition~13]{rust-spindeler}}]\label{PROP:primitive-implies-transitive}
    Let $\vartheta_{\bm P} = (\vartheta, \bm P)$ be a primitive random substitution. 
    Then, the associated subshift $X_{\vartheta}$ is topologically transitive.
\end{prop}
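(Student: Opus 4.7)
The plan is to reduce topological transitivity to a combinatorial statement about the language: since cylinder sets form a basis for the topology on $\mc A^{\N}$, the subshift $(X_{\vartheta}, S)$ is topologically transitive precisely when, for every pair of legal words $u, v \in \mc L_{\vartheta}$, there exists some $w \in \mc A^{*}$ with $uwv \in \mc L_{\vartheta}$.

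The first main step is a key lemma: for any two letters $a_1, a_2 \in \mc A$, there exists $x \in \mc A^{*}$ such that $a_1 x a_2 \in \mc L_{\vartheta}$. For this I would use primitivity of $M = M_{\vartheta}$ twice. Choose $N$ with $M^{N} > 0$; since $\lambda > 1$, by Perron--Frobenius the expected length $\sum_{i}(M^{N})_{i,c} = \mathbb{E}[\lvert \vartheta_{\bm P}^{N}(c)\rvert]$ grows like $\lambda^{N} L_{c}$ and so tends to infinity as $N \to \infty$. Hence, for $N$ large enough, every letter $c \in \mc A$ admits a realisation $r = r_{1} \cdots r_{m} \in \vartheta^{N}(c)$ of length $m \geqslant 2$. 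Applying $\vartheta^{N}$ independently to each letter of $r$ produces a realisation $t^{(1)} \cdots t^{(m)} \in \vartheta^{2N}(c)$, and since $(M^{N})_{a_1, r_1}, (M^{N})_{a_2, r_m} > 0$, I would choose $t^{(1)}$ to contain $a_1$ and $t^{(m)}$ to contain $a_2$. As $m \geqslant 2$, the resulting realisation then contains $a_1$ before $a_2$, giving a legal subword of the form $a_1 x a_2$.

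The second step combines this key lemma with the definitions to handle arbitrary $u, v \in \mc L_{\vartheta}$. Write $u \triangleleft s$ with $s \in \vartheta^{k_1}(a_1)$ and $v \triangleleft t$ with $t \in \vartheta^{k_2}(a_2)$, and set $k = \max(k_1, k_2) + N$. By primitivity, $\vartheta^{k - k_1}(a_1)$ has a realisation containing the letter $a_1$; applying $\vartheta^{k_1}$ to this realisation, with the realisation at the distinguished $a_1$-position chosen to be $s$, yields $S \in \vartheta^{k}(a_1)$ with $u \triangleleft s \triangleleft S$, and analogously one produces $T \in \vartheta^{k}(a_2)$ with $v \triangleleft T$. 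Applying $\vartheta^{k}$ to the legal word $a_1 x a_2$ supplied by the key lemma, with the realisations at the two endpoints chosen to be $S$ and $T$, then yields a legal word in which $u$ appears before $v$; extracting the relevant subword produces the required bridge $w$ with $uwv \in \mc L_{\vartheta}$.

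I expect the main obstacle to be the initial step of the key lemma, namely ensuring that realisations of length at least $2$ exist uniformly in the starting letter. Primitivity of $M$ on its own only guarantees that every letter appears with positive expected count; the assumption $\lambda > 1$ (built into primitivity in this paper) is essential to drive the expected length to infinity. A further subtlety is that primitivity does not supply a single realisation containing two prescribed letters in a prescribed order, which is why the two-stage argument -- first producing a long enough $r$, then using primitivity a second time to place $a_1$ and $a_2$ in distinct blocks -- is needed rather than a single-stage application.
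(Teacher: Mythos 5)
The paper itself offers no proof here; the statement is quoted from Rust--Spindeler, whose argument constructs an explicit point of $X_{\vartheta}$ with dense shift-orbit as a nested limit of inflation words $u_n \in \vartheta^{n}(a)$, each a prefix of the next and arranged so that every legal word eventually occurs. Your route is different but sound: you prove irreducibility of the language $\mc L_{\vartheta}$ directly, and the two-stage use of primitivity is exactly right --- $\lambda>1$ forces the column sums of $M^{N}$ above $1$, hence a level-$N$ realisation of length at least $2$ for every letter, and a second application of $M^{N}>0$ places $a_1$ and $a_2$ in distinct level-$N$ blocks; the embedding of $u,v$ into level-$k$ inflation words of $a_1,a_2$ works because $M^{N'}>0$ for all $N'\geqslant N$. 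The one step to make explicit is the passage from irreducibility of $\mc L_{\vartheta}$ to transitivity of $(X_{\vartheta},S)$: the basic open sets of $X_{\vartheta}$ are cylinders over the language $\mc L(X_{\vartheta})$ of the subshift, which a priori could be a proper subset of $\mc L_{\vartheta}$, so you must check that every $\vartheta$-legal word occurs in some element of $X_{\vartheta}$. This follows by iterating your own bridging step to build an infinite sequence all of whose prefixes are legal --- essentially reconstructing the transitive point of the cited proof. What the cited construction buys in addition is a transitive point lying in $X_{\vartheta}^{\infty}$, which the paper needs later for the equivalence of the two Rauzy fractal constructions; your language-level argument does not produce such a point directly.
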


For compatible random substitutions, every element in the associated subshift has well-defined letter frequencies. 
Under the additional assumption of irreducible Pisot, it was shown in \cite{miro-et-al} that every element is in fact $C$-balanced.

\begin{theorem}[{\cite[Theorem~33]{miro-et-al}}]
\label{THM:Pisot-balanced}
    Let $\vartheta_{\bm P}$ be a compatible and irreducible Pisot random substitution. 
    Then, there exists a $C \geq 1$ such that every element of $X_{\vartheta}$ is $C$-balanced.
\end{theorem}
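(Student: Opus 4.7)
The plan is to reduce the assertion to a uniform bound on the projection of abelianisation vectors onto the contracting hyperplane $\mathbb{H}$, and to obtain this bound by an inductive desubstitution argument in which the Pisot hypothesis provides geometric contraction while compatibility ensures that the boundary errors at each level remain uniformly controlled. The reduction I have in mind is the following. Fix a norm $\|\cdot\|_*$ on $\mathbb{H}$ to be specified, and suppose one can produce a finite constant $B$ with $\|\pi(\psi(u))\|_* \leqslant B$ for every $u \in \mc L_\vartheta$. Writing $\psi(u) = t_u \bm R + \pi(\psi(u))$ and using $\bm 1^{T} \bm R = \|\bm R\|_1 = 1$ together with $\bm 1^{T} \psi(u) = |u|$, one obtains $|t_u - |u|| \leqslant \sqrt{d}\,B$, so any two legal words of equal length have abelianisations that differ by a uniformly bounded vector. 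Since every subword of an element of $X_\vartheta$ is legal, this yields $C$-balancedness for some $C$ depending only on $B$, $d$ and $\bm R$.

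To produce $B$, I would first fix the norm. Because $\vartheta_{\bm P}$ is irreducible Pisot, every eigenvalue of $h_\vartheta = M_\vartheta|_{\mathbb{H}}$ has modulus strictly less than one, so, as noted in the excerpt, there is an equivalent norm $\|\cdot\|_*$ on $\mathbb{H}$ in which $\rho := \|h_\vartheta\|_* < 1$. The core step is a one-step desubstitution. Given $u \in \mc L_\vartheta$, choose $a \in \mc A$ and $N \in \N$ with $u \triangleleft \vartheta^N(a)$, and realise $\vartheta^N(a) = \vartheta(c_1)\cdots\vartheta(c_M)$ for some $c_1 \cdots c_M \in \vartheta^{N-1}(a)$. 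The occurrence of $u$ partitions as $u = s \cdot \vartheta(c_i)\cdots\vartheta(c_j) \cdot t$, where $s$ is a (possibly empty) proper suffix of $\vartheta(c_{i-1})$ and $t$ a proper prefix of $\vartheta(c_{j+1})$. Compatibility gives $\psi(\vartheta(c_\ell)) = M_\vartheta e_{c_\ell}$, so $\psi(u) = \psi(s) + \psi(t) + M_\vartheta \psi(u')$ with $u' := c_i \cdots c_j \in \mc L_\vartheta$. Since $\pi$ commutes with $M_\vartheta$ (because $\mathbb{H}$ and $\mathbb{R}\bm R$ are both $M_\vartheta$-invariant), applying $\pi$ yields
\[
\pi(\psi(u)) = \pi(\psi(s)) + \pi(\psi(t)) + h_\vartheta\bigl(\pi(\psi(u'))\bigr).
\]

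Iterating this decomposition $N$ times collapses $u'$ to a single letter (or the empty word) and produces the geometric estimate
\[
\|\pi(\psi(u))\|_* \;\leqslant\; 2K \sum_{k=0}^{N-1} \rho^k + \rho^N K \;\leqslant\; \frac{2K}{1-\rho} + K,
\]
where $K$ is the maximum of $\|\pi(\psi(w))\|_*$ over the finite set of proper prefixes and suffixes of level-$1$ inflation images of letters of $\mc A$. This bound is uniform in $u$, delivering $B$ and hence the theorem. I expect the main obstacle to be the bookkeeping of the iteration: at every descent step the boundary words $s$ and $t$ must come from a fixed finite pool whose contributions to the geometric sum do not grow. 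This is precisely where compatibility is crucial, since it turns $M_\vartheta$ into a deterministic linear map on abelianisation vectors irrespective of which random realisation is selected at each level, so the recursion closes without appealing to any probabilistic input and $K$ remains finite.
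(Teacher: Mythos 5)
Your argument is correct. Note that the paper does not prove this statement itself---it is imported verbatim from \cite[Theorem~33]{miro-et-al}---but your desubstitution argument (bounding $\pi(\psi(u))$ uniformly over $\mc L_\vartheta$ via a geometric series driven by the contraction $h_\vartheta$ on $\mathbb{H}$, with compatibility guaranteeing that $\psi(\vartheta(c_\ell)) = M_\vartheta e_{c_\ell}$ independently of the realisation chosen at each level) is precisely the standard route to balancedness for Pisot substitutions, adapted correctly to the random setting; the only points to tidy are the degenerate cases where the occurrence of $u$ lies within one or two level-$1$ inflation words, and the norm-equivalence constants in the reduction step, neither of which affects the conclusion.
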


The set-valued function $\vartheta$ naturally extends to $X_{\vartheta}$, where for $x =  x_{1} x_{2} \cdots \in X_{\vartheta}$ we let $\vartheta(x)$ denote the (infinite) set of sequences of the form $v = v_1 v_2 \cdots$, with $v_j \in \vartheta(x_j)$ for all $j \in \N$. By definition, it is easily verified that $\vartheta(X_{\vartheta}) \subseteq X_{\vartheta}$. Some properties of $\vartheta$ are reminiscent of continuous functions, although $\vartheta$ itself is \emph{not} a function.

\begin{lemma}[{\cite[Lemma~2.5]{MT-entropy}}]\label{LEM:subst-image-compact}
If $\vartheta_{\mathbf{P}} = (\vartheta, \mathbf{P})$ is a random substitution and $X \subseteq \mathcal{A}^{\N}$ is compact, then $\vartheta(X)$ is compact.
\end{lemma}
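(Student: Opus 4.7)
The plan is to show that $\vartheta(X)$ is sequentially compact inside the compact metrisable space $\mathcal{A}^{\N}$. Given an arbitrary sequence $(y^{(n)})_{n \in \N}$ in $\vartheta(X)$, I would extract a subsequence converging to a point of $\vartheta(X)$ via a two-stage compactness argument: first exploit compactness of $X$ to control the preimages, then exploit finiteness of $\vartheta(a)$ for each letter $a$ to control the chosen realisations.

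More precisely, for each $n \in \N$, fix $x^{(n)} = x_1^{(n)} x_2^{(n)} \cdots \in X$ and $v_j^{(n)} \in \vartheta(x_j^{(n)})$ such that $y^{(n)} = v_1^{(n)} v_2^{(n)} \cdots$. By compactness of $X$, after passing to a subsequence I may assume $x^{(n)} \to x \in X$. Since convergence is in the product topology and $\mathcal{A}$ is discrete, for each $j \in \N$ there exists $N_j$ with $x_j^{(n)} = x_j$ for all $n \geq N_j$, so that $v_j^{(n)} \in \vartheta(x_j)$ for all $n \geq N_j$. Because $\vartheta(x_j)$ is a finite set, a standard diagonal extraction yields a further subsequence along which, for every $j$, the word $v_j^{(n)}$ is eventually constant, equal to some $v_j \in \vartheta(x_j)$. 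Define $y = v_1 v_2 \cdots$, which lies in $\vartheta(x) \subseteq \vartheta(X)$ by construction.

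It remains to verify that $y^{(n)} \to y$ in $\mathcal{A}^{\N}$ along this subsequence. For any prefix length $k$, once $n$ is large enough that $v_j^{(n)} = v_j$ for all $j \leq k$, the concatenation $v_1 \cdots v_k$ forms a common prefix of both $y^{(n)}$ and $y$; since $|v_j| \geq 1$, this common prefix has length at least $k$, so the first $k$ letters of $y^{(n)}$ and $y$ agree. Hence $y^{(n)} \to y$, which shows $\vartheta(X)$ is sequentially compact, hence compact.

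There is no real obstacle here beyond being careful with the diagonal argument; the only subtlety is ensuring the chosen realisations $v_j^{(n)}$ lie in $\vartheta(x_j)$ for large $n$, which is where the discreteness of $\mathcal{A}$ (so that coordinatewise convergence becomes eventual equality) is essential. Primitivity, compatibility and the probability data $\mathbf{P}$ play no role; only the fact that $\vartheta(a)$ is finite for every letter $a \in \mathcal{A}$ is used.
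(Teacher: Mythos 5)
Your proof is correct, and it is the standard argument: the paper does not reproduce a proof here but simply cites \cite[Lemma~2.5]{MT-entropy}, and the sequential-compactness-plus-diagonal-extraction route you take (pull back to a convergent sequence of preimages in $X$, use discreteness of $\mathcal{A}$ to upgrade coordinatewise convergence to eventual equality, then exploit finiteness of each $\vartheta(a)$ and the fact that $|v_j|\geq 1$ to control prefixes) is exactly the argument one expects from that reference. You are also right that only finiteness of the sets $\vartheta(a)$ is used, not primitivity, compatibility or $\mathbf{P}$.
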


\subsection{Frequency measures}

The choice of the probability parameters $\mathbf{P}$ induces a probabilistic structure on $X_{\vartheta}$ that is reflected by appropriate choices of probability measures. In analogy to the notion of a substitution-fixed point in the deterministic setting, we introduce the concept of an invariant measure.
\begin{definition}
A probability measure $\nu$ on $X_{\vartheta}$ is \emph{invariant} under $\vartheta_{\mathbf{P}}$, if for all $w \in \mc L_{\vartheta}$,
\begin{align*}
\nu([w]) = \sum_{v \in \mc L_{\vartheta}^{|w|}} \nu([v]) \, \mathbb{P} [\vartheta_{\mathbf{P}}(v)_{[1,\lvert w \rvert]} = w].
\end{align*}
\end{definition}

In many cases, the probability $\mathbb{P} [\vartheta_{\mathbf{P}}(v)_{[1,\lvert w \rvert]} = w]$ depends only on a prefix of $v$. 
More precisely, if $m \in \N$ is large enough to guarantee $|\vartheta(v)| \geqslant |w|$ for all $v \in \mc L_{\vartheta}^m$, we obtain
\begin{align}
\label{EQ:nu-invariance}
\nu([w]) = \sum_{v \in \mc L_{\vartheta}^m} \nu([v]) \, \mathbb{P}[\vartheta_{\mathbf{P}}(v)_{[1,|w|]} = w].
\end{align}
It is not difficult to verify that every compatible primitive random substitution $\vartheta_{\mathbf{P}}$ permits an invariant measure (possibly up to replacing $\vartheta_{\mathbf{P}}$ by a higher power); we refer to \cite[Proposition~4.1.3]{gohlke_diss} for details. A $\vartheta_{\mathbf{P}}$-invariant measure $\nu$ is generally not shift-invariant. However, $\nu$ is intimately connected to a natural ergodic measure on $(X_{\vartheta},S)$. Recall that a point $x \in X_{\vartheta}$ is \emph{generic} for a measure $\varrho$ if 
\[
\varrho = \lim_{n \to \infty} \frac{1}{n} \sum_{k=0}^{n-1} \delta_{S^k x},
\]
in the weak topology. 
The following was shown in \cite[Theorem~5.9]{gohlke-spindeler} and \cite[Theorem~4.1.10, Rem.~4.1.12]{gohlke_diss} in the context of bi-infinite sequence spaces; the proof carries over immediately to the one-sided setting.

\begin{theorem}
\label{THM:frequency-measure}
    Let $\vartheta_{\mathbf{P}}$ be a primitive random substitution. There is an $S$-invariant, ergodic measure $\varrho$ on $X_{\vartheta}$ such that for every $\vartheta_{\mathbf{P}}$-invariant measure $\nu$, we have that $\nu$-almost every $x \in X_{\vartheta}$ is generic for $\varrho$.
\end{theorem}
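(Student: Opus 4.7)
The plan is to construct $\varrho$ explicitly via expected word frequencies in long random inflation words, verify its basic dynamical properties, and then reduce the genericity claim to a strong law of large numbers for the Markov chain of inflation words.

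\textbf{Step 1: Constructing the candidate measure $\varrho$.} For each legal word $w$ and letter $a \in \mc A$, I would consider the quantities
\[
f_a^{(n)}(w) \;:=\; \frac{\mathbb{E}\bigl[\lvert \vartheta^n_{\mathbf{P}}(a) \rvert_w\bigr]}{\mathbb{E}\bigl[\lvert \vartheta^n_{\mathbf{P}}(a) \rvert\bigr]}.
\]
Let $\ell = |w|$ and consider the induced (averaged) transition matrix on the countable set $\mc L_\vartheta^\ell$ obtained from $Q$ by tracking $\ell$-letter subwords of inflation images. Primitivity of $M_{\vartheta}$ together with the Perron--Frobenius theorem applied to this induced positive operator (restricted to a suitable primitive invariant block) should guarantee that $f_a^{(n)}(w)$ converges as $n \to \infty$ to a limit $\varrho([w])$ that is independent of $a$. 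Consistency of the values $\{\varrho([w])\}_{w \in \mc L_\vartheta}$ under the identities $\varrho([w]) = \sum_{a \in \mc A} \varrho([aw]) = \sum_{a \in \mc A} \varrho([wa])$ follows from the construction and defines a Borel probability measure on $\mc A^{\mathbb{N}}$ supported on $X_{\vartheta}$, which is automatically $S$-invariant.

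\textbf{Step 2: Ergodicity of $\varrho$.} Using that the induced Markov chain on subwords is, after passing to a primitive block, exponentially mixing, I would verify a weak mixing (and hence ergodicity) property of $\varrho$ by showing that cylinder correlations factorise asymptotically; this is standard for measures arising from primitive (random) substitutions and mirrors the argument used in the deterministic case to establish unique ergodicity via uniform convergence of word frequencies.

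\textbf{Step 3: Generic points for any $\vartheta_{\mathbf{P}}$-invariant $\nu$.} Fix a $\vartheta_{\mathbf{P}}$-invariant measure $\nu$. By iterating the invariance identity \eqref{EQ:nu-invariance}, for every cylinder $[w]$ and every sufficiently large $n$,
\[
\nu([w]) \;=\; \sum_{v \in \mc L_\vartheta^{m(n)}} \nu([v])\,\mathbb{P}\bigl[\vartheta^n_{\mathbf{P}}(v)_{[1,|w|]} = w\bigr],
\]
which expresses a $\nu$-typical $x$ as a limit of random inflations $\vartheta^n_{\mathbf{P}}(v)$ with $v \sim \nu$. Now, for a fixed $v$, the random word $\vartheta^n_{\mathbf{P}}(v)$ is the state at time $n$ of the Markov chain $(\vartheta^k_{\mathbf{P}}(v))_{k \in \N}$, and the empirical frequency of $w$ inside $\vartheta^n_{\mathbf{P}}(v)$ converges almost surely (as $n \to \infty$) to $\varrho([w])$ by a law of large numbers for this Markov chain, using the exponential mixing from Step 2 together with the Borel--Cantelli-type arguments of \cite{gohlke-spindeler}. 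Combining with the representation of $\nu$ above then gives that the Birkhoff averages $\tfrac{1}{N}\sum_{k=0}^{N-1}\mathbf{1}_{[w]}(S^k x)$ converge to $\varrho([w])$ for $\nu$-a.e.\ $x$ and every $w$, so $\nu$-a.e.\ $x$ is generic for $\varrho$.

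The main obstacle is Step 3: one needs a uniform (in $v$) almost-sure law of large numbers for the empirical word frequencies inside the growing random inflations $\vartheta^n_{\mathbf{P}}(v)$, and to transfer that statement to a Birkhoff averaging statement along a single infinite $\nu$-typical sequence. This is essentially the content of \cite[Theorem~5.9]{gohlke-spindeler} in the bi-infinite setting; the argument adapts to the one-sided case with the same mixing input from Step 2.
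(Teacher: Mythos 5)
First, note that the paper does not prove this theorem itself: it is imported from \cite[Theorem~5.9]{gohlke-spindeler} and \cite{gohlke_diss}, and your overall strategy (frequency measure from expected inflation-word statistics, then an almost-sure law of large numbers for the Markov chain of inflation words, transferred to Birkhoff averages via the invariance identity) is the same strategy used in those references. Steps 1 and 3 are therefore the right outline, with the honest caveat that the transfer in Step 3 from almost-sure convergence along the random inflations $\vartheta_{\mathbf{P}}^n(v)$ to convergence of $\frac{1}{N}\lvert x_{[1,N]}\rvert_w$ for $\nu$-a.e.\ $x$ and \emph{all} $N$ (not just $N = \lvert\vartheta^n(v)\rvert$) is exactly where the real work lies; you defer it to the reference, which is acceptable for a sketch but means the crux is not actually carried out.

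There is, however, a genuine error in Step 2. Frequency measures of primitive random substitutions are \emph{not} weakly mixing in general: for the random Fibonacci substitution the diffraction (hence, by the diffraction--dynamics correspondence, the dynamical spectrum of $(X_\vartheta,S,\varrho)$) has a non-trivial pure point component, so non-constant measurable eigenfunctions exist and cylinder correlations do not factorise asymptotically in the unaveraged sense. Your appeal to the deterministic case is also misleading, since there ergodicity comes for free from \emph{unique} ergodicity, which fails for random substitution subshifts. What is true, and what you actually need, is the Ces\`{a}ro-averaged factorisation $\frac{1}{N}\sum_{k=0}^{N-1}\varrho([u]\cap S^{-k}[v])\to\varrho([u])\varrho([v])$, which is equivalent to ergodicity and can be extracted from a second-moment/variance estimate for pair counts in large inflation words. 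Be aware also that ergodicity cannot be recovered as a corollary of Step 3: knowing that $\nu$-a.e.\ point is generic for $\varrho$ does not imply $\varrho$ is ergodic (a point can be generic for a non-ergodic measure, and genericity of $\varrho$-a.e.\ point for $\varrho$ is what would be needed), so Step 2 must be repaired independently.
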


We call $\varrho$ the \emph{frequency measure} corresponding to $\vartheta_{\mathbf{P}}$ because the value $\varrho([w])$ is precisely the frequency of $w$ in $x$ for $\nu$-almost every (and $\varrho$-almost every) $x \in X_{\vartheta}$. 
In fact, this frequency is also observed in large inflation words in the sense that the relative frequency of $w$ in the random word $\vartheta_{\mathbf{P}}^n(a)$ converges $\mathbb{P}$-almost surely to $\varrho([w])$---see \cite[Proposition~4.3]{gohlke-spindeler} for more details. 
Probabilistic aspects of the subshift $X_{\vartheta}$ are naturally studied in terms of its frequency measures, including measure theoretic entropy \cite{MT-entropy} and $L^q$-spectra \cite{mitchell-rutar}.

\section{Rauzy fractals of \texorpdfstring{$C$-balanced}{C-balanced} sequences}\label{S:Bal-seq-Rauzy}

A sequence $x \in \mc{A}^{\mathbb{N}}$ is called \emph{$C$-balanced} if there exists a constant $C \geqslant 1$ such that for all $j, k, n \in \N$ and $a \in \mc A$, we have $\left\lvert \lvert x_{[j,j+n-1]} \rvert_a - \lvert x_{[k,k+n-1]} \rvert_a \right\rvert \leqslant C$.
We also extend the definition of $C$-balanced to subshifts as follows: a subshift $X$ is $C$-balanced if every sequence $x \in X$ is $C$-balanced.
This property is central to the construction of the Rauzy fractal associated with an irreducible Pisot substitution.
More generally, the same procedure allows a Rauzy fractal to be defined for any $C$-balanced sequence and, further, for any topologically transitive $C$-balanced subshift.
In this section, we outline this more general construction.
We prove several topological and analytic properties that we will utilise in the construction of Rauzy fractals of random substitutions in \Cref{S: Rauzy random subs}, which we believe are of interest in their own right.

We first provide an alternative characterisation of the $C$-balanced property, which will be useful for our purposes.
A sequence $w \in \mc A^{\N}$ has \emph{uniformly well-defined letter frequencies} if, for all $i \in \mc A$ and all sequences $(j_n)_n$ of positive integers, the limit
\begin{align*}
    r_i = \lim_{n \rightarrow \infty} \frac{1}{n} \left\lvert w_{[j_n,j_n+n-1]} \right\rvert_{i}
\end{align*}
exists and is independent of the sequence $(j_n)_n$. 
We call the vector $\mathbf{r} = \mathbf{r} (w) = (r_1 \ldots, r_d)$ the \emph{letter frequency vector of $w$}. Observe that $\mathbf{r}$ is a probability vector. 
The following characterisation of the $C$-balanced property was proved in \cite{BD14}, and motivates why a Rauzy fractal can be associated with any $C$-balanced sequence.

\begin{lemma}[{\cite[Proposition~2.4]{BD14}}]\label{LEM:C-bal-ULF}
A sequence $w \in \mc A^{\N}$ is $C$-balanced if and only if $w$ has uniformly well-defined letter frequencies and there exists a constant $B$ so that $\lvert \left\lvert u \right\rvert_{i} - \lvert u \rvert r_i \rvert < B$, for any finite subword $u$ of $w$ and $i \in \mc A$, and where $r_i$ is the entry of the letter frequency vector corresponding to $i$.
\end{lemma}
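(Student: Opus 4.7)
The plan is to prove the two implications separately, with Fekete's subadditive lemma (applied in both its subadditive and superadditive forms) as the main ingredient for the harder direction.

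The reverse direction is the routine half. Assuming uniformly well-defined letter frequencies with vector $\mathbf{r}$ and the deviation bound with constant $B$, for any two length-$n$ subwords $u$ and $v$ of $w$ and any letter $i \in \mc A$ the triangle inequality yields
\[
\bigl\lvert \lvert u \rvert_i - \lvert v \rvert_i \bigr\rvert \;\leqslant\; \bigl\lvert \lvert u \rvert_i - n r_i \bigr\rvert + \bigl\lvert n r_i - \lvert v \rvert_i \bigr\rvert \;<\; 2B,
\]
so $w$ is $C$-balanced with $C = \lceil 2B \rceil$.

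For the forward direction, fix $i \in \mc A$ and set $a_n := \lvert w_{[1,n]} \rvert_i$. Applying $C$-balance to the two length-$m$ subwords $w_{[1,m]}$ and $w_{[n+1, n+m]}$ yields the almost-additivity relation $\lvert a_{n+m} - a_n - a_m \rvert \leqslant C$. Consequently, $b_n := a_n + C$ is subadditive while $c_n := a_n - C$ is superadditive, so Fekete's lemma produces a common limit $r_i := \lim_n a_n/n$ together with the sharp two-sided bound $\lvert a_n - n r_i \rvert \leqslant C$ valid for every $n$. A further application of $C$-balance, comparing $w_{[1,n]}$ with an arbitrary length-$n$ subword $w_{[j, j+n-1]}$, allows one to replace $a_n$ by $\lvert w_{[j, j+n-1]} \rvert_i$ at the cost of an additional $C$, giving the uniform estimate
\[
\bigl\lvert \lvert w_{[j, j+n-1]} \rvert_i - n r_i \bigr\rvert \;\leqslant\; 2C
\]
for all $j,n \in \N$. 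Dividing by $n$ shows that the letter frequencies are uniformly well-defined with frequency vector $\mathbf{r} = (r_1,\ldots,r_d)$, and the displayed inequality itself provides the required deviation bound (with $B = 2C+1$, say).

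The main obstacle is the forward implication, where one must simultaneously extract the frequencies $r_i$ and a bound on $\lvert \lvert u \rvert_i - \lvert u \rvert r_i \rvert$ that is uniform in both the position and the length of the subword. The key conceptual point is that the $C$-balanced condition is precisely the statement that each letter-counting sequence $(a_n)_{n \in \N}$ is almost-additive in the sense above, which is exactly the input Fekete's lemma requires; applying the lemma in its subadditive and superadditive forms in parallel pins down $r_i$ as a common limit and delivers the tight two-sided deviation bound in a single stroke, with the remaining uniformity in $j$ following from one more invocation of $C$-balance.
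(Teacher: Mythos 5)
Your proof is correct: the reverse direction is the straightforward triangle inequality, and the forward direction correctly extracts the almost-additivity $\lvert a_{n+m}-a_n-a_m\rvert\leqslant C$ from $C$-balance and applies Fekete's lemma in both its subadditive and superadditive forms to pin down $r_i$ together with the two-sided bound $\lvert a_n - nr_i\rvert\leqslant C$, after which one more comparison gives uniformity in the starting position. The paper itself does not prove this lemma but quotes it from \cite{BD14}; your argument is essentially the standard one used there, so there is nothing further to reconcile.
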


For topologically transitive $C$-balanced subshifts, there exists a uniform letter frequency vector.

\begin{prop}
    If $X$ is a topologically transitive $C$-balanced subshift, then there is a probability vector $\bm r$ such that for every $x \in X$, $\bm r$ is the letter frequency vector of $x$.
\end{prop}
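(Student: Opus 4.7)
The plan is to combine Lemma~\ref{LEM:C-bal-ULF} with topological transitivity. Since $X$ is $C$-balanced, every $x \in X$ is $C$-balanced, so Lemma~\ref{LEM:C-bal-ULF} gives that each $x \in X$ has uniformly well-defined letter frequencies, and hence a letter frequency vector $\bm r(x) = (r_1(x),\ldots,r_d(x))$ which is automatically a probability vector. The content of the proposition is that this vector is independent of $x$.

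To prove constancy, I would fix a transitive point $x^{\ast} \in X$, that is, a point whose forward $S$-orbit is dense in $X$, and write $\bm r^{\ast} = \bm r(x^{\ast})$. Given an arbitrary $x \in X$, the density of the orbit of $x^{\ast}$ means that for every $n \in \N$ there exists $k_n \in \N_0$ such that $S^{k_n} x^{\ast}$ agrees with $x$ on the first $n$ coordinates, i.e.\ $x^{\ast}_{[k_n+1,\, k_n+n]} = x_{[1,n]}$. Setting $j_n = k_n + 1$, the uniform letter frequency property applied to $x^{\ast}$ along the window sequence $(j_n)_n$ yields, for each $i \in \mc A$,
\begin{equation*}
r_i(x) \;=\; \lim_{n \to \infty} \frac{1}{n} \bigl\lvert x_{[1,n]} \bigr\rvert_i \;=\; \lim_{n \to \infty} \frac{1}{n} \bigl\lvert x^{\ast}_{[j_n,\, j_n + n - 1]} \bigr\rvert_i \;=\; r_i^{\ast},
\end{equation*}
so $\bm r(x) = \bm r^{\ast}$. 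Taking $\bm r := \bm r^{\ast}$ completes the argument.

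There is no real obstacle: the only point requiring a little care is ensuring that the window sequence $(j_n)_n$ along $x^{\ast}$ is a bona fide sequence of positive integers of the form covered by the uniform letter frequency property (and not, for instance, a subsequence with shrinking windows), which is immediate from the construction. The conclusion for $x$ is then forced because the first equality uses that $r_i(x)$ is itself defined as a windowed limit over $x$, and uniformity over window positions means we may evaluate this limit on the prefix $x_{[1,n]}$.
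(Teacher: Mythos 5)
Your proposal is correct and follows essentially the same route as the paper: both invoke Lemma~\ref{LEM:C-bal-ULF} and then use a dense orbit to transfer the frequency vector of a transitive point to an arbitrary point. The paper leaves the final step ("$S^{n_k}(w) \to x$ implies $\bm r(x) = \bm r(w)$") implicit, whereas you spell out the window-sequence argument that justifies it; the substance is identical.
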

\begin{proof}
    Since $X$ is $C$-balanced, every element of $X$ has uniformly well-defined letter frequencies by \Cref{LEM:C-bal-ULF}.
    Let $\bm r (x)$ denote the letter frequency vector of a given element $x \in X$.
    By transitivity, there exists an element $w$ with dense shift-orbit in $X$.
    Thus, for every $x \in X$, there is a sequence $(n_k)_k$ of positive integers such that $S^{n_k} (w) \rightarrow x$ as $k \rightarrow \infty$.
    In particular, $\bm r (x) = \bm r (w)$.
\end{proof}

\subsection{Rauzy fractals of \texorpdfstring{$C$-balanced}{C-balanced} sequences}\label{SS:Rauzy-construction}

Let $w$ be a $C$-balanced sequence over a finite alphabet $\mc A = \{1,\ldots,d\}$, with letter frequency vector $\mathbf v$, and let $\mathbb H$ be a \mbox{$(d-1)$-dimensional} hyperplane passing through the origin, such that every non-zero vector in $\mathbb H$ is linearly independent to $\mathbf v$. 
Let $\pi \colon \mathbb{R}^{d} \rightarrow \mathbb{R}^{d}$ denote the linear projection along $\mathbf v$ onto $\mathbb H$. 
For each $a \in \mc A$, let
\begin{align*}
    \mathcal{S}_a (w) = \{ \psi (w_{[1,n]}) : n \in \mathbb{N}_{0} \ \text{and} \ w_{n+1} = a \} .
\end{align*}
Setting $\mc R_{a}^{*}(w) = \pi \mc (\mathcal{S}_a (w))$ and $\mc R_{a} = \overline{\mc R_{a}^{*}}$, we define the Rauzy fractal $\mc R (w)$ of $w$ by
\begin{align*}
    \mathcal{R} (w) = \bigcup_{a \in \mathcal{A}} \mathcal{R}_{a} (w) .
\end{align*}
Since $w$ is $C$-balanced, $\mathcal{R} (w)$ is bounded, and since $\mathcal{R} (w)$ is a finite union of closed sets, it is compact. 
We use the same notation if $w$ a finite word, with the convention that $w_{[1,n]}=w$ if $n>|w|$, and we observe that for finite words $\mathcal{R} (w)$ is a finite set and thus bounded. 

The following describes how the action of the shift map on a $C$-balanced sequence corresponds to a translation in the Rauzy fractal.

\begin{prop}\label{PROP:Rauzy shifts}
Let $w \in \mc A^{\N}$ be a $C$-balanced sequence. Then, for all $a \in \mathcal{A}$ and $k \geqslant 0$, 
\begin{align*}
    \mathcal{R}^{*}_a (S^{k} (w)) \subseteq \mathcal{R}^{*}_a (w) - \pi (\psi(w_{[1,k]})) \quad \text{and} \quad
    \mathcal{R}_a (S^{k} (w)) \subseteq \mathcal{R}_a (w) - \pi (\psi(w_{[1,k]})) .
\end{align*}
\end{prop}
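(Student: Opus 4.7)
The plan is to unfold the definitions and reduce the statement to a straightforward computation about abelianisation vectors of prefixes, using the fact that $\pi$ is linear and that translation is a homeomorphism so commutes with closure.

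First, I would rewrite $\mc S_a(S^k(w))$ in terms of prefixes of $w$. For $n \in \N_0$, we have $(S^k(w))_{[1,n]} = w_{[k+1, k+n]}$ and $(S^k(w))_{n+1} = w_{k+n+1}$. The key observation is the additivity of abelianisation on concatenation, namely
\begin{align*}
    \psi(w_{[1,k+n]}) = \psi(w_{[1,k]}) + \psi(w_{[k+1,k+n]}),
\end{align*}
so $\psi(w_{[k+1,k+n]}) = \psi(w_{[1,k+n]}) - \psi(w_{[1,k]})$. Substituting $m = k+n$ (so that $m \geqslant k$), the condition $(S^k(w))_{n+1} = a$ becomes $w_{m+1} = a$, and hence
\begin{align*}
    \mc S_a(S^k(w)) = \{\psi(w_{[1,m]}) - \psi(w_{[1,k]}) : m \geqslant k,\; w_{m+1} = a\}.
\end{align*}
Relaxing the constraint $m \geqslant k$ to $m \geqslant 0$ can only enlarge the set on the right-hand side, which gives $\mc S_a(S^k(w)) \subseteq \mc S_a(w) - \psi(w_{[1,k]})$.

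Next, applying the linear projection $\pi$ to both sides and using that $\pi$ commutes with translation yields the first inclusion:
\begin{align*}
    \mc R_a^*(S^k(w)) = \pi(\mc S_a(S^k(w))) \subseteq \pi(\mc S_a(w)) - \pi(\psi(w_{[1,k]})) = \mc R_a^*(w) - \pi(\psi(w_{[1,k]})).
\end{align*}
For the second inclusion, taking closures preserves set-inclusion, and translation by a fixed vector is a homeomorphism, so it commutes with closure; thus
\begin{align*}
    \mc R_a(S^k(w)) = \overline{\mc R_a^*(S^k(w))} \subseteq \overline{\mc R_a^*(w) - \pi(\psi(w_{[1,k]}))} = \mc R_a(w) - \pi(\psi(w_{[1,k]})),
\end{align*}
which is what we wanted to show.

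There is essentially no obstacle here; the $C$-balanced hypothesis is not used for the inclusion itself but only implicitly to guarantee that the sets in question are bounded (and hence $\mc R_a(w)$ is compact). The entire argument is bookkeeping with the identity $\psi(uv) = \psi(u) + \psi(v)$ and the linearity of $\pi$.
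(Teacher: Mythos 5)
Your proof is correct and follows essentially the same route as the paper's: unfold the definition of $\mc S_a(S^k(w))$, use $\psi(w_{[1,k+m]}) = \psi(w_{[1,k]}) + \psi(w_{[k+1,k+m]})$ together with linearity of $\pi$, and take closures. Your remark that the inclusion (rather than equality) comes from relaxing $m \geqslant k$ to $m \geqslant 0$ makes explicit a point the paper leaves implicit, but the argument is the same.
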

\begin{proof}
If $x \in \mathcal{R}_{a}^{*} (S^{k} (w))$, then there exists an $m \geqslant 0$ with $x = \pi (\psi (S^k (w)_{[1,m]}))$ and $S^k (w)_{m+1} = a$ (that is, $w_{m+k+1} = a$). From this and by linearity of the projection map $\pi$, we have
\begin{align*}
        \pi (\psi(S^k (w)_{[1,m]})) = \pi (\psi(w_{[k+1,k+m]})) = \pi (\psi(w_{[1,k+m]})) - \pi (\psi(w_{[1,k]})).
\end{align*}
Hence, $x \in \mathcal{R}_{a}^{*}(w) - \pi(\psi(w_{[1,k]}))$. Thus, $\mathcal{R}_{a}^{*} (S^{k} (w)) \subseteq \mathcal{R}_{a}^{*} (w) - \pi (\psi(w_{[1,k]}))$, and taking closure gives $\mathcal{R}_{a} (S^{k} (w)) \subseteq \mathcal{R}_{a} (w) - \pi(\psi(w_{[1,k]}))$.
\end{proof}

\subsection{Analytic properties}\label{SS:analytic-props}

In this section we look at limiting behaviour, with respect to the Hausdorff metric, of Rauzy fractals of $C$-balanced sequences. 
The following characterisation of convergence in the Hausdorff metric will be most convenient for our purposes.

\begin{definition}
For a sequence $(A_n)_{n}$ of compact sets in a metric space $(X,\rho)$, the \emph{Kuratowski-Painlev\'{e} limit inferior} is defined by
\begin{align*}
\underset{n\to\infty}\Li A_n = \left\{x : \limsup_{n \to \infty} \rho (x,A_n) =0 \right\} ,
\end{align*}
where $\rho (x,A_n)$ is the distance between the point $x$ and the set $A_n$.
Analogously, the \emph{Kuratowski-Painlev\'{e} limit superior} is defined by
\begin{align*}
\underset{n\to\infty}\Ls A_n = \left\{x : \liminf_{n \to \infty} \rho (x,A_n) =0 \right\}.
\end{align*}
If the Kuratowski-Painlev\'{e} limit inferior and superior agree, then the common set is called the \emph{Kuratowski-Painlev\'{e} limit} of $A_n$ and is denoted by $\underset{n\to\infty}\Lim A_n$. Here, by $\rho (x,A)$ for $A \subseteq X$ we mean $\inf \{ \rho(x, y) : y \in A \}$.
\end{definition}

If $A_n$ converges to $A$ in the Hausdorff metric, then $A$ is the Kuratowski-Painlev\'{e} limit of of $(A_n)_n$. Conversely, if for all but a finite number of $n\in \N$, the set $A_n$ is compact, then Kuratowski-Painlev\'{e} convergence is equivalent to convergence in the Hausdorff metric \cite{beer}. Further, observe that 
\begin{align*}
\underset{n\to\infty}\Li A_n \subseteq \underset{n\to\infty}\Ls A_n.
\end{align*}
For more details on Kuratowski-Painlev\'{e} convergence and its implications, we refer the reader to \cite[Chapter 5]{beer},  specifically Corollary 5.1.11 and Theorem 5.2.10 of \cite{beer}. 

It is possible for a sequence $(w^{(n)})_{n \in \N}$ of $C$-balanced sequences to converge to a $C$-balanced sequence $w$, but for $\mc R (w^{(n)})$ not to converge to $\mc R (w)$ in the Hausdorff metric.
For example, consider $w^{(n)} = a^n b a^\infty$, which converges to $w = a^\infty$ as $n \to \infty$.
For all $n \in \N$, $\mc R(w^{(n)})$ is the same set of two points and does not converge to the singleton set $\mc R(w)$.
However, the following lower semi-continuity always holds.

\begin{lemma}
\label{LEM:lsc}
If $(w^{(n)})_{n \in \N}$ is a sequence of $C$-balanced sequences converging to a $C$-balanced sequence $w$, then
\begin{align*}
\mc R_a (w) \subseteq \underset{n\to\infty} \Li \mc R_a (w^{(n)}).
\end{align*}
\end{lemma}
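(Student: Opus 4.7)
The plan is to unfold the definitions and exploit the fact that each element of $\mc R_a^*(w)$ is determined by a finite prefix of $w$, together with the fact that convergence $w^{(n)} \to w$ in the product topology on $\mc A^{\N}$ means eventual agreement on arbitrarily long prefixes. Since $\mc R_a(w) = \overline{\mc R_a^*(w)}$ and the Kuratowski--Painlevé limit inferior is always closed (see \cite[Chapter~5]{beer}), it suffices to establish the inclusion $\mc R_a^*(w) \subseteq \Li_{n\to\infty} \mc R_a(w^{(n)})$ and then take closures.

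To that end, fix $y \in \mc R_a^*(w)$. By definition, there is some $m \in \N_0$ such that $w_{m+1} = a$ and $y = \pi(\psi(w_{[1,m]}))$. The crucial observation is that this identity depends only on the prefix $w_{[1,m+1]}$. Since $w^{(n)} \to w$ in $\mc A^{\N}$, there exists $n_0 \in \N$ such that $w^{(n)}_{[1,m+1]} = w_{[1,m+1]}$ for all $n \geq n_0$. For such $n$, we have $w^{(n)}_{m+1} = a$ and
\begin{align*}
    \pi(\psi(w^{(n)}_{[1,m]})) = \pi(\psi(w_{[1,m]})) = y,
\end{align*}
so $y \in \mc R_a^*(w^{(n)}) \subseteq \mc R_a(w^{(n)})$ for every $n \geq n_0$. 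In particular $\rho(y, \mc R_a(w^{(n)})) = 0$ for all $n \geq n_0$, so $\limsup_{n\to\infty} \rho(y,\mc R_a(w^{(n)})) = 0$ and hence $y \in \Li_{n\to\infty} \mc R_a(w^{(n)})$. Closing both sides of the resulting inclusion delivers $\mc R_a(w) \subseteq \Li_{n\to\infty} \mc R_a(w^{(n)})$.

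There is no substantive obstacle here; the argument is a one-line consequence of the definitions once one notices the cylinder-like structure of $\mc R_a^*(w)$. What is \emph{not} true, and cannot be proved by this method, is the reverse inclusion with $\Ls$ in place of $\Li$: the counterexample $w^{(n)} = a^n b a^\infty$ preceding the lemma statement shows that new points can persist in $\Ls_{n\to\infty} \mc R_a(w^{(n)})$ even though they are generated by letters (here the single $b$) that disappear in the limit. Thus one-sided semi-continuity is genuinely the strongest statement available at this level of generality, which is exactly what the lemma records.
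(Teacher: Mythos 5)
Your proof is correct and follows essentially the same route as the paper's: the key observation in both is that each point of $\mc R_a^*(w)$ is determined by a finite prefix, and convergence in $\mc A^{\N}$ forces eventual agreement on that prefix. The only cosmetic difference is that the paper handles a general $v \in \mc R_a(w)$ by an $\varepsilon$-approximation from $\mc R_a^*(w)$, whereas you prove the inclusion on $\mc R_a^*(w)$ first and then invoke the closedness of the Kuratowski--Painlev\'{e} limit inferior; both steps are valid and equivalent in substance.
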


\begin{proof}
Let $v \in \mc R_a (w)$ and let $\varepsilon > 0$. There exists an $m = m(\varepsilon) \geqslant 0$ with $|\pi(\psi(w_{[1,m]})) - v| < \varepsilon$ and $w_{m+1} = a$.
By assumption, there is an $n_0$ such that $w^{(n)}_{[1,m+1]} = w_{[1,m+1]}$ for all $n \geqslant n_0$, and therefore $\pi(\psi(w_{[1,m]})) \in \mc R_a(w^{(n)})$.
It follows that $\rho (v, \mc R_a(w^{(n)})) < \varepsilon$ for all $n \geqslant n_0$. 
As $\varepsilon > 0$ was arbitrary, we conclude that $\rho(v,\mc R_a(w^{(n)})) \rightarrow 0$ as $n \rightarrow \infty$, so $v$ lies in $\Li_{n\to\infty} \mc R_a (w^{(n)})$. 
Since $v \in \mc R_a (w)$ was chosen arbitrarily, the assertion follows.
\end{proof}

It follows from the above and \Cref{PROP:Rauzy shifts} that for every element in the orbit closure of a $C$-balanced sequence $w$, the associated Rauzy fractal is contained in a translate of the Rauzy fractal of $w$.

\begin{lemma}
\label{LEM:limit-subset}
Let $w$ be a $C$-balanced sequence.
For every $x$ in the shift-orbit closure of $w$, there is a vector $t \in \mc R (w)$ with $\mc R_a (x) \subseteq \mc R_a (w) - t$, for all $a \in \mc A$.
\end{lemma}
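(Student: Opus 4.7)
The plan is to transfer the inclusion from \Cref{PROP:Rauzy shifts} to the limit point $x$ by combining the lower semi-continuity of the Rauzy fractal construction (\Cref{LEM:lsc}) with the compactness of $\mc R(w)$. Concretely, since $x$ lies in the shift-orbit closure of $w$, there exists a sequence of non-negative integers $(k_n)_n$ such that $S^{k_n}(w) \rightarrow x$ as $n \rightarrow \infty$. Set $t_n = \pi(\psi(w_{[1,k_n]}))$ and observe that $t_n$ is the projection of $\psi(w_{[1,k_n]})$ where $w_{k_n+1}$ is some letter $b_n \in \mc A$, so $t_n \in \mc R^*_{b_n}(w) \subseteq \mc R(w)$ for every $n$.

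Next I would extract the candidate translation vector. Since $\mc R(w)$ is compact, the sequence $(t_n)_n$ admits a convergent subsequence; relabelling, I may assume $t_n \rightarrow t$ for some $t \in \mc R(w)$. Along this subsequence, the hypothesis $S^{k_n}(w) \rightarrow x$ is retained, so for each $a \in \mc A$ the lower semi-continuity statement \Cref{LEM:lsc} applied to the sequence $(S^{k_n}(w))_n$ yields $\mc R_a(x) \subseteq \Li_{n \to \infty} \mc R_a(S^{k_n}(w))$.

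Finally, I would verify the pointwise inclusion. Fix $a \in \mc A$ and $y \in \mc R_a(x)$. By the previous step, there exist $y_n \in \mc R_a(S^{k_n}(w))$ with $y_n \rightarrow y$. By \Cref{PROP:Rauzy shifts} each such $y_n$ satisfies $y_n + t_n \in \mc R_a(w)$, and since $\mc R_a(w)$ is closed and $y_n + t_n \rightarrow y + t$, we conclude $y + t \in \mc R_a(w)$, i.e., $y \in \mc R_a(w) - t$. As $y$ was arbitrary, $\mc R_a(x) \subseteq \mc R_a(w) - t$ for every $a \in \mc A$, with the same $t$ working for all letters.

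The main obstacle is really only conceptual: one must make sure to pass to a common subsequence so that the single translation $t$ serves every letter $a$ simultaneously, and one must check that $t$ genuinely lies in $\mc R(w)$ rather than merely in its affine span; both points are handled by the observation that every $t_n$ already belongs to the compact set $\mc R(w)$. The remaining ingredients -- the shift identity \Cref{PROP:Rauzy shifts}, the lower semi-continuity \Cref{LEM:lsc}, and closedness of $\mc R_a(w)$ -- combine directly once this choice is made.
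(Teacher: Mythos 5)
Your proof is correct and follows essentially the same route as the paper: extract a subsequence along which the translates $\pi(\psi(w_{[1,k_n]}))$ converge, then combine \Cref{LEM:lsc} with \Cref{PROP:Rauzy shifts} to pass the inclusion to the limit. The only cosmetic difference is that you establish $t \in \mc R(w)$ upfront by noting each $t_n \in \mc R^*_{b_n}(w)$ and using compactness, whereas the paper deduces it at the end from $0 \in \mc R(x)$; both are valid.
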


\begin{proof}
By assumption, there exists a sequence $(n_k)_{k}$ with
\begin{align*}
\lim_{k \to \infty} S^{n_k} (w) = x.
\end{align*}
Let $\mc T$ be the set of accumulation points of the sequence $\pi(\psi(w_{[1,n_k]}))$.
Note that the set $\mc T$ is non-empty since the sequence is bounded.
For $t \in \mc T$ there exists a subsequence $(m_k)_{k}$ such that $\pi(\psi(w_{[1,m_k]})) \to t$ and $S^{m_k}(w) \to x$ as $k \to \infty$. 
Hence, by \Cref{LEM:lsc} and \Cref{PROP:Rauzy shifts}, we obtain $\mc R_a(x) \subseteq \mc R_a(w) - t$. Since $0 \in \mc R (x)$, it follows that $t \in \mc R (w)$.
\end{proof}

In fact, since in the above proof the statements hold for all $t \in \mc T$, we have
\[
\mc R_a(x) \subseteq \bigcap_{t \in \mc T} \mc R_a(w) -t.
\]

\begin{corollary}\label{CO: dense-shift-Rauzy}
Let $X$ be a subshift and suppose that $w, x \in X$ are $C$-balanced and have dense shift-orbit in $X$.
Then, there exists a vector $t \in \mc R (w)$ such that $\mc R_a(x) = \mc R_a(w) - t$, for all $a \in \mc A$.
\end{corollary}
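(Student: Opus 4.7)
The plan is to apply \Cref{LEM:limit-subset} twice — once in each direction — and then use boundedness of the Rauzy fractal to force the two translation vectors to cancel.

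Since $x$ has dense shift-orbit in $X$ and $w \in X$, the sequence $w$ lies in the shift-orbit closure of $x$, and symmetrically $x$ lies in the shift-orbit closure of $w$. Applying \Cref{LEM:limit-subset} with base sequence $w$, we obtain a vector $t \in \mc R(w)$ such that
\[
\mc R_a(x) \subseteq \mc R_a(w) - t \quad \text{for all } a \in \mc A.
\]
Applying it again with the roles of $w$ and $x$ interchanged, we obtain a vector $s \in \mc R(x)$ such that $\mc R_a(w) \subseteq \mc R_a(x) - s$ for all $a \in \mc A$.

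Chaining the two inclusions for each $a$ gives $\mc R_a(x) \subseteq \mc R_a(x) - (s+t)$, that is, $\mc R_a(x) + (s+t) \subseteq \mc R_a(x)$. Taking the union over $a \in \mc A$, we get $\mc R(x) + (s+t) \subseteq \mc R(x)$. Iterating, $\mc R(x) + n(s+t) \subseteq \mc R(x)$ for every $n \in \N_0$. Since $0 \in \mc R(x)$ (taking $n=0$ in the definition of $\mc S_{a}(x)$ for $a = x_1$), this forces $n(s+t) \in \mc R(x)$ for all $n$. Boundedness of $\mc R(x)$, which follows from $x$ being $C$-balanced, then forces $s + t = 0$.

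Substituting $s = -t$ into the second inclusion yields $\mc R_a(w) - t \subseteq \mc R_a(x)$, and together with the first inclusion this gives $\mc R_a(x) = \mc R_a(w) - t$ for every $a \in \mc A$, as required. I do not anticipate any serious obstacle: \Cref{LEM:limit-subset} does the geometric work, and the only subtle point is the standard observation that a non-empty bounded set in $\mathbb{R}^d$ cannot be strictly translated into itself, which pins down the translation vector uniquely.
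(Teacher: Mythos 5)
Your proposal is correct and follows essentially the same route as the paper: apply \Cref{LEM:limit-subset} in both directions to get $\mc R_a(x) \subseteq \mc R_a(w) - t \subseteq \mc R_a(x) - s - t$, then use boundedness of the (non-empty, since $0 \in \mc R(x)$) Rauzy fractal to force $s = -t$ and turn the inclusions into equalities. The only difference is that you spell out the iteration argument showing a bounded non-empty set cannot be strictly translated into itself, which the paper leaves implicit.
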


\begin{proof}
By \Cref{LEM:limit-subset}, there exist vectors $s,t \in \mc R (w)$ with $\mc R_a(x) \subseteq \mc R_a(w) - t \subseteq \mc R_a(x) - s - t$, for all $a \in \mc A$.
Since $\mc R_a(x)$ is bounded, this is only possible if all the subset relations are in fact equalities and $s = - t$.
Thus, $\mc R_a(x) = \mc R_a(w) - t$ for all $a \in \mc A$.
\end{proof}

As a consequence of \Cref{LEM:lsc} and \Cref{LEM:limit-subset}, if a sequence $(w^{(n)})_n$ of $C$-balanced words converges to a sequence $w \in X$ with dense shift-orbit, $(\mc R(w^{(n)}))_n$ converges to $\mc R(w)$ in the Hausdorff metric.

\begin{prop}
\label{PROP:Hausdorff-convergence}
Let $X$ be a $C$-balanced subshift. If $(w^{(n)})_{n}$ is a sequence in $X$ that converges to a $w \in X$ with dense shift-orbit, then $\mc R_a(w^{(n)}) \rightarrow \mc R_a(w)$ in the Hausdorff metric as $n \to \infty$, for all $a \in \mc A$.
\end{prop}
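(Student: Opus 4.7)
The plan is to establish Kuratowski--Painlev\'{e} convergence (which, for compact sets, coincides with Hausdorff convergence by the remarks following the Kuratowski--Painlev\'{e} definition) by sandwiching $\mc R_a(w^{(n)})$ between $\mc R_a(w)$ from below, via \Cref{LEM:lsc}, and a suitable translate of $\mc R_a(w)$ from above, via \Cref{LEM:limit-subset}, and then showing that the translations vanish in the limit.

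Fix $a \in \mc A$. If $a$ never appears in $w$, then $\mc R_a(w) = \emptyset$ and, since the shift-orbit of $w$ is dense in $X$, the same holds for every $w^{(n)}$; the claim is then trivial. Otherwise, \Cref{LEM:lsc} immediately gives the lower bound $\mc R_a(w) \subseteq \Li_{n \to \infty} \mc R_a(w^{(n)})$. For the upper bound, note that each $w^{(n)}$ lies in the shift-orbit closure of $w$, so \Cref{LEM:limit-subset} supplies vectors $t_n \in \mc R(w)$ with $\mc R_a(w^{(n)}) \subseteq \mc R_a(w) - t_n$ for every $n$. Once we show $t_n \to 0$, the closedness of $\mc R_a(w)$ yields $\Ls_{n \to \infty} \mc R_a(w^{(n)}) \subseteq \mc R_a(w)$, and combining this with the lower bound gives the desired convergence.

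The crux is thus to show $t_n \to 0$. Since $\mc R(w)$ is compact, it suffices to rule out any non-zero accumulation point of $(t_n)_n$. Suppose $t_{n_k} \to t$ along a subsequence. For any $v \in \mc R_a(w)$, the lower semi-continuity gives $\rho(v,\mc R_a(w^{(n)})) \to 0$, and combining this with the inclusion $\mc R_a(w^{(n)}) \subseteq \mc R_a(w) - t_n$ yields $\rho(v,\mc R_a(w) - t_{n_k}) \to 0$. Closedness of $\mc R_a(w)$ then forces $v \in \mc R_a(w) - t$, so $\mc R_a(w) + t \subseteq \mc R_a(w)$. Iterating gives $\mc R_a(w) + kt \subseteq \mc R_a(w)$ for every $k \geqslant 1$, which is incompatible with $\mc R_a(w)$ being non-empty and bounded unless $t = 0$. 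I expect this translation-vanishing step to be the main obstacle; it is precisely here that the density of the shift-orbit of $w$ is indispensable, since it is what allows \Cref{LEM:limit-subset} to be applied to every $w^{(n)}$.
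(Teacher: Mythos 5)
Your proof is correct and follows essentially the same route as the paper's: the lower bound from \Cref{LEM:lsc}, the translated upper bound $\mc R_a(w^{(n)}) \subseteq \mc R_a(w) - t_n$ from \Cref{LEM:limit-subset}, a subsequence argument forcing $t_n \to 0$ because a non-empty bounded set cannot contain a non-trivial translate of itself, and the identification of Kuratowski--Painlev\'{e} with Hausdorff convergence for uniformly bounded compact sets. The only (harmless) additions are your explicit iteration argument for $t=0$ and the separate treatment of the empty-tile case.
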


\begin{proof}
By \Cref{LEM:limit-subset}, for every $w^{(n)}$ there exists a vector $t_n$ in $\mc R(w)$ such that $\mc R_a(w^{(n)}) \subseteq \mc R_a(w) - t_n$. Let $(n_k)_{k}$ be a subsequence such that $t_{n_k}$ converges to some $t \in \mc R(w)$ as $k \to \infty$. This implies that
\begin{align*}
\lim_{k \to \infty} \mc R_a(w) - t_{n_k} = \mc R_a(w) - t
\end{align*}
in the Hausdorff distance.
Using \Cref{LEM:lsc}, we obtain that
\begin{align*}
\mc R_a(w) \subseteq \underset{n \to \infty}\Li \mc R_a(w^{(n)}) 
\subseteq \underset{k \to \infty} \Li \mc R_a(w^{(n_k)})
 \subseteq  \underset{k \to \infty}  \Li \mc R_a(w) - t_{n_k} 
= \mc R_a(w) - t.
\end{align*}
It follows that $t = 0$ and that each of the subset relations is in fact an equality. 
Since every subsequence of $t_n$ converges to $0$, we obtain that $\lim_{n \to \infty} t_n = 0$, therefore
\begin{align*}
\mc R_a(w) \subseteq \underset{n \to \infty}\Li \mc R_a(w^{(n)}) 
\subseteq \underset{n \to \infty} \Ls \mc R_a(w^{(n)}) \subseteq \underset{n \to \infty} \Ls \mc R_a(w) - t_n = \mc R_a(w). 
\end{align*}
Hence,
\begin{align*}
\underset{n \to \infty} \Li \mc R_a(w^{(n)}) = \underset{n \to \infty} \Ls \mc R_a(w^{(n)}) = \mc R_a(w),
\end{align*}
implying that the Kuratowski-Painlev\'{e} limit exists and is given by $\mc R_a(w)$. Since all the sets $\mc R_a(w^{(n)})$ are contained in the bounded Minkowski difference $\mc R(w) - \mc R(w)$, the convergence in Hausdorff distance follows.
\end{proof}

\subsection{Rauzy fractals as generic factors}
\label{SUBSEC:generic-factors}

For the remainder of this section, we assume $X$ to be a topologically transitive $C$-balanced subshift.
Let us fix a sequence $w \in X$ with dense shift-orbit as a reference point. 
We can define a mapping $\phi \colon X' \to \mc R(w)$ on the set $X'$ of points with dense shift-orbit by $\phi(x) = t$, where $t$ is the unique vector such that $\mc R(x) = \mc R(w) - t$. 

\begin{prop}
\label{PROP:factor-continuity}
The map $\phi \colon X' \to \mc R(w)$ is continuous.
\end{prop}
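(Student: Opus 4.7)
The plan is to combine the Hausdorff-continuity proved in \Cref{PROP:Hausdorff-convergence} with a subsequential compactness argument driven by the boundedness of $\mc R(w)$. Fix $x \in X'$ and a sequence $(x^{(n)})_n$ in $X'$ with $x^{(n)} \to x$ in $X$; I aim to show that $\phi(x^{(n)}) \to \phi(x)$ in $\R^d$.

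Since $x$ has dense shift-orbit in the $C$-balanced subshift $X$, \Cref{PROP:Hausdorff-convergence} applies and yields $\mc R_a(x^{(n)}) \to \mc R_a(x)$ in the Hausdorff metric for every $a \in \mc A$. Writing $t_n = \phi(x^{(n)})$ and $t = \phi(x)$, the defining property of $\phi$ gives $\mc R_a(x^{(n)}) = \mc R_a(w) - t_n$ and $\mc R_a(x) = \mc R_a(w) - t$, so $\mc R_a(w) - t_n \to \mc R_a(w) - t$ in the Hausdorff metric for each $a \in \mc A$. Because $(t_n)_n$ lies in the compact set $\mc R(w)$, any subsequence has a further subsequence $(t_{n_k})_k$ converging to some $t' \in \mc R(w)$. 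Along this further subsequence the translates satisfy $\mc R_a(w) - t_{n_k} \to \mc R_a(w) - t'$, and uniqueness of Hausdorff limits forces $\mc R_a(w) - t' = \mc R_a(w) - t$ for every $a \in \mc A$, hence $\mc R(w) - t' = \mc R(w) - t$. Since $\mc R(w)$ is bounded (as $w$ is $C$-balanced) a bounded set can only coincide with one of its translates by the zero vector, so $t' = t$.

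Thus every subsequence of $(t_n)_n$ admits a further subsequence converging to the same limit $t = \phi(x)$, which implies $t_n \to t$ and proves continuity of $\phi$ at $x$. The only genuinely non-trivial step is the uniqueness of the translation vector in the final identification $t' = t$; this is the same boundedness-based rigidity that underlies \Cref{CO: dense-shift-Rauzy}, and I would simply record it here as an explicit one-line observation rather than reprove it.
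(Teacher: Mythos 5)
Your proof is correct and follows essentially the same route as the paper: both invoke \Cref{PROP:Hausdorff-convergence} to get Hausdorff convergence of $\mc R(x^{(n)}) = \mc R(w) - \phi(x^{(n)})$ to $\mc R(w) - \phi(x)$ and then read off convergence of the translation vectors. Your subsequential-compactness argument and the explicit boundedness-rigidity observation ($\mc R(w) = \mc R(w) - s$ forces $s=0$) simply make precise a step the paper's proof leaves implicit, which is a welcome addition rather than a deviation.
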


\begin{proof}
Let $x \in X'$ and $x^n \in X'$ for all $n \in \N$ such that $x^n \to x$. 
By the definition of $\phi$, we have that $\mc R(x) = \mc R(w) - \phi(x)$ and $\mc R(x^n) = \mc R(w) - \phi(x^n)$.  
Further, by \Cref{PROP:Hausdorff-convergence}, 
\begin{align*}
\mc R(w) - \phi(x) = \mc R(x) = \lim_{n \to \infty} \mc R(x^n) = \lim_{n \to \infty} \mc R(w) - \phi(x^n)
\end{align*}
in the Hausdorff distance.
This implies that $\lim_{n \to \infty} \phi(x^n) = \phi(x)$.
\end{proof}

This result is particularly useful if the set of transitive sequences $X'$ is invariant under the shift map. This is the case if some (equivalently every) sequence $x \in X'$ is \emph{recurrent}. That is, every subword of $x$ appears in $x$ infinitely often. We take this as a standing assumption for the remainder of this subsection.

\Cref{PROP:factor-continuity} has a convenient dynamical interpretation, as it helps to construct an explicit \emph{generic factor} of the subshift $(X,S)$. Following \cite{huang-ye}, we say that a transitive dynamical system $(Y,T)$ is a generic factor of $(X,S)$ if there is a continuous map $\varphi$ from the set of transitive points $X'$ of $X$ to the set of transitive points $Y'$ of $Y$ such that $\varphi \circ S = T \circ \varphi$ on $X'$. Generic factors are a convenient classification tool in cases when the maximal equicontinuous factor is trivial and the system is not uniquely ergodic, such that \textit{a priori} there is not a unique choice for the Kronecker factor. For details on the concept of a \emph{maximal equicontinuous generic factor} (MEGF) and some of its applications in the context of aperiodic order we refer the reader to \cite{keller}. 

In order to interpret $\phi$ as a factor map, we need to equip $\mc R(w)$ with an action that corresponds to the shift action on $X'$. It is readily verified from Corollary~\ref{CO: dense-shift-Rauzy} and Proposition~\ref{PROP:Rauzy shifts} that
\begin{align}
\label{EQ:pre-semi-conjugation}
\phi(Sx) = \phi(x) + \pi(\psi(x_1)),
\end{align}
for all $x \in X'$. For this translation to be independent of $x$, we wish to identify the vectors $\pi(\mathbf{e}_i)$ for all $1\leqslant i \leqslant d$. That is, we consider $({\mathbf v}_i)_{i = 2}^{d}$, with ${\mathbf v}_i = \pi(\mathbf{e}_i - \mathbf{e}_1)$, and the lattice spanned by these vectors
\begin{align}
\label{EQ:lattice}
\mc J = \left\{ \sum_{i = 2}^d z_i {\mathbf{v}_i} : z_i \in \Z \; \text{for} \; i \in \{1, 2, \ldots, d \} \right\},
\end{align}
where $({\mathbf e}_{i} )_{i = 1}^{d}$ denotes the standard basis of $\R^{d}$. With $\pi_{\mc J}$ the natural projection from $\mathbb{H}$ to the $d-1$ dimensional torus $\mathbb{H}/\mc J$, we obtain the following consequence of Proposition~\ref{PROP:factor-continuity}.

\begin{corollary}
\label{COR:generic-factor}
Let $X$ be the orbit closure of a recurrent, $C$-balanced sequence $w$.
    The map $\pi_{\mc J} \circ \phi$ is a generic factor map from $(X,S)$ to the equicontinuous dynamical system $(G, T)$, where $T$ is the torus rotation
    \[
    T \colon \mathbb{H}/\mc J \to \mathbb{H}/\mc J, \quad x \mapsto x + \pi(\mathbf{e}_1) \mod \mc J,
    \]
    and $G = \pi_{\mc J}(\mc R(w))$ is the subgroup of $\mathbb{H}/\mc J$ generated by $\pi(\mathbf{e}_1)$.
\end{corollary}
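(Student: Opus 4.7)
The plan is to verify the three defining requirements of a generic factor map in turn: continuity of $\pi_{\mc J} \circ \phi$ on $X'$, the intertwining $\pi_{\mc J} \circ \phi \circ S = T \circ \pi_{\mc J} \circ \phi$, and the containment of $\pi_{\mc J}(\phi(X'))$ in the set of $T$-transitive points of $G$. Continuity is immediate from \Cref{PROP:factor-continuity} composed with the continuous quotient map $\pi_{\mc J} \colon \mathbb{H} \to \mathbb{H}/\mc J$, so the only substantive work lies in identifying the image as a closed subgroup of the torus on which $T$ acts minimally.

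The key calculation is to show $\pi_{\mc J}(\mc R(w)) = G$. By the definition of the lattice in \eqref{EQ:lattice}, $\mathbf{v}_i = \pi(\mathbf{e}_i - \mathbf{e}_1) \in \mc J$ for every $2 \leq i \leq d$, so $\pi(\mathbf{e}_i) \equiv \pi(\mathbf{e}_1) \pmod{\mc J}$ for every $i \in \mc A$. Linearity of $\pi$ then yields
\[
\pi_{\mc J}(\pi(\psi(u))) = |u| \cdot \pi_{\mc J}(\pi(\mathbf{e}_1))
\]
for every $u \in \mc A^+$. Applied to the prefixes $w_{[1,n]}$, this shows that $\bigcup_a \pi_{\mc J}(\pi(\mc S_a(w)))$ equals the forward $T$-orbit of the identity in $\mathbb{H}/\mc J$. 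Taking closures in the compact torus, and using both the compactness of $\mc R(w)$ and the continuity of $\pi_{\mc J}$, I would conclude that $\pi_{\mc J}(\mc R(w))$ is precisely the closure of this forward orbit, namely the closed subgroup $G$ of $\mathbb{H}/\mc J$ generated by $\pi(\mathbf{e}_1)$.

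With this identification in hand, the intertwining relation follows from \eqref{EQ:pre-semi-conjugation}: for any $x \in X'$,
\[
\pi_{\mc J}(\phi(Sx)) = \pi_{\mc J}(\phi(x)) + \pi_{\mc J}(\pi(\mathbf{e}_{x_1})) = \pi_{\mc J}(\phi(x)) + \pi_{\mc J}(\pi(\mathbf{e}_1)) = T(\pi_{\mc J}(\phi(x))),
\]
invoking the congruence $\pi(\mathbf{e}_{x_1}) \equiv \pi(\mathbf{e}_1) \pmod{\mc J}$ once more. Since $G$ is by construction the closed subgroup of the compact abelian group $\mathbb{H}/\mc J$ generated by the rotation vector $\pi(\mathbf{e}_1)$, the rotation $T|_G$ is minimal, so every point of $G$ is $T$-transitive and $(G,T)$ is equicontinuous. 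The main obstacle is therefore the set-theoretic identification $\pi_{\mc J}(\mc R(w)) = G$; both inclusions are essentially formal once the congruences $\pi(\mathbf{e}_i) \equiv \pi(\mathbf{e}_1) \pmod{\mc J}$ are in place, with one direction using the density of the forward orbit of $0$ together with the closedness of $\pi_{\mc J}(\mc R(w))$, and the other following from the description of $\mc R(w)$ as the closure of $\pi(\bigcup_a \mc S_a(w))$.
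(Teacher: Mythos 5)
Your proposal is correct and follows essentially the same route as the paper: continuity via \Cref{PROP:factor-continuity}, the intertwining from \eqref{EQ:pre-semi-conjugation} together with the congruences $\pi(\mathbf{e}_i) \equiv \pi(\mathbf{e}_1) \pmod{\mc J}$, and the identification of $\pi_{\mc J}(\mc R(w))$ with the closure of the forward $T$-orbit of $0$. The only (harmless) variation is at the last step, where you deduce that every point of $G$ is $T$-transitive from minimality of the group rotation, whereas the paper checks directly that $\{\varphi(S^n x)\}_n$ is dense in $\pi_{\mc J}(\mc R(w))$ for each $x \in X'$ using the density of $\{\phi(S^n x)\}_n$ in $\mc R(w)$.
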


\begin{proof}
The map $\varphi = \pi_{\mc J} \circ \phi$ is clearly continuous as it is a composition of continuous maps; compare Proposition~\ref{PROP:factor-continuity}. 
    Projecting the relation in \eqref{EQ:pre-semi-conjugation} to the torus yields 
    \[
    \varphi (Sx) = \varphi(x) + \pi({\mathbf e}_1) \mod \mc J
    \; = T(\varphi(x)),
    \]
    for all $x \in X'$. 
    By construction, the set $\{ \psi(x_{[1,n]}) \}_{n \in \mathbb{N}}$ lies dense in $\mc R(x)$ and consequently the points
    \[
    \phi(S^n x) = \phi(x) + \pi(\psi(x_{[1,n]})), 
    \]
    with $n \in \mathbb{N}$, lie dense in $\mc R(w) = \mc R(x) + \phi(x)$.
    This holds in particular for $\mc R^\ast(w) = \{ \phi(S^n w) \}_{n \in \N_0}$, whose projection is given by
    \[
    \pi_{\mc J}(\mc R^\ast(w)) = \{ \varphi(S^n w) \}_{n \in \N_0} = \{ T^n(0) \}_{n \in \N_0},
    \]
    the (forward) orbit of $0$ under $T$. Due to the compactness of $\mc R(w)$, this yields
    \[
    G:= \overline{\{ T^n(0) \}_{n \in \N}} = \pi_{\mc J}(\mc R(w)).
    \]
    Similarly, we obtain that $\{ \varphi(S^n x) = T^n \varphi(x) \}_{n \in \mathbb{N}}$ is dense in $\pi_{\mc J}(\mc R(w))$ for every $x \in X'$. That is, $\varphi(x)$ has a dense orbit in $(G,T)$ for every $x \in X'$.
\end{proof}

\begin{remark}
\label{REM:MEGF}
    If $(X,S)$ is the subshift of a unimodular, irreducible Pisot substitution, we have that $X' = X$ and $(G,T)$ is in fact a factor. Under the assumption that the projection $\pi_{\mc J} \colon \mc R(w) \to \mathbb{H} / \mc J$ is almost surely $1$-$1$, the map $\varphi$ is even a measurable isomorphism between $(X,S)$ and $(G,T)$, where both are endowed with their unique ergodic measure. In this case, $(X,S)$ has pure point dynamical spectrum and $(G,T)$ is the maximal equicontinuous factor \cite{siegel}. For random substitutions, the spectrum is generally richer and eigenfunctions do not necessarily have a continuous representative. In fact, it was shown for the random Fibonacci substitution in a geometric setting that the MEF is trivial and a natural analogue of $\varphi$ provides the MEGF instead \cite{baake-spindeler-strungaru}.
\end{remark}

\subsection{Measures and the Lebesgue covering property}\label{SS:rauzy-measures}

For a $C$-balanced sequence $w$ and $n \in \mathbb{N}_{0}$, let 
\begin{align*}
\mu (w_{[1,n]}) = \sum_{i=1}^n \delta_{\pi \circ \psi(w_{[1,i]})}
\quad \text{and} \quad
\mu (w) = \lim_{n \rightarrow \infty} n^{-1} \mu (w_{[1,n]})
\end{align*}
whenever this latter weak limit exists. Let us emphasise that $\mu(w)$ is a measure associated with a sequence $w$ and not to be confused with the evaluation of a measure at a singleton set.  We call the measure $\mu(w)$, when it exists, a \emph{Rauzy measure}.  Note, in any case, that there always exists a limit point, under the topology of weak convergence, of the sequence $(n^{-1} \mu (w_{[1,n]}))_{n}$, by the Banach--Alaoglu theorem. However, this limit point may not be unique.
Note that the Rauzy fractal $\mc R(w)$ always contains the support $\operatorname{supp} \mu(w)$ of the Rauzy measure.

In this section, we show that covering the stable subspace $\mathbb H$ with Rauzy measures (according to the lattice $\mc J$) creates a multiple of Lebesgue measure. Recall the definition of $\mc J$ from \eqref{EQ:lattice}. For ${\mathbf j} \in \mc J$, we define a corresponding translation $t_{\mathbf j} \colon \mathbb H \to \mathbb H$ by $t_{\mathbf j}({\mathbf x}) = {\mathbf x} + {\mathbf j}$, which is a bijection on $\mathbb H$.

\begin{theorem}
\label{PROP:Lebesgue-covering}
If $w \in \mc A^{\N}$ is an infinite, $C$-balanced word with totally irrational frequency vector $\mathbf r$, and if the weak limit $\mu(w)$ exists, then 
\begin{align*}
\mu_{\mc J}(w) = \sum_{{\mathbf j} \in \mc J} \mu(w) \circ t_{\mathbf j}^{-1} = D \Leb,
\end{align*}
where $D$ is the density of points in $\mc J$ and $\Leb$ denotes the Lebesgue measure on $\mathbb H$.
\end{theorem}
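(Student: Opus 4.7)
The plan is to push the measure $\mu(w)$ down to the torus $\mathbb{H}/\mc J$ and use equidistribution, then lift back to $\mathbb H$.

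First, I would note that $\pi_{\mc J}(\pi(\mathbf e_j)) = \pi_{\mc J}(\pi(\mathbf e_1))$ for every $j \in \mc A$, since $\pi(\mathbf e_j - \mathbf e_1) \in \mc J$ by the definition of $\mc J$ in \eqref{EQ:lattice}. Writing $\psi(w_{[1,i]}) = \sum_{a \in \mc A} |w_{[1,i]}|_a \mathbf e_a$ and using that the total number of letters equals $i$, it follows that
\begin{align*}
\pi_{\mc J}\bigl(\pi(\psi(w_{[1,i]}))\bigr) = i\, \pi_{\mc J}(\pi(\mathbf e_1)) \mod \mc J.
\end{align*}
Hence the pushforward of the empirical averages $n^{-1}\mu(w_{[1,n]})$ under $\pi_{\mc J}$ is precisely the Cesàro average of Dirac masses along the forward orbit of $0$ under the torus rotation $T \colon x \mapsto x + \pi(\mathbf e_1) \bmod \mc J$.

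Next, I would invoke the total irrationality of $\mathbf r$ to conclude that $\pi_{\mc J}(\pi(\mathbf e_1))$ generates a dense subgroup of $\mathbb H/\mc J$, so that $T$ is an irrational rotation on a $(d-1)$-dimensional torus and is therefore uniquely ergodic with Haar measure as the unique invariant probability measure. By Weyl's equidistribution theorem, the Cesàro averages above converge weakly to Haar measure on $\mathbb H/\mc J$. Continuity of pushforward under weak convergence, combined with the assumed existence of $\mu(w)$, then yields $(\pi_{\mc J})_{\ast}\,\mu(w) = \mathrm{Haar}_{\mathbb H/\mc J}$.

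Finally, I would transfer this back to a statement on $\mathbb H$. Since $\mu(w)$ is supported on the compact set $\mc R(w)$, the sum $\mu_{\mc J}(w) = \sum_{\mathbf j \in \mc J} \mu(w)\circ t_{\mathbf j}^{-1}$ is locally finite and $\mc J$-invariant. For any compactly supported continuous $f \colon \mathbb H \to \R$, a direct computation using $\mathbb H = \bigsqcup_{\mathbf j \in \mc J} (F + \mathbf j)$ for a fundamental domain $F$ gives
\begin{align*}
\int_{\mathbb H} f \, \mathrm d\mu_{\mc J}(w) = \int_{\mathbb H/\mc J} \widetilde f \, \mathrm d (\pi_{\mc J})_{\ast}\mu(w),
\qquad \text{where } \widetilde f(x + \mc J) = \sum_{\mathbf j \in \mc J} f(x + \mathbf j).
\end{align*}
By the previous step this equals $\int_{\mathbb H/\mc J} \widetilde f \, \mathrm d\,\mathrm{Haar}$, which by definition of Haar measure on the torus equals $\mathrm{vol}(F)^{-1}\int_{\mathbb H} f \, \mathrm d\Leb = D \int_{\mathbb H} f \, \mathrm d\Leb$. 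Since $f$ was arbitrary, the claim $\mu_{\mc J}(w) = D\,\Leb$ follows.

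The main obstacle will be the second step: extracting from the (hypothesised) total irrationality of $\mathbf r$ the required dense-orbit condition for the translation vector $\pi(\mathbf e_1)$ on the torus $\mathbb H/\mc J$. This requires carefully tracing how the kernel of $\pi$ (spanned by $\mathbf r$) interacts with the rational span of the lattice generators $\pi(\mathbf e_j - \mathbf e_1)$, so that the absence of non-trivial integer relations among the entries of $\mathbf r$ rules out any non-trivial character of $\mathbb H/\mc J$ annihilating $\pi(\mathbf e_1)$.
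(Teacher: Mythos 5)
Your proof is correct, but it takes a genuinely different route from the paper's. The paper works directly in $\mathbb H$: it uses the decomposition $V_{[0,n]} = \bigsqcup_{\mathbf v \in V_0} \mc S(w)+\mathbf v$ to bound $\mu_{\mc J}(w)(B)$ by a count of lattice points of $\Z^d$ in the tube $\pi^{-1}(B)\cap E_{[0,n]}$, invokes a uniform-distribution (cut-and-project density) result to replace the count by the tube's volume, obtains matching upper and lower bounds via open and closed balls together with the $\pi$--$\lambda$ theorem, and finally fixes the constant by an explicit determinant computation on a fundamental domain. You instead quotient by $\mc J$ first: the observation $\pi_{\mc J}(\pi(\psi(w_{[1,i]}))) = i\,\pi(\mathbf e_1) \bmod \mc J$ turns the empirical measures into Ces\`aro averages along an orbit of the torus rotation $T$, unique ergodicity of $T$ (which, as you correctly anticipate, follows from total irrationality of $\mathbf r$: any $k \in \mc J^{*}$ with $\langle k,\pi(\mathbf e_1)\rangle \in \Z$ extends to $K = k\circ\pi$ with integer values on all $\mathbf e_i$ and $K(\mathbf r)=0$, forcing $K=0$) gives $(\pi_{\mc J})_{*}\mu(w)=\mathrm{Haar}$, and the periodization identity unfolds this to $\mu_{\mc J}(w)=D\Leb$. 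Your approach is more conceptual and meshes with the generic-factor picture of Corollary~\ref{COR:generic-factor} and the remark after Proposition~\ref{PROP:rauzy-measure-dynamical}, where the paper itself observes that the pushforward to $\mathbb H/\mc J$ is Haar measure; it also immediately yields the paper's closing remark that the conclusion holds for every weak accumulation point of $(n^{-1}\mu(w_{[1,n]}))_n$, since the torus averages converge regardless. What the paper's more hands-on argument buys is the explicit intermediate formula $\mu_{\mc J}(w) = \lVert\mathbf r\rVert \Leb\circ\pi_{\mathbf r}$ and a self-contained normalisation computation. Do make sure to spell out the duality argument for unique ergodicity in the final write-up, since that is where the hypothesis on $\mathbf r$ is actually consumed.
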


As an intermediate step, instead of $\mathbb H$ we consider the plane orthogonal to ${\mathbf v_0} = (1,\ldots,1)$, that is, $E_0 = \{ {\mathbf v} \in \R^d : {\mathbf v} \cdot {\mathbf v_0} = 0 \}$. Observe that the intersection $V_0 = E_0 \cap \Z^d$ is a lattice. 

\begin{lemma}
$V_0$ is the integer span of the vectors $({\mathbf w_i})_{i=2}^d$, with ${\mathbf w_i} = ({\mathbf e_i} - {\mathbf e_1})$.
\end{lemma}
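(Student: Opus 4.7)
The plan is to prove the two inclusions separately. For the inclusion $\operatorname{span}_{\Z}(\mathbf{w}_2,\ldots,\mathbf{w}_d) \subseteq V_0$, I would simply observe that each $\mathbf{w}_i = \mathbf{e}_i - \mathbf{e}_1$ has integer entries and satisfies $\mathbf{w}_i \cdot \mathbf{v}_0 = 1 - 1 = 0$, so every integer linear combination of the $\mathbf{w}_i$ lies in $E_0 \cap \Z^d = V_0$.

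For the reverse inclusion, I would take an arbitrary $\mathbf{v} = (v_1,\ldots,v_d) \in V_0$. By definition, $v_i \in \Z$ for each $i$, and $\mathbf{v} \cdot \mathbf{v}_0 = \sum_{i=1}^d v_i = 0$, so $v_1 = -\sum_{i=2}^d v_i$. Substituting this into the standard expansion of $\mathbf{v}$ in the basis $(\mathbf{e}_i)_{i=1}^d$ gives
\begin{align*}
\mathbf{v} = v_1 \mathbf{e}_1 + \sum_{i=2}^d v_i \mathbf{e}_i = \sum_{i=2}^d v_i (\mathbf{e}_i - \mathbf{e}_1) = \sum_{i=2}^d v_i \mathbf{w}_i,
\end{align*}
which expresses $\mathbf{v}$ as an integer linear combination of $(\mathbf{w}_i)_{i=2}^d$, as required.

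There is no genuine obstacle here: the statement is a standard fact of integer linear algebra, amounting to the observation that $(\mathbf{e}_i - \mathbf{e}_1)_{i=2}^d$ is a $\Z$-basis of the sublattice of $\Z^d$ cut out by the single linear equation $\sum_i v_i = 0$. If desired one could note additionally that the vectors are linearly independent (being $d-1$ vectors in the $(d-1)$-dimensional space $E_0$), so they form a lattice basis rather than merely a generating set, but this is not required for the statement.
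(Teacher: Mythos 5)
Your proof is correct and follows essentially the same route as the paper's: one inclusion by noting each $\mathbf{w}_i$ lies in $V_0$, and the reverse by solving $v_1 = -\sum_{i=2}^d v_i$ and rewriting $\mathbf{v}$ as $\sum_{i=2}^d v_i \mathbf{w}_i$. The additional remark about linear independence is a harmless bonus not needed for the statement.
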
 

\begin{proof}
Notice that each of the vectors ${\mathbf w_i}$ lies in $V_0$ and so does each of their linear combinations with integer coefficients. Conversely, if ${\mathbf u} = (u_1,\ldots, u_d) \in V_0$, then $u_1 = - \sum_{i =2}^d u_i$ and so ${\mathbf u} = \sum_{i = 2}^d u_i {\mathbf w_i}$.
\end{proof}

Analogously to the definition of $E_0$ and $V_0$, for an integer $k \geqslant 0$, we let $E_k = \{ {\mathbf v} \in \R^d : {\mathbf v} \cdot {\mathbf v_0} = k \}$, $E_{[0,k]} = \{ {\mathbf v} \in \R^d : {\mathbf v} \cdot {\mathbf v_0} \in [0,k] \}$, $V_k = E_k \cap \Z^d$, and $V_{[0,k]} = E_{[0,k]} \cap \Z^d$.

Since the scalar product of a vector in $\Z^d$ with $\mathbf v_0$ is always an integer, each element of $\Z^d$ is contained in precisely one of the affine spaces $E_k$. Hence,
\begin{align}
\label{EQ:V+decomposition}
V_{[0,n]} = \bigsqcup_{k = 0}^n V_k,
\end{align}
a disjoint union of the sets $V_k$.
This observation will be useful for the following decomposition result, and for which we recall that $\mc S(w)= \{\psi(w_{[1,k]}) : k = 0, 1, 2, \ldots\}$.

\begin{lemma}
\label{LEM:V+covering}
If $w \in \mc A^n$, then
\begin{align*}
V_{[0,n]} = \bigsqcup_{{\mathbf v} \in V_0} \mc S(w) + {\mathbf v}.
\end{align*}
\end{lemma}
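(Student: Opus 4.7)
The plan is to exploit the observation that $\mc S(w)$ contains exactly one element in each slice $V_k$ for $0 \leqslant k \leqslant n$: writing $\mathbf{s}_k := \psi(w_{[1,k]})$, one has $\mathbf{s}_k \cdot \mathbf{v}_0 = |w_{[1,k]}| = k$, so $\mathbf{s}_k \in V_k$, and the convention $w_{[1,k]} = w$ for $k > n$ merely collapses all indices $k \geqslant n$ to the single element $\mathbf{s}_n = \psi(w)$. Thus $\mc S(w) = \{\mathbf{s}_0, \mathbf{s}_1, \ldots, \mathbf{s}_n\}$. Once this slicing is identified, both the equality and the disjointness follow from a short dot-product bookkeeping together with the decomposition in \eqref{EQ:V+decomposition}.

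First I would establish the inclusion $\bigcup_{\mathbf{v} \in V_0} (\mc S(w) + \mathbf{v}) \subseteq V_{[0,n]}$: for any $\mathbf{v} \in V_0$ and $\mathbf{s}_k \in \mc S(w)$, the sum $\mathbf{s}_k + \mathbf{v}$ is an integer vector satisfying $(\mathbf{s}_k + \mathbf{v}) \cdot \mathbf{v}_0 = k \in [0,n]$, so it lies in $V_k \subseteq V_{[0,n]}$.

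Next, I would verify the disjointness of the union. Suppose $\mathbf{s}_k + \mathbf{v} = \mathbf{s}_\ell + \mathbf{v}'$ with $\mathbf{v}, \mathbf{v}' \in V_0$. Taking the dot product with $\mathbf{v}_0$ on both sides gives $k = \ell$, which forces $\mathbf{s}_k = \mathbf{s}_\ell$ and hence $\mathbf{v} = \mathbf{v}'$. This is the step where it is essential that the elements $\mathbf{s}_k$ live in distinct slices $V_k$.

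Finally, for the reverse inclusion $V_{[0,n]} \subseteq \bigcup_{\mathbf{v} \in V_0} (\mc S(w) + \mathbf{v})$, I would take any $\mathbf{u} \in V_{[0,n]}$; by \eqref{EQ:V+decomposition}, $\mathbf{u} \in V_k$ for a unique $k \in \{0, 1, \ldots, n\}$. Since $\mathbf{s}_k$ also lies in $V_k$, the difference $\mathbf{u} - \mathbf{s}_k$ is an integer vector with $(\mathbf{u} - \mathbf{s}_k) \cdot \mathbf{v}_0 = 0$, hence lies in $V_0$, and so $\mathbf{u} = \mathbf{s}_k + (\mathbf{u} - \mathbf{s}_k) \in \mc S(w) + V_0$. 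The whole argument is a bookkeeping exercise, so I do not anticipate a substantive obstacle; the only subtlety worth flagging is the convention $w_{[1,k]} = w$ for $k > n$, which does not enlarge $\mc S(w)$ beyond $\{\mathbf{s}_0, \ldots, \mathbf{s}_n\}$ and therefore does not interfere with the one-point-per-slice property on which the proof rests.
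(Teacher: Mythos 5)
Your proof is correct and follows essentially the same route as the paper's: both hinge on the observation that $\mc S(w)$ meets each slice $E_k$, $0 \leqslant k \leqslant n$, in the single point $\mathbf{s}_k = \psi(w_{[1,k]})$, so that $V_k = V_0 + \mathbf{s}_k$ and the decomposition \eqref{EQ:V+decomposition} reassembles into the stated disjoint union. The only difference is presentational: you spell out the two inclusions and the disjointness element by element, whereas the paper compresses this into a one-line chain of equalities.
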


\begin{proof}
Note that $\psi(w_{[1,n]}) \cdot {\mathbf v_0} = n$ for all $n \in \mathbb{N}_{0}$. 
Hence, the intersection of $\mc S(w)$ with each of the affine spaces $E_k$ is a singleton ${\mathbf s_k} \in V_k$. 
Since $V_k = V_0 + {\mathbf s_k}$, we obtain from \eqref{EQ:V+decomposition} that
\[
V_{[0,n]} 
= \bigsqcup_{k = 0}^n V_0 + {\mathbf s_k}
= \bigsqcup_{k = 0}^n \bigsqcup_{{\mathbf v} \in V_0} {\mathbf s_k} + {\mathbf v} = \bigsqcup_{{\mathbf v} \in V_0} \mc S(w) + {\mathbf v}.
\qedhere
\]
\end{proof}

We now give the proof of \Cref{PROP:Lebesgue-covering}.

\begin{proof}[Proof of \Cref{PROP:Lebesgue-covering}]
It suffices to show that $\mu_{\mc J}(w)(B) = D \Leb(B)$ for every open ball $B$ in $\mathbb H$ (with respect to the maximum-norm).
This is because the collection of such balls forms a $\pi$-system that generates the Borel $\sigma$-algebra and equality of the measures then follows by the $\pi$--$\lambda$ theorem, see for example \cite[Theorem~3.3]{billingsley}.
By construction, the restriction of $\pi$ to $V_0$ is a bijection onto its image $\mc J$.
Hence, for each ${\mathbf j} \in \mc J$ there is precisely one ${\mathbf v} = {\mathbf v_j} \in V_0$ with $\pi({\mathbf v}) = {\mathbf j}$. Given an arbitrary bounded and continuous function $f$ on $\mathbb H$ , we obtain
    \begin{align*}
        \mu(w) \circ t_{\mathbf j}^{-1} (f) 
            = \mu(w)(f \circ t_{\mathbf j})
            &= \lim_{n \to \infty} n^{-1} \mu(w_{[1,n]}) (f \circ t_{\mathbf j})\\
            &= \lim_{n \to \infty} n^{-1} \sum_{{\mathbf y} \in \mc S(w_{[1,n]})} f ({\mathbf j} + \pi({\mathbf y}))
            = \lim_{n \to \infty} n^{-1} \sum_{{\mathbf z} \in \mc S(w_{[1,n]}) + {\mathbf v_j}} f(\pi({\mathbf z})),
    \end{align*}
where $S(w_{[1,n]}) = \{ \psi(w_{[1,i]}) : i \in \{ 0, 1, 2, \ldots, n \} \}$, for $n \in \mathbb{N}_{0}$. This chain of equalities implies, by the Portmanteau theorem, in the sense of weak convergence, that
    \begin{align*}
        \mu(w) \circ t_{\mathbf j}^{-1} 
        = \lim_{n \to \infty} n^{-1} \sum_{{\mathbf z} \in \mc S(w_{[1,n]}) + {\mathbf v_j}} \delta_{\pi({\mathbf z})}.
    \end{align*}
Due to the $C$-balancedness of $w$, each $\mu(w) \circ t_{\mathbf j}^{-1}$ has compact support and hence, given an arbitrary open ball $B$, only finitely many of the expressions $\mu(w) \circ t_{\mathbf j}^{-1}(B)$ are non-zero. This together with \Cref{LEM:V+covering} and the characterisation of weak convergence in terms of open sets, implies
    \begin{align*}
        \mu_{\mc J}(w)(B) 
        = \sum_{{\mathbf j} \in \mc J} \mu(w) \circ t_{\mathbf j}^{-1}(B)
        &\leqslant \lim_{n\to \infty} n^{-1} \sum_{{\mathbf j} \in \mc J} \sum_{{\mathbf z} \in \mc S(w_{[1,n]}) + {\mathbf v_j}} \delta_{\pi({\mathbf z})}(B).\\ 
        & = \lim_{n \to \infty} n^{-1} \sum_{{\mathbf x} \in V_{[0,n]}} \delta_{\pi({\mathbf x})}(B)
        = \lim_{n \to \infty} n^{-1} \# ( \pi^{-1}(B) \cap \Z^d \cap E_{[0,n]} ).
    \end{align*}
By uniform distribution results (see for example \cite[Proposition~4.2]{Hof98} or \cite[Proposition~2.1]{Schl98}), the intersection of $\Z^d$ with $\pi^{-1}(B)$ has well-defined frequency $1$ (along appropriate averaging sequences). Hence, the limit does not change if the cardinality in the last expression is replaced by the volume of the corresponding tube $T_n$, given by $T_n = \pi^{-1}(B) \cap E_{[0,n]}$. 
Since $\mathbb H$ is not necessarily perpendicular to $\mathbf r$, it is convenient to project $B$ to ${\mathbf r}^{\perp}$. 
More precisely, we let $\pi_{\mathbf r}$ be the projection along $\mathbf r$ onto ${\mathbf r}^{\perp}$ and observe
\begin{align*}
\pi^{-1}(B) = \pi^{-1}(\pi_{\mathbf r}(B)) = \pi_{\mathbf r}(B) + \R \mathbf r.
\end{align*}
For all ${\mathbf v} \in \R^d$ the intersection $({\mathbf v} + \R \mathbf r) \cap E_{[0,n]}$ has the same length $r_n$. Indeed, let $\mathbf{w}_0$ and $\mathbf{w}_n = c_n \mathbf{r} + \mathbf{w}_0$ be the intersection points of $\mathbf v + \R \mathbf r$ with $E_0$ and $E_n$, respectively. Then, $n = \mathbf{w}_n \cdot \mathbf{v}_0 = c_n \mathbf r \cdot \mathbf{v}_0 = c_n$, and hence $r_n = \|\mathbf{w}_n - \mathbf{w}_0\| = n \| \mathbf r\|$,
which is independent of $\mathbf v$. The volume of $T_n$ is therefore given by
\begin{align*}
r_n \Leb(\pi_{\mathbf r} (B)) = n \| \mathbf r\| \Leb(\pi_{\mathbf r} (B)),
\end{align*}
yielding
\begin{align*}
\mu_{\mc J}(w)(B) \leqslant \| \mathbf r\| \Leb(\pi_{\mathbf r} (B)).
\end{align*}
A parallel argument (using the characterisation of weak convergence in terms of closed sets) shows that
\begin{align*}
\mu_{\mc J}(w)(\overline{B}) \geqslant \| \mathbf r\| \Leb(\pi_{\mathbf r} (\overline{B}))
\end{align*}
for every closed ball $\overline{B}$ in $\mathbb H$. Using the (inner) regularity of Lebesgue measure, this implies that in fact $\mu_{\mc J}(w)(B) = \| \mathbf r\| \Leb(\pi_{\mathbf r} (B))$ for every open ball $B$.
Hence, we obtain
\begin{align}
\label{EQ:mu-J-R-expression}
\mu_{\mc J}(w) = \| \mathbf r\| \Leb \circ \, \pi_{\mathbf r}.
\end{align}
Since $\pi_{\mathbf r}$ restricts to a linear isomorphism from $\mathbb H$ to ${\mathbf r}^{\perp}$, there is a constant $c$ such that $\mu_{\mc J}(w) = c \Leb$.

We can therefore determine the normalisation by considering the fundamental domain $J$, spanned by $(\mathbf{w}_i)_{i=2}^d$. Its image $J' = \pi_{\mathbf r}(J)$ is then spanned by the vectors $(\mathbf{u}_i)_{i = 2}^d$, with $\mathbf{u}_i = \pi_{\mathbf r} (\mathbf{e}_i - \mathbf{e}_1)$. Since $\mathbf r$ is perpendicular to each of the $\mathbf{u}_i$ the Lebesgue measure of $J'$ can be expressed as
\begin{align*}
\frac{1}{\| \mathbf r\|} |\det(\mathbf r, \mathbf{u}_2,\ldots, \mathbf{u}_d)| 
= \frac{1}{\| \mathbf r\|} |\det(\mathbf r, \mathbf{e}_2 - \mathbf{e}_1, \ldots, \mathbf{e}_d - \mathbf{e}_1)| = \frac{1}{\| \mathbf r\|} \sum_{j =1}^d r_j = \frac{1}{\|\mathbf r\|},
\end{align*}
where the first step follows from the fact that $\mathbf{u}_i$ and $\mathbf{e}_i - \mathbf{e}_1$ only differ by a multiple of $\mathbf r$ and the second step follows by a straightforward calculation, for example by expanding the determinant along the first column.
It follows from \eqref{EQ:mu-J-R-expression} that $\mu_{\mc J}(w)(J) = \| \mathbf r\| \Leb(J') = 1$, and therefore $c = (\Leb(J))^{-1}$, which is precisely the density of points in $\mc J$.
\end{proof}

The statement in Theorem~\ref{PROP:Lebesgue-covering} holds for every weak accumulation point of the sequence $(\mu(w_{[1,n]}))_n$, even if the limit does not exist. This can be verified by restricting to the corresponding subsequence in the proof provided above.

\subsection{Rauzy measures as pushforward measures}
\label{SS:balanced-pushforward-measure}

Recall that we assume $X$ to be a topologically transitive, $C$-balanced subshift, with $X'$ denoting the set of transitive points.
Let $\varrho$ be a fully supported ergodic measure on $(X,S)$. Then, $X'$ is a set of full measure for $\varrho$, and $\varrho$-almost every $x \in X'$ is generic. 
For such points, the Rauzy measure is (up to translation) given by the pushforward of $\varrho$ under the map $\phi$, defined in Section~\ref{SUBSEC:generic-factors}.

\begin{prop}
\label{PROP:rauzy-measure-dynamical}
    For every $\varrho$-generic point $x$, we have
    $
    \mu(x) \circ t_{\phi(x)}^{-1} = \varrho \circ \phi^{-1},
    $
    where $t_{\phi(x)} \colon x \mapsto x + \phi(x)$.
\end{prop}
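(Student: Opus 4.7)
The plan is to recognise, via the cocycle identity \eqref{EQ:pre-semi-conjugation}, that the pre-limit empirical measures defining $\mu(x) \circ t_{\phi(x)}^{-1}$ are exactly the pushforwards under $\phi$ of the standard orbit empirical measures $\nu_n := n^{-1}\sum_{i=1}^{n} \delta_{S^i x}$ on $X$, and then to transfer the weak convergence $\nu_n \to \varrho$ (the content of $\varrho$-genericity) through $\phi$.

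Iterating \eqref{EQ:pre-semi-conjugation} yields $\phi(S^i x) = \phi(x) + \pi(\psi(x_{[1,i]}))$ for all $i \geqslant 0$, and substituting into the definition of $\mu(x_{[1,n]})$ gives the identity of probability measures
\begin{align*}
\frac{1}{n}\,\mu(x_{[1,n]}) \circ t_{\phi(x)}^{-1}
\;=\; \frac{1}{n}\sum_{i=1}^{n}\delta_{\phi(S^i x)}
\;=\; \phi_\ast \nu_n,
\end{align*}
valid for every $x \in X'$ and $n \in \N$. Passing to the weak limit, the existence of $\mu(x)$ together with the claimed equality $\mu(x) \circ t_{\phi(x)}^{-1} = \varrho \circ \phi^{-1}$ both follow once we establish the weak convergence $\phi_\ast \nu_n \to \varrho \circ \phi^{-1}$ on $\mathbb{H}$, since $t_{\phi(x)}$ is a homeomorphism.

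The main obstacle is that $\phi$ is continuous only on $X'$ and generally does not extend continuously to $X$, so the continuous mapping theorem does not apply off the shelf. I would address this via the extended Portmanteau theorem. Two preliminary observations set the stage: because $\varrho$ is fully supported and ergodic, $\varrho$-almost every point is generic and recurrent, and hence---under the standing assumption of \Cref{SUBSEC:generic-factors}---has dense orbit and lies in $X'$, so $\varrho(X') = 1$; moreover, each $\nu_n$ is supported in $X'$ by the $S$-invariance of $X'$. Given $f \in C(\mc R(w))$, an inner-regularity argument on $(X, \varrho)$ combined with Tietze's extension theorem (applied to the uniformly continuous restrictions $\phi|_{K}$ on compact subsets $K \subseteq X'$ exhausting $X'$ in $\varrho$-measure) produces a bounded Borel extension of $f \circ \phi$ to $X$ whose discontinuity set is $\varrho$-null. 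The extended Portmanteau theorem then delivers $\int (f \circ \phi)\,d\nu_n \to \int (f \circ \phi)\,d\varrho$, which is precisely $\phi_\ast \nu_n \to \varrho \circ \phi^{-1}$ weakly; combined with the displayed identity above, this completes the proof.
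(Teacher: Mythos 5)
Your reduction of the statement to the weak convergence $\phi_\ast\nu_n\to\varrho\circ\phi^{-1}$ is correct and is exactly the paper's starting point: the identity $n^{-1}\mu(x_{[1,n]})\circ t_{\phi(x)}^{-1}=\phi_\ast\nu_n$ follows from \eqref{EQ:pre-semi-conjugation}, and you correctly isolate the only real issue, namely pushing the genericity of $x$ through a map $\phi$ that is continuous only on $X'$. The gap is in how you resolve that issue. A Tietze extension $g$ of $f\circ\phi|_K$ from a compact $K\subseteq X'$ agrees with $f\circ\phi$ only on $K$, not on $X'$, so it is not an extension of $f\circ\phi$; to compare $\int f\circ\phi\,\mathrm{d}\nu_n$ with $\int g\,\mathrm{d}\nu_n$ you must control $\nu_n(X\setminus K)$, and genericity does not give this: $X\setminus K$ is open, so the Portmanteau theorem only yields $\liminf_n\nu_n(X\setminus K)\geqslant\varrho(X\setminus K)$, which is the wrong direction. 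Nor can one repair this by choosing $K$ to be a $\varrho$-continuity set, because when $X\setminus X'$ is dense every compact $K\subseteq X'$ has empty interior, hence $\partial K=K$ and $\varrho(\partial K)=\varrho(K)>0$; for the same reason, patching Tietze extensions over an exhaustion $K_1\subseteq K_2\subseteq\cdots$ yields a function whose discontinuity set is not $\varrho$-null. More generally, the Lusin-type fact that restrictions of $f\circ\phi$ to large compact sets are continuous does not imply that some global extension is continuous at $\varrho$-almost every point of $X$, which is what the mapping theorem actually requires (compare the indicator of $\Q$, whose restriction to the irrationals is continuous while the function itself is nowhere continuous).

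The convergence you want is nonetheless true, and there are two clean ways to obtain it. The paper's route avoids extensions altogether: since the $\nu_n$ and $\varrho$ all assign full mass to $X'$, the closed-set form of the Portmanteau theorem shows that $\nu_n\to\varrho$ weakly \emph{as measures on the metric space $X'$} (for $F$ closed in $X'$ write $F=\widetilde F\cap X'$ with $\widetilde F$ closed in $X$ and use $\nu_n(F)=\nu_n(\widetilde F)$ together with $\limsup_n\nu_n(\widetilde F)\leqslant\varrho(\widetilde F)=\varrho(F)$); the ordinary continuous mapping theorem then applies to the genuinely continuous map $\phi\colon X'\to\mc R(w)$. Alternatively, if you wish to work on $X$, the correct extension is not obtained from Tietze but by setting $h(z)=\lim_{\delta\to0}\sup\{f(\phi(y)):y\in X',\,d(y,z)<\delta\}$: this is Borel, agrees with $f\circ\phi$ on $X'$, and the continuity of $f\circ\phi$ on $X'$ forces $h$ to be continuous at \emph{every} point of $X'$ as a function on $X$, so its discontinuity set is contained in the $\varrho$-null set $X\setminus X'$ and the extended mapping theorem applies as you intended.
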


\begin{proof}
    We regard $\varrho$ as an ergodic measure on the (non-compact) dynamical system $(X',S)$. Let $x$ be a $\varrho$-generic point and define 
    \[
    \varrho_n = \frac{1}{n} \sum_{k=0}^{n-1} \delta_{S^k x},
    \]
    for all $n \in \N$. Note that the weak convergence of $\varrho_n$ persists under the restriction to the (full measure) set $X'$. Since $\phi$ is continuous on $X'$, this implies that $ \varrho_n \circ \phi^{-1}$ converges to $ \varrho \circ \phi^{-1}$ in the weak topology. Since $\phi(S^k x) = \phi(x) + \pi(\psi(x_{[1,k]}))$, we obtain by definition of the Rauzy measure
    \[
    \mu(x) \circ t_{\phi(x)}^{-1} = \lim_{n \to \infty} \frac{1}{n} \sum_{k = 0}^{n-1} \delta_{\phi(S^k x)}
    = \lim_{n \to \infty} \varrho_n \circ \phi^{-1}
    = \varrho \circ \phi^{-1},
    \]
    and the claim follows.
\end{proof}

\begin{remark}
    For generic points $x$, this offers an additional interpretation of the Lebesgue covering property. In fact, the restriction of $\mu_{\mc J}(x)$ to a fundamental domain coincides with the natural projection of $\mu(x)$ to $\mathbb{H}/\mc J$ under $\pi_{\mc J}$. Since $\varphi = \pi_{\mc J} \circ \phi$, this is (up to translation) the same as $\mu \circ \varphi^{-1}$ by Proposition~\ref{PROP:rauzy-measure-dynamical}. On the other hand, by Corollary~\ref{COR:generic-factor}, $\mu \circ \varphi^{-1}$ is the unique $T$-invariant probability measure on the group $G$. Because we assumed that the entries of the right eigenvector are rationally independent, the group $G$ is in fact the full $d-1$ dimensional torus, and the Haar measure is an appropriate multiple of Lebesgue measure.
\end{remark}

\section{Rauzy fractals of random substitutions}\label{S: Rauzy random subs}

Here, we present two methods of canonically associating a Rauzy fractal with a given compatible, irreducible Pisot random substitution.
We provide a construction in \Cref{SS:Rauzy-sequence-definition} via the set of \emph{level-$\infty$ inflation words}. The elements of this set can be viewed as analogues of substitution-fixed points for random substitutions. Utilising the results proved in \Cref{S:Bal-seq-Rauzy} on Rauzy fractals of $C$-balanced sequences, we show that every element in this set with dense shift-orbit gives rise to the same Rauzy fractal.
In \Cref{SS:Rauzy-language-definition}, we provide an alternative construction, via the \emph{prefix language} of the random substitution.
We prove that these two constructions coincide in \Cref{SS:Rauzy-construction-equivalence}.

\subsection*{Notation}

The Rauzy fractal associated with a (compatible, irreducible Pisot) random substitution $(\vartheta, \bm P)$ does not depend on the explicit choice of (non-degenerate) probabilities $\bm P$.
As such, we suppress the dependence on $\bm P$ throughout this section, and simply refer to $\vartheta$ as a random substitution.
To further simplify our notation, we let $(\lambda,\mathbf{L},\mathbf{R})$ denote the Perron--Frobenius data of $\vartheta$ (which is independent of $\bm P$ by compatibility) and let $\pi$ denote the projection along $\mathbf{R}$ onto $\mathbb{H}$. 
Recall from \Cref{THM:Pisot-balanced} that the assumptions on $\vartheta$ guarantee that every sequence in $X_{\vartheta}$ is $C$-balanced, such that the results from \Cref{S:Bal-seq-Rauzy} are applicable in the present setting.

\subsection{Construction via sequences with dense shift-orbit}\label{SS:Rauzy-sequence-definition}

Given a random substitution $\vartheta$ with associated subshift $X_{\vartheta}$, we define the \emph{level-$\infty$ inflation set} by
\begin{align*}
    X_{\vartheta}^{\infty} = \bigcap_{n \in \N_0} \vartheta^{n} (X_{\vartheta}) .
\end{align*}
It follows from Cantor's intersection theorem that the set $X_{\vartheta}^{\infty}$ is always non-empty, noting that \mbox{$\vartheta^n (X_{\vartheta}) \subseteq \vartheta^{n-1} (X_{\vartheta})$} for all $n \in \N$ and that the sets $\vartheta^n (X_{\vartheta})$ are compact by \Cref{LEM:subst-image-compact}. 

As $\vartheta$ is a compatible, irreducible Pisot random substitution, any marginal is an irreducible Pisot substitution, and hence the substitution-fixed point of any marginal belongs to $X_{\vartheta}^{\infty}$.  Moreover, one can construct a point in $X_{\vartheta}^{\infty}$ with a dense shift-orbit in $X_\vartheta$, see for instance \cite[Proposition~13]{rust-spindeler}.

We define $\mc R_{\vartheta,a} = \mc R_a(w)$, where $w \in X_\vartheta^\infty$ is any element with dense shift-orbit in $X_\vartheta$. The \emph{Rauzy fractal associated with the random substitution $\vartheta$} is the set $\mc R_\vartheta = \cup_{a \in \mc A} \mc R_{\vartheta,a}$. It will soon be shown in \Cref{SS:Rauzy-construction-equivalence} that $\mc R_{\vartheta,a}$ (and therefore $\mc R_\vartheta$) is independent of the choice of element $w$ with this property. 
In the special case that $\vartheta$ is deterministic (that is, $\# \vartheta(a) = 1$ for all $a \in \mc A$), the set $\mc R_{\vartheta}$ coincides with the standard notion of a Rauzy fractal in the context of substitutions. 

\subsection{Construction via the prefix language}\label{SS:Rauzy-language-definition}

An alternative method of associating a Rauzy fractal with a random substitution is via the \emph{prefix language} of the random substitution---a similar construction has been given previously in the S-adic setting \cite[Section~2.5]{BMST16}.
It will be shown in \Cref{SS:Rauzy-construction-equivalence} that this construction is equivalent to the one given in \Cref{SS:Rauzy-sequence-definition}. 
For now, we differentiate the two constructions by placing a hat $\widehat{\phantom{\mathcal{R}}}$ above any alternate versions.

In what follows, we let $\mathcal{A}_1$ denote the set of \emph{eventually first} letters: namely, $a \in \mathcal{A}_1$ if and only if there is a letter $b \in \mathcal{A}$ such that $a$ is the first letter of some realisation of $\vartheta^{n} (b)$ for infinitely many $n \in \N$.

\begin{definition}
Let $\vartheta$ be a random substitution.
We define the \emph{prefix language} of $\vartheta$ by
\begin{align*}
    \mc L_{\vartheta, p} = \{ v \in \mathcal{A}^{+} : v \text{ is a prefix of some } w \in \vartheta^{k} (a), \, k \in \N_0, \, a \in \mathcal{A}_1 \} .
\end{align*}
The Rauzy fractal associated with the random substitution $\vartheta$ is defined from the projection of the set of all possible abelianisation vectors arising from words in $\mc L_{\vartheta, p}$. For each $a \in \mathcal{A}$, let
\begin{align*}
    \mathcal{S}_a (\mc L_{\vartheta, p}) = \{ \psi (v) : v a \in \mc L_{\vartheta,p} \},
\end{align*}
and define $\widehat{\mc R}_{\vartheta, a}^{*} = \pi (\mathcal{S}_a (\mc L_{\vartheta, p}))$ and $\widehat{\mc R}_{\vartheta, a} = \overline{\widehat{\mc R}_{\vartheta, a}^{*}}$. The \emph{(prefix) Rauzy fractal associated with the random substitution $\vartheta$} is the set $\widehat{\mc R}_{\vartheta} = \cup_{a \in \mathcal{A}} \widehat{\mc R}_{\vartheta, a}$.
\end{definition}

\subsection{Equivalence of the two constructions}\label{SS:Rauzy-construction-equivalence}

If $x \in X_{\vartheta}^{\infty}$, then $x_{[1,n]} \in \mc L_{\vartheta, p}$ for all $n \in \mathbb{N}_{0}$, and hence
\begin{align}\label{EQ: Rauzy inf-inflation characterisation}
    \mathcal{S}_{a} (x)
    \subseteq
    \mathcal{S}_{a} (\mc L_{\vartheta,p}).
\end{align}
Projecting this identity gives that $\mathcal{R}_a (x)
\subseteq \widehat{\mathcal{R}}_{\vartheta,a}$.
Thus, the Rauzy fractal corresponding to any sequence in $X_{\vartheta}^{\infty}$ is contained in the Rauzy fractal of $\vartheta$ defined via the prefix language. If the sequence is additionally assumed to have a dense shift-orbit, then this inclusion is an equality.

\begin{theorem}\label{THM: r-Rauzy equiv char}
Let $\vartheta$ be a compatible, irreducible Pisot random substitution and let $w \in X_{\vartheta}^{\infty}$ be an element with dense shift-orbit in $X_{\vartheta}$.
Then, for all $a \in \mathcal{A}$, we have $\mathcal{R}_{a} (w) = \widehat{\mathcal{R}}_{\vartheta, a}$.
\end{theorem}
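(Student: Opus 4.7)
The forward inclusion $\mc R_a(w) \subseteq \widehat{\mc R}_{\vartheta, a}$ is already recorded at \eqref{EQ: Rauzy inf-inflation characterisation}, so the substance of the argument is the reverse inclusion. Since $\mc R_a(w)$ is closed, the plan is to show that every $\pi(\psi(v)) \in \widehat{\mc R}_{\vartheta,a}^{\ast}$ is an accumulation point of $\{\pi(\psi(w_{[1,n]})) : n \in \N_0,\, w_{n+1} = a\}$. Unpacking the hypothesis, $va$ is a prefix of some realisation $u \in \vartheta^k(b)$ with $b \in \mathcal A_1$, and by the definition of $\mathcal A_1$ I can fix a letter $c \in \mc A$ such that for infinitely many $n$ the letter $b$ appears as the first letter of some realisation $b \tilde u^{(n)} \in \vartheta^n(c)$; then $U^{(n)} := u \cdot \vartheta^k(\tilde u^{(n)})$ is a realisation of $\vartheta^{n+k}(c)$ that starts with $u$, hence with $va$.

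To locate projections that approximate $\pi(\psi(v))$ inside $\mc R_a(w)$, I would exploit that $w \in X_\vartheta^{\infty}$: for each such $n$ the sequence $w$ admits a level-$(n{+}k)$ decomposition $w = u_1^{(n)} u_2^{(n)} \cdots$ with $u_i^{(n)} \in \vartheta^{n+k}(z_i^{(n)})$ and $z^{(n)} = z_1^{(n)} z_2^{(n)} \cdots \in X_\vartheta$. The central intermediate claim is that the decomposition can be chosen so that $U^{(n)}$ arises as some block $u_{j_n}^{(n)}$. Granted this, set $p_n = 1 + \sum_{i < j_n} |u_i^{(n)}|$ and $m_n = p_n + |v| - 1$; then $w_{[p_n, m_n]} = v$ and $w_{m_n + 1} = a$. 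Compatibility of $\vartheta$ yields $\psi(w_{[1, p_n - 1]}) = M_\vartheta^{n+k}\,\psi(z_{[1, j_n - 1]}^{(n)})$, and since $\pi \circ M_\vartheta = h \circ \pi$ on $\mathbb{R}^d$, projecting gives
\[
\pi(\psi(w_{[1, p_n - 1]})) \;=\; h^{n+k}\!\bigl(\pi(\psi(z_{[1, j_n - 1]}^{(n)}))\bigr).
\]
By \Cref{THM:Pisot-balanced} each $z^{(n)}$ is $C$-balanced, so $\pi(\psi(z_{[1, j_n - 1]}^{(n)}))$ is uniformly bounded in $n$; and since $\vartheta$ is irreducible Pisot, $h$ acts as a strict contraction on $\mathbb H$. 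Therefore $\pi(\psi(w_{[1, p_n - 1]})) \to 0$ along the chosen subsequence of $n$, whence $\pi(\psi(w_{[1, m_n]})) = \pi(\psi(w_{[1, p_n - 1]})) + \pi(\psi(v)) \to \pi(\psi(v))$ with $w_{m_n + 1} = a$, placing $\pi(\psi(v))$ in $\mc R_a(w)$ by closedness.

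The main obstacle is the alignment claim: that the specific realisation $U^{(n)}$ can be realised as an actual block (rather than merely as a subword) of some valid level-$(n{+}k)$ decomposition of $w$. Dense shift-orbit of $w$ guarantees that $U^{(n)}$ appears as a subword of $w$, but does not a priori place the occurrence at a block boundary. My intended route is to combine the primitivity of $\vartheta$---which forces $c$ to appear in the base sequence $z^{(n)}$ of every level-$(n{+}k)$ decomposition of $w$---with a flexibility argument exploiting the multiplicity of realisations permitted by a random substitution. Concretely, one can encode level-$(n{+}k)$ decompositions as sequences over an augmented alphabet $\{(c', u') : c' \in \mc A,\, u' \in \vartheta^{n+k}(c')\}$ and, using dense shift-orbit of $w$ and primitivity, argue that the resulting auxiliary sequence visits every admissible pair, in particular $(c, U^{(n)})$. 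A recognizability-type result for compatible random substitutions would streamline this step; without such a result, a hands-on combinatorial argument along these lines is what I expect to be the main labour of the proof.
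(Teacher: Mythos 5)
Your contraction computation and your use of the eventually-first-letter structure to push $k$ arbitrarily large are exactly right and mirror the paper's argument. The genuine gap is the step you yourself flag: the claim that for the \emph{given} $w \in X_{\vartheta}^{\infty}$ with dense shift-orbit, a level-$(n+k)$ inflation-word decomposition can be chosen in which your specific realisation $U^{(n)}$ occurs as an actual block. Dense orbit only guarantees that $U^{(n)}$ occurs as a \emph{subword} of $w$, and for random substitutions (which have many decompositions and, unlike the deterministic Pisot case, no recognizability to appeal to) there is no reason an occurrence can be aligned with block boundaries; a point of $X_\vartheta^\infty$ can be transitive while all of its level-$N$ decompositions avoid a particular level-$N$ inflation word as a block. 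Your proposed remedies (primitivity forcing $c$ to appear in the base sequence, an augmented-alphabet coding, or a recognizability-type result) do not close this, since primitivity controls which \emph{letters} appear in the base sequence, not which \emph{realisations} are used above them, and no recognizability result of the required kind is available in this setting.

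The paper avoids the alignment problem entirely: it invokes the special transitive point $\widetilde{w} \in X_{\vartheta}^{\infty}$ of \cite[Proposition~13]{rust-spindeler}, which is \emph{constructed} so that for every $n$ it admits a level-$n$ decomposition containing every word of $\vartheta^n(a)$, for every $a$, as a block; for this $\widetilde{w}$ your density-via-contraction argument goes through verbatim and gives $\mathcal{R}_a(\widetilde{w}) = \widehat{\mathcal{R}}_{\vartheta,a}$. The general $w$ is then handled by a short rigidity argument you did not use but already have available: by \Cref{CO: dense-shift-Rauzy} there is $t$ with $\mathcal{R}_a(w) = \mathcal{R}_a(\widetilde{w}) - t$ for all $a$, while \eqref{EQ: Rauzy inf-inflation characterisation} gives $\mathcal{R}_a(w) \subseteq \widehat{\mathcal{R}}_{\vartheta,a} = \mathcal{R}_a(\widetilde{w})$; a compact set cannot contain a nontrivial translate of itself, so $t = 0$. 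Replacing your alignment claim by this two-step route (special point plus translation rigidity) is what is needed to make the proof complete.
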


\begin{proof}
We first observe that there is a point $\widetilde{w} \in X_{\vartheta}^\infty$ with the following properties.
\begin{itemize}
\item For each $n \in \N$, $\widetilde{w}$ can be decomposed as a concatenation of level-$n$ inflation words.
\item In each of these decompositions, every word $u \in \vartheta^n(a)$ appears, for all $a \in \mc A$.
\item The shift-orbit of $\widetilde{w}$ is dense.
\end{itemize}
The construction of such a point is given in  \cite[Proposition~13]{rust-spindeler}, where we also observe that, although it is not explicitly stated, the point constructed  belongs to $X_{\vartheta}^{\infty}$. 

Our goal now is to show that $\mc R_a(\tilde{w}) = \widehat{\mc R}_{\vartheta,a}$. By \eqref{EQ: Rauzy inf-inflation characterisation}, we have that $\mc R_a(\widetilde{w}) \subset \widehat{\mc R}_{\vartheta,a}$ for all $a \in \A$. So, if we can show that $\mc R_a^\ast(\widetilde{w})$ is dense in $\widehat{\mc R}_{\vartheta,a}$, then we can conclude that $\mc R_a(\widetilde{w}) = \widehat{\mc R}_{\vartheta,a}$. To this end, let $q \in \widehat{\mc R}_{\vartheta,a}^\ast$ and $\varepsilon > 0$. There exists a word $va \in \mc L_{\vartheta,p}$ such that $q = \pi \circ \psi(v)$, that is, $va$ is a prefix of some word $u \in \vartheta^k(b)$, for some $k \in \N$ and $b \in \mc A$. In fact, we can choose $k$ arbitrarily large. 
Let $k \in \N$ be such that $h_{\vartheta}^k(\widehat{\mc R}_{\vartheta,a})$ is contained in a ball of radius $\varepsilon$ centered at the origin. 
This is possible because $h_{\vartheta}$ is a contractive linear transformation. Since $u$  appears in the level-$k$ inflation word decomposition of $\widetilde{w}$, there exists a word $v' \in \mc L_{\vartheta,p}$ such that $\widetilde{w} = \vartheta^k(v') va \cdots$. Hence, $M^k \psi(v') + \psi(v) \in \mc S_a(\widetilde{w})$ and thereby, $h_{\vartheta}^k(\pi\circ\psi(v')) + q \in \mc R_a^{\ast}(\widetilde{w})$. By the assumption on $k$, this point lies in the $\varepsilon$-neighborhood of $q \in \widehat{\mc R}_{\vartheta,a}^\ast$. Since $q$ was arbitrary, this shows that $\mc R_a^\ast(\widetilde{w})$ lies dense in $\widehat{\mc R}_{\vartheta,a}$.

Now, let $w \in X_{\vartheta}^\infty$. Since $w$ and $\widetilde{w}$ both have dense shift-orbit in $X_{\vartheta}$, it follows by \Cref{CO: dense-shift-Rauzy} that there exists a vector $t$ such that $\mathcal{R}_a (w) = \mathcal{R}_a (\widetilde{w}) - t$ for all $a \in \mathcal{A}$. But since $w \in X_{\vartheta}^{\infty}$, we have $\mathcal{R}_a (w) \subseteq \widehat{\mathcal{R}}_{\vartheta,a} = \mc R_a(\widetilde{w})$, so we must have $t = 0$. This completes the proof.
\end{proof}

Therefore, we may now refer to both constructions of the Rauzy fractal by $\mathcal{R}_{\vartheta}$ without ambiguity.

\subsection{Topological properties}\label{SS:Rauzy-top-properties}

Using the characterisation given by \Cref{THM: r-Rauzy equiv char}, together with the analytic properties of Rauzy fractals corresponding to $C$-balanced sequences proved in \Cref{S:Bal-seq-Rauzy}, we can obtain several key topological properties of Rauzy fractals of random substitutions. 

\begin{prop}\label{PROP: marginal Rauzy}
Let $\vartheta$ be a compatible, irreducible Pisot random substitution. If $\theta$ is a marginal of $\vartheta^{k}$ for some $k \in \N$, then $\mathcal{R}_{\theta,a} \subseteq \mathcal{R}_{\vartheta,a}$ for all $a \in \mathcal{A}$.
\end{prop}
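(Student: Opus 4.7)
The plan is to exploit the prefix-language characterisation of $\mc R_\vartheta$ established in \Cref{THM: r-Rauzy equiv char}, and show that any abelianisation vector arising from a substitution-fixed point of $\theta$ is in fact realised as an abelianisation of a prefix in $\mc L_{\vartheta,p}$.

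First, since every primitive substitution admits a power with a substitution-fixed point, replace $\theta$ by a suitable power $\theta^m$ if necessary and pick a fixed point $x$ of $\theta^m$, so that $\mc R_{\theta,a} = \overline{\pi(\mc S_a(x))}$ for each $a \in \mc A$. Writing $b = x_1$, the identity $x = \theta^{mj}(x)$ implies that $\theta^{mj}(b)$ is a prefix of $x$ for every $j \in \N$, and so its first letter is $b$. Because $\theta$ is a marginal of $\vartheta^k$, the word $\theta^{mj}(b)$ is a realisation of $\vartheta^{kmj}(b)$, from which it follows that $b$ is the first letter of some realisation of $\vartheta^{kmj}(b)$ for every $j$, and hence $b \in \mc A_1$.

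Next, I would verify the key inclusion $\mc S_a(x) \subseteq \mc S_a(\mc L_{\vartheta,p})$. Given $n \in \N_0$ with $x_{n+1} = a$, choose $j$ large enough so that $|\theta^{mj}(b)| \geqslant n+1$. Since $\theta^{mj}(b)$ is a prefix of $x$, the word $x_{[1,n]}\,a$ is a prefix of $\theta^{mj}(b)$, which is a realisation of $\vartheta^{kmj}(b)$ with $b \in \mc A_1$. Hence $x_{[1,n]}\,a \in \mc L_{\vartheta,p}$, so $\psi(x_{[1,n]}) \in \mc S_a(\mc L_{\vartheta,p})$. Projecting by $\pi$ and taking closures then yields
\begin{align*}
\mc R_{\theta,a} = \overline{\pi(\mc S_a(x))} \subseteq \overline{\pi(\mc S_a(\mc L_{\vartheta,p}))} = \widehat{\mc R}_{\vartheta,a} = \mc R_{\vartheta,a},
\end{align*}
where the final equality uses \Cref{THM: r-Rauzy equiv char}.

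I do not expect a serious obstacle here: the main issue is purely bookkeeping, namely confirming that $b \in \mc A_1$ and that the prefix relation survives the passage from $\theta^{mj}$ to $\vartheta^{kmj}$, both of which follow directly from the definition of a marginal. An alternative route would be to observe that $x \in X_{\vartheta}^\infty$ (since iterating $\theta^m$ on $x$ produces a fixed realisation at every level, so $x \in \vartheta^{kmj}(X_\vartheta)$ for all $j$, hence $x$ lies in every $\vartheta^n(X_\vartheta)$ by the nesting $\vartheta^{n+1}(X_\vartheta) \subseteq \vartheta^n(X_\vartheta)$) and then invoke \eqref{EQ: Rauzy inf-inflation characterisation} directly. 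Either route avoids needing density of the shift-orbit of $x$, so the statement holds for any substitution-fixed point of a marginal of a power of $\vartheta$.
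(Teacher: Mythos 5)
Your proof is correct and takes essentially the same approach as the paper: the paper's proof picks a $\theta$-fixed point $w \in X_{\vartheta}^{\infty}$ with dense shift-orbit in $X_{\theta}$, identifies $\mathcal{R}_a(w)$ with $\mathcal{R}_{\theta,a}$, and applies the inclusion \eqref{EQ: Rauzy inf-inflation characterisation} together with \Cref{THM: r-Rauzy equiv char} --- which is precisely your ``alternative route''. Your primary argument simply unpacks that inclusion by exhibiting the prefixes in $\mathcal{L}_{\vartheta,p}$ explicitly (and verifying $b \in \mathcal{A}_1$), which is a correct, slightly more hands-on rendering of the same idea.
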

\begin{proof}
Let $w \in X_{\vartheta}^{\infty}$ be an element with a dense shift-orbit in $X_{\theta}$ (for example, take a $\theta$-fixed point). By \Cref{THM: r-Rauzy equiv char}, we have that $\mathcal{R}_a (w) = \mathcal{R}_{\theta,a}$ for all $a \in \mathcal{A}$, and applying the projection $\pi$ to both sides in \eqref{EQ: Rauzy inf-inflation characterisation} gives $\mathcal{R}_a (w) \subseteq \mathcal{R}_{\vartheta,a}$, so the result follows.
\end{proof}

\begin{prop}\label{PROP: Rauzy marginals convergence}
Let $\vartheta$ be a compatible, irreducible Pisot random substitution.
There exists a sequence of marginals $\theta_n$ of $\vartheta^{n}$ such that $\mathcal{R}_{\theta_n} \rightarrow \mathcal{R}_{\vartheta}$ in the Hausdorff metric.
\end{prop}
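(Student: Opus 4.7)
The plan is to invoke \Cref{PROP:Hausdorff-convergence}, which reduces the assertion to the problem of approximating a reference point in $X_\vartheta^\infty$ with dense shift-orbit by substitution-fixed points of marginals $\theta_n$ of $\vartheta^n$. Fix a point $\widetilde{w} \in X_\vartheta^\infty$ with dense shift-orbit in $X_\vartheta$ (as constructed in the proof of \Cref{THM: r-Rauzy equiv char}), so that $\mc R(\widetilde{w}) = \mc R_\vartheta$; the subshift $X_\vartheta$ is $C$-balanced by \Cref{THM:Pisot-balanced}, and thus the framework of \Cref{S:Bal-seq-Rauzy} applies.

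For each $n$, the core step is to construct a marginal $\theta_n$ of $\vartheta^n$ together with a substitution-fixed point $w^{(n)} \in X_{\theta_n}^\infty$ whose initial segment of length $N_n$ agrees with $\widetilde{w}$, with $N_n \to \infty$. Since $\widetilde{w} \in X_\vartheta^\infty \subseteq \vartheta^n(X_\vartheta)$, there is a decomposition $\widetilde{w} = u^{(n)}_1 u^{(n)}_2 \cdots$ with $u^{(n)}_j \in \vartheta^n(y^{(n)}_j)$ for some $y^{(n)} \in X_\vartheta$. I would define $\theta_n(a)$ to equal $u^{(n)}_{j_a}$ at the first position $j_a$ with $y^{(n)}_{j_a} = a$, choosing the images of the remaining letters freely. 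Since $\theta_n$ inherits primitivity from $\vartheta^n$, an appropriate power $\theta_n^{k_n}$ admits a substitution-fixed point which, by coordinating the freely-chosen images with a first-letter cycle passing through $y^{(n)}_1$, can be arranged to begin with $u^{(n)}_1$. Taking $w^{(n)}$ to be this fixed point yields $N_n \geq |u^{(n)}_1|$, a quantity growing exponentially in $n$ by Perron--Frobenius.

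Applying \Cref{PROP:Hausdorff-convergence} to the sequence $w^{(n)} \to \widetilde{w}$ in the $C$-balanced subshift $X_\vartheta$ then yields $\mc R_a(w^{(n)}) \to \mc R_a(\widetilde{w})$ in the Hausdorff metric for each $a \in \mc A$, and hence $\mc R(w^{(n)}) \to \mc R(\widetilde{w}) = \mc R_\vartheta$ upon taking the finite union over letters. Since $w^{(n)}$ is a substitution-fixed point of a power of the primitive deterministic substitution $\theta_n$, it lies in $X_{\theta_n}^\infty$ and satisfies $\mc R(w^{(n)}) = \mc R_{\theta_n}$. Combining these facts gives $\mc R_{\theta_n} \to \mc R_\vartheta$.

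The main obstacle is the coordinated construction of $\theta_n$ and $w^{(n)}$: one must ensure simultaneously that $w^{(n)}$ is a substitution-fixed point of a suitable power of $\theta_n$, that it lies in $X_{\theta_n}^\infty$ (so that its Rauzy fractal coincides with $\mc R_{\theta_n}$ and not merely a translate), and that it shares a sufficiently long initial segment with $\widetilde{w}$. The flexibility to choose the marginal's images on letters that do not appear early in $y^{(n)}$, together with the existence of first-letter cycles provided by the primitivity of $\theta_n$, is what allows these requirements to be reconciled.
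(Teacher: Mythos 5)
Your overall skeleton---approximate a transitive reference point $\widetilde w \in X_\vartheta^\infty$ by points $w^{(n)}$ lying in subshifts of marginals of $\vartheta^n$ and invoke \Cref{PROP:Hausdorff-convergence}---is exactly the paper's, but the way you try to kill the translation ambiguity contains a genuine gap. You insist that $w^{(n)}$ be a substitution-fixed point of a power of $\theta_n$ (so that $\mc R(w^{(n)}) = \mc R_{\theta_n}$ on the nose) which agrees with $\widetilde w$ on a long prefix, and you propose to achieve this by pinning $\theta_n$ on the letters occurring in the preimage sequence $y^{(n)}$ and using the ``remaining'' letters to engineer a first-letter cycle through $y^{(n)}_1$. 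This does not work as stated. First, by primitivity every letter has strictly positive frequency in every element of $X_\vartheta$, so every letter occurs in $y^{(n)}$ and your pinning rule leaves no images free to coordinate with. Second, even if you pin only $\theta_n(y^{(n)}_1) = u^{(n)}_1$, a first-letter cycle through $y^{(n)}_1$ requires $y^{(n)}_1$ to occur as the first letter of some realisation of $\vartheta^n(b)$ for some $b$, and this can simply fail: for the random tribonacci substitution no inflation word begins with the letter $3$, so if the chosen level-$n$ decomposition of $\widetilde w$ has $y^{(n)}_1 = 3$ there is no such cycle for any marginal, and no fixed point of any power of any marginal begins with the letter $y^{(n)}_1$. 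One could hope to rescue this by choosing the decomposition more carefully (forcing $y^{(n)}_1$ to be reachable in the first-letter digraph from $\widetilde w_1$), but you neither identify nor prove such a statement, and it is precisely the hard part of your construction; the existence of the coordinated pair $(\theta_n, w^{(n)})$ is therefore not established.

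The paper avoids this obstacle entirely by not requiring the translate to vanish by construction. It takes $a_n$ with $w_{[1,\lvert \vartheta^n(a_n)\rvert]} \in \vartheta^n(a_n)$, a marginal $\theta_n$ with $\theta_n(a_n)$ equal to this prefix, and \emph{any} $w^{(n)} \in X_{\theta_n}$ beginning with it (no fixed-point property needed). Then $\mc R_a(w^{(n)}) \subseteq \mc R_{\theta_n,a} + t_n$ for some $t_n$, by \Cref{LEM:limit-subset} together with \Cref{THM: r-Rauzy equiv char} and minimality of $X_{\theta_n}$, while $\mc R_{\theta_n,a} \subseteq \mc R_{\vartheta,a}$ by \eqref{EQ: Rauzy inf-inflation characterisation} (cf.\ \Cref{PROP: marginal Rauzy}). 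Since \Cref{PROP:Hausdorff-convergence} gives $\mc R_a(w^{(n)}) \to \mc R_{\vartheta,a}$, the sandwich
\begin{align*}
\mc R_a(w^{(n)}) \subseteq \mc R_{\theta_n,a} + t_n \subseteq \mc R_{\vartheta,a} + t_n
\end{align*}
forces $t_n \to 0$ and squeezes $\mc R_{\theta_n,a} \to \mc R_{\vartheta,a}$. If you want to keep your route, you would need to supply the missing existence argument for a fixed point of (a power of) a marginal sharing arbitrarily long prefixes with $\widetilde w$; the squeeze argument shows this extra work is unnecessary.
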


\begin{proof}
Let $w \in X_{\vartheta}^{\infty}$ be an element with dense shift-orbit. For each $n \in \N$, let $a_n$ be a letter such that $w_{[1,\lvert \vartheta^n(a_n) \rvert]} \in \vartheta^n (a_n)$. Such a letter always exists since $w \in X_{\vartheta}^{\infty}$. Thus, there exists a marginal $\theta_n$ of $\vartheta^n$ such that $\theta_n (a_n) = w_{[1,\lvert \vartheta^n (a_n) \rvert]}$. 
In particular, there exists a sequence of words $(w^{(n)})_n$ such that $w^{(n)} \rightarrow w$, for which $w^{(n)} \in X_{\theta_n}$ for all $n \in \N$. Hence, it follows by \Cref{LEM:limit-subset} and \Cref{THM: r-Rauzy equiv char} that, for each $n \in \N$, there is a vector $t_n \in \mathbb{R}^d$ such that
\begin{align*}
    \mathcal{R}_a (w^{(n)}) \subseteq \mathcal{R}_{\theta_n,a} + t_n \subseteq \mathcal{R}_{\vartheta,a} + t_n ,
\end{align*}
where the second inclusion follows by \eqref{EQ: Rauzy inf-inflation characterisation}. Since $w^{(n)} \rightarrow w$ and $w$ has dense shift-orbit in $X_{\vartheta}$, \Cref{PROP:Hausdorff-convergence} gives that $\mathcal{R}_a (w^{(n)}) \rightarrow \mathcal{R}_a (w) = \mathcal{R}_{\vartheta,a}$ as $n \rightarrow \infty$. Therefore, we have that $t_n \rightarrow 0$, hence, we conclude that $\mathcal{R}_{\theta_n,a} \rightarrow \mathcal{R}_{\vartheta,a}$ as $n \rightarrow \infty$.
\end{proof}

These results can be used to transfer some topological properties of Rauzy fractals for deterministic substitutions to the random setting.
In the following two results, we apply \Cref{PROP: deterministic-rauzy-properties} and therefore also assume that our random substitutions are unimodular.

\begin{corollary}
Let $\vartheta$ be a compatible, unimodular, irreducible Pisot random substitution.
The Rauzy fractal $\mathcal{R}_{\vartheta}$ is the closure of its interior.
\end{corollary}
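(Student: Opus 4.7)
The plan is to deduce this corollary as a short consequence of the two preceding propositions together with the deterministic statement in \Cref{PROP: deterministic-rauzy-properties}. The inclusion $\overline{\operatorname{int}(\mathcal{R}_\vartheta)} \subseteq \mathcal{R}_\vartheta$ is automatic since $\mathcal{R}_\vartheta$ is closed, so the content lies in establishing the reverse inclusion $\mathcal{R}_\vartheta \subseteq \overline{\operatorname{int}(\mathcal{R}_\vartheta)}$.

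The first step is to invoke \Cref{PROP: Rauzy marginals convergence} to produce a sequence of marginals $\theta_n$ of $\vartheta^n$ with $\mathcal{R}_{\theta_n} \to \mathcal{R}_\vartheta$ in the Hausdorff metric. Compatibility ensures that the substitution matrix of $\theta_n$ coincides with $M_\vartheta^n$, so each $\theta_n$ inherits unimodularity and the irreducible Pisot property from $\vartheta$. Hence \Cref{PROP: deterministic-rauzy-properties} applies to each $\theta_n$ and yields $\mathcal{R}_{\theta_n} = \overline{\operatorname{int}(\mathcal{R}_{\theta_n})}$ for every $n$.

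The second step is the elementary topological monotonicity observation that if $A \subseteq B$ then $\operatorname{int}(A) \subseteq \operatorname{int}(B)$, since any open set contained in $A$ is also an open set contained in $B$. Applied to the inclusion $\mathcal{R}_{\theta_n} \subseteq \mathcal{R}_\vartheta$ furnished by \Cref{PROP: marginal Rauzy}, this gives $\operatorname{int}(\mathcal{R}_{\theta_n}) \subseteq \operatorname{int}(\mathcal{R}_\vartheta)$, and combining with the previous step we obtain
\[
\mathcal{R}_{\theta_n} = \overline{\operatorname{int}(\mathcal{R}_{\theta_n})} \subseteq \overline{\operatorname{int}(\mathcal{R}_\vartheta)}
\]
for every $n$. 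Since $\overline{\operatorname{int}(\mathcal{R}_\vartheta)}$ is closed and Hausdorff convergence preserves containment in a closed set, passing to the limit in $n$ gives $\mathcal{R}_\vartheta \subseteq \overline{\operatorname{int}(\mathcal{R}_\vartheta)}$, as required.

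I do not anticipate any substantive obstacle, since the heavy lifting has been carried out in \Cref{PROP: marginal Rauzy} and \Cref{PROP: Rauzy marginals convergence}. The only minor technical point to verify is that each $\theta_n$ is indeed an irreducible Pisot substitution so that \Cref{PROP: deterministic-rauzy-properties} may be applied; this is essentially implicit in the formulation of \Cref{PROP: Rauzy marginals convergence}, which already treats each $\mathcal{R}_{\theta_n}$ as a standard Rauzy fractal.
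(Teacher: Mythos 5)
Your proposal is correct and follows essentially the same route as the paper: both arguments combine \Cref{PROP: Rauzy marginals convergence} (Hausdorff approximation by marginal Rauzy fractals), \Cref{PROP: deterministic-rauzy-properties} applied to those marginals, and \Cref{PROP: marginal Rauzy} together with monotonicity of the interior. The only difference is presentational --- the paper runs a pointwise $\varepsilon/2$ chase where you phrase the same steps as set inclusions followed by a passage to the Hausdorff limit.
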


\begin{proof}
Let $\varepsilon > 0$ and let $x \in \mc R_{\vartheta}$.
By \Cref{PROP: Rauzy marginals convergence}, there exists a sequence of marginals $\theta_n$ of $\vartheta^n$ such that $\mc R_{\theta_n} \rightarrow \mc R_{\vartheta}$ in the Hausdorff metric.
Hence, there exists an $n \in \N$ and $y \in \mc R_{\theta_n}$ such that $\lvert x - y \rvert < \varepsilon / 2$.
Since $\mc R_{\theta_n}$ is the closure of its interior, there is a $z \in \operatorname{int} (\mc R_{\theta_n})$ such that $\lvert y - z \rvert < \varepsilon / 2$; hence, $\lvert x - z \rvert < \varepsilon$.
By \Cref{PROP: marginal Rauzy}, we have $\mc R_{\theta_n} \subseteq \mc R_{\vartheta}$, so it follows that $z \in \operatorname{int} (\mc R_{\vartheta})$ and we conclude that $\mc R_{\vartheta} = \overline{\operatorname{int} (\mc R_{\vartheta})}$.
\end{proof}

\subsection{GIFS}\label{SS:Rauzy-GIFS}

An important feature of Rauzy fractals associated with substitutions is that their subtiles are the attractors of a graph-directed iterated function system (GIFS) that arises naturally from the substitution action.
Graph-directed iterated function systems were first studied in the seminal paper of Mauldin and Williams~\cite{mauldin-williams} and the GIFS structure of Rauzy fractals was identified by Sirvent and Wang~\cite{sirvent-wang}.
In particular, the underlying graph is the \emph{prefix-suffix graph}, which was first introduced by Canterini and Siegel~\cite{canterini-siegel} and encodes the different positions in which letters can appear in the inflated image of a letter.
Here, we prove an analogous result for Rauzy fractals of random substitutions. 
We first generalise the definition of the prefix-suffix graph to random substitutions.

\begin{definition}
Let $\vartheta$ be a random substitution and let $\mathcal{P}_{\vartheta}$ be the finite set defined by
\begin{align*}
    \mathcal{P}_{\vartheta} = \left\{ (p,a,s) \in \mathcal{A}^{*} \times \mathcal{A} \times \mathcal{A}^{*} : \text{ there exists $b \in \mathcal{A}$ such that } p a s \in \vartheta (b) \right\} .
\end{align*}
The \emph{prefix-suffix graph} of $\vartheta$ is the finite directed graph $\Gamma_{\vartheta}$ with vertex set $\mathcal{A}$ such that there is an edge labelled by $(p,a,s) \in \mathcal{P}_{\vartheta}$ from $a$ to $b$ if $p a s \in \vartheta (b)$.
\end{definition}

\begin{theorem}\label{THM: r-Rauzy GIFS}
Let $\vartheta$ be a compatible, irreducible Pisot random substitution.
For each $a \in \mc A$, the subtile $\mc R_{\vartheta, a}$ is the unique (non-empty) compact solution of the self-consistency relation
\begin{align}\label{EQ:GIFS-sets}
    \mathcal{R}_{\vartheta,a} = \bigcup_{b \in \mathcal{A}} \bigcup_{\substack{(p,a,s) \in \mathcal{P}_{\vartheta} \\ p a s \in \vartheta (b)}} h(\mathcal{R}_{\vartheta,b}) + \pi (\psi(p)).
\end{align}
\end{theorem}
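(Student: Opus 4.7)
The plan is to leverage the prefix-language characterisation $\widehat{\mc R}_{\vartheta, a} = \mc R_{\vartheta, a}$ from Theorem~\ref{THM: r-Rauzy equiv char} to prove the self-consistency relation \eqref{EQ:GIFS-sets} by a direct double-inclusion argument on the dense subsets $\widehat{\mc R}^\ast_{\vartheta,a}$, and then to derive uniqueness from the standard GIFS contraction-mapping theorem. The key computational tool throughout is the identity $\pi \circ M = h \circ \pi$, which holds because the decomposition $\R^d = \mathbb{H} \oplus \R \bm R$ is $M$-invariant (both summands being eigenspaces of $M$).

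\textbf{Inclusion $\supseteq$.} I would fix an edge $(p,a,s) \in \mc P_\vartheta$ with $pas \in \vartheta(b)$, together with a point $q = \pi(\psi(v)) \in \widehat{\mc R}^\ast_{\vartheta,b}$, so that $vb$ is a prefix of some $u \in \vartheta^k(c)$ with $c \in \mc A_1$. Picking any realisation $\tilde v \in \vartheta(v)$ and substituting $pas$ for the image of $b$ in the natural level-one decomposition of $\vartheta(u)$ produces a word in $\vartheta^{k+1}(c)$ with prefix $\tilde v p a$, so that $\tilde v p a \in \mc L_{\vartheta,p}$. Compatibility of $\vartheta$ then gives $\pi(\psi(\tilde v p)) = h(q) + \pi(\psi(p)) \in \widehat{\mc R}^\ast_{\vartheta, a}$, and taking closures completes this direction.

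\textbf{Inclusion $\subseteq$.} I would take $q' = \pi(\psi(v')) \in \widehat{\mc R}^\ast_{\vartheta, a}$ with $v'a$ a prefix of some $u' \in \vartheta^\ell(c')$, and split into cases. If $\ell \geq 1$, I would use the level-one decomposition $u' = u^{(1)} \cdots u^{(m)}$ with $u^{(i)} \in \vartheta(\tilde u_i)$ and $\tilde u \in \vartheta^{\ell-1}(c')$, locate the inflation word $u^{(j)} = pas$ containing the prescribed occurrence of $a$, and read off $v'' := \tilde u_{[1,j-1]}$, which satisfies $v''b \in \mc L_{\vartheta, p}$ for $b = \tilde u_j$; compatibility then yields $q' = h(\pi(\psi(v''))) + \pi(\psi(p))$, a point of the desired form. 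The subtle case is $\ell = 0$, where $v' = \varepsilon$ and $q' = 0$, since $a \in \mc A_1$ need not directly imply that $a$ appears as the first letter of any single realisation of some $\vartheta(b)$. Here I would combine the definition of $\mc A_1$ with a pigeonhole argument on the first letters of the level-$(n-1)$ preimages of realisations of $\vartheta^n(d)$ that start with $a$ (for $d$ a seed of $a$ and $n$ large), producing a letter $b \in \mc A_1$ together with a word $s$ such that $as \in \vartheta(b)$; since $b \in \mc A_1$ forces $0 \in \widehat{\mc R}^\ast_{\vartheta,b}$, the edge $(\varepsilon, a, s)$ witnesses that $0$ lies in the right-hand side.

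\textbf{Uniqueness.} Finally, I would consider the operator $F$ on $d$-tuples of non-empty compact subsets of $\mathbb{H}$ obtained from the right-hand side of \eqref{EQ:GIFS-sets}. Since $h$ is a contraction on $\mathbb{H}$ in the metric fixed earlier, $F$ is a contraction in the product Hausdorff metric; moreover, primitivity of $\vartheta$ ensures that every vertex of $\Gamma_\vartheta$ has at least one incoming edge, so that non-emptiness of each component is preserved under $F$. Uniqueness of the fixed point is then immediate from the GIFS version of Banach's fixed point theorem, in the spirit of Mauldin--Williams. The main technical subtlety of the argument is the $\ell = 0$ case of the $\subseteq$ inclusion, where the combinatorial structure of $\mc A_1$ has to be unwound carefully; the rest is a routine bookkeeping exercise once the identity $\pi \circ M = h \circ \pi$ and the prefix-language formulation are in hand.
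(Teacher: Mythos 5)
Your proposal is correct and follows essentially the same route as the paper: both derive the recursion from the prefix-language characterisation $\mathcal{S}_a(\mc L_{\vartheta,p}) = \bigcup M(\mathcal{S}_b(\mc L_{\vartheta,p})) + \psi(p)$, project via $\pi \circ M = h \circ \pi$, take closures, and invoke standard GIFS uniqueness. The only difference is one of detail: the paper asserts the set-level identity in a single sentence, whereas you unpack it into two inclusions and explicitly handle the $\ell = 0$ case via a pigeonhole argument on $\mathcal{A}_1$ — a point the paper's ``precisely'' glosses over but which your treatment correctly resolves.
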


\begin{proof}
For each $a \in \mathcal{A}$, the vectors $q \in \mathcal{S}_a (\mc L_{\vartheta,p}) = \{ \psi (v) : v a \in \mc L_{\vartheta,p} \}$ are precisely the vectors of the form $q = M r + \psi(p)$, where $r \in \mathcal{S}_b (\mc L_{\vartheta,p})$ for some $b \in \mathcal{A}$ and $p a s \in \vartheta (b)$. Thus,
\begin{align*}
    \mathcal{S}_a (\mc L_{\vartheta,p}) = \bigcup_{b \in \mathcal{A}} \bigcup_{\substack{(p,b,s) \in \mathcal{P}_{\vartheta} \\ p a s \in \vartheta (b)}} M(\mathcal{S}_b (\mc L_{\vartheta,p})) + \psi (p),
\end{align*}
and projecting to $\mathbb{H}$ gives
\begin{align}\label{EQ: gifs proof eq}
    \mathcal{R}^{*}_{\vartheta,a} = \bigcup_{b \in \mathcal{A}} \bigcup_{\substack{(p,b,s) \in \mathcal{P}_{\vartheta} \\ p a s \in \vartheta (b)}} \pi \circ M (\mathcal{S}_b (\mc L_{\vartheta,p})) + \pi (\psi (p)).
\end{align}
Since $\pi \circ M = h \circ \pi$, it follows from \eqref{EQ: gifs proof eq} that
\begin{align*}
    \mathcal{R}^{*}_{\vartheta,a} = \bigcup_{b \in \mathcal{A}} \bigcup_{\substack{(p,b,s) \in \mathcal{P}_{\vartheta} \\ p a s \in \vartheta (b)}} h(\mathcal{R}^{*}_{\vartheta,b}) + \pi (\psi (p)).
\end{align*}
Taking closure gives \eqref{EQ:GIFS-sets}.

The relation in \eqref{EQ:GIFS-sets} is in fact the self-consistency relation for a self-similar set $(\mc R_{\vartheta,a})_{a \in \mc A}$ of a GIFS $\mc G = \mc G(\vartheta) = (G,(X_a)_{a \in \mc A},(f_e)_{e \in E})$.
Here, $G = \Gamma_{\vartheta}$, the prefix-suffix graph, and $X_a = \mathbb H$ for all $a \in \mc A$.
For each edge $e = (p,b,s)$ in $\Gamma_{\vartheta}$, the corresponding contraction is given by $f_e = f_p$, where 
    \begin{align*}
    f_p \colon x \mapsto h(x) + \pi \circ \psi(p).
    \end{align*}
Thus, it follows that the sets $(\mc R_a)_{a \in \mc A}$ are in fact the unique, non-empty compact sets satisfying \eqref{EQ:GIFS-sets}---see, for instance, \cite{mauldin_urbanski_2003}.
\end{proof}

It should be emphasised that, in contrast to the case for deterministic substitutions, the sets $\mc R_{\vartheta,a}$ are in general no longer measure-disjoint for different choices of $a \in \mc A$.

\section{Measures on Rauzy fractals of random substitutions}\label{S: measures}

In this section, we regard $\vartheta_{\bm P}$ properly as a random substitution (equipped with probabilities). 
We equip the subtile $\mc R_{\vartheta,a}$ with a natural measure, called the \emph{Rauzy measure associated with $\vartheta$ and $a$}. 
This object can be defined in several different ways, which we show all coincide.

Given a finite word $w$, we associate with the set $\mathcal R_a^{\ast}(w)$ a corresponding counting measure
\begin{align*}
\mu_a(w) =  \sum_{x \in \mathcal R_a^{\ast}(w)} \delta_x
 = \sum_{ y \in \mathcal S_a(w)} \delta_{\pi(y)}.
\end{align*}
Here the last step holds because $\pi$ restricted to the integer lattice is injective, and here $\mc R_a^{\ast}(w)$ is as defined in \Cref{SS:Rauzy-construction}.
This is consistent with the notation for $\mu(w)$ employed in \Cref{SS:rauzy-measures} in the sense that $\mu(w) = \sum_{a \in \mc A} \mu_a(w)$. 
This relation also carries over to an infinite sequence $w$ if we set $\mu_a(w) =  \lim_{n\to \infty} n^{-1} \mu_a(w_{[1,n]})$, provided that the limit exists. Note, if $w$ is a \emph{random} word (or random sequence), the expression $\mu_a(w)$ is to be interpreted as a random measure. 
In general, it will depend on the choice of $w \in X_{\vartheta}$, but once a set of probability parameters $\mathbf{P}$ is fixed, almost all sequences will essentially give rise to the same measure. 

In the following, let $\vartheta_{\mathbf{P}}$ be a compatible, irreducible Pisot random substitution. 
Recall that $\nu$ denotes the $\vartheta_{\mathbf{P}}$-invariant measure on $X_{\vartheta}$ and $\varrho$ is the corresponding (ergodic) frequency measure. 
Also, we let $\phi$ be the map from the set of transitive points to the Rauzy fractal, defined in \Cref{SUBSEC:generic-factors}. 
This is well-defined up to a reference point $w'$ with $\phi(w') = 0$.
Recall that $R_a$ is the entry of the right Perron--Frobenius eigenvector $\mathbf{R}$ associated with the letter $a$.
We summarise the main results of this section in the following.

There is a vector of measures $\overline{\mu} = (\overline{\mu}_a)_{a \in \mc A}$, depending on $\vartheta_{\mathbf{P}}$, with the following properties for all $a \in \mc A$.
\begin{enumerate}
    \item $\overline{\mu}_a$ is supported on $\mc R_{\vartheta,a}$ and $\overline{\mu}_a(\mc R_{\vartheta,a}) = R_a$.
    \item $\overline{\mu}_a = \mu_a(w) $ for $\nu$-almost all $w \in X_{\vartheta}$ (in particular, the limit $\mu_a(w)$ exists).
    \item $\overline{\mu}_a = \lim_{n \to \infty} |\vartheta^n(v)|^{-1}\mu_a(\vartheta_{\mathbf{P}}^n(v))$ holds $\mathbb{P}$-almost surely for all $v \in \mc A^+$.
    \item $\overline{\mu}_a$ is absolutely continuous relative to Lebesgue measure, with bounded density.
    \item $(R_a^{-1} \overline{\mu}_a)_{a \in \mc A}$ is self-similar for the GIFS in \Cref{SS:Rauzy-GIFS}, equipped with appropriate probabilities.
\end{enumerate}
Given an appropriate reference point $w' \in X_{\vartheta}$ with $\phi(w') = 0$, we further have
\begin{enumerate}
    \setcounter{enumi}{5}
    \item $\mu = \sum_{a \in \mc A} \overline{\mu}_a$ coincides with $\mu(w)$  for $\varrho$-almost every $w \in X_{\vartheta}$, up to a translation by $\phi(w)$.
    \item $\mu$ is the pushforward of $\varrho$ under $\phi$.
\end{enumerate}
Finally, for a fixed (set-valued) compatible, irreducible Pisot random substitution $\vartheta$, 
\begin{enumerate}
    \setcounter{enumi}{7}
    \item $\overline{\mu}_a$ depends (weakly-)continuously on the probability parameters $\mathbf{P}$.
\end{enumerate}
The first item is an immediate consequence of the following two, which, in turn, will be proved in \Cref{PROP:nu-as-convergence} and \Cref{PROP:as-convergence-markov}, respectively. 
We obtain the absolute continuity of the measures from the Lebesgue covering property discussed in \Cref{SS:Lebesgue-covering-RS}. 
The interpretation in terms of a GIFS is provided in \Cref{PROP:measure-GIFS} and we will use this to obtain the continuous dependence on the probability parameters.
The total mass of $\bar{\mu}_a$ is made precise in \Cref{COR:Lebesgue-sum}.
Items (7) and (8) follow from the general framework on balanced sequences as we will show in \Cref{SS:frequency-measure-image}.

Our first aim is to determine how $\mu_a(w)$ changes as we apply $\vartheta_{\bm P}$. 
We develop in parallel the prerequisites to show that there is a Rauzy measure that emerges both by following the expected measure as well as by following a generic path of the random variables $(\vartheta_{\bm P}^n(v))_{n \in \N}$.
This framework follows a similar inflation word formalism established in \cite{gohlke} for topological entropy of random substitutions and expanded in \cite{MT-entropy} for measure theoretic entropy, whereby we will establish an ``inflation word version" of the Rauzy measure and subsequently show this to be the same as the one obtained as the Rauzy measure (as described in Section \ref{SS:rauzy-measures}) of a generic infinite sequence.

\begin{lemma}
\label{LEM:measure-substitution}
For every $w \in \mc A^{\ast}$, we have
\begin{align*}
\mu_a (\vartheta_{\bm P} (w)) = \sum_{b \in \mathcal A}  \sum_{x \in \mathcal R^{\ast}_b(w)} \mu_a(\vartheta_{\bm P} (b)) \ast \delta_{h x},
\end{align*}
where the occurrence of $\mu_a (\vartheta_{\bm P} (b))$ is to be understood as an independent random measure in each summand.
\end{lemma}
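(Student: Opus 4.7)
Write $w = w_1 \cdots w_n$ and invoke the definition of the random substitution on words: $\vartheta_{\bm P}(w)$ is the concatenation $v^{(1)} \cdots v^{(n)}$, where the $v^{(i)}$ are \emph{independent} random realisations of $\vartheta_{\bm P}(w_i)$. My first step is to decompose the set $\mc S_a(\vartheta_{\bm P}(w))$ of prefix abelianisations followed by an $a$ according to which block $v^{(i)}$ contains that $a$. The abelianisation of the prefix ending inside block $i$ at a position immediately before some letter $a$ equals $\psi(v^{(1)} \cdots v^{(i-1)}) + z$ for some $z \in \mc S_a(v^{(i)})$. Here compatibility is essential: it ensures $\psi(v^{(1)}\cdots v^{(i-1)}) = M \psi(w_{[1,i-1]})$ regardless of the random realisations, because the abelianisation of $\vartheta(u)$ is deterministic for compatible $\vartheta$. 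Thus
\[
\mc S_a(\vartheta_{\bm P}(w)) = \bigsqcup_{i=1}^n \bigl(M\psi(w_{[1,i-1]}) + \mc S_a(v^{(i)})\bigr),
\]
a disjoint union since the total length $|v^{(1)} \cdots v^{(i-1)}|$ is strictly increasing in $i$.

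Next, I apply the projection $\pi$ and use the intertwining relation $\pi M = h \pi$ (valid on $\R^d$ because $\mathbb{H}$ is the contracting subspace associated with $M$) to get
\[
\mu_a(\vartheta_{\bm P}(w)) = \sum_{y \in \mc S_a(\vartheta_{\bm P}(w))} \delta_{\pi(y)} = \sum_{i=1}^n \sum_{z \in \mc S_a(v^{(i)})} \delta_{h \pi \psi(w_{[1,i-1]}) + \pi(z)} = \sum_{i=1}^n \mu_a(v^{(i)}) \ast \delta_{h x_i},
\]
where $x_i = \pi \psi(w_{[1,i-1]})$. Injectivity of $\pi$ on the integer lattice ensures there is no double counting between the summands when we pass from the set of abelianisations to the sum of Dirac masses.

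Finally, I reindex the sum over positions $i$ as a double sum over letters $b \in \mc A$ and points $x \in \mc R_b^\ast(w)$: by definition $\mc R_b^\ast(w) = \{\pi \psi(w_{[1,i-1]}) : 1 \leqslant i \leqslant n,\ w_i = b\}$, and the correspondence $i \leftrightarrow (w_i, \pi\psi(w_{[1,i-1]}))$ is a bijection, again using that prefix abelianisations are pairwise distinct under $\pi$. For each such pair $(b,x)$, the corresponding $v^{(i)}$ is distributed as $\vartheta_{\bm P}(b)$, and mutual independence of all the $v^{(i)}$ (for $1 \le i \le n$) precisely matches the interpretation in the statement that each occurrence of $\mu_a(\vartheta_{\bm P}(b))$ is an independent random measure. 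This yields the claimed identity.

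The proof is essentially a bookkeeping exercise; the only real obstacle is ensuring compatibility is used correctly so that $\psi$ of a concatenation of realisations is deterministic, and verifying that the reindexing respects independence. No delicate limiting argument is needed at this stage since $w$ is a finite word and everything in sight is a finite sum.
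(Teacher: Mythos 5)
Your proposal is correct and follows essentially the same route as the paper's proof: decompose $\mc S_a(\vartheta_{\bm P}(w))$ block by block, use compatibility to replace $\psi(\vartheta_{\bm P}(w_{[1,k-1]}))$ by the deterministic $M\psi(w_{[1,k-1]})$, apply $\pi\circ M = h\circ\pi$, and reindex the sum over positions as a sum over $b\in\mc A$ and $x\in\mc R_b^\ast(w)$. Your additional remarks on disjointness of the decomposition and injectivity of $\pi$ on the lattice make explicit what the paper leaves implicit, but the argument is the same.
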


\begin{proof}
We recall that for $a \in \mathcal{A}$ and $v = v_1 \cdots v_n \in \mathcal{A}^n$,
\begin{align*}
    \mathcal{S}_a (v) = \{ \psi (v_{[1,m]}) : m \in \{0, 2, \dots, n-1 \} \ \text{and} \ v_{m+1} = a \} .
\end{align*}
For $w = w_1 \cdots w_n \in \mathcal{A}^n$ we obtain $\vartheta_{\bm P} (w) = \vartheta_{\bm P} (w_1) \cdots \vartheta_{\bm P} (w_n)$, and hence
\begin{align*}
\mathcal S_a (\vartheta_{\bm P} (w)) = \bigcup_{k = 1}^{n} \left( \mathcal S_a(\vartheta_{\bm P} (w_k) ) + \psi(\vartheta_{\bm P} (w_{[1,k-1]})) \right),
\end{align*}
to be understood as a union of (independent) random sets. 
Recall that $\psi(\vartheta_{\bm P} (w_{[1,k-1]})) = M \psi(w_{[1,k-1]})$, where $M$ is the matrix of $\vartheta_{\bm P}$, and therefore
\begin{align*}
\mathcal R^{\ast}_a (\vartheta_{\bm P} (w)) = 
\bigcup_{k=1}^{n} \left( \mathcal R^{\ast}_a (\vartheta_{\bm P} (w_k)) + h ( \pi \circ \psi(w_{[1,k-1]})) \right).
\end{align*}
This gives
\begin{align*}
\mu_a (\vartheta_{\bm P} (w)) =  \sum_{k = 1}^{n} \mu_a(\vartheta_{\bm P} (w_k)) \ast \delta_{h ( \pi \circ \psi(w_{[1,k-1]}))}.
\end{align*}
Recall that $w_k = b$ precisely if $\pi \circ \psi (w_{[1,k-1]}) \in \mathcal R^{\ast}_b(w)$.
We can therefore reformulate the last equation as
\[
\mu_a (\vartheta_{\bm P} (w)) = \sum_{b \in \mathcal A}  \sum_{x \in \mathcal R^{\ast}_b(w)} \mu_a (\vartheta_{\bm P} (b)) \ast \delta_{h x}.\qedhere
\]
\end{proof}

In order to formulate the random GIFS later on, we need one more ingredient. 

\begin{lemma}
\label{LEM:f_p-measure}
For every $w \in \mathcal A^+$,
\begin{align*}
\mu_a (\vartheta_{\bm P} (w)) = \sum_{b \in \mathcal A} \sum_{x \in \mathcal R_b^{\ast}(w)} \sum_{p,\, pas \in \vartheta (b)} \delta_x \circ f_p^{-1},
\end{align*}
where $f_p \colon x \mapsto h(x) + \pi \circ \psi(p)$.
\end{lemma}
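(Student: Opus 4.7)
The plan is to deduce the claimed identity directly from \Cref{LEM:measure-substitution}, since the only new content is to unpack the single-letter factor $\mu_a(\vartheta_{\bm P}(b))$ and rewrite the convolution $\ast\, \delta_{hx}$ in terms of the affine maps $f_p$. First I would start from the formula
\[
\mu_a (\vartheta_{\bm P} (w)) = \sum_{b \in \mathcal A}  \sum_{x \in \mathcal R^{\ast}_b(w)} \mu_a(\vartheta_{\bm P} (b)) \ast \delta_{h x},
\]
supplied by \Cref{LEM:measure-substitution}, where the copies of $\mu_a(\vartheta_{\bm P}(b))$ indexed by different $(b,x)$ are independent random measures. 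The remaining work is thus to rewrite each summand $\mu_a(\vartheta_{\bm P}(b)) \ast \delta_{hx}$ in the desired form.

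Next, I would compute $\mu_a(v)$ for a deterministic word $v \in \mathcal A^+$. By definition, $\mu_a(v) = \sum_{y \in \mathcal S_a(v)} \delta_{\pi(y)}$, and $y \in \mathcal S_a(v)$ corresponds bijectively to a factorisation $v = pas$ with $p \in \mathcal A^\ast$, $s \in \mathcal A^\ast$, via $y = \psi(p)$. Hence for any $v \in \vartheta(b)$,
\[
\mu_a(v) = \sum_{\substack{(p,s)\,:\, pas = v}} \delta_{\pi \circ \psi(p)}.
\]
Convolving with $\delta_{hx}$ translates every Dirac mass by $hx$, so that each individual summand becomes $\delta_{hx + \pi \circ \psi(p)} = \delta_{f_p(x)} = \delta_x \circ f_p^{-1}$ by the definition $f_p(y) = h(y) + \pi \circ \psi(p)$. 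Substituting the random realisation $v = \vartheta_{\bm P}(b)$ for $v$ and summing over its decompositions into $pas$ yields
\[
\mu_a(\vartheta_{\bm P}(b)) \ast \delta_{hx} = \sum_{\substack{p \,:\, pas \,=\, \vartheta_{\bm P}(b)}} \delta_x \circ f_p^{-1},
\]
which is exactly the inner sum in the statement, with the convention (as in \Cref{LEM:measure-substitution}) that the sum ranges over the decompositions of the realised $\vartheta_{\bm P}(b)$ for that particular summand and that different $(b,x)$ trigger independent realisations.

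Plugging this back into the expression from \Cref{LEM:measure-substitution} completes the proof. There is no substantial obstacle: the lemma is purely a bookkeeping rearrangement whose only subtle point is keeping track of the independence of the random measures across different $(b,x)$ pairs, exactly as was already highlighted in the statement of \Cref{LEM:measure-substitution}. The pay-off of this reformulation is that each summand is now expressed as a pushforward by one of the GIFS contractions $f_p$ from \Cref{SS:Rauzy-GIFS}, which is the form needed to later identify the Rauzy measures as self-similar measures of the random GIFS.
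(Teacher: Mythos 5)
Your argument is correct and follows the paper's own proof essentially verbatim: both start from \Cref{LEM:measure-substitution}, rewrite $\mu_a(\vartheta_{\bm P}(b))$ as a sum of Dirac masses $\delta_{\pi\circ\psi(p)}$ indexed by the decompositions $pas$ of the realised inflation word, and then identify $\delta_{hx+\pi\circ\psi(p)}$ with $\delta_x\circ f_p^{-1}$. No gaps.
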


\begin{proof}
We can be more explicit about $\mu_a (\vartheta_{\bm P} (b))$, which can be rewritten as
\begin{align*}
\mu_a (\vartheta_{\bm P} (b)) = \sum_{p, \, pas = \vartheta(b)} \delta_{\pi  \circ \psi(p)},
\end{align*}
yielding
\begin{align*}
\mu_a (\vartheta_{\bm P} (w)) = \sum_{b \in \mathcal A} \sum_{x \in \mathcal R_b^{\ast}(w)} \sum_{p,\, pas = \vartheta(b)} \delta_{hx + \pi \circ \psi(p)}.
\end{align*}
With the definition of $f_p$ as above, we obtain 
\[
\mu_a (\vartheta_{\bm P} (w)) = \sum_{b \in \mathcal A} \sum_{x \in \mathcal R_b^{\ast}(w)} \sum_{p,\, pas = \vartheta(b)} \delta_x \circ f_p^{-1}.\qedhere
\]
\end{proof}

\subsection{Expected values}
\label{SUBSEC:expected_values}

Recall that by random word, we mean a random variable, whose co-domain is $\mathcal{A}^{\ast}$.  In the following, for a random word $w$, we set $\overline{\mu}_a(w) \coloneqq \mathbb{E} [\mu_a (w)]$, where the expectation is taken with resect to the distribution of the random word $w$. 

\begin{lemma}
\label{COR:average-iteration}
For every random word $w$ of fixed length, we have
\begin{align*}
\overline{\mu}_a (\vartheta_{\bm P} (w)) = \sum_{b \in \mathcal A} \overline{\mu}_a (\vartheta_{\bm P} (b)) \ast \left( \overline{\mu}_b(w) \circ h^{-1} \right). 
\end{align*}
\end{lemma}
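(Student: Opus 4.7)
The plan is to derive the identity by taking expectations of the pointwise formula already established in \Cref{LEM:measure-substitution}, exploiting the independence inherent in the random substitution's action on distinct letters. Starting from
\begin{align*}
\mu_a (\vartheta_{\bm P} (w)) = \sum_{b \in \mathcal A}  \sum_{x \in \mathcal R^{\ast}_b(w)} \mu_a(\vartheta_{\bm P} (b)) \ast \delta_{h x},
\end{align*}
I will first condition on the realisation of $w$. Because $w$ has fixed length, the set $\mathcal{R}^{\ast}_b(w)$ is almost surely finite with a uniformly bounded cardinality, so the outer sums are finite, and the random measures $\mu_a(\vartheta_{\bm P}(b))$ are, by construction, independent of $w$ and mutually independent across the summands.

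Next, I will use linearity of expectation (and the fact that convolution with a deterministic Dirac mass commutes with taking expectations of a random measure) to obtain, conditional on $w$,
\begin{align*}
\mathbb{E}\bigl[\mu_a(\vartheta_{\bm P}(w)) \mid w\bigr] = \sum_{b \in \mathcal A} \overline{\mu}_a(\vartheta_{\bm P}(b)) \ast \sum_{x \in \mathcal R^{\ast}_b(w)} \delta_{h x}.
\end{align*}
The inner sum is precisely $\mu_b(w) \circ h^{-1}$, since for any finite set $S \subset \mathbb{H}$ and bijection $h$ one has $\sum_{x \in S}\delta_{hx} = \bigl(\sum_{x\in S}\delta_x\bigr)\circ h^{-1}$. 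Applying this identity yields
\begin{align*}
\mathbb{E}\bigl[\mu_a(\vartheta_{\bm P}(w)) \mid w\bigr] = \sum_{b \in \mathcal A} \overline{\mu}_a(\vartheta_{\bm P}(b)) \ast \bigl(\mu_b(w) \circ h^{-1}\bigr).
\end{align*}

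Finally, taking the expectation of both sides over $w$ and once more exchanging the (finite) sum with expectation, together with $\mathbb{E}[\mu_b(w) \circ h^{-1}] = \overline{\mu}_b(w) \circ h^{-1}$, gives the claimed equality. The only point requiring any care is the justification of the exchange of expectation and convolution; since all measures involved have bounded total mass (controlled by $|w|$ and $\max_b |\vartheta(b)|$), this follows routinely from Fubini applied test-function-wise, so no real obstacle arises.
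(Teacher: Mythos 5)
Your proposal is correct and follows essentially the same route as the paper: both take expectations in \Cref{LEM:measure-substitution} and use the independence of the letterwise substituted images from $w$ and from each other (the Markov property of the substitution action) to factor the expectation of each convolution summand, identifying $\mathbb{E}\bigl[\sum_{x \in \mathcal R^{\ast}_b(w)} \delta_{hx}\bigr]$ with $\overline{\mu}_b(w)\circ h^{-1}$. Your version merely spells out the conditioning on $w$ and the finiteness justifications more explicitly than the paper's one-line argument.
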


\begin{proof}
Taking expectation values in \Cref{LEM:measure-substitution} and using the Markov property of the substitution action,
\[
\mathbb{E}[\mu_a (\vartheta_{\bm P} (w))] =  \sum_{b \in \mathcal A} \overline{\mu}_a(\vartheta_{\bm P} (b)) \ast \mathbb{E} \left[ \sum_{x \in \mathcal R^{\ast}_b(w)} \delta_{h_{\vartheta} x} \right]
= \sum_{b \in \mathcal A} \overline{\mu}_a(\vartheta_{\bm P} (b)) \ast \left( \overline{\mu}_b(w) \circ h^{-1} \right). \qedhere
\]
\end{proof}

The following should be compared with the set valued recursion in \Cref{THM: r-Rauzy GIFS}. 
Up to a normalisation factor, this will be our measure analogue of the GIFS.

\begin{lemma}
\label{COR:measure-gifs-iteration}
For every random word $w$ of fixed length, we have
\begin{align*}
\overline{\mu}_a (\vartheta_{\bm P} (w)) = \sum_{(p,b,s) \in \Gamma_{\theta}} \mathbb{P}[\vartheta_{\bm P} (b) = pas] \, \overline{\mu}_b(w) \circ f_p^{-1}.
\end{align*}
\end{lemma}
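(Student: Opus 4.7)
My plan is to obtain this by averaging the single-letter case of \Cref{LEM:f_p-measure} and plugging the result into \Cref{COR:average-iteration}. First, I would compute $\overline{\mu}_a(\vartheta_{\bm P}(b))$ explicitly. For a single letter $b$ the random word $\vartheta_{\bm P}(b)$ takes the value $pas$ with probability $\mathbb P[\vartheta_{\bm P}(b) = pas]$, and for that fixed realisation the measure $\mu_a(pas) = \delta_{\pi \circ \psi(p)}$ records the unique position of $a$. Averaging over realisations yields
\begin{align*}
\overline{\mu}_a(\vartheta_{\bm P}(b)) = \sum_{\substack{(p,s)\\ pas \in \vartheta(b)}} \mathbb P[\vartheta_{\bm P}(b) = pas]\, \delta_{\pi \circ \psi(p)}.
\end{align*}
(This is essentially the level-one instance of \Cref{LEM:f_p-measure}, taken in expectation.)

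Next I would substitute this expression into \Cref{COR:average-iteration} to obtain
\begin{align*}
\overline{\mu}_a(\vartheta_{\bm P}(w)) = \sum_{b \in \mathcal A} \sum_{\substack{(p,s)\\ pas \in \vartheta(b)}} \mathbb P[\vartheta_{\bm P}(b) = pas]\, \delta_{\pi \circ \psi(p)} \ast \bigl(\overline{\mu}_b(w) \circ h^{-1}\bigr).
\end{align*}
The only computation left is to recognise the inner convolution as a pushforward under $f_p$. Since convolution with a Dirac mass is a translation, for any Borel $E$ we have
\begin{align*}
\bigl(\delta_{\pi\circ\psi(p)} \ast (\overline{\mu}_b(w) \circ h^{-1})\bigr)(E) = \overline{\mu}_b(w)\bigl(h^{-1}(E - \pi \circ \psi(p))\bigr) = \overline{\mu}_b(w)\bigl(f_p^{-1}(E)\bigr),
\end{align*}
using the definition $f_p(x) = h(x) + \pi \circ \psi(p)$ from \Cref{THM: r-Rauzy GIFS}.

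Finally, I would re-index the double sum: the triples $(p,b,s)$ for which $pas \in \vartheta(b)$ for some $b \in \mathcal A$ are exactly the edges of $\Gamma_\theta$ entering from vertex $a$ in the prefix-suffix graph, which is what the notation $(p,b,s) \in \Gamma_\theta$ in the statement abbreviates. Collecting terms delivers the claimed formula. There is no real obstacle here: the content of the lemma is essentially bookkeeping that converts the averaged recursion of \Cref{COR:average-iteration} into a weighted GIFS-style recursion ready for use in the subsequent self-similarity arguments; the only place deserving a line of verification is the convolution–pushforward identity, and the delicate step of unwinding the independence of substitutions at different positions has already been performed in \Cref{LEM:measure-substitution}.
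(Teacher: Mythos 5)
Your proposal is correct and amounts to the same computation as the paper's, which simply takes the expectation of \Cref{LEM:f_p-measure}; you assemble the identical ingredients in the reverse order, first averaging the single-letter formula and then substituting into \Cref{COR:average-iteration} via the convolution--pushforward identity $\delta_{\pi\circ\psi(p)} \ast (\nu \circ h^{-1}) = \nu \circ f_p^{-1}$. All steps check out, including the bilinearity of convolution and the re-indexing over prefix-suffix triples.
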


\begin{proof}
This result follows from \Cref{LEM:f_p-measure} and taking expectation.
\end{proof}

Note that the recursion in \Cref{COR:average-iteration} has the structure of a matrix convolution with a vector. 
Following this interpretation and starting from a fixed word $v \in \mc A^{\ast}$, we define the measure-valued vector 
\begin{align*}
\overline{\mu}^n(v) \coloneqq (\overline{\mu}_a(\vartheta_{\bm P}^n(v)))_{a \in \mathcal A},
\end{align*}
for all $n \in \N_0$.
By construction, for each $a \in \mathcal A$, the measures 
\begin{align*}
\frac{1}{|\vartheta_{\bm P}^n(v)|_a} \overline{\mu}^n_a(v)
\end{align*}
are probability measures on a compact space for each $n \in \N$ and hence have a weak accumulation point. 
In the following, we show that we in fact have convergence.
Let us outline the general idea for the proof. 
Taking Fourier transforms in \Cref{COR:average-iteration}, we obtain a self-consistency relation that involves a matrix multiplication. 
Iterating this relation leads to a matrix cocycle that converges compactly. 

As a first step, we introduce some notation. 
Fix some $v \in \mc A^{\ast}$ and set $F^n_v = (\operatorname{FT}[\overline{\mu}^n_a(v)])_{a \in \mc A}$, where $\operatorname{FT}$ denotes the Fourier transform operator.
Since $\overline{\mu}_a(v)$ is a finite linear combination of Dirac measures on $\mathbb H$, its Fourier transform is represented by a continuous function, taking values in the dual vector space $\mathbb H^{\ast}$. 
The latter can be identified with $\R^{d-1}$. A straightforward calculation yields
\begin{align*}
\operatorname{FT}[\overline{\mu}_a(v) \circ h^{-1}](k) = \operatorname{FT}[\overline{\mu}_a(v)](g k),
\end{align*}
where $g = h^{\ast}$ is the dual of $h$, given by
\begin{align*}
\langle g k, x \rangle
= \langle k, h x \rangle,
\end{align*}
for all $x \in \mathbb H$ and $k \in\mathbb H^{\ast}$.
We also let $B = (B_{ab})_{a,b \in \mc A}$ be the (function-valued) matrix given by
\begin{align*}
B_{ab} = \operatorname{FT} [\overline{\mu}_a (\vartheta_{\bm P} (b))],
\end{align*}
for all $a,b \in \mc A$. 
Then, applying the Fourier transform to the relation in \Cref{COR:average-iteration}, with $w = \vartheta_{\bm P}^n(v)$, we obtain
\begin{align*}
\operatorname{FT}[\overline{\mu}^{n+1}_a(v)](k)
 = \sum_{b \in \mc A} B(k)_{ab}
 \operatorname{FT}[\overline{\mu}^n_b(v)](gk).
\end{align*}
In a more compact manner, this can be written as a cocycle relation over the dynamical system $k \mapsto g k$, given by
\begin{align*}
F^{n+1}_v(k) = B(k) \, F^n_v (g k).
\end{align*}
Iterating this relation gives
\begin{align}
\label{EQ:mu-n-cocycle}
F^n_v(k) = B^{(n)}(k) \, F^0_v(g^n k), 
\quad B^{(n)}(k) = B(k) B(g k) \cdots B(g^{n-1}k).
\end{align}
By construction, the assignments $k \mapsto B(k)$ and $k \mapsto F^0_v(k)$ are both Lipshitz-continuous. 
Since $g$ is a contraction, the iteration $k \mapsto gk$ approaches the fixed point $0$ exponentially fast. 
Since all entries of $B$ and $F^{0}_v$ are defined via the Fourier transform of a measure, evaluation at $0$ gives the total mass of the corresponding measure. Hence,
\begin{align*}
B(0) = M, \quad F^{0}_v(0) = \psi(v).
\end{align*}
Recall that $\mathbf{L}$ and $\mathbf{R}$ are the left and right Perron--Frobenius eigenvectors resp.\ of the matrix $M$.
Following the line of argument in \cite{BG20}, we quickly arrive at the following; compare \cite[Theorem~4.6]{BG20} and the discussion thereafter.

\begin{lemma}
\label{LEM:cocycle-convergence}
The sequence $(\lambda^{-n} B^{(n)}(k))_{n \to \infty}$ converges compactly on $\mathbb H^{\ast}$ to a matrix valued function $C(k)$. 
The limit is of the form
\begin{align*}
C(k) = c(k) \mathbf{L}^{T},
\end{align*}
where $k \mapsto c(k)$ is a continuous vector valued function with $c(0) = \mathbf{R}$.
\end{lemma}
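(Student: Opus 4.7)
The approach is to use the contractivity of the dual map $g$ together with the Lipschitz continuity of $B$ near $0$, reducing the behaviour of the cocycle to a Perron--Frobenius analysis of $M = B(0)$. Since $g$ contracts at some rate $\rho < 1$ (inherited from the Pisot property of $\lambda$), the arguments $g^j k$ approach $0$ exponentially fast, and one obtains a bound of the form $\|B(g^j k) - M\| \leqslant L \rho^j \|k\|$ for a Lipschitz constant $L$; in particular, these deviations from $M$ are summable in $j$ on compact subsets of $k$.

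First, I would handle the base case $k = 0$ directly. Here $B^{(n)}(0) = M^n$, and primitivity of $M$ together with Perron--Frobenius yields $\lambda^{-n} M^n \to P := \mathbf{R} \mathbf{L}^T$ geometrically, at a rate governed by the subdominant spectrum of $M$. This already identifies $C(0) = \mathbf{R} \mathbf{L}^T$, hence $c(0) = \mathbf{R}$. For general $k$, I would exploit the cocycle recursion $\lambda^{-n} B^{(n)}(k) = \lambda^{-1} B(k) \cdot \lambda^{-(n-1)} B^{(n-1)}(g k)$ iterated to split, for any $m \leqslant n$,
\[
\lambda^{-n} B^{(n)}(k) = \Bigl(\prod_{j=0}^{m-1} \lambda^{-1} B(g^j k)\Bigr) \cdot \lambda^{-(n-m)} B^{(n-m)}(g^m k).
\]
The prefix is a finite product of continuous matrix-valued functions of $k$. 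For the tail with $m$ large, $g^m k$ lies in a small ball around $0$, and a telescoping perturbation argument using the spectral decomposition $\lambda^{-1} M = P + N$ (with $N$ contractive on the complement of the Perron direction) shows that the tail is uniformly close to $\lambda^{-(n-m)} M^{n-m}$, which itself converges to $P$ as $n \to \infty$.

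Passing first to $n \to \infty$ for fixed $m$, and then sending $m \to \infty$, yields compact convergence of $\lambda^{-n} B^{(n)}(k)$ to a limit $C(k)$. Since in every approximation the right factor is proportional to $P = \mathbf{R} \mathbf{L}^T$, the rank-one structure is preserved in the limit, forcing $C(k) = c(k) \mathbf{L}^T$ with $c$ continuous in $k$; evaluation at $k = 0$ recovers $c(0) = \mathbf{R}$ from the base case. The main obstacle will be the tail estimate: because $\lambda^{-1} M$ has unit spectral radius, one cannot naively iterate a norm bound on the perturbed product. Following the strategy of \cite{BG20}, one has to separate the Perron component (on which the perturbations only generate a bounded multiplicative distortion that sums geometrically in $j$) from the complementary component (where the genuine contraction of $N$ absorbs errors), in order to obtain uniform control of the full matrix cocycle on compact sets.
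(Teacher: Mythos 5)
Your proposal is correct and is essentially the route the paper intends: the paper gives no proof of this lemma but defers to \cite[Theorem~4.6]{BG20}, whose argument is precisely what you reconstruct --- Lipschitz continuity of $B$ at $0$ together with the contractivity of $g$ makes the deviations $E_j = \lambda^{-1}B(g^jk)-\lambda^{-1}M$ summable on compacta, so the normalised cocycle is a convergent perturbation of $\lambda^{-n}M^n \to \mathbf{R}\mathbf{L}^T$, and the rank-one right factor forces $C(k)=c(k)\mathbf{L}^T$ with $c(0)=\mathbf{R}$. One minor remark: your concern about iterating norm bounds is unnecessary, since the irreducible Pisot hypothesis makes the eigenvalue $1$ of $\lambda^{-1}M$ simple with all remaining eigenvalues strictly inside the unit disc, so $\lambda^{-1}M$ is power-bounded and the crude estimate $\bigl\|\prod_{j}(\lambda^{-1}M+E_j)-(\lambda^{-1}M)^{n-m}\bigr\| \leqslant K\bigl(\exp\bigl(K\sum_j\|E_j\|\bigr)-1\bigr)$ already closes the tail estimate, although your Perron/complement splitting works as well.
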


From this, convergence of the measure-valued vectors $\lambda^{-n} \overline{\mu}^{n}(v)$ is obtained as a corollary. 
More precisely, we obtain the following.

\begin{prop}
\label{PROP:limit-measure-vector}
There exists a vector of finite measures $\overline{\mu} = (\overline{\mu}_a)_{a \in \mc A}$ such that
\begin{align*}
\lim_{n \to \infty} \frac{1}{|\vartheta^n(v)|} \overline{\mu}^n(v) 
\coloneqq \lim_{n \to \infty} \frac{1}{|\vartheta^n(v)|} (\overline{\mu}_a (\vartheta_{\bm P}^n (v)))_{a \in \mathcal A}
= \overline{\mu}
\end{align*}
in the sense of weak convergence, for all $v \in \mathcal A^+$. 
Further $\|\overline{\mu}\| = \mathbf{R}$, where $\|\overline{\mu}\| = (\| \overline{\mu}_a \| )_{a \in \mc A}$ and $\| \overline{\mu}_a \| = \overline{\mu}_a(\mathbb{H})$ denotes the total mass of the measure $\overline{\mu}_a$. 
\end{prop}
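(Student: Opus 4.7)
The plan is to combine the cocycle representation \eqref{EQ:mu-n-cocycle} with \Cref{LEM:cocycle-convergence} to obtain pointwise (in fact, compact) convergence of the normalised Fourier transforms $F^n_v(k)/|\vartheta^n(v)|$, and then upgrade this Fourier-side convergence to weak convergence of measures via a tightness-plus-uniqueness argument.

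First, I would record the relevant asymptotics of the normalisation factor. Since $\vartheta_{\bm P}$ is compatible, $\psi(\vartheta^n_{\bm P}(v)) = M^n \psi(v)$ is deterministic, so $|\vartheta^n(v)| = \mathbf{1}^T M^n \psi(v)$. The Perron--Frobenius theorem, together with the normalisations $\|\mathbf{R}\|_1 = 1$ and $\mathbf{L}^T \mathbf{R} = 1$, yields
\begin{align*}
    \frac{\lambda^n}{|\vartheta^n(v)|} \longrightarrow \frac{1}{\mathbf{L}^T \psi(v)} \quad \text{as } n \to \infty.
\end{align*}
Dividing \eqref{EQ:mu-n-cocycle} by $|\vartheta^n(v)|$ and rearranging, I would then write
\begin{align*}
    \frac{F^n_v(k)}{|\vartheta^n(v)|} = \frac{\lambda^n}{|\vartheta^n(v)|} \bigl( \lambda^{-n} B^{(n)}(k) \bigr) F^0_v(g^n k).
\end{align*}
The middle factor tends to $c(k)\, \mathbf{L}^T$ compactly by \Cref{LEM:cocycle-convergence}, while $F^0_v(g^n k) \to F^0_v(0) = \psi(v)$ because $g$ is a contraction with fixed point $0$ and $F^0_v$ is Lipschitz-continuous. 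Taking the product and cancelling yields
\begin{align*}
    \lim_{n \to \infty} \frac{F^n_v(k)}{|\vartheta^n(v)|} = \frac{c(k) \,\mathbf{L}^T \psi(v)}{\mathbf{L}^T \psi(v)} = c(k),
\end{align*}
pointwise in $k \in \mathbb H^\ast$, and -- as a crucial sanity check -- independently of $v$.

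To promote this Fourier-side convergence to weak convergence of measures, I would proceed as follows. Evaluating the limit at $k = 0$ gives $c(0) = \mathbf{R}$, hence the $a$-th component total mass converges: $|\vartheta^n(v)|^{-1} \overline{\mu}_a(\vartheta^n_{\bm P}(v))(\mathbb{H}) \to R_a$. Moreover, by \Cref{THM:Pisot-balanced}, every element of $X_\vartheta$ is uniformly $C$-balanced, so each $\mathcal R_a^\ast(\vartheta^n_{\bm P}(v))$ is contained in a common bounded subset of $\mathbb H$ independent of $n$; this renders the family $\bigl( |\vartheta^n(v)|^{-1} \overline{\mu}_a(\vartheta^n_{\bm P}(v)) \bigr)_n$ tight. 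Any weak accumulation point must then have Fourier transform $c_a$ and total mass $R_a$, so by uniqueness of the Fourier transform all accumulation points coincide with a single finite measure $\overline{\mu}_a$ satisfying $\|\overline{\mu}_a\| = R_a$. Assembling the components produces the vector $\overline{\mu} = (\overline{\mu}_a)_{a \in \mc A}$ and the desired weak convergence $|\vartheta^n(v)|^{-1} \overline{\mu}^n(v) \to \overline{\mu}$ with $\|\overline{\mu}\| = \mathbf{R}$.

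The main technical hurdle is the tightness step, which relies essentially on the uniform support bound supplied by irreducible Pisot $C$-balancedness -- without it, one could only deduce vague convergence and the mass on the boundary of $\mc R_\vartheta$ (or escape of mass to infinity) would be hard to control. The remainder is bookkeeping in the cocycle asymptotics and standard Fourier-analytic arguments.
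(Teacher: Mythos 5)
Your proposal is correct and follows essentially the same route as the paper: both arguments rest on the cocycle representation \eqref{EQ:mu-n-cocycle}, \Cref{LEM:cocycle-convergence}, and the asymptotics $\lambda^{-n}\lvert\vartheta^n(v)\rvert \to \mathbf{L}^T\psi(v)$ to identify the limiting Fourier transform as $c(k)$, independently of $v$. The only (minor) divergence is the final upgrade to weak convergence: the paper invokes L\'evy's continuity theorem via the continuity of $k \mapsto c(k)$, whereas you use tightness from the uniform support bound supplied by $C$-balancedness together with uniqueness of Fourier transforms of finite measures --- both are valid, and your variant has the small merit of making explicit both why no mass escapes and why $F^0_v(g^n k) \to \psi(v)$, a step the paper leaves implicit.
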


\begin{proof}
Let us note that 
\begin{align*}
\lim_{n \to \infty} \lambda^{-n} |\vartheta^n(v)|
= \lim_{n \to \infty} \lambda^{-n} \sum_{a \in \mc A} (M^n \psi(v))_a = \mathbf{L}^T \psi(v) =:  L_v.
\end{align*}
Hence, when taking the limit of a product, we may replace $|\vartheta^n(v)|$ by $\lambda^n L_v$. Recall that we denote the Fourier transform of $\overline{\mu}^n(v)$ by $F^n_v$. By \eqref{EQ:mu-n-cocycle}, we obtain via \Cref{LEM:cocycle-convergence}
\begin{align*}
\lim_{n \to \infty} (L_v \lambda^n)^{-1} F^{n}_v(k) 
= L_v^{-1} c(k) \mathbf{L}^T \psi(v) = c(k),
\end{align*}
for all $k \in \mathbb H^{\ast}$. Since $k \mapsto c(k)$ is continuous, Levy's continuity theorem implies that $c(k)$ is the Fourier Transform of some vector valued measure $\overline{\mu}$ such that 
\begin{align*}
\lim_{n\to \infty} \frac{1}{|\vartheta^n(v)|} \overline{\mu}^n(v) 
= \lim_{n \to \infty} (L_v \lambda^n)^{-1} \overline{\mu}^n(v) = \overline{\mu}.
\end{align*}
In particular, $\|\overline{\mu}\| = c(0) = \mathbf{R}$.
\end{proof}

There is an explicit representation of $\overline{\mu}$ as an infinite convolution product, involving measure-valued matrices. Indeed, we may rewrite the relation in \Cref{COR:average-iteration} as 
\begin{align*}
\overline{\mu}^{n+1}(v) = \mc M \ast (\overline{\mu}^n(v) \circ h^{-1}),
\end{align*}
where $\mc M = (\overline{\mu}_a (\vartheta_{\bm P} (b)))_{a,b \in \mc A}$. From this, it is straightforward to verify that $\overline{\mu}$ satisfies the self-similarity relation
\begin{align*}
\overline{\mu} = \frac{1}{\lambda} \mc M \ast (\overline{\mu} \circ h^{-1}).
\end{align*}
Iterating this relation leads to 
\begin{align*}
\overline{\mu} = \mc M^{\infty}\mathbf{R}, 
\quad \mc M^{\infty} = \Conv_{n=0}^{\infty} \frac{1}{\lambda} \mc M \circ h^{-n}.
\end{align*}
Note, the Fourier transform of $\mc M^{\infty}$ is given by the matrix valued function $C(k)$ in \Cref{LEM:cocycle-convergence}. This result also guarantees the existence of the infinite convolution as a limit (via Levy's continuity theorem).
From \Cref{LEM:cocycle-convergence}, we may also infer that $\mc M^{\infty} = m^{\infty} \mathbf{L}^T$, for some measure-valued vector $m^{\infty}$. Combining this with the fact that $\overline{\mu} = \mc M^{\infty}\mathbf{R}$, we have $m^{\infty} = \overline{\mu}$ and hence 
\begin{align*}
\mc M^{\infty} = \overline{\mu}\mathbf{L}^T.
\end{align*}
It is worth noting that sometimes the analysis of the matrix valued convolution $\mc M^{\infty}$ can be reduced to a scalar Bernoulli-like convolution. This is the case for the random Fibonacci substitution, see \cite{baake-spindeler-strungaru}.

The following is finally the measure GIFS relation. 

\begin{prop}\label{PROP:measure-GIFS}
The vector of measures $\overline{\mu}$ satisfies the consistency relation
\begin{align}
\label{EQ:measure-GIFS}
\overline{\mu}_a = \frac{1}{\lambda} \sum_{(p,b,s)} \mathbb{P}[\vartheta_{\bm P} (b) = pas] \, \overline{\mu}_b \circ f_p^{-1}.
\end{align}
\end{prop}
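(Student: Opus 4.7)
The plan is to pass to the limit in the finite recursion already established in \Cref{COR:measure-gifs-iteration}, using the weak convergence from \Cref{PROP:limit-measure-vector}. Fix an arbitrary $v \in \mc A^+$ and apply \Cref{COR:measure-gifs-iteration} with the random word $w = \vartheta_{\bm P}^n(v)$. Compatibility of $\vartheta$ ensures that every realisation of $\vartheta_{\bm P}^n(v)$ has the common length $|\vartheta^n(v)|$, so $w$ is indeed a random word of fixed length and the corollary applies. Writing $\vartheta_{\bm P}(w) = \vartheta_{\bm P}^{n+1}(v)$, we obtain the recursion
\begin{align*}
\overline{\mu}_a(\vartheta_{\bm P}^{n+1}(v))
= \sum_{(p,b,s) \in \mathcal{P}_\vartheta} \mathbb{P}\bigl[\vartheta_{\bm P}(b) = pas\bigr] \, \overline{\mu}_b(\vartheta_{\bm P}^n(v)) \circ f_p^{-1}.
\end{align*}

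Next I would divide through by $|\vartheta^{n+1}(v)|$ and rearrange the RHS so that the normalisation at level $n$ becomes visible, giving
\begin{align*}
\frac{1}{|\vartheta^{n+1}(v)|} \overline{\mu}_a(\vartheta_{\bm P}^{n+1}(v))
= \frac{|\vartheta^n(v)|}{|\vartheta^{n+1}(v)|} \sum_{(p,b,s)} \mathbb{P}\bigl[\vartheta_{\bm P}(b) = pas\bigr] \cdot \frac{1}{|\vartheta^n(v)|} \overline{\mu}_b(\vartheta_{\bm P}^n(v)) \circ f_p^{-1}.
\end{align*}
By \Cref{PROP:limit-measure-vector}, the LHS converges weakly to $\overline{\mu}_a$, and for each $b \in \mc A$, $|\vartheta^n(v)|^{-1} \overline{\mu}_b(\vartheta_{\bm P}^n(v)) \to \overline{\mu}_b$ weakly. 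The Perron--Frobenius asymptotics $|\vartheta^n(v)| \sim \mathbf{L}^T\psi(v)\,\lambda^n$ established in the proof of \Cref{PROP:limit-measure-vector} give $|\vartheta^n(v)|/|\vartheta^{n+1}(v)| \to 1/\lambda$. Comparing the two sides as $n \to \infty$ will yield the claimed identity, provided we can commute the limit with the pushforwards and the finite sum on the RHS.

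The only point requiring care is thus the preservation of weak convergence under the pushforward by $f_p$. Each map $f_p \colon \mathbb{H} \to \mathbb{H}$, $x \mapsto h(x) + \pi \circ \psi(p)$, is affine and therefore continuous on $\mathbb{H}$. Hence for any bounded continuous $\varphi$, the function $\varphi \circ f_p$ is also bounded and continuous, and so weak convergence of $|\vartheta^n(v)|^{-1} \overline{\mu}_b(\vartheta_{\bm P}^n(v))$ to $\overline{\mu}_b$ implies the corresponding weak convergence of the pushforwards under $f_p^{-1}$. Because the sum ranges over the finite set $\mathcal{P}_\vartheta$ and the probability weights $\mathbb{P}[\vartheta_{\bm P}(b)=pas]$ do not depend on $n$, the finite linear combination on the RHS converges weakly to $\lambda^{-1} \sum_{(p,b,s)} \mathbb{P}[\vartheta_{\bm P}(b) = pas]\, \overline{\mu}_b \circ f_p^{-1}$, completing the argument.

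I do not anticipate any substantive obstacle: the proposition is essentially a clean stationarity property inherited from the recursion in \Cref{COR:measure-gifs-iteration} under the normalisation that makes the limit exist. The work has already been done in establishing \Cref{PROP:limit-measure-vector} (via the Fourier/cocycle argument), and what remains is the standard observation that weak limits commute with pushforwards along continuous maps and with finite linear combinations.
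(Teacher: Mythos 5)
Your proposal is correct and follows essentially the same route as the paper: apply \Cref{COR:measure-gifs-iteration} to $w = \vartheta_{\bm P}^n(v)$, normalise by $|\vartheta^{n+1}(v)|$, and let $n \to \infty$ using \Cref{PROP:limit-measure-vector}. The paper states this in two lines without spelling out the (standard) facts that $|\vartheta^n(v)|/|\vartheta^{n+1}(v)| \to 1/\lambda$ and that weak convergence commutes with pushforwards along the continuous maps $f_p$ and with finite linear combinations, which you correctly supply.
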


\begin{proof}
Due to \Cref{COR:measure-gifs-iteration}, we obtain
\begin{align*}
\frac{1}{|\vartheta^{n+1}(v)|} \overline{\mu}^{n+1}_a = \frac{|\vartheta^{n}(v)|}{|\vartheta^{n+1}(v)|} \sum_{(p,b,s)} \mathbb{P}[\vartheta_{\bm P} (b) = pas] \, \frac{1}{|\vartheta^n(v)|} \overline{\mu}^n_b \circ f_p^{-1}.
\end{align*}
Sending $n \to \infty$ gives the desired relation due to \Cref{PROP:limit-measure-vector}.
\end{proof}

As observed above, we have $\|\overline{\mu}_a\| = R_{a}$ for all $a \in \mc A$. 
Renormalising via $\widetilde{\mu}_{a} = R_a^{-1} \overline{\mu}_a$, we therefore obtain a vector $(\widetilde{\mu}_a)_{a \in \mc A}$ of probability measures. 
By \eqref{EQ:measure-GIFS}, these measures satisfy the relation
\begin{align*}
\widetilde{\mu}_a = \sum_{(p,b,s)} p^a_{(p,b,s)} \widetilde{\mu}_b \circ f_p^{-1}, \quad p^a_{(p,b,s)} = \frac{1}{\lambda} \mathbb{P}[\vartheta_{\bm P} (b) = pas] \frac{R_b}{R_a}.
\end{align*}
Recall from \Cref{SS:Rauzy-GIFS} that the parameters $(p,b,s)$ label the edges $E$ of an appropriate GIFS $\mc G$. 
In fact, one can verify that $p^a = (p^a_e)_{e \in E(a)}$ defines a probability vector on $E(a)$ for each $a \in \mc A$. Indeed,
\begin{align*}
\sum_{(p,b,s)} p^a_{(p,b,s)} = \frac{1}{\lambda R_a} \sum_{b \in \mc A} R_b \sum_{(p,s)} \mathbb{P}[\vartheta_{\bm P}(b) = pas],
\end{align*}
where
\begin{align*}
\sum_{(p,s)} \mathbb{P}[\vartheta_{\bm P}(b) = pas] = \sum_{v \in \vartheta(b)} \mathbb{P}[\vartheta_{\bm P}(b) = v] |v|_a = \mathbb{E}[|\vartheta_{\bm P}(b)|_a] = M_{ab},
\end{align*}
and hence,
\begin{align*}
\sum_{(p,b,s)} p^a_{(p,b,s)} = \frac{1}{\lambda R_a} \sum_{b \in \mc A}  M_{ab} R_b = 1.
\end{align*}
Using these probabilities, we may hence extend $\mc G$ to a GIFS $\mc G' = (G,(X_a)_{a \in \mc A}, (f_e)_{e \in E}, (p^a)_{a \in \mc A})$ and obtain that $(\widetilde{\mu}_a)_{a \in \mc A}$ is the unique self-similar measure vector for $\mc G'$. It directly follows that $(\overline{\mu}_a)_{a \in \mc A}$ is the unique solution to \eqref{EQ:measure-GIFS}. 
Furthermore, this solution is (globally) attractive in the weak topology and, moreover, the support of the Rauzy measure $\overline{\mu}_a$ is the Rauzy fractal $\mc R_{a}$.

\subsection{Continuity of the invariant measure}

Let $\dmk$ denote the \emph{Monge--Kantorovich metric} on the space of probability measures on a given metric space $X$, defined by
\begin{align*}
    \dmk (\mu,\nu) = \sup_{g \in \operatorname{Lip}_1 (X, \R)} \left\lvert \int g \, d \mu - \int g \, d \nu \right\rvert  ,
\end{align*}
where $\operatorname{Lip}_1 (X,\R)$ denotes the set of Lipschitz functions from $X$ to $\R$ with Lipschitz constant at most $1$.

In the context of iterated function systems, it is known that the corresponding self-similar measure depends continuously on the probability parameters that are assigned to the edges, see for instance \cite[Theorem~3.4]{centore1994continuity}. The proof is  analogous in the graph-directed case and is considered folklore. Since we are not aware of a direct reference, we present a proof below.

\begin{prop}\label{PROP:meas-continuity}
The self-similar measures $(\mu_a)_{a \in \mc A}$ of a GIFS $\mc G = (G,(X_a)_{a \in \mc A}, (f_e)_{e \in E}, (p^a)_{a \in \mc A})$ depend Lipshitz continuously on the probability parameters (with respect to the Monge--Kantorovich metric).
\end{prop}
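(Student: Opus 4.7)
The plan is to realise the self-similar measures $(\mu_a)_{a \in \mc A}$ as the unique fixed point of a parameter-dependent contraction operator $T_{\mathbf p}$ on the product space $\mc M := \prod_{a \in \mc A} \mc P(X_a)$, equipped with the product Monge--Kantorovich metric $D((\mu_a),(\nu_a)) := \max_{a \in \mc A} \dmk(\mu_a,\nu_a)$. Lipschitz continuity in $\mathbf p$ then falls out of a standard contraction-mapping-with-parameters argument. Without loss of generality we may restrict each $X_a$ to a bounded forward-invariant set containing the attractor, so that $d_a := \operatorname{diam}(X_a) < \infty$.

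The first step is to set $T_{\mathbf p}(\mu)_a := \sum_{e \in E(a)} p^a_e\, \mu_{t(e)} \circ f_e^{-1}$ and verify that $T_{\mathbf p}$ sends $\mc M$ to itself (using $\sum_{e \in E(a)} p^a_e = 1$) and contracts with ratio at most $r := \max_{e \in E}\Lip(f_e) < 1$. Indeed, for $g \in \Lip_1(X_a,\R)$ one has $g \circ f_e \in \operatorname{Lip}_{\Lip(f_e)}(X_{t(e)},\R)$, so $\dmk(\mu_{t(e)} \circ f_e^{-1}, \nu_{t(e)} \circ f_e^{-1}) \leqslant \Lip(f_e)\dmk(\mu_{t(e)}, \nu_{t(e)})$, and convex combinations are non-expansive for $\dmk$. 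The Banach fixed-point theorem then identifies the self-similar vector $(\mu_a)_{a \in \mc A}$ with the unique fixed point of $T_{\mathbf p}$.

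The second step is to estimate $D(T_{\mathbf p}(\mu), T_{\mathbf q}(\mu))$ uniformly in $\mu \in \mc M$. Fix $a \in \mc A$ and $g \in \Lip_1(X_a,\R)$; replacing $g$ by $g - g(x_0)$ for some $x_0 \in X_a$ (which leaves integrals against probability measures of equal mass unchanged), we may assume $\sup_{X_a}|g| \leqslant d_a$. Since $\sum_{e \in E(a)} (p^a_e - q^a_e) = 0$ permits the same subtraction on both sides, we obtain
\begin{align*}
\left| \int g\, d(T_{\mathbf p}(\mu)_a - T_{\mathbf q}(\mu)_a) \right| = \left| \sum_{e \in E(a)} (p^a_e - q^a_e) \int g\, d(\mu_{t(e)} \circ f_e^{-1}) \right| \leqslant K \|\mathbf p - \mathbf q\|_\infty,
\end{align*}
with $K := \max_{a \in \mc A}(\#E(a)) \cdot \max_{a \in \mc A} d_a$, independently of $\mu$.

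Finally, let $\mu^{\mathbf p}, \mu^{\mathbf q}$ be the fixed points of $T_{\mathbf p}, T_{\mathbf q}$. Combining the two estimates,
\begin{align*}
D(\mu^{\mathbf p}, \mu^{\mathbf q}) \leqslant D(T_{\mathbf p}(\mu^{\mathbf p}), T_{\mathbf p}(\mu^{\mathbf q})) + D(T_{\mathbf p}(\mu^{\mathbf q}), T_{\mathbf q}(\mu^{\mathbf q})) \leqslant r\, D(\mu^{\mathbf p}, \mu^{\mathbf q}) + K\|\mathbf p - \mathbf q\|_\infty,
\end{align*}
yields the Lipschitz bound $D(\mu^{\mathbf p}, \mu^{\mathbf q}) \leqslant \frac{K}{1-r}\|\mathbf p - \mathbf q\|_\infty$, as required. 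The only mild obstacle is bookkeeping around the MK metric for probability measures (the constant-shift trick requires equal total mass, which holds here because both $T_{\mathbf p}(\mu)_a$ and $T_{\mathbf q}(\mu)_a$ are probability measures), and ensuring uniformity in $\mu$; both are routine once the operator $T_{\mathbf p}$ is set up correctly.
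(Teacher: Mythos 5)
Your proposal is correct and follows essentially the same route as the paper: both arguments rest on the contraction factor $r$ coming from $g \mapsto g\circ f_e$, the bound on the probability-difference term via a constant shift of $g$ (your $g - g(x_0)$ is the paper's restriction to Lipschitz functions vanishing somewhere) yielding a constant proportional to the diameter, and the final rearrangement producing the $\tfrac{1}{1-r}$ factor. Packaging this as a parameter-dependent contraction operator rather than estimating the fixed points directly through the self-consistency relation, as the paper does, is only a cosmetic difference.
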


\begin{proof}
We denote the terminal vertex of an edge $e$ by $t(e)$.
Let $(\mu_a)_{a \in \mc A}$ and $(\mu'_a)_{a \in \mc A}$ be the self-similar measures associated with the probability parameters $(p^a)_{a \in \mc A}$ and $(p'^a)_{a \in \mc A}$ respectively, and assume that $|p^a_e - p'^a_e| \leqslant \varepsilon$ for all $a \in \mc A$ and $e \in E(a)$. 
Note that the definition of the Monge--Kantorovich metric does not change if we take the supremum over the Lipshitz continuous functions $g$ with $0$ in the image of $g$, denoted by $\operatorname{Lip}_{1,0}$. We therefore obtain
    \begin{align*}
        \dmk (\mu_a, \mu'_a)
        &= \sup_{g \in \Lip_{1,0}(X_a, \R)} \Bigg\lvert \int g \, d \bigg( \sum_{e \in E(a)} p^a_e \mu_{t(e)} \circ f_{e}^{-1} \bigg) - \int g \, d \bigg(\sum_{e \in E(a)} p'^a_e \mu'_{t(e)} \circ f_{e}^{-1} \bigg) \Bigg\rvert\\
        &\leqslant  \sum_{e \in E(a)} \sup_{g \in \Lip_{1,0} (X_a, \R)} \Bigg\lvert \int g \circ f_{e} \, d (p^a_e \mu_{t(a)}) - \int g \circ f_{e} \, d (p'^a_e \mu'_{t(e)} ) \Bigg\rvert\\
        &\leqslant  \sum_{e \in E(a)} \sup_{g \in \Lip_{1,0} (X_a, \R)} \Bigg\lvert \int g \circ f_{e} \, d (p^a_e \mu_{t(e)} ) - \int g \circ f_{e} \, d (p^a_e \mu'_{t(e)} ) \Bigg\rvert\\
        &+ \sum_{e \in E(a)} \lvert p^a_e - p'^a_e \rvert \sup_{g \in \Lip_{1,0} (X_a, \R)} \Bigg\lvert \int g \circ f_{e} \, d \mu'_{t(e)} \Bigg\rvert\\
        &\leqslant r \sum_{e \in E(a)} p^a_e \sup_{g \in \Lip_{1,0} (X_a, \R)} \left\lvert \int r^{-1} g \circ f_{e} \, d \mu_{t(e)} - \int r^{-1} g \circ f_{e} \, d \mu'_{t(e)} \right\rvert + c \varepsilon \tc
    \end{align*}
where $c = \operatorname{diam}(X_a) \sum_{e \in E(a)} \mu_{t(e)}(X_{t(e)})$ and $r$ is the common contraction ratio of the maps $(f_e)_{e \in E}$.
    Since every $f_{e}$ has contraction ratio $r$, it follows that $r^{-1} g \circ f_{e} \in \Lip_{1} (X_{t(e)}, \R)$.
    Hence, we obtain from the above that
\begin{align*}
\dmk (\mu_a,\mu'_a) 
\leqslant r \sum_{e \in E(a)} p^a_e \dmk (\mu_{t(e)},\mu'_{t(e)}) + c \varepsilon
\leqslant r \max_{b \in \mc A} \dmk(\mu_b, \mu'_b) + c \varepsilon.
\end{align*}
Taking the maximum and reorganising yields
\[
\max_{a \in \mc A} \dmk (\mu_a,\mu'_a) \leqslant \frac{c \varepsilon}{1-r}.\qedhere
\]
\end{proof}

\begin{corollary}
\label{COR:continuity}
For every compatible, irreducible Pisot random substitution $\vartheta_{\mathbf{P}}$, let $\overline{\mu} = \overline{\mu}(\vartheta,\mathbf{P})$ be the corresponding vector of Rauzy measures. For fixed $\vartheta$, with respect to the Monge--Kantorovich metric, the vector of probability measures $\overline{\mu}(\vartheta,\mathbf{P})$ depends continuously on the probability parameters in $\mathbf{P}$. Since convergence in the Monge--Kantorovich metric implies weak convergence, the same holds with respect to the topology of weak convergence.
\end{corollary}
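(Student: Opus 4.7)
The plan is to reduce \Cref{COR:continuity} to a direct application of \Cref{PROP:meas-continuity} by working with the normalised Rauzy measures $\widetilde{\mu}_a = R_a^{-1} \overline{\mu}_a$, which, as observed just after \Cref{PROP:measure-GIFS}, form the unique self-similar probability measure vector of the GIFS $\mc G' = (G, (X_a)_{a \in \mc A}, (f_e)_{e \in E}, (p^a)_{a \in \mc A})$ whose edge weights are
\begin{align*}
p^a_{(p,b,s)} = \frac{1}{\lambda}\, \mathbb{P}[\vartheta_{\mathbf{P}}(b) = pas] \, \frac{R_b}{R_a}.
\end{align*}

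The first step is to observe that, for fixed set-valued substitution $\vartheta$, every ingredient of $\mc G'$ apart from the probability weights $(p^a)_{a \in \mc A}$ is independent of $\mathbf{P}$. Indeed, by compatibility, the substitution matrix $M_\vartheta$ depends only on $\vartheta$, hence so do its Perron--Frobenius data $(\lambda, \mathbf{L}, \mathbf{R})$ and the contractive action $h$ on $\mathbb{H}$. The underlying prefix-suffix graph $\Gamma_\vartheta$ and the associated contractions $f_p \colon x \mapsto h(x) + \pi \circ \psi(p)$ are likewise determined entirely by $\vartheta$.

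The second step is to note that $\mathbf{P} \mapsto p^a_{(p,b,s)}$ is continuous (in fact linear) in $\mathbf{P}$: we simply have $\mathbb{P}[\vartheta_{\mathbf{P}}(b) = pas] = p_b(pas)$ when $pas \in \vartheta(b)$, and $0$ otherwise, while the prefactor $R_b/(\lambda R_a)$ is a constant. Thus a convergent sequence of probability assignments $\mathbf{P}^{(n)} \to \mathbf{P}$ produces a convergent sequence of GIFS probability vectors for $\mc G'$.

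Combining these two observations, \Cref{PROP:meas-continuity} applied to $\mc G'$ immediately yields that $\widetilde{\mu}(\vartheta, \mathbf{P}) = (\widetilde{\mu}_a(\vartheta, \mathbf{P}))_{a \in \mc A}$ depends continuously on $\mathbf{P}$ with respect to $\dmk$. Multiplying by the $\mathbf{P}$-independent constants $R_a$ recovers $\overline{\mu}_a = R_a \widetilde{\mu}_a$ and gives the claimed continuity for $\overline{\mu}(\vartheta,\mathbf{P})$. The final sentence of the statement is automatic from the standard fact that convergence in the Monge--Kantorovich metric implies weak convergence of probability measures on a compact metric space. There is no substantial obstacle here — the proof is essentially an unpacking of definitions, with the only point requiring care being to verify that the GIFS $\mc G'$ depends on $\mathbf{P}$ \emph{only} through its probability weights, so that \Cref{PROP:meas-continuity} applies verbatim.
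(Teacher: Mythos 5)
Your proposal is correct and follows essentially the same route as the paper: the corollary is deduced from \Cref{PROP:meas-continuity} together with the discussion after \Cref{PROP:measure-GIFS}, which identifies $(\widetilde{\mu}_a)_{a\in\mc A}$ as the unique self-similar measure vector of the GIFS $\mc G'$ whose only dependence on $\mathbf{P}$ is through the (affine-in-$\mathbf{P}$) edge probabilities, all other data being fixed by $\vartheta$. Your unpacking of why the maps, the graph and the normalisation constants $R_a$ are independent of $\mathbf{P}$ is exactly the verification the paper leaves implicit.
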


We note that, although Rauzy measures for random substitutions have been shown above to vary continuously with respect to the choice of probabilities, this is not true for the Rauzy fractals themselves (say with respect to the Hausdorff metric). 
However, the distinction only becomes apparent at the extremal cases; that is, when the probabilities become degenerate.
Indeed, for all non-degenerate probabilities (those for which all $p_{i,j}\neq 0$), the Rauzy fractal is independent of the probabilities.
This follows from the fact that, for non-degenerate random substitutions, the subtiles $\mc R_a$ satisfy the GIFS relations \eqref{EQ:GIFS-sets} of Theorem \ref{THM: r-Rauzy GIFS}, and these relations are independent of the generating probabilities.

On the other hand, for a degenerate choice of probabilities, the subtiles $\mc R_a$ will in general only satisfy a GIFS relation for a random substitution with fewer choices of images (that is, the one given by removing the choice for any images of letters with probability $p_{i,j}=0$). This corresponds to removing edges from the defining graph of the GIFS. Therefore, the support of the measure may become smaller at extremal points. This is observed in the case of the random tribonacci substitution (see Figure \ref{fig:rauzy-measure-trib}). 

\subsection{Almost-sure results}

So far we have obtained a self-similar vector of measures $\overline{\mu} = (\overline{\mu}_a)_{a \in \mc A}$ in the limit of large inflation words under taking expectations. However, since the possible number of inflation words typically grows exponentially with the level, the computation of such expectation values quickly becomes impractical. In this section, we show that $\overline{\mu}$ can alternatively be calculated from a single sequence of inflation words, where convergence holds almost surely. This is useful for concrete numerical approximation of Rauzy measures.

\begin{prop}
\label{PROP:as-convergence-markov}
Let $v \in \mc A^+$. For all $a \in \mc A$,
\begin{align*}
\lim_{n \to \infty} \frac{1}{|\vartheta^n(v)|} \mu_a(\vartheta_{\bm P}^n(v)) = \overline{\mu}_a
\end{align*}
almost surely.
\end{prop}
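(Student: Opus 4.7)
The plan is to reduce the claim to a.s.\ convergence against a countable family of bounded continuous test functions and to obtain this scalar a.s.\ convergence from a second-moment estimate plus Borel--Cantelli. By \Cref{PROP:limit-measure-vector} we already have weak convergence of the expected normalized measures $|\vartheta^n(v)|^{-1}\overline{\mu}^n_a(v)$ to $\overline{\mu}_a$, and by $C$-balancedness all measures involved are supported in a fixed compact subset of $\mathbb H$. It therefore suffices to show that, for each bounded continuous $f$ on $\mathbb H$, the random fluctuation
\begin{align*}
Y_n(f) = \lambda^{-n}\bigl( \mu_a(\vartheta^n_{\bm P}(v))(f) - \overline{\mu}^n_a(v)(f) \bigr)
\end{align*}
converges to zero almost surely; the final assertion then follows by a union bound over a countable convergence-determining family in $C_b(\mathbb H)$ combined with the compactness of supports.

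To control $Y_n(f)$, I would exploit the Markov decomposition in \Cref{LEM:measure-substitution}. Conditionally on $\mathcal{F}_{n-1} = \sigma(\vartheta^k_{\bm P}(v) : 0 \leqslant k \leqslant n-1)$, the random variable $\mu_a(\vartheta^n_{\bm P}(v))(f)$ is a sum of $|\vartheta^{n-1}(v)|$ conditionally independent terms, each of the form $\mu_a(\vartheta_{\bm P}(b))$ tested against a translate of $f$. Since each such term has modulus at most $\max_c |\vartheta(c)|\,\|f\|_\infty$, the conditional variance is bounded by $C \lambda^{n-1}\|f\|_\infty^2$. By the law of total variance,
\begin{align*}
\operatorname{Var}(\mu_a(\vartheta^n_{\bm P}(v))(f)) \leqslant C \lambda^{n-1}\|f\|_\infty^2 + \operatorname{Var}\Bigl( \sum_{b \in \mc A} \mu_b(\vartheta^{n-1}_{\bm P}(v))(g_b) \Bigr),
\end{align*}
where, by \Cref{COR:average-iteration}, $g_b$ is the bounded Lipschitz function $x \mapsto \overline{\mu}_a(\vartheta_{\bm P}(b))(f(\cdot+hx))$. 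Iterating this recursion, I would aim for the estimate $\operatorname{Var}(\mu_a(\vartheta^n_{\bm P}(v))(f)) \leqslant C' \lambda^n \|f\|_\infty^2$, so that $\operatorname{Var}(Y_n(f)) = O(\lambda^{-n})$.

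The main obstacle is closing this variance recursion sharply: a naive unrolling picks up a combinatorial factor of $\#\mc A$ at each step, which can dominate $\lambda$. The remedy is to track the vector $(\operatorname{Var}(\mu_b(\vartheta^n_{\bm P}(v))(g)))_{b \in \mc A}$ simultaneously and contract it against the Perron--Frobenius left eigenvector $\bm L$, so that $\lambda$ absorbs the cross-terms, analogously to the cocycle argument in \Cref{LEM:cocycle-convergence}. Granted the $O(\lambda^n)$ variance bound, Chebyshev's inequality yields $\mathbb P(|Y_n(f)| > \varepsilon) \leqslant C''\varepsilon^{-2}\lambda^{-n}$, which is geometrically summable, so the Borel--Cantelli lemma gives $Y_n(f) \to 0$ almost surely and the proof is completed as outlined above.
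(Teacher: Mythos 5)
Your overall strategy (test against a countable convergence-determining family, control fluctuations by a variance bound, conclude by Chebyshev and Borel--Cantelli) is a legitimate and genuinely different route from the paper, which instead decomposes level $n+m$ into $\sim\lambda^n$ i.i.d.\ level-$m$ inflation words, applies a strong law of large numbers to the Fourier transforms at each fixed $k$, upgrades to compact convergence via Arzel\`{a}--Ascoli, and finishes with L\'{e}vy's continuity theorem. However, your variance recursion has a genuine gap, and you have misdiagnosed where it is. The problem is not the combinatorial factor $\#\mc A$ per step (that is a harmless constant, and the cross-terms can be absorbed by Cauchy--Schwarz), so contracting against $\bm L$ will not save you. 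The problem is that the conditional-expectation operator amplifies the test functions: since $\lVert g_b\rVert_\infty \leqslant \lVert\overline{\mu}_a(\vartheta_{\bm P}(b))\rVert\,\lVert f\rVert_\infty = M_{ab}\lVert f\rVert_\infty$, after $j$ iterations the transported test functions have sup norm of order $\lambda^{j}\lVert f\rVert_\infty$. The conditional-variance contribution injected at level $n-j$ is then of order $\lambda^{n-j}\cdot(\lambda^{j}\lVert f\rVert_\infty)^2 = \lambda^{n+j}\lVert f\rVert_\infty^2$, and summing over $j$ gives $\operatorname{Var}\bigl(\mu_a(\vartheta_{\bm P}^n(v))(f)\bigr) = O(\lambda^{2n})$, i.e.\ $\operatorname{Var}(Y_n(f)) = O(1)$. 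This is the same order as the square of the mean, so Chebyshev gives nothing summable and Borel--Cantelli cannot be invoked.

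The recursion can be closed, but only by exploiting two features you have not used. First, compatibility makes all the total masses $\mu_b(w)(\mathbb H) = \lvert w\rvert_b$ deterministic, so constants may be subtracted from the test functions without changing any variance; what matters is therefore the \emph{oscillation} of $g_b$ over the (uniformly bounded, by $C$-balancedness) supports, not its sup norm. Second, $g_b(x) = \int f(y+hx)\dd\overline{\mu}_a(\vartheta_{\bm P}(b))(y)$ composes $f$ with the contraction $h$, so for Lipschitz $f$ one gets $\operatorname{Lip}(g_b)\leqslant c\,M_{ab}\operatorname{Lip}(f)$ with $c = \lVert h\rVert <1$, whence $\operatorname{Lip}(g^{(j)}) = O((\lambda c)^j)$. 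The level-$(n-j)$ contribution becomes $O(\lambda^{n-j}(\lambda c)^{2j})$, and the sum is $O(\lambda^{n}\max(1,(\lambda c^2)^n)\cdot n)$; in every case $\operatorname{Var}(Y_n(f))$ decays geometrically (up to a polynomial factor) and Borel--Cantelli applies. With this replacement of sup norms by Lipschitz seminorms of centred test functions --- and restricting the countable family to Lipschitz functions, which is still convergence-determining on a compact set --- your argument goes through and is arguably more elementary than the paper's; as written, though, the claimed bound $\operatorname{Var}\leqslant C'\lambda^n\lVert f\rVert_\infty^2$ is unsupported and the proposed eigenvector fix does not address the actual obstruction.
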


\begin{proof}
Applying the random substitution $\vartheta_{\bm P}^m$ to the random word $\vartheta_{\bm P}^n(v)$, we obtain from \Cref{LEM:measure-substitution}
\begin{align}
\label{EQ:n+m-relation}
\mu_a(\vartheta_{\bm P}^{n+m}(v)) = \sum_{b \in \mathcal A} \sum_{x \in \mathcal R_b^{\ast}(\vartheta_{\bm P}^n(v))} \mu_a(\vartheta_{\bm P}^m(b)) \ast \delta_{h_{\vartheta^m} x}.
\end{align}
For the Fourier transforms
\begin{align*}
g^n_a \coloneqq \lambda^{-n} \operatorname{FT}[\mu_a(\vartheta_{\bm P}^{n}(v))]
\end{align*}
we obtain from \eqref{EQ:n+m-relation} that
\begin{align}
\label{EQ:g-n+m}
g^{n+m}_a (k) 
= \lambda^{-n} \sum_{b \in \mc A} \sum_{x \in \mc  R_b^{\ast}(\vartheta_{\bm P}^n (v))} f^{a,m,b}_{x,n}(k)  \me^{-2 \pi \im k \cdot h^m_{\vartheta} x},
\end{align}
where for each $a,b \in \mc A$, $m\in \N$ and $k \in \mathbb H^{\ast}$, the family of random variables $\{ f^{a,m,b}_{x,n}\}_{x,n}$ is independent and identically distributed as
\begin{align*}
f^{a,m,b}_{x,n}(k) \sim \lambda^{-m} \operatorname{FT}[\mu_a(\vartheta_{\bm P}^m(b))] (k),
\end{align*}
where $\sim$ denotes equality of distributions. 
Since the language $\mc L_{\vartheta}$ is $C$-balanced, there is a radius $r > 0$ such that $\mc R_b^{\ast}(\vartheta_{\bm P}^n(v))$ is contained in the ball $B_r(0)$ for all $n \in \N$. 
Hence, for all $k$ in a fixed compact set $K \subset \mathbb H^{\ast}$ there is a uniform constant $c = c(K)$ with $0<c<1$ and $d > 0$ such that 
\begin{align*}
\me^{-2 \pi \im k \cdot h^m_{\vartheta} x}
= 1 + r_m(x,k), 
\quad |r_m(x,k)| < d c^m.
\end{align*}
In the next step, we estimate the error that we obtain by replacing the exponential by $1$ in \eqref{EQ:g-n+m}. To this end, observe that
\begin{align*}
|f^{a,m,b}_{x,n}(k)| < f^{a,m,b}_{x,n}(0) = \lambda^{-m}|\vartheta^m(b)|_a = \lambda^{-m} (M^m)_{ab} , 
\end{align*}
and $\# \mc R_b^{\ast}(\vartheta_{\bm P}^n(v)) = |\vartheta_{\bm P}^n(v)|_b = (M^n \psi(v))_b$, yielding
\begin{align*}
& \biggl| \lambda^{-n} \sum_{b \in \mc A} \sum_{x \in \mc  R_b^{\ast}(\vartheta_{\bm P}^n (v))} f^{a,m,b}_{x,n}(k)  r_m(x,k) \biggr| 
 \leqslant \lambda^{-n-m} \sum_{b \in \mc A} \sum_{x \in \mc  R_b^{\ast}(\vartheta_{\bm P}^n (v))} d c^m |\vartheta^m(b)|_a 
\\ & = d c^m \lambda^{-n-m} \sum_{b \in \mc A} (M^m)_{ab} (M^n \psi(v))_b = \mc O(c^m),
\end{align*}
since the entries of $\lambda^{-n}M^n$ are uniformly bounded for all $n \in \N$. 
Combining this with \eqref{EQ:g-n+m} yields
\begin{align*}
g_a^{n+m}(k) =  \mc O(c^m) + \lambda^{-n} \sum_{b \in \mc A} \sum_{x \in \mc  R_b^{\ast}(\vartheta_{\bm P}^n (v))} f^{a,m,b}_{x,n}(k) .
\end{align*}
An appropriate version of the strong law of large numbers (compare for example \cite[Lemma~3]{kurtz1997conceptual}) yields that for each $b \in \mc A$ and every $k \in K$,
\begin{align}
\label{EQ:SLLN-convergence}
\lim_{n \to \infty} \frac{1}{|\vartheta^n(v)|_b} \sum_{x \in \mc  R_b^{\ast}(\vartheta_{\bm P}^n (v))} f^{a,m,b}_{x,n}(k) = \mathbb{E}[f^{a,m,b}_{x,n}(k)] = \lambda^{-m} \operatorname{FT}[\overline{\mu}_a(\vartheta_{\bm P}^m(b))](k)
 = \lambda^{-m} B^{(m)}_{ab}(k)
\end{align}
holds almost surely.

Our next goal is to obtain the same statement with a changed order of the quantifiers: we require that for a set of full probability the convergence holds for all $k \in K$. 
Taking a countable intersection of full measure sets, we easily obtain that almost-surely convergence holds on a dense countable subset of $K$. Due to the Arzel\`{a}-Ascoli theorem, it suffices to show (for each possible realisation) the equicontinuity of the (uniformly bounded) sequence of  functions in \eqref{EQ:SLLN-convergence} in order to obtain uniform convergence on $K$ almost-surely. 
In fact, since any uniform modulus of convergence is preserved under taking averages, it is enough to show the equicontinuity of the family $\{ f^{a,m,b}_{x,n} \}_{x,n}$ for all possible realisations. 
Since the random variables are i.i.d., with only finitely many possible realisations, equicontinuity follows from the fact that each of these realisations is a continuous function on $K$. 
Hence, there exists a set $\Omega(m,K)$ of full $\mathbb{P}$-measure such that the convergence in \eqref{EQ:SLLN-convergence} is uniform on $K$ for all realisations in $\Omega(m,K)$. 
Since $\mathbb H^{\ast}$ is $\sigma$-compact, there is a countable subset $\mc T$ such that
$\mathbb H^{\ast} = \cup_{t \in \mc T} K + t$.
Hence, for all realisations in the full-measure set
\begin{align*}
\Omega = \bigcap_{t \in \mc T} \bigcap_{m \in \N} \Omega(m,K+t),
\end{align*}
we obtain compact convergence in \eqref{EQ:SLLN-convergence} for all $m \in \N$.
Note that 
\begin{align*}
\lim_{n \to \infty} \lambda^{-n} |\vartheta^{n}(v)|_b 
= \lim_{n \to \infty} \lambda^{-n} (M \psi(v))_b = R_b L_v. 
\end{align*}
Thus, for all realisations in $\Omega$, we have
\begin{align*}
\lim_{n \to \infty} \lambda^{-n} \sum_{b \in \mc A} \sum_{x \in \mc  R_b^{\ast}(\vartheta^n (v))} f^{a,m,b}_{x,n}(k) 
= L_v \sum_{b \in \mc A} \lambda^{-m} B^{(m)}(k)_{ab} R_b.
\end{align*}
Due to \Cref{LEM:cocycle-convergence}, we obtain
\begin{align*}
\lim_{m \to \infty} L_v \sum_{b \in \mc A} \lambda^{-m} B^{(m)}(k)_{ab} R_b = L_v \sum_{b \in \mc A} c(k)_a L_b R_b = L_v c(k)_a,
\end{align*}
uniformly on compact sets. 
Hence, for all realisations in $\Omega$, we obtain that
\begin{align*}
\limsup_{n \to \infty} |g^n_a(k) - L_v c(k)_a| =
\limsup_{n \to \infty} |g^{n+m}_a(k) - L_v c(k)_a| = o(1) \xrightarrow{m \to \infty} 0,
\end{align*}
implying 
\begin{align*}
\lim_{n \to \infty} g^n_a(k) = L_v c(k)_a,
\end{align*}
uniformly on compact sets. Since $c(k)_a$ is the Fourier transform of $\overline{\mu}_a$, this implies by Levy's continuity theorem that
\begin{align*}
\lim_{n \to \infty} \lambda^{-n} \mu_a(\vartheta_{\bm P}^n(v)) = L_v \overline{\mu}_a,
\end{align*}
in the weak topology for all realisations in $\Omega$. Up to a renormalisation, this is precisely what we intended to show.
\end{proof}

\subsection{The substitution-invariant distribution}

Let $\nu$ be a $\vartheta_{\bm P}$-invariant probability measure as introduced in \Cref{S:rand-subst-intro}. 
By construction, $1=\nu(X_{\vartheta}) = \nu(\vartheta^n(X_{\vartheta}))$ for all $n \in \N$ and hence $X_{\vartheta}^{\infty}$ has full measure for $\nu$.
Using similar ideas as in the previous section, we obtain the analogue of \Cref{PROP:as-convergence-markov} for the invariant distribution $\nu$.

\begin{prop}
\label{PROP:nu-as-convergence}
For $\nu$-almost every $w$, we have
\begin{align*}
\mu_a(w) := \lim_{n \to \infty} \frac{1}{n} \mu_a(w_{[1,n]}) = \overline{\mu}_a,
\end{align*}
in the sense of weak convergence, for all $a \in \mc A$.
\end{prop}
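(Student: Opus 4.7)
The plan is to follow the structural strategy of the proof of \Cref{PROP:as-convergence-markov}, but exploit the $\vartheta_{\bm P}$-invariance of $\nu$ to pass from a deterministic starting word $v$ to a $\nu$-distributed random sequence $w$.

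First I would iterate the invariance relation to couple $w$ to an $m$-level parent. For every fixed $m \in \N$, the $\vartheta_{\bm P}$-invariance of $\nu$ gives a coupling $(u,w)$ with both $u,w \sim \nu$ and $w = \vartheta_{\bm P}^m(u)$ obtained by the Markov chain dynamics. For a prefix of length $n$, let $j = j(n,m,u)$ be the largest integer with $|\vartheta^m(u_{[1,j]})| \leqslant n$; the remainder $w_{[|\vartheta^m(u_{[1,j]})|+1,n]}$ contributes at most $\max_{b} |\vartheta^m(b)|$ Dirac masses to $\mu_a(w_{[1,n]})$, a constant depending only on $m$. Hence, in total variation,
\begin{align*}
\frac{1}{n} \mu_a(w_{[1,n]}) = \frac{1}{n} \mu_a(\vartheta_{\bm P}^m(u_{[1,j]})) + O_m(n^{-1}).
\end{align*}

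Second, I would apply \Cref{LEM:measure-substitution} to decompose
\begin{align*}
\mu_a(\vartheta_{\bm P}^m(u_{[1,j]})) = \sum_{b \in \mc A} \sum_{x \in \mc R_b^{\ast}(u_{[1,j]})} \mu_a(\vartheta_{\bm P}^m(b)) \ast \delta_{h^m x},
\end{align*}
where, conditional on $u$, the inner random measures $\mu_a(\vartheta_{\bm P}^m(b))$ are i.i.d.\ copies by the Markov structure of the substitution action. By \Cref{THM:frequency-measure}, $\nu$-almost every $u$ is generic for $\varrho$, and by \Cref{THM:Pisot-balanced} it is $C$-balanced with frequency vector $\mathbf R$; thus $|u_{[1,j]}|_b/j \to R_b$ and $|\vartheta^m(u_{[1,j]})|/j \to \lambda^m L_{\bm R}$ with $L_{\bm R} = \bm L^T \bm R = 1$, so $n \sim j \lambda^m$ as $j\to\infty$.

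Third, I would carry out a SLLN argument parallel to the one in the proof of \Cref{PROP:as-convergence-markov}: pass to Fourier transforms, exploit the contractivity estimate $|\me^{-2\pi \im k \cdot h^m x} - 1| \leqslant d c^m$ (valid uniformly for $x$ in the $C$-balanced set $\mc R_b^{\ast}(u_{[1,j]})$ and $k$ in a compact set $K \subset \mathbb H^{\ast}$), and apply a conditional strong law to the i.i.d.\ random measures indexed by occurrences of $b$ in $u_{[1,j]}$. This yields, almost surely, compact convergence of the form
\begin{align*}
\frac{1}{j} \sum_{b \in \mc A}\sum_{x \in \mc R_b^{\ast}(u_{[1,j]})} \operatorname{FT}[\mu_a(\vartheta_{\bm P}^m(b)) \ast \delta_{h^m x}](k) \longrightarrow \sum_{b \in \mc A} R_b \, B^{(m)}(k)_{ab} + O(c^m).
\end{align*}
Normalising by $\lambda^{m}$ and sending $m \to \infty$, \Cref{LEM:cocycle-convergence} gives $\lambda^{-m}\sum_b B^{(m)}(k)_{ab} R_b \to c(k)_a = \operatorname{FT}[\overline\mu_a](k)$. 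Applying Levy's continuity theorem then produces $n^{-1}\mu_a(w_{[1,n]}) \to \overline\mu_a$ in the weak topology for $\nu$-almost every $w$.

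The main obstacle is the diagonal interchange of the two limits: the SLLN is applied for fixed $m$ (letting $j \to \infty$), whereas the contractivity estimate and the matrix-cocycle limit require $m \to \infty$. To justify this, I would follow the equicontinuity/countable-dense-subset strategy from the end of the proof of \Cref{PROP:as-convergence-markov}: the finiteness of possible realisations of $\mu_a(\vartheta_{\bm P}^m(b))$ (for each fixed $m$) gives equicontinuity of the Fourier transforms, so a countable intersection of full-$\nu$ measure sets (one for each $m$ and each compact $K+t$ in a $\sigma$-compact exhaustion of $\mathbb H^{\ast}$) produces a single full-measure set on which the convergence holds for all $m$ simultaneously, enabling the diagonal argument.
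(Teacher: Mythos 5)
Your argument is correct, but it takes a genuinely different route from the paper's. The paper never constructs a coupling between $w$ and its level-$m$ preimage: it works only with the cylinder-level distributional identity (its \Cref{LEM:mu-FT-renormalise}, a direct consequence of \eqref{EQ:nu-invariance}), which relates the $\nu$-law of $\widehat{\mu}_a(w_{[1,\ell]})(k)$ to that of $\widehat{\mu}_a(\vartheta_{\bm P}^m(v)_{[1,\ell]})(k)$ separately for each $\ell$. Because that formulation provides no consistency of the randomness across different prefix lengths, an almost-sure statement cannot be extracted from a law of large numbers alone; instead the paper bounds $\nu(A_\ell)$, the probability of an $\varepsilon$-deviation at prefix length $\ell$, by $d\me^{-r'\ell}$ using Cram\'er's large deviation theorem applied to the i.i.d.\ family $\{f_x^{a,m,b}(k)\}$, and concludes via Borel--Cantelli. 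Your coupling $w = \vartheta_{\bm P}^m(u)$ with $u \sim \nu$ (hence $w \sim \nu$, by iterating \eqref{EQ:nu-invariance} over cylinders and applying the $\pi$--$\lambda$ theorem) restores exactly that consistency: the level-$m$ inflation word attached to the $i$-th letter of $u$ is a single random object shared by all prefix lengths $n$, so the ordinary SLLN conditional on $u$ suffices, and since the limsup event depends on $w$ alone, its full probability under the coupling transfers to full $\nu$-measure; intersecting over $m \in \N$ then kills the $O(c^m)$ errors. What your route buys is the elimination of the large-deviation machinery; what it costs is the (routine, but worth stating explicitly) verification that the cylinder-level invariance relation upgrades to a measure-preserving coupling on infinite sequences. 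The paper's route stays entirely at the level of finite words and the definition of invariance as given, and obtains an exponential rate for $\nu(A_\ell)$ as a by-product. Your handling of the boundary term (using compatibility so that $j(n,m,u)$ is deterministic given $u$), of the phase error $O(c^m)$, and of the $k$-uniformity via equicontinuity, the Arzel\`{a}--Ascoli theorem and Levy's continuity theorem all match the paper's treatment and are sound.
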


First, we need some preparation. As a first step, we control the size change of a word under a random substitution.

\begin{lemma}
There exists a positive integer $r \in \N$ such that for all $\ell \in \N$, $m,n \in \N$ and all $v \in \mc L_{\vartheta}^n$ with 
\begin{align*}
n \geqslant n^+(\ell,m) = \ceil{\lambda^{-m} \ell} + r
\end{align*}
we have $|\vartheta^m(v)| \geqslant \ell$, and for all $v \in \mc L^n$ with
\begin{align*}
n \leqslant n^-(\ell,m) = \lfloor \lambda^{-m}\ell \rfloor - r
\end{align*}
we have $|\vartheta^m(v)| \leqslant \ell$.
\end{lemma}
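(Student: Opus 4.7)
The plan is to exploit compatibility. Since $\vartheta_{\bm P}$ is compatible, $|\vartheta^m(v)|$ is independent of the random realisation and equals the deterministic quantity $\mathbf{1}^T M^m \psi(v)$, where $M = M_\vartheta$ and $\mathbf{1} = (1,\ldots,1)^T$. The strategy is to show that $|\vartheta^m(v)| = n\lambda^m + O(\lambda^m)$ uniformly in $m$, $n$, and $v \in \mc L_\vartheta^n$, and then choose $r$ large enough to absorb this error.

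The key ingredient is the uniform bound $\|\psi(v) - n \mathbf{R}\| \leqslant B$ for all $v \in \mc L_\vartheta^n$, with $B$ independent of $n$. This follows by combining Theorem~\ref{THM:Pisot-balanced} ($X_\vartheta$ is $C$-balanced) with Lemma~\ref{LEM:C-bal-ULF}, applied to any infinite sequence $x \in X_\vartheta$ having $v$ as a subword (such an $x$ exists as $\vartheta_{\bm P}$ is primitive). Decomposing $\psi(v) = \alpha_v \mathbf{R} + \mathbf{h}_v$ with $\mathbf{h}_v \in \mathbb{H}$, we obtain $\|\mathbf{h}_v\| \leqslant B'$ uniformly; summing coordinates and using $\mathbf{1}^T \mathbf{R} = 1$ yields $n = \alpha_v + \mathbf{1}^T \mathbf{h}_v$, so $|\alpha_v - n|$ is uniformly bounded as well. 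Since $M \mathbf{R} = \lambda \mathbf{R}$, we then have $M^m \psi(v) = \alpha_v \lambda^m \mathbf{R} + M^m \mathbf{h}_v$, and the Pisot assumption guarantees that $M|_{\mathbb{H}}$ is a contraction in a suitable norm, so $\|M^m \mathbf{h}_v\|$ remains uniformly bounded in both $m$ and $v$. Applying $\mathbf{1}^T$ then gives
\[
|\vartheta^m(v)| \;=\; \alpha_v \lambda^m + \mathbf{1}^T M^m \mathbf{h}_v \;=\; n \lambda^m + E_{v,m,n}, \qquad |E_{v,m,n}| \leqslant c\,\lambda^m,
\]
for a constant $c$ independent of $v$, $m$, and $n$.

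Taking $r = \lceil c \rceil$ now handles both inequalities simultaneously: if $n \geqslant \lceil \lambda^{-m}\ell \rceil + r$, then $n \lambda^m \geqslant \ell + r\lambda^m$, so $|\vartheta^m(v)| \geqslant n\lambda^m - c\lambda^m \geqslant \ell + (r-c)\lambda^m \geqslant \ell$, using $\lambda^m \geqslant 1$, and the opposite case $n \leqslant \lfloor \lambda^{-m}\ell \rfloor - r$ is entirely symmetric. The only real point to watch is that the error constants must be uniform in $n$ and $m$; this is precisely what the combination of compatibility (to make the length deterministic) and the irreducible Pisot assumption (to control the non-Perron component of $\psi(v)$ under all powers of $M$) delivers. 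Beyond this book-keeping I do not anticipate any obstacle.
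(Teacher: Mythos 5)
Your proof is correct and follows essentially the same route as the paper: both use $C$-balancedness of the language to get $\psi(v)=n\mathbf{R}+O(1)$ uniformly, apply $M^m$ and sum coordinates to conclude $|\vartheta^m(v)|=n\lambda^m+O(\lambda^m)$ with a constant uniform in $v,n,m$, and then absorb the error into $r$. The only cosmetic difference is that you control the error term by splitting off the $\mathbb{H}$-component and using the Pisot contraction, whereas the paper bounds it directly via the uniform boundedness of the entries of $\lambda^{-m}M^m$; this changes nothing of substance.
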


\begin{proof}
This is a consequence of the $C$-balancedness of the language. 
Indeed, it guarantees the existence of some $c > 0$ such that for all $n \in \N$ and $v \in \mc L_{\vartheta}^n$, we obtain
\begin{align*}
| |v|_a - n R_a| < c
\end{align*}
for all $a \in \mc A$ and hence for all $m \in \N$,
\begin{align*}
|\vartheta^m(v)| = \sum_{a \in \mc A} (M^m \psi(v))_a
\leqslant n \lambda^m + \sum_{a,b \in \mc A} c (M^m)_{ab} \leqslant \lambda^m(n + r),
\end{align*}
for some $r \in \N$ because the entries of $\lambda^{-m} M^m$ are bounded in $m$. 
Similarly, we obtain\
\begin{align*}
|\vartheta^m(v)| \geqslant \lambda^m(n - r).
\end{align*}
The statement of the Lemma is just a reformulation of these two observations.
\end{proof}

In a similar manner as before we wish to obtain weak convergence via pointwise convergence of the Fourier transforms. We therefore define for every $a \in \mc A$ a function on $\mc L_{\theta}$ via
\begin{align*}
\widehat{\mu}_a \colon w \mapsto \operatorname{FT}[\mu_a(w)].
\end{align*}
If $w$ is a $\nu$-distributed random sequence and $\ell \in \N$, this induces a distribution on the random word $w_{[1,\ell]}$ and similarly on the random function $\widehat{\mu}_a(w_{[1,\ell]})$.
Due to the invariance of $\nu$ under all powers of $\vartheta_{\mathbf{P}}$, we can relate statements about $w_{[1,\ell]}$ to statements about random words of the form $\vartheta_{\mathbf{P}}^m(v)$, provided they are guaranteed to be long enough to cover the prefix of length $\ell$. The same holds for the distribution of $\widehat{\mu}_a(w_{[1,\ell]})$ which is derived from $w_{[1,\ell]}$. We make this more precise in the following.

\begin{lemma}
\label{LEM:mu-FT-renormalise}
Let $\ell, m \in \N$ and $n \geqslant n^+(\ell,m)$. For every $k \in \mathbb H^{\ast}$, $a \in \mc A$ and $c \in \mathbb{C}$, we have
\begin{align*}
\nu \{ w : \widehat{\mu}_a(w_{[1,\ell]})(k) = c \}
= \sum_{v \in \mc L_{\vartheta}^n} \nu[v] \mathbb{P} [ \widehat{\mu}_a(\vartheta_{\bm P}^m(v)_{[1,\ell]})(k) = c].
\end{align*}
\end{lemma}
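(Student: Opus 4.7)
The plan is to reduce the statement to the invariance property in \eqref{EQ:nu-invariance}, applied to the substitution $\vartheta_{\bm P}^m$ rather than $\vartheta_{\bm P}$ itself.

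First, I would note that the value $\widehat{\mu}_a(w_{[1,\ell]})(k)$ only depends on the prefix $w_{[1,\ell]}$. Hence the level set
\[
A_c := \{ w \in X_{\vartheta} : \widehat{\mu}_a(w_{[1,\ell]})(k) = c \}
\]
is a disjoint union of cylinder sets:
\[
A_c = \bigsqcup_{u \in U_c} [u], \quad U_c := \{ u \in \mc L_{\vartheta}^{\ell} : \widehat{\mu}_a(u)(k) = c \},
\]
so that $\nu(A_c) = \sum_{u \in U_c} \nu([u])$. This first step just repackages the event in terms of cylinders.

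Second, I would use that $\vartheta_{\bm P}$-invariance of $\nu$ iterates to $\vartheta_{\bm P}^m$-invariance (by a direct induction using the Markov property of the substitution action, namely the identity $(Q^{m+1})_{u,w} = \sum_v (Q^m)_{u,v} Q_{v,w}$ together with \eqref{EQ:nu-invariance}). The choice $n \geqslant n^+(\ell,m)$ is exactly the threshold guaranteeing $|\vartheta^m(v)| \geqslant \ell$ for every $v \in \mc L_{\vartheta}^n$, so the $\vartheta_{\bm P}^m$-analogue of \eqref{EQ:nu-invariance} applies with $|w| = \ell$, yielding
\[
\nu([u]) = \sum_{v \in \mc L_{\vartheta}^{n}} \nu([v]) \, \mathbb{P}[\vartheta_{\bm P}^m(v)_{[1,\ell]} = u],
\]
for every $u \in \mc L_{\vartheta}^\ell$.

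Third, I would substitute this into the cylinder decomposition of $\nu(A_c)$, swap the (finite) sums, and collect:
\[
\nu(A_c)
= \sum_{v \in \mc L_{\vartheta}^n} \nu([v]) \sum_{u \in U_c} \mathbb{P}[\vartheta_{\bm P}^m(v)_{[1,\ell]} = u]
= \sum_{v \in \mc L_{\vartheta}^n} \nu([v]) \, \mathbb{P}[\vartheta_{\bm P}^m(v)_{[1,\ell]} \in U_c],
\]
where the disjointness of the events $\{\vartheta_{\bm P}^m(v)_{[1,\ell]} = u\}$ over $u \in U_c$ is used. Since $\vartheta_{\bm P}^m(v)_{[1,\ell]} \in U_c$ is precisely the event that $\widehat{\mu}_a(\vartheta_{\bm P}^m(v)_{[1,\ell]})(k) = c$, this completes the identity.

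There is essentially no substantial obstacle: the only non-bookkeeping point is verifying that the length bound $n^+(\ell,m)$ really does force the image $\vartheta_{\bm P}^m(v)$ to have length at least $\ell$ almost surely, which is exactly the content of the preceding lemma, and that $\vartheta_{\bm P}$-invariance iterates to $\vartheta_{\bm P}^m$-invariance, which is a straightforward induction on $m$.
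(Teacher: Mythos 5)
Your proposal is correct and follows essentially the same route as the paper: decompose the level set into cylinders of length $\ell$, apply the invariance relation \eqref{EQ:nu-invariance} for the power $\vartheta_{\bm P}^m$ (valid because $n \geqslant n^+(\ell,m)$ forces $|\vartheta^m(v)| \geqslant \ell$), then swap the finite sums and collect the disjoint events. The only difference is that you spell out the iteration of $\vartheta_{\bm P}$-invariance to $\vartheta_{\bm P}^m$-invariance, which the paper leaves implicit.
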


\begin{proof}
Let $f \colon \mc L_{\vartheta} \to \mathbb{C}$ be defined by $f(w) = \widehat{\mu}_a(w)(k)$, and let $f_{\ell}(w) = f(w_{[1,\ell]})$. By \eqref{EQ:nu-invariance},
\begin{align*}
\nu[f_{\ell} = c] 
= \sum_{u \in \mc L_{\vartheta}^{\ell}, f(u) = c} \nu[u]  
= \sum_{u \in \mc L_{\vartheta}^{\ell}, f(u) = c} \sum_{v \in \mc L_{\vartheta}^n} \nu[v] \, \mathbb{P}[\vartheta_{\bm P}^m(v)_{[1,\ell]} = u]
= \sum_{v \in \mc L_{\vartheta}^n } \nu[v] \, \mathbb{P}[f(\vartheta_{\bm P}^m(v)_{[1,\ell]}) = c],
\end{align*}
which yields the desired assertion.
\end{proof}

\begin{lemma}
\label{LEM:FT-convergence-pointwise}
For every $k \in \mathbb H^{\ast}$, there exists a set $\Omega_k \subset X_{\vartheta}$ with $\nu(\Omega_k) = 1$ such that for all $w \in \Omega_k$, we have
\begin{align*}
\lim_{\ell \to \infty} \ell^{-1} \, \widehat{\mu}_a (w_{[1,\ell]})(k) = c(k)_a.
\end{align*}
\end{lemma}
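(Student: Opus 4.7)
The plan is to transfer the $\mathbb P$-almost sure convergence in \Cref{PROP:as-convergence-markov} into $\nu$-almost sure convergence for the Fourier-transformed prefix, by combining an $L^2$-bound obtained via the substitution-invariance \Cref{LEM:mu-FT-renormalise}, a Borel--Cantelli argument along a sparse subsequence, and a uniform Lipschitz estimate in $\ell$ to fill in the gaps.

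Fix $k \in \mathbb H^\ast$ and set $X_\ell(w) = \ell^{-1}\widehat{\mu}_a(w_{[1,\ell]})(k)$. First I would use \Cref{LEM:mu-FT-renormalise} with $n = n^+(\ell,m)$ to obtain the identity
\begin{align*}
\mathbb E_\nu\!\left[\lvert X_\ell - c(k)_a\rvert^2\right]
= \sum_{v \in \mc L^n_\vartheta} \nu[v]\,\mathbb E\!\left[\bigl\lvert \ell^{-1}\widehat{\mu}_a(\vartheta_{\bm P}^m(v)_{[1,\ell]})(k) - c(k)_a\bigr\rvert^2\right].
\end{align*}
Next, since $\widehat{\mu}_a(\vartheta_{\bm P}^m(v)) - \widehat{\mu}_a(\vartheta_{\bm P}^m(v)_{[1,\ell]})$ is a sum of at most $\lvert\vartheta^m(v)\rvert - \ell \leqslant (1+2r)\lambda^m$ phases of unit modulus, each inner expectation differs by $O(\lambda^m/\ell)$ from
\begin{align*}
\mathbb E\!\left[\bigl\lvert (L_v\lambda^m/\ell) \cdot \lambda^{-m}L_v^{-1}\widehat{\mu}_a(\vartheta_{\bm P}^m(v))(k) - c(k)_a\bigr\rvert^2\right].
\end{align*}
By \Cref{PROP:as-convergence-markov} and the uniform bound $\lambda^{-m}\lvert\vartheta^m(v)\rvert \leqslant L_v + O(1)$ from $C$-balancedness, bounded convergence gives that this tends to zero as $m \to \infty$; moreover $L_v\lambda^m/\ell \to 1$ because $L_v = n + O(1)$ and $n\lambda^m/\ell \to 1$. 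Choosing $m = m(\ell) \to \infty$ with $\lambda^{m(\ell)}/\ell \to 0$, for example $m(\ell) = \lfloor (\log_\lambda \ell)/2\rfloor$, and invoking dominated convergence for the $\nu$-weighted sum (using the same uniform bound), I conclude $\mathbb E_\nu[\lvert X_\ell - c(k)_a\rvert^2] \to 0$.

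Finally, by $L^2$-convergence I can select a subsequence $\ell_j \uparrow \infty$ with $\mathbb E_\nu[\lvert X_{\ell_j} - c(k)_a\rvert^2] \leqslant 2^{-j}$, so by Markov and Borel--Cantelli, $X_{\ell_j} \to c(k)_a$ for $\nu$-a.e.\ $w$. The elementary Lipschitz estimate
\begin{align*}
\lvert X_{\ell+1}(w) - X_\ell(w)\rvert \leqslant \tfrac{1}{\ell+1} + \lvert X_\ell(w)\rvert \cdot \tfrac{1}{\ell+1} \leqslant \tfrac{2}{\ell+1},
\end{align*}
obtained from $\lvert \widehat{\mu}_a(w_{[1,\ell+1]})(k) - \widehat{\mu}_a(w_{[1,\ell]})(k)\rvert \leqslant 1$ and the trivial bound $\lvert\widehat{\mu}_a(w_{[1,\ell]})(k)\rvert \leqslant \ell$, then interpolates between the convergent subsequence and the full sequence, giving the required full-measure set $\Omega_k$.

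The main obstacle is ensuring the $L^2$-convergence survives the passage from each fixed $v$ to the $\nu$-weighted sum in Step~1, since both the distribution of $v$ (via $\nu|_{\mc L^n_\vartheta}$) and the length of $v$ move with $\ell$; here the uniform-in-$v$ boundedness provided by $C$-balancedness is essential, as is a careful tuning of $m(\ell)$ so that simultaneously the truncation error $\lambda^m/\ell$ vanishes and the normalisation ratio $L_v\lambda^m/\ell$ converges to one on a set of $v$'s of full $\nu$-measure.
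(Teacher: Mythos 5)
Your route (transfer to inflation words via \Cref{LEM:mu-FT-renormalise}, then a second-moment argument with Borel--Cantelli along a subsequence and a $2/(\ell+1)$ increment bound) is genuinely different from the paper's, which keeps $m$ \emph{fixed} (large, depending on $\varepsilon$), decomposes $\widehat{\mu}_a(\vartheta_{\bm P}^m(v))$ into i.i.d.\ contributions of the letters of $v$, and uses Cram\'er's theorem to get deviation bounds of order $\mathrm{e}^{-r'\ell}$ that are \emph{uniform in $v$} and summable, so that Borel--Cantelli runs along the full sequence of $\ell$. As written, however, your argument has a genuine gap at its central step: the claim $\mathbb{E}_\nu[\lvert X_\ell - c(k)_a\rvert^2]\to 0$ is not justified by \Cref{PROP:as-convergence-markov} plus bounded/dominated convergence. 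That proposition gives almost-sure (hence $L^2$) convergence as $m\to\infty$ for a \emph{fixed} word $v$, with no rate and no uniformity in $v$; but in your $\nu$-weighted sum the words $v$ range over $\mc L_\vartheta^{n}$ with $n=n^+(\ell,m(\ell))\to\infty$, so the inner expectations are evaluated at words that change with $\ell$. There is no fixed family of functions on a fixed measure space to which dominated convergence could apply, and the statement you actually need --- smallness of the inner expectation uniformly over all sufficiently long $v$ once $m$ is large --- is precisely what is missing. It is true, but proving it requires conditioning on $v$ and doing the quantitative work the paper does: by independence of the level-$m$ images of the letters of $v$, the conditional variance of $\ell^{-1}\widehat{\mu}_a(\vartheta_{\bm P}^m(v))(k)$ is $O(n\lambda^{2m}/\ell^2)=O(1/n)$ uniformly in $v$, while the conditional mean equals $\ell^{-1}\bigl(B^{(m)}(k)F^0_v(g^mk)\bigr)_a$ by \eqref{EQ:mu-n-cocycle}, which is within $o(1)$ of $c(k)_a$ uniformly in $v$ by \Cref{LEM:cocycle-convergence} and $C$-balancedness; none of this follows from citing \Cref{PROP:as-convergence-markov}.

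There is a second, smaller flaw in the final step: from $L^2$-convergence alone the subsequence $(\ell_j)_j$ with $\mathbb{E}_\nu[\lvert X_{\ell_j}-c(k)_a\rvert^2]\leqslant 2^{-j}$ may be very sparse, and the increment bound $\lvert X_{\ell+1}-X_\ell\rvert\leqslant 2/(\ell+1)$ only controls the drift between $\ell_j$ and $\ell_{j+1}$ by roughly $2\log(\ell_{j+1}/\ell_j)$, which need not be small (for example it is about $2\log 2$ if $\ell_{j+1}\geqslant 2\ell_j$), so convergence along such a subsequence does not bridge the gaps. To make the interpolation work you need almost-sure convergence along geometric subsequences $\ell_j=\lceil(1+\delta)^j\rceil$ for every small $\delta>0$, which requires a quantitative bound on the deviation probabilities --- e.g.\ Chebyshev applied after centring at the conditional mean, using the $O(1/n)$ variance bound above (note $n\approx\ell^{1/2}$ for your choice of $m(\ell)$, which is summable along geometric subsequences), or the paper's exponential Cram\'er bound --- but not merely the qualitative decay of $\mathbb{E}_\nu[\lvert X_\ell-c(k)_a\rvert^2]$. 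With these two repairs (an explicit uniform bias--variance estimate, and Borel--Cantelli along all geometric subsequences before interpolating), your strategy would give a correct alternative proof that avoids large deviations at the cost of extra bookkeeping.
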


\begin{proof}
We show this via an application of the Borel--Cantelli lemma. 
Let $\varepsilon > 0$ be arbitrary. With the same notation as in the proof of \Cref{LEM:mu-FT-renormalise} we set
\begin{align*}
A_{\ell} = \{ w: |\ell^{-1} f_{\ell}(w) - c(k)_a| > \varepsilon \}, 
\quad \widetilde{A}_{\ell} = \{ w \in \mc L_{\vartheta}^{\ell} : |\ell^{-1} f(w) - c(k)_a| > \varepsilon \}.
\end{align*}
Then, for arbitrary $m \in \N$, by \Cref{LEM:mu-FT-renormalise} we obtain for $n = n^+(\ell,m)$ that
\begin{align}
\label{EQ:A-ell-renormalisation}
\nu(A_{\ell}) = \sum_{v \in \mc L_{\vartheta}^n} \nu[v] \, \mathbb{P} [\vartheta_{\bm P}^m(v)_{[1,\ell]} \in \widetilde{A}_{\ell}].
\end{align}
By the Borel--Cantelli lemma, the desired almost-sure convergence follows as soon as we have shown that 
\begin{align*}
\sum_{\ell \in \N} \nu(A_{\ell}) < \infty.
\end{align*}
We therefore aim to obtain an appropriate upper bound for $\mathbb{P}[\vartheta_{\bm P}^m(v)_{[1,\ell]} \in \widetilde{A}_{\ell}]$.
By the definition of $n=n^+(\ell,m)$, it follows that $\ell \leqslant |\vartheta^m(v)| \leqslant \ell + (2r + 1) \lambda^m$. 
Since every letter in a word $w$ contributes a summand on the complex unit circle to $f(w)$, this implies, for all realisation of $\vartheta_{\bm P}(v)$,  that $|f(\vartheta_{\bm P} (v)) - f(\vartheta_{\bm P} (v)_{[1,\ell]})| \leqslant (2 r + 1) \lambda^m$. 
Choosing $\ell_0 = \ell_0(m,\varepsilon)$ large enough so that
\begin{align*}
\frac{(2r + 1) \lambda^m}{\ell_0} < \frac{\varepsilon}{2}
\end{align*}
we obtain that for all $\ell \geqslant \ell_0$, the condition $\vartheta_{\bm P} (v)_{[1,\ell]} \in \widetilde{A}_{\ell}$ implies that
\begin{align}
\label{EQ:FT-deviation-1}
|\ell^{-1} f(\vartheta_{\bm P} (v)) - c(k)_a| > \frac{\varepsilon}{2}.
\end{align}
Similar to the proof of \Cref{PROP:as-convergence-markov}, we have
\begin{align*}
\ell^{-1} f(\vartheta_{\bm P} (v)) 
& = \mc O(c^m) + \frac{\lambda^m}{\ell} \sum_{b \in \mc A} \sum_{x \in \mc  R_b^{\ast}(v)} f^{a,m,b}_{x}(k)
\\ & = \mc O(c^m) + \sum_{b \in \mc A} R_b \frac{\lambda^m |v|}{\ell} \frac{|v|_b}{R_b |v|} \frac{1}{|v|_b} \sum_{x \in \mc  R_b^{\ast}(v)} f^{a,m,b}_{x}(k),
\end{align*}
where $|c| < 1$ and $\{ f^{a,m,b}_{x}(k) \}_{x}$ are i.i.d., with distribution given by $\lambda^{-m}\operatorname{FT}[\mu_a(\vartheta_{\bm P}^m(b))](k)$.
In particular
\begin{align}
\label{EQ:FT-expectation}
\mathbb{E}[f^{a,m,b}_{x}(k)] = \lambda^{-m} B^{(m)}_{ab}(k).
\end{align}
Similarly to above, we have that $\ell^{-1} \lambda^m |v|$ differs from $1$ by at most $\ell^{-1} \lambda^m(r+1)$. 
Furthermore, the $C$-balanced property implies that 
\begin{align*}
1 - \frac{|v|_b}{R_b |v|} = \mc O(|v|^{-1}) = \mc O(\ell^{-1}).
\end{align*}
Since $|f_x^{a,m,b}(k)|$ is uniformly bounded by a constant $C$, we obtain that
\begin{align}
\label{EQ:multiple-error-estimate}
\Bigl|\ell^{-1} f(\vartheta_{\bm P} (v)) - \sum_{b \in \mc A} R_b \frac{1}{|v|_b} \sum_{x \in \mc  R_b^{\ast}(v)} f^{a,m,b}_{x}(k) \Bigr| = \mc O(c^m) + \mc O(\ell^{-1}\lambda^m) + \mc O(\ell^{-1})\tp
\end{align}
We may choose $m$ large enough and then $\ell_1 = \ell_1(m,\varepsilon) \geqslant \ell_0(m,\varepsilon)$ large enough such that for $\ell \geqslant \ell_1$ the sum of the error terms on the right hand side is smaller than $\varepsilon/4$.
Due to \Cref{LEM:cocycle-convergence}, we further know that
\begin{align*}
\lim_{m \to \infty} \sum_{b \in \mc A} R_b \lambda^{-m} B^{(m)}_{ab}(k) = \sum_{b \in \mc A} c(k)_a R_b L_b = c(k)_a
\end{align*}
and hence we may assume that $m$ has been chosen large enough to ensure that
\begin{align*}
\left| \sum_{b \in \mc A} R_b \lambda^{-m} B^{(m)}_{ab}(k) - c(k)_a \right| \leqslant \frac{\varepsilon}{8}.
\end{align*}
Together with \eqref{EQ:FT-deviation-1} and \eqref{EQ:multiple-error-estimate}, this yields that for all $\ell \geqslant \ell_1$, we have
\begin{align*}
\frac{\varepsilon}{8} < \sum_{b \in \mc A} R_b \, \left| \frac{1}{|v|_b} \sum_{x \in \mc R_b^{\ast}(v)} f_x^{a,m,b}(k) - \lambda^{-m} B^{(m)}_{ab}(k) \right|.
\end{align*}
Due to \eqref{EQ:FT-expectation} and since $(R_b)_{b \in \mc A}$ is a probability vector, this requires that for some $b \in \mc A$, we have that
\begin{align}
\label{EQ:condition-exponential}
\frac{\varepsilon}{8} < \left| \frac{1}{|v|_b} \sum_{x \in \mc R_b^{\ast}(v)} \bigl( f_x^{a,m,b}(k) - \mathbb{E}[f_x^{a,m,b}(k) ] \bigr) \right|.
\end{align}
By Cram\'{e}r's theorem on large deviations, there exists a constant $r = r(a,m,b,k) > 0$ such that (given $|v|_b$ is large enough) the probability for this event is bounded by $\me^{- r|v|_b} \leqslant \me^{- r' \ell} $, for some $r' > 0$. In summary, there exists a number $m \in \N$ and some $\ell_2 \in \N$ such that for all $\ell \geqslant \ell_2$ the condition $\vartheta_{\bm P}^m(v)_{[1,\ell]} \in \widetilde{A}_{\ell}$ implies that \eqref{EQ:condition-exponential} holds for some $b \in \mc A$ and
\begin{align*}
\mathbb{P}[\vartheta_{\bm P}^m(v)_{[1,\ell]} \in \widetilde{A}_{\ell}] \leqslant d \me^{- r' \ell}.
\end{align*}
Since the choice of $r'$ is independent of both $\ell$ and $v \in \mc L_{\vartheta}^n$ we obtain, via \eqref{EQ:A-ell-renormalisation}, that $\nu(A_{\ell}) \leqslant d \me^{-r'\ell}$, which is summable over $\ell$. 
The desired result follows by an application of the Borel--Cantelli lemma.
\end{proof}

We are now equipped to prove the main result of this subsection.

\begin{proof}[Proof of \Cref{PROP:nu-as-convergence}]
Given $k \in \mathbb H^{\ast}$, \Cref{LEM:FT-convergence-pointwise} gives that for $w$ in a set $\Omega_k$ of full measure, we have
\begin{align}
\label{EQ:FT-convergence-omega}
\lim_{ \ell \to \infty} \ell^{-1} \widehat{\mu}_a(w_{[1,\ell]})(k) = c(k)_a.
\end{align}
The rest follows as in the proof of \Cref{PROP:as-convergence-markov}.
Given a compact set $K$, there is a dense countable subset $E \subset K$ and a set $\Omega(E)$ of full measure such that for all $w \in \Omega(E)$ the convergence holds on $E$.
Since each $\ell^{-1} \widehat{\mu}_a(w_{[1,\ell]})$ is the average of functions with a uniformly bounded Lipshitz constant, the family $\{ \ell^{-1} \widehat{\mu}_a(w_{[1,\ell]})\}_{\ell \in \N}$ is equicontinuous. 
By the Arzel\`{a}-Ascoli theorem, this implies uniform convergence on $K$ for all $w \in \Omega(E)$. We can hence find a set $\Omega$ with $\nu(\Omega) = 1$ such that for all $w \in \Omega$ the convergence in \eqref{EQ:FT-convergence-omega} holds uniformly on all compact subsets of $\mathbb H^{\ast}$, and in particular pointwise for all $k \in \mathbb H^{\ast}$.
Since $c(k)_a$ is the Fourier transform of $\overline{\mu}_a$, the claim follows by an application of Levy's continuity theorem.
\end{proof}

\begin{corollary}
Let $\overline{\mu}^{\nu}_a (w_{[1,n]}) = \mathbb{E}_{\nu}[\mu_a(w_{[1,n]})]$. Then,
\begin{align*}
\lim_{n \to \infty} \frac{1}{n} \overline{\mu}_a^{\nu}(w_{[1,n]}) = \overline{\mu}_a,
\end{align*}
for all $a \in \mc A$, in the sense of weak convergence.
\end{corollary}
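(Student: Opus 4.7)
The plan is to lift the $\nu$-almost-sure convergence established in \Cref{PROP:nu-as-convergence} to convergence in expectation via the dominated convergence theorem. Two elementary observations make this transparent. First, the total mass of $\mu_a(w_{[1,n]})$ equals $|w_{[1,n]}|_a \leqslant n$, so $\tfrac{1}{n}\mu_a(w_{[1,n]})$ is a subprobability measure uniformly in $w \in X_{\vartheta}$ and $n \in \N$. Second, by \Cref{THM:Pisot-balanced}, the $C$-balancedness of $X_{\vartheta}$ ensures that each set $\mc R_a^{*}(w_{[1,n]})$ lies in a common bounded subset of $\mathbb H$, so both the measures $\tfrac{1}{n}\mu_a(w_{[1,n]})$ and the limit $\overline{\mu}_a$ are supported in a fixed compact set $K \subset \mathbb H$.

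With these observations in hand, fix an arbitrary bounded continuous test function $f \colon \mathbb H \to \R$. The real-valued map $w \mapsto \int f \, d\bigl(\tfrac{1}{n}\mu_a(w_{[1,n]})\bigr)$ depends only on the prefix $w_{[1,n]}$, hence is measurable, and is uniformly bounded by $\|f\|_{\infty}$. By \Cref{PROP:nu-as-convergence}, $\tfrac{1}{n}\mu_a(w_{[1,n]}) \to \overline{\mu}_a$ weakly for $\nu$-almost every $w$, and therefore this map converges pointwise $\nu$-almost surely to the constant $\int f \, d\overline{\mu}_a$. Applying the dominated convergence theorem, together with Fubini's theorem to interchange the expectation with the integration against $f$, yields
\[
\int f \, d\bigl(\tfrac{1}{n}\overline{\mu}_a^{\nu}(w_{[1,n]})\bigr) = \mathbb{E}_{\nu}\!\left[\int f \, d\bigl(\tfrac{1}{n}\mu_a(w_{[1,n]})\bigr)\right] \xrightarrow[n \to \infty]{} \int f \, d\overline{\mu}_a.
\]
Since $f$ was an arbitrary bounded continuous function and all measures involved live on the compact set $K$, this is exactly the required weak convergence of $\tfrac{1}{n}\overline{\mu}_a^{\nu}(w_{[1,n]})$ to $\overline{\mu}_a$.

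There is no substantive obstacle: the uniform mass bound provides the dominating function for free, and the uniform compact support on $K$ rules out any loss of mass to infinity, making the interchange of limit and expectation routine. The only point requiring care is the well-definedness of $\overline{\mu}_a^{\nu}(w_{[1,n]})$ as a finite Borel measure, which follows from monotone convergence applied to $A \mapsto \mathbb{E}_{\nu}[\mu_a(w_{[1,n]})(A)]$ since, for each fixed $w$, $\mu_a(w_{[1,n]})$ is a finite positive measure.
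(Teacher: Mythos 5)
Your argument is correct and is essentially the paper's own proof: test against a bounded continuous function, note the uniform bound by $\|f\|_{\infty}$, invoke the $\nu$-almost-sure convergence from \Cref{PROP:nu-as-convergence}, and pass the limit through the expectation by dominated convergence. The additional remarks on uniform compact support and well-definedness of $\overline{\mu}_a^{\nu}$ are harmless elaborations of the same route.
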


\begin{proof}
Let $f$ be an arbitrary bounded and uniformly continuous function on $\mathbb H$. 
Then, since each of $n^{-1} \overline{\mu}^{\nu}(w_{[1,n]})$ is a probability measure, we obtain $| n^{-1} \overline{\mu}_a^{\nu}(w_{[1,n]})(f)| \leqslant \|f\|_{\infty}$. By Lebesgue's dominated convergence theorem, we may therefore conclude from \Cref{PROP:nu-as-convergence},
\begin{align*}
\lim_{n \to \infty} \frac{1}{n} \mu_a^{\nu}(w_{[1,n]})(f) 
= \lim_{n \to \infty} \int_{X_{\vartheta}} \frac{1}{n} \mu_a(w_{[1,n]})(f) \dd \nu(w)
= \overline{\mu}_a(f). 
\end{align*}
Since $f$ was arbitrary, this shows the required weak convergence of probability measures.
\end{proof}

\subsection{The frequency measure}
\label{SS:frequency-measure-image}
We have seen in the previous section that the vector of Rauzy measures $\overline{\mu}(w) = (\overline{\mu}_a(w))_{a \in \mc A}$ is almost surely constant with respect to any $\vartheta_{\mathbf{P}}$-invariant measure $\nu$. Since $\nu$ is neither shift-invariant nor of full support, this might seem unsatisfactory as a statement about the underlying shift space $X_{\vartheta}$. In this section, we show that a similar statement holds for the ergodic frequency measure $\varrho$: the Rauzy measure is almost surely constant, up to a rigid translation that aligns its support with the Rauzy fractal of the sequence at hand. In addition, we make the remarkable observation that the Rauzy measure emerges as the image of $\varrho$ under the map $\phi$ that projects transitive points in $X_\vartheta$ to the Rauzy fractal.

In the following, let $\varrho$ be the frequency measure associated with the random substitution $\vartheta_{\mathbf{P}}$. 
Since $\varrho$ is ergodic, we obtain from \Cref{PROP:rauzy-measure-dynamical} the interpretation of the Rauzy measure as a pushforward of $\varrho$ under the map $\phi \colon X_{\vartheta}' \to \mc R_{\vartheta}$ in \Cref{SUBSEC:generic-factors}, where $X_{\vartheta}'$ is the set of transitive points of $X_{\vartheta}$. 
For $\phi$ to be well-defined we need to pick a reference point $w \in X_{\vartheta}'$ and declare $\phi(w) = 0$. 
To be specific, let us choose $w \in X_{\vartheta}^{\infty} \cap X_{\vartheta}'$ to be one of the $\nu$-typical points with the property that $\mu_a(w) = \overline{\mu}_a$ for all $a \in \mc A$ (\Cref{PROP:nu-as-convergence}) and that $w$ is $\varrho$-generic (\Cref{THM:frequency-measure}).

\begin{corollary}
    The Rauzy measure $\mu = \sum_{a \in \mc A} \overline{\mu}_a$ coincides with $\varrho \circ \phi^{-1}$. Further, $\mu(x) \circ t_{\phi(x)}^{-1} = \mu$ holds for $\varrho$-almost every $x \in X_{\vartheta}$.
\end{corollary}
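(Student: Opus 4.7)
The plan is to deduce this corollary almost immediately from \Cref{PROP:rauzy-measure-dynamical}, once the special properties of the reference point $w$ are used. Recall that $w \in X_\vartheta^\infty \cap X_\vartheta'$ was explicitly chosen so that three conditions hold: $\phi(w) = 0$ (by definition of the reference point), $w$ is $\varrho$-generic (by \Cref{THM:frequency-measure}, since $\nu$-almost every point is $\varrho$-generic), and $\mu_a(w) = \overline{\mu}_a$ for every $a \in \mc A$ (by \Cref{PROP:nu-as-convergence}). The simultaneous validity of these conditions is guaranteed because each of them holds on a set of full $\nu$-measure, and the choice of $w$ was made precisely to lie in their intersection.

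First, I would sum the identity $\mu_a(w) = \overline{\mu}_a$ over $a \in \mc A$ to obtain $\mu(w) = \mu$. Then, applying \Cref{PROP:rauzy-measure-dynamical} at the $\varrho$-generic point $w$, I get
\begin{equation*}
\mu(w) \circ t_{\phi(w)}^{-1} = \varrho \circ \phi^{-1}.
\end{equation*}
Since $\phi(w) = 0$, the translation $t_{\phi(w)}$ is the identity, so the left-hand side equals $\mu(w) = \mu$, yielding $\mu = \varrho \circ \phi^{-1}$, which is the first assertion.

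For the second assertion, I would invoke ergodicity of $\varrho$ together with the Birkhoff ergodic theorem, which guarantees that $\varrho$-almost every $x \in X_\vartheta$ is $\varrho$-generic. Because $\varrho$ is fully supported and ergodic, the set of transitive points $X_\vartheta'$ has full $\varrho$-measure, so $\phi(x)$ is well-defined for $\varrho$-a.e.\ $x$. Applying \Cref{PROP:rauzy-measure-dynamical} at each such $x$ gives
\begin{equation*}
\mu(x) \circ t_{\phi(x)}^{-1} = \varrho \circ \phi^{-1} = \mu,
\end{equation*}
where the second equality was just established. This is precisely the second claim.

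I do not anticipate any substantial obstacle: the whole argument is essentially a bookkeeping step that combines \Cref{PROP:rauzy-measure-dynamical} (which already does the dynamical work of identifying $\mu(x) \circ t_{\phi(x)}^{-1}$ with $\varrho \circ \phi^{-1}$ at generic points) with the defining properties of the reference point $w$. The only minor point worth flagging is that the choice $\phi(w) = 0$ was a convention; without it, both sides of the first identity would differ by a translation, which would be unnatural for the intended interpretation of $\mu$ as the pushforward $\varrho \circ \phi^{-1}$.
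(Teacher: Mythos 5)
Your argument is correct and follows essentially the same route as the paper: both parts are immediate applications of \Cref{PROP:rauzy-measure-dynamical}, first at the reference point $w$ (using $\phi(w)=0$, $\varrho$-genericity, and $\mu_a(w)=\overline{\mu}_a$ from \Cref{PROP:nu-as-convergence}), and then at a $\varrho$-generic point $x$, with ergodicity of $\varrho$ (\Cref{THM:frequency-measure}) supplying genericity almost everywhere. No gaps to report.
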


\begin{proof}
    Since we have assumed that $w$ is $\varrho$-generic, its Rauzy measure $\mu = \sum_{a \in \mc A} \overline{\mu}_a$ coincides with $\varrho\circ \phi^{-1}$ (up to a shift by $\phi(w) = 0$), due to \Cref{PROP:rauzy-measure-dynamical}. Likewise, since $\varrho$ is ergodic by \Cref{THM:frequency-measure}, $\varrho$-almost every point is generic, and therefore satisfies $\mu(x) \circ t_{\phi(x)}^{-1} = \varrho \circ \phi^{-1}= \mu$, again by \Cref{PROP:rauzy-measure-dynamical}.
\end{proof}

\subsection{Lebesgue covering property}
\label{SS:Lebesgue-covering-RS}

Finally, we transfer the Lebesgue covering property from the context of $C$-balanced subshifts to the Rauzy measures associated with random substitutions. Recall that the lattice $\mc J$ is the integer span of the vectors $\pi(\mathbf{e}_i - \mathbf{e}_1)$ with $2\leqslant i \leqslant d$ and $D$ denotes the density of points in $\mc J$.

\begin{corollary}
\label{COR:Lebesgue-sum}
Let $\overline{\mu}$ be the self-similar measure vector corresponding to a compatible, irreducible Pisot random substitution $\vartheta_{\bm P}$. Then,
\begin{align*}
\sum_{j \in \mc J} \sum_{a \in \mc A} \overline{\mu}_a \circ t_j^{-1} = D \Leb.
\end{align*}
\end{corollary}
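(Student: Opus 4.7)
The plan is to reduce this to the Lebesgue covering property for $C$-balanced sequences (Theorem \ref{PROP:Lebesgue-covering}) by exhibiting a single well-chosen sequence $w \in X_{\vartheta}$ whose associated Rauzy measures $\mu_a(w)$ realise $\overline{\mu}_a$ simultaneously for every $a \in \mathcal{A}$. By Proposition \ref{PROP:nu-as-convergence}, the set of such $w$ has full measure for any $\vartheta_{\mathbf{P}}$-invariant measure $\nu$, so such a choice is possible. Moreover, since $\vartheta$ is compatible and irreducible Pisot, Theorem \ref{THM:Pisot-balanced} ensures that every $w \in X_{\vartheta}$ is $C$-balanced with letter frequency vector equal to the right Perron--Frobenius eigenvector $\mathbf{R}$.

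Next, I would verify that $\mathbf{R}$ is totally irrational, which is the hypothesis required to invoke Theorem \ref{PROP:Lebesgue-covering}. This is a standard consequence of irreducibility: the entries of $\mathbf{R}$ lie in $\mathbb{Q}(\lambda)$, and irreducibility of the characteristic polynomial of $M_{\vartheta}$ over $\mathbb{Z}$ implies that $[\mathbb{Q}(\lambda):\mathbb{Q}] = d$, so that any $d$ elements of $\mathbb{Q}(\lambda)$ satisfying a non-trivial $\mathbb{Q}$-linear relation must be linearly dependent in $\mathbb{Q}(\lambda)$; applying this to the eigenvector relation $M_{\vartheta} \mathbf{R} = \lambda \mathbf{R}$ forces total irrationality of $\mathbf{R}$.

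With $w$ chosen, Proposition \ref{PROP:nu-as-convergence} gives $\mu_a(w) = \overline{\mu}_a$ in the weak topology for every $a \in \mathcal{A}$, and hence
\[
\mu(w) = \sum_{a \in \mathcal{A}} \mu_a(w) = \sum_{a \in \mathcal{A}} \overline{\mu}_a.
\]
Applying Theorem \ref{PROP:Lebesgue-covering} to this $w$ then yields
\[
\sum_{j \in \mathcal{J}} \mu(w) \circ t_j^{-1} = D \, \mathrm{Leb}.
\]
Interchanging the two (non-negative, locally finite) summations, which is justified since the supports of the translated measures $\overline{\mu}_a \circ t_j^{-1}$ are uniformly bounded and hence each compact set meets only finitely many of them, gives the stated identity.

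The only genuinely non-routine point is the verification of total irrationality of $\mathbf{R}$; everything else is an assembly of results already proved. I would therefore spend a short remark addressing this algebraic fact, then state the two-line combination of Proposition \ref{PROP:nu-as-convergence} and Theorem \ref{PROP:Lebesgue-covering} as above.
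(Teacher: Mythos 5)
Your proposal is correct and follows essentially the same route as the paper: select a $\nu$-typical sequence $w$ with $\mu_a(w)=\overline{\mu}_a$ for all $a\in\mc A$ via \Cref{PROP:nu-as-convergence}, note $\mu(w)=\sum_{a\in\mc A}\overline{\mu}_a$, and apply \Cref{PROP:Lebesgue-covering}. The only addition is your explicit verification that irreducibility forces $\mathbf{R}$ to be totally irrational, a standard fact the paper leaves implicit, which is a correct and harmless supplement.
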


\begin{proof}
By \Cref{PROP:nu-as-convergence}, we have for $\nu$-almost every $w$ that
\begin{align*}
\sum_{a \in \mc A} \overline{\mu}_a = \lim_{n \to \infty} \frac{1}{n} \sum_{a \in \mc A} \mu_a(w_{[1,n]}) = \lim_{n \to \infty} \frac{1}{n} \mu(w_{[1,n]}) = \mu(w),
\end{align*}
and hence the claim follows immediately from \Cref{PROP:Lebesgue-covering}.
\end{proof}

\begin{corollary}
Under the requirements of \Cref{COR:Lebesgue-sum}, each of the measures $\overline{\mu}_a$ with $a \in \mc A$ is absolutely continuous with respect to Lebesgue measure, with density bounded above by $D$.
\end{corollary}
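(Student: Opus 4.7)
The plan is to extract the absolute continuity and the density bound directly from the identity
\[
\sum_{j \in \mc J} \sum_{a \in \mc A} \overline{\mu}_a \circ t_j^{-1} = D \Leb
\]
provided by \Cref{COR:Lebesgue-sum}. Since every summand on the left is a non-negative (sub-probability) measure, each term is dominated by the right-hand side, and the bulk of the work is simply reading off the consequences of this domination.

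First, I would observe that for any Borel set $B \subseteq \mathbb{H}$ with $\Leb(B) = 0$, the identity above forces
\[
\sum_{j \in \mc J} \sum_{a \in \mc A} (\overline{\mu}_a \circ t_j^{-1})(B) = 0.
\]
As each of the countably many terms is non-negative, each must vanish individually; in particular $\overline{\mu}_a \circ t_0^{-1}(B) = \overline{\mu}_a(B) = 0$. This yields $\overline{\mu}_a \ll \Leb$ for every $a \in \mc A$.

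Next, by the Radon--Nikodym theorem, let $f_{a,j}$ denote a non-negative Borel density of $\overline{\mu}_a \circ t_j^{-1}$ with respect to $\Leb$. Since countable sums of densities pass through to densities of countable sums of measures (by monotone convergence), the identity from \Cref{COR:Lebesgue-sum} gives
\[
\sum_{j \in \mc J} \sum_{a \in \mc A} f_{a,j}(x) = D
\]
for Lebesgue-almost every $x \in \mathbb{H}$. As every $f_{a,j}$ is non-negative almost everywhere, it follows that $f_{a,j} \leqslant D$ almost everywhere, and in particular the density $f_{a,0}$ of $\overline{\mu}_a$ itself satisfies $f_{a,0} \leqslant D$ $\Leb$-a.e. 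This yields both claims simultaneously.

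There is essentially no obstacle: the only subtlety is the interchange of the countable sum with the Radon--Nikodym derivative, which is justified by monotone convergence applied to the partial sums of non-negative densities. Consequently, the argument is a few lines once \Cref{COR:Lebesgue-sum} is in hand.
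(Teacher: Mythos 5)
Your proposal is correct and follows essentially the same route as the paper: both extract the result from the fact that, since all summands in the Lebesgue covering identity are non-negative, the single term $\overline{\mu}_a$ is dominated by $D\Leb$, which immediately yields absolute continuity with density at most $D$. Your version merely spells out the Radon--Nikodym/monotone-convergence details that the paper leaves implicit.
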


\begin{proof}
This follows from the fact that $\overline{\mu}_a$ is dominated by $D$ times $\Leb$ due to \Cref{COR:Lebesgue-sum}.
\end{proof}

\begin{remark}
    Recall that we obtain the Rauzy measures of the marginals of a random substitution $\vartheta_{\mathbf{P}}$ by setting appropriate probability parameters to $1$. In this case the Rauzy measures are uniformly distributed on their support and the statement of \Cref{COR:Lebesgue-sum} returns the well-known property of having a $k$-fold multi-tiling for irreducible Pisot substitutions---see for instance \cite[Section 5.11]{BST10}. The Pisot substitution conjecture is equivalent to $k=1$, that is, having no overlaps of positive Lebesgue measure with respect to the sum in \Cref{COR:Lebesgue-sum}. By \Cref{PROP:meas-continuity}, this is equivalent to all overlaps of positive measure needing to disappear in the limit of degenerate $\mathbf{P}$ as we approach a deterministic substitution.
\end{remark}

\section{Applications to S-adic systems}
\label{S:S-adic}

Rauzy fractals were instrumental in proving variants of the Pisot substitution conjecture in the S-adic setting under appropriate conditions. 
The case of two letters was resolved in \cite{BMST16} and progress in more general settings has been made with the help of multidimensional continued fraction algorithms \cite{BST23,FN20}.
Here, we demonstrate several connections between the theory of Rauzy fractals for S-adic systems and that of random substitutions.
We introduce S-adic systems in a somewhat narrow setting, adapted to our needs. 
For a more general overview, we refer the reader to \cite{BD14} and the references therein. 

Let $\Sub$ be a set of (deterministic) substitutions on an alphabet $\mc A$. 
We assume that $\Sub$ is \emph{compatible} in the sense that every substitution $\theta \in \Sub$ has the same substitution matrix $M$. 
This latter condition implies that $\Sub$ is finite. 
We further assume that $M$ is primitive. An S-adic system is determined by a sequence $\pmb \theta  = (\theta_n)_{n \in \N} \in \Sub^{\N}$. 
We call a sequence $\bm a = (a_n)_{n \in \N} \in \mc A^{\N}$ \emph{adapted to $\pmb \theta $} if $\theta_n(a_n)$ starts with the letter $a_{n-1}$ for all integers $n \geqslant 2$, and write $\theta_{[1,n]} = \theta_1 \circ \cdots \circ \theta_n$ for all $n \in \N$. 
The assumption that $\bm a$ is adapted guarantees that $\theta_{[1,n]}(a_n)$ is a prefix of $\theta_{[1,m]}(a_m)$ for all integers $n \leqslant m$. 
Hence, there is a well-defined limiting sequence $u = u(\pmb \theta , \bm a) \in \mc A^{\N}$ with the property that $\theta_{[1,n]}(a_n)$ is a prefix of $u$ for all $n \in \N$. 
With some abuse of notation, in this situation, we write
\[
u = \lim_{n \to \infty} \theta_{[1,n]}(a_n).
\]
It can be verified that there is only a finite number of adapted letter sequences for a given $\pmb \theta $, and it should be noted that $u$ can be written as the image of $\theta_1$ acting on a different limiting sequence that corresponds to the shifted S-adic sequence $S \pmb \theta  = (\theta_{n+1})_{n \in \N}$ and $S \bm a = (a_{n+1})_{n \in \N}$. 
More generally,
\[
u(\pmb \theta , \bm a) = \theta_{[1,n]} ( u(S^n\pmb \theta , S^n \bm a) ),
\]
and for a limiting sequence $u = u(\pmb \theta ,\bm a)$, let $u^{(n)} = u(S^n \pmb \theta , S^n \bm a)$ for the corresponding preimage sequences. 

\subsection{Rauzy fractals}

Let us further assume that every $\theta \in \Sub$ is irreducible Pisot.
Then, every limiting sequence is $C$-balanced \cite{Arnoux}. We can therefore assign a corresponding subtile
\[
\mc R_{\pmb \theta , b} = \mc R_b(u(\pmb \theta , \bm a)),
\quad \mc R_{\pmb \theta } = \mc R(u(\pmb \theta , \bm a)),
\]
for all $b \in \mc A$. 
It was shown in \cite{Arnoux} that $\mc R_{\pmb \theta , b}$ is indeed independent of the choice of $\bm a$ and that it depends continuously on $\pmb \theta  \in \Sub^{\N}$, with respect to the Hausdorff metric. It can also be inferred from the proof of \cite[Lemma~3.1]{Arnoux} that there is a uniform bound for the diameter of $\mc R_{\pmb \theta ,a}$ with $\pmb \theta  \in \Sub^\N$.
Further, the Rauzy fractals satisfy the self-consistency relation
\begin{align}
\label{EQ:S-adic-GIFS}
\mc R_{\pmb \theta ,a} = \bigcup_{{\substack{(p,b,s) \\ pas=\theta_1(b)}}}  h(\mc R_{S\pmb \theta ,b}) + \pi (\psi(p)).
\end{align}
Let us emphasise that $\mc R_{\pmb \theta }$ generally \emph{does} depend on the choice of the directive sequence $\pmb \theta $ and not just on the available set $\Sub$ of substitutions. 
In the following, we show that the smallest enveloping set for all $\mc R_{\pmb \theta }$ with $\pmb \theta  \in \Sub^{\N}$ is given by the Rauzy fractal of a corresponding random substitution $\vartheta$. For this, we need to choose $\vartheta$ in an appropriate manner. 
Since the choice of the probability vectors is immaterial for this discussion, we consider a random substitution as a set-valued function for now.

\begin{definition}
Let $\Sub$ be a compatible collection of primitive substitutions. The \emph{enveloping random substitution} of $\Sub$ is $\vartheta \colon \mc A \to \mc F(\mc A^+)$, with
$
\vartheta(a) = \{ \theta(a) \colon \theta \in \Sub\}
$ 
for all $a \in \mc A$.
\end{definition}

Note that every substitution in $\Sub$ is a marginal of its enveloping random substitution $\vartheta$, but not every marginal of $\vartheta$ needs to be contained in $\Sub$. 

\begin{example}
Let $\Sub = \{\theta, \theta' \}$, with $\theta \colon 1 \mapsto 112, 2 \mapsto 21$ and $\theta' \colon 1 \mapsto 211, 2 \mapsto 12 $. Then the enveloping random substitution of $\Sub$ is given by 
\[
\vartheta \colon 1 \mapsto \{112,211 \}, \quad
2 \mapsto \{12,21 \}.
\]
There are four marginals of $\vartheta$, one for each possible pair of inflation words.
\end{example}

\begin{theorem}
\label{THM:S-adic-interpretation}
Let $\Sub$ be a compatible collection of irreducible Pisot substitutions, with enveloping random substitution $\vartheta$. For all $a \in \mc A$,
\begin{align*}
\bigcup_{\pmb \theta  \in \Sub^{\N}} \mc R_{\pmb \theta ,a} = \mc R_{\vartheta,a}.
\end{align*}
\end{theorem}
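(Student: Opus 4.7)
The plan is to prove the two inclusions separately, using for each direction the respective GIFS structure: the prefix-suffix GIFS for the enveloping random substitution (\Cref{THM: r-Rauzy GIFS}) and the S-adic GIFS \eqref{EQ:S-adic-GIFS}.

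For the inclusion $\mc R_{\pmb\theta,a} \subseteq \mc R_{\vartheta,a}$, I would fix $\pmb\theta \in \Sub^\N$ with some adapted letter sequence $\bm a$ and limiting sequence $u = u(\pmb\theta,\bm a)$. First observe that each prefix $\theta_{[1,n]}(a_n)$ lies in $\vartheta^n(a_n)$, so $u \in X_\vartheta$. Next, since $\theta_{[n+1,m]}(a_m) \in \vartheta^{m-n}(a_m)$ begins with $a_n$ for all $m > n$, each $a_n$ is eventually-first, i.e.\ $a_n \in \mc A_1$. Consequently, every finite prefix $u_{[1,m]}$ with $u_{m+1} = a$ is a prefix of some element of $\vartheta^n(a_n)$ with $a_n \in \mc A_1$, so $\psi(u_{[1,m]}) \in \mc S_a(\mc L_{\vartheta,p})$. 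Taking projections and closures, $\mc R_{\pmb\theta,a} = \mc R_a(u) \subseteq \widehat{\mc R}_{\vartheta,a}$, which equals $\mc R_{\vartheta,a}$ by \Cref{THM: r-Rauzy equiv char}.

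For the harder inclusion $\mc R_{\vartheta,a} \subseteq \bigcup_{\pmb\theta} \mc R_{\pmb\theta,a}$, I would start with $x \in \mc R_{\vartheta,a}$ and iteratively unfold it through the random substitution GIFS. Setting $b_0 = a$, I would use \Cref{THM: r-Rauzy GIFS} to produce inductively an edge $e_i = (p_i, b_{i-1}, s_i) \in \mc P_\vartheta$ with $p_i b_{i-1} s_i \in \vartheta(b_i)$, and a point $x_k \in \mc R_{\vartheta,b_k}$, such that
\[
x = \sum_{i=1}^{k} h^{i-1}(\pi(\psi(p_i))) + h^k(x_k)
\]
for every $k \in \N$. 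By the enveloping property, for each $i$ there is some $\theta^{(i)} \in \Sub$ with $\theta^{(i)}(b_i) = p_i b_{i-1} s_i$; set $\pmb\theta = (\theta^{(i)})_{i \in \N}$. Then the same chain of edges is admissible in the S-adic GIFS \eqref{EQ:S-adic-GIFS} for $\pmb\theta$, so for any choice of $y_k \in \mc R_{S^k\pmb\theta, b_k}$,
\[
z_k := \sum_{i=1}^{k} h^{i-1}(\pi(\psi(p_i))) + h^k(y_k) \in \mc R_{\pmb\theta,a}.
\]
Using the uniform diameter bound for the S-adic subtiles (noted just after \eqref{EQ:S-adic-GIFS}) together with the contraction of $h$, both $h^k(x_k)$ and $h^k(y_k)$ tend to zero, so $z_k \to x$ and, since $\mc R_{\pmb\theta,a}$ is closed, $x \in \mc R_{\pmb\theta,a}$.

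The main obstacle is the second inclusion: we cannot hope to realise an individual prefix arising from $\vartheta^k$ as a prefix of $\theta_{[1,k]}(b)$ for some fixed $\theta_1, \ldots, \theta_k \in \Sub$ (the random substitution permits different choices at different positions in a level, whereas an S-adic product uses one $\theta_j$ for all letters at level $j$). The key insight that resolves this is that we only need to approximate $x$ in the closure, so we are free to choose, at each \emph{level} of the unfolding, only one edge; the consistency required by the S-adic framework is exactly the consistency we need to pick one representative $\theta^{(i)} \in \Sub$ per level. The uniform boundedness of the S-adic subtiles from \cite{Arnoux} is the technical ingredient that then transfers convergence from the random GIFS to the S-adic GIFS.
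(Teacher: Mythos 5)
Your proof is correct, but it follows a genuinely different route from the paper's. The paper does not argue the two inclusions separately: it first shows that $K_a = \bigcup_{\pmb\theta \in \Sub^{\N}} \mc R_{\pmb\theta,a}$ is compact (using the uniform diameter bound together with the continuity of $\pmb\theta \mapsto \mc R_{\pmb\theta,a}$ and the compactness of $\Sub^\N$), then takes the union over all $\pmb\theta$ in the S-adic relation \eqref{EQ:S-adic-GIFS} and observes that, because $\theta_1$ and the tail $S\pmb\theta$ range independently, the tuple $(K_a)_{a\in\mc A}$ satisfies exactly the self-consistency relation \eqref{EQ:GIFS-sets} of the enveloping random substitution; uniqueness of the non-empty compact GIFS attractor then gives $K_a = \mc R_{\vartheta,a}$ in one stroke. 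Your argument instead establishes the inclusion $\mc R_{\pmb\theta,a}\subseteq \mc R_{\vartheta,a}$ via the prefix-language characterisation of \Cref{THM: r-Rauzy equiv char} (your observation that each $a_n$ is eventually-first, hence $a_n\in\mc A_1$, is the right way to make that work), and the reverse inclusion by unfolding the random GIFS one level at a time and packaging the chosen edges into a single directive sequence $\pmb\theta$, using the uniform boundedness of the S-adic subtiles to pass to the limit. What your approach buys is that it avoids needing the union $K_a$ to be closed a priori --- so it does not rely on the continuity of $\pmb\theta\mapsto\mc R_{\pmb\theta,a}$ --- and it is constructive, exhibiting for each $x\in\mc R_{\vartheta,a}$ an explicit $\pmb\theta$ with $x\in\mc R_{\pmb\theta,a}$ (your "one edge per level" resolution of the apparent mismatch between independent letterwise choices and a single substitution per level is exactly the right insight). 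What the paper's approach buys is brevity and symmetry: both inclusions come for free from the uniqueness of the attractor, at the cost of the preparatory compactness lemma.
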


As a preparatory step, we show the following under the assumptions of Theorem~\ref{THM:S-adic-interpretation}.

\begin{lemma}
The union $\bigcup_{\pmb \theta  \in \Sub^{\N}} \mc R_{\pmb \theta , a}$ is a compact set.
\end{lemma}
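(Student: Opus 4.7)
The plan is to combine compactness of the directive-sequence space $\Sub^{\N}$ with the continuity of $\pmb \theta \mapsto \mc R_{\pmb \theta, a}$ cited above from \cite{Arnoux}.

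First, I would observe that $\Sub$ is finite (as a compatible collection of substitutions sharing a common primitive matrix $M$), so $\Sub^{\N}$ equipped with the product of the discrete topologies is compact and metrisable. The continuity of the assignment $F \colon \pmb \theta \mapsto \mc R_{\pmb \theta, a}$ from $\Sub^{\N}$ to the space of non-empty compact subsets of $\mathbb H$ endowed with the Hausdorff metric $d_H$ then yields that the image $F(\Sub^{\N})$ is compact with respect to $d_H$.

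Next, I would appeal to the standard principle that, in a proper metric space such as $\mathbb H$, the union of a Hausdorff-compact family of compact sets is compact. Concretely, given a sequence $(x_n)_{n \in \N}$ in $\bigcup_{\pmb \theta \in \Sub^{\N}} \mc R_{\pmb \theta, a}$, choose $\pmb \theta^{(n)} \in \Sub^{\N}$ with $x_n \in \mc R_{\pmb \theta^{(n)}, a}$. By the compactness of $F(\Sub^{\N})$, we may pass to a subsequence along which $\mc R_{\pmb \theta^{(n)}, a} \to A$ in $d_H$ for some $A \in F(\Sub^{\N})$. The Hausdorff-convergence implies that all but finitely many of the $\mc R_{\pmb \theta^{(n)}, a}$ are contained in a fixed bounded neighbourhood of the compact set $A$, so $(x_n)_n$ is bounded in $\mathbb H$ and therefore admits a further convergent subsequence $x_{n_k} \to x$. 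A triangle inequality using $d_H(\mc R_{\pmb \theta^{(n_k)}, a}, A) \to 0$ forces $\mathrm{dist}(x, A) = 0$, whence $x \in A \subseteq \bigcup_{\pmb \theta \in \Sub^{\N}} \mc R_{\pmb \theta, a}$, establishing sequential compactness, and hence compactness, of the union.

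I do not anticipate a substantive obstacle: the statement is essentially a routine consequence of the cited continuity of $\pmb \theta \mapsto \mc R_{\pmb \theta, a}$ together with the finiteness of $\Sub$. The only point requiring mild care is to justify the uniform boundedness of the $\mc R_{\pmb \theta^{(n)}, a}$, which is immediate from Hausdorff-convergence to a compact limit and also available independently from the uniform diameter bound noted after \eqref{EQ:S-adic-GIFS}.
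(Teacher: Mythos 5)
Your proposal is correct and follows essentially the same route as the paper: both arguments rest on the compactness of $\Sub^{\N}$ together with the Hausdorff-continuity of $\pmb \theta \mapsto \mc R_{\pmb \theta, a}$ cited from \cite{Arnoux}, plus a subsequence extraction to place a limit point inside some $\mc R_{\pmb \theta', a}$. The only difference is packaging: the paper verifies boundedness (via the uniform diameter bound) and closedness (via a Kuratowski--Painlev\'{e} limit identification), whereas you establish sequential compactness directly, which is an equivalent formulation in the metric space $\mathbb H$.
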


\begin{proof}
Since there is a \emph{uniform} bound for the diameter of each of the sets $\mc R_{\pmb \theta ,a}$, the same holds for the union. It remains to show that the union is closed.
This is a consequence of the fact that $\mc R_{\pmb \theta ,a}$ depends continuously on $\pmb \theta $ and the compactness of $\Sub^\N$. Indeed, let $x$ be in the closure of $\cup_{\pmb \theta  \in \Sub^{\N}} \mc R_{\pmb \theta , a}$. Then, there exists a sequence $(\pmb \theta _n)_{n \in \N}$ such that $d(x,\mc R_{\pmb \theta _n,a})$ converges to $0$. Up to restricting to a subsequence, we may assume that $(\pmb \theta _n)_{n \in \N}$ converges to some $\pmb \theta ' \in \Sub^{\N}$. By continuity, it follows that 
$
\mc R_{\pmb \theta ', a} = \lim_{n \to \infty} \mc R_{\pmb \theta _n,a},
$
in the Hausdorff metric. Since all the sets are compact, this coincides with the Kuratowski-Painlev\'{e} limit of $(\mc R_{\pmb \theta _n, a})_{n \in \N}$, which contains $x$ by assumption. Hence, $x \in \mc R_{\pmb \theta', a} \subset \cup_{\pmb \theta  \in \Sub^{\N}} \mc R_{\pmb \theta , a}$ and the claim follows.
\end{proof}

\begin{proof}[Proof of Theorem~\ref{THM:S-adic-interpretation}]
Let $K_a = \cup_{\pmb \theta  \in \Sub^{\N}} \mc R_{\pmb \theta , a}$ for all $a \in \mc A$. Taking the union over all $\pmb \theta  \in \Sub^\N$ in \eqref{EQ:S-adic-GIFS} yields
\begin{align}
\label{EQ:U_a-consistency}
K_a = \bigcup_{\theta_1 \in \Theta} \bigcup_{{\substack{(p,b,s) \\ pas=\theta_1(b)}}}  h(K_b) + \pi (\psi(p)) = \bigcup_{{\substack{(p,b,s) \\ pas \in \vartheta(b)}}}  h(K_b) + \pi (\psi(p)),
\end{align}
where the last step follows since $\vartheta(b) = \{\theta(b) : \theta \in \Theta \}$, by definition of the enveloping random substitution. 
The relation in \eqref{EQ:U_a-consistency} coincides with the self-consistency relation for the collection $\{\mc R_{\vartheta,a} \}_{a \in \mc A}$. 
Since this relation has a unique solution of non-empty compact sets, the claim follows.
\end{proof}

Let us interpret the main result of this section in terms of the S-adic Pisot conjecture.
Recall that we call a sequence of substitutions $\pmb \theta$ \emph{recurrent} if every word in $\pmb \theta$ appears in $\pmb \theta$ infinitely often. 
If we restrict to the case that the substitution matrix $M$ is unimodular, it follows from \cite[Theorem~3.1]{BST19} that the $\mc J$-periodic repetition of $\mc R_{\pmb \theta}$ is a multi-tiling of $\mathbb{H}$, provided $\pmb \theta$ is recurrent. 
Further, the corresponding dynamical system $(\mathbb{X}_{\pmb \theta}, S)$ has purely discrete spectrum if this multi-tiling is in fact a tiling. 
This is implicitly a condition on the Lebesgue measure of the Rauzy fractal $\mc R_{\pmb \theta}$ and we obtain the following sufficient condition from Theorem~\ref{THM:S-adic-interpretation}.

\begin{corollary}\label{cor:new}
    Let $\Sub$ be a compatible collection of unimodular, irreducible Pisot substitutions, with enveloping random substitution $\vartheta$. 
    Assume that the Lebesgue measure of $\mc R_{\vartheta}$ is smaller than two times the measure of the fundamental domain in $\mc J$. 
    Then $(\mathbb{X}_{\pmb \theta}, S)$ has purely discrete spectrum for every recurrent sequence $\pmb \theta \in \Sub^\N$.
\end{corollary}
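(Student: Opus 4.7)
The plan is to combine the containment provided by Theorem~\ref{THM:S-adic-interpretation} with the multi-tiling criterion for pure discrete spectrum cited from \cite{BST19}, and to rule out covering multiplicities larger than one via the hypothesis on the Lebesgue measure of $\mc R_\vartheta$.

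First, I would apply Theorem~\ref{THM:S-adic-interpretation} to note that for every directive sequence $\pmb \theta \in \Sub^\N$ and every $a \in \mc A$, we have $\mc R_{\pmb \theta, a} \subseteq \mc R_{\vartheta, a}$, and consequently $\mc R_{\pmb \theta} \subseteq \mc R_\vartheta$. In particular, the Lebesgue measure of $\mc R_{\pmb \theta}$ is bounded above by that of $\mc R_\vartheta$, which by hypothesis satisfies $\Leb(\mc R_\vartheta) < 2\Leb(J)$ where $J$ denotes a fundamental domain of the lattice $\mc J$.

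Next, for a recurrent $\pmb \theta \in \Sub^\N$, I would invoke \cite[Theorem~3.1]{BST19}, as stated in the passage preceding the corollary, which guarantees that the $\mc J$-periodic repetition of $\mc R_{\pmb \theta}$ is a multi-tiling of $\mathbb{H}$ of some integer multiplicity $k \geqslant 1$. Integrating the identity $\sum_{j \in \mc J} \mathbf{1}_{\mc R_{\pmb \theta} + j} = k$ (holding Lebesgue-almost everywhere) over a fundamental domain $J$ and using a standard rearrangement yields $\Leb(\mc R_{\pmb \theta}) = k\,\Leb(J)$.

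Combining these two observations, $k\,\Leb(J) = \Leb(\mc R_{\pmb \theta}) \leqslant \Leb(\mc R_\vartheta) < 2\Leb(J)$, so $k < 2$ and hence $k = 1$. Therefore the $\mc J$-periodic repetition of $\mc R_{\pmb \theta}$ is in fact a (genuine) tiling, and the sufficiency criterion from \cite[Theorem~3.1]{BST19} yields that $(\mathbb{X}_{\pmb \theta}, S)$ has purely discrete spectrum. There is essentially no obstacle in this argument: the work has already been carried out, on the S-adic side by the multi-tiling characterisation of pure discreteness, and on the random substitution side by Theorem~\ref{THM:S-adic-interpretation} which provides the uniform enveloping set needed to control the Lebesgue measure simultaneously across all $\pmb \theta \in \Sub^\N$.
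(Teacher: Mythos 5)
Your proposal is correct and follows exactly the route the paper intends: containment $\mc R_{\pmb \theta} \subseteq \mc R_{\vartheta}$ from Theorem~\ref{THM:S-adic-interpretation}, the multi-tiling property for recurrent $\pmb\theta$ from \cite[Theorem~3.1]{BST19} giving $\Leb(\mc R_{\pmb\theta}) = k\,\Leb(J)$, and the measure bound forcing $k=1$, hence a tiling and pure discrete spectrum. The paper leaves this argument implicit in the discussion preceding the corollary; your write-up fills it in faithfully.
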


We provide an application of this result in \Cref{ex:additional_note_to_cor:new}.

\subsection{Rauzy measures}\label{SS:Rauzy_measures}

Let $w \in \mc A^\N$ be $C$-balanced, and assume that
\[
\mu_a(w) := \lim_{n \to \infty} \frac{1}{n}\mu_a(w_{[1,n]})
\]
exists as a limit for all $a \in \mc A$. 
Further assume that $\theta$ is a primitive substitution on $\mc A$ with Perron--Frobenius eigenvalue $\lambda$.
Then, we have the following special case of \Cref{COR:measure-gifs-iteration}.

\begin{lemma}
\label{LEM:sub-Rauzy-measure-recursion}
The Rauzy measure $\mu_a(\theta(w))$ exists as a weak limit. 
For all $a \in \mc A$,
\[
\mu_a(\theta(w)) = \frac{1}{\lambda} \sum_{(p,b,s)} \mu_b(w) \circ f_p^{-1}. 
\]
\end{lemma}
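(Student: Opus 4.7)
The plan is to specialise Lemma~\ref{LEM:f_p-measure} to the deterministic substitution $\theta$ (viewed as a random substitution with trivial transitions), apply it to the finite prefix $w_{[1,n]}$, and pass to the limit $n \to \infty$. For deterministic $\theta$, the probability $\mathbb{P}[\vartheta_{\mathbf{P}}(b) = pas]$ is just an indicator, so the lemma reduces to
\begin{align*}
\mu_a(\theta(w_{[1,n]})) = \sum_{(p,b,s):\, pas = \theta(b)} \mu_b(w_{[1,n]}) \circ f_p^{-1}.
\end{align*}
The crucial identification is that $\theta(w_{[1,n]})$ is precisely the prefix $\theta(w)_{[1,m_n]}$ of $\theta(w)$, where $m_n := |\theta(w_{[1,n]})| = \sum_{b \in \mc A} |w_{[1,n]}|_b \, |\theta(b)|$, so the left-hand side equals $\mu_a(\theta(w)_{[1,m_n]})$.

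Next, I would divide by $m_n$ and take $n \to \infty$. Because $w$ arises as a limiting sequence of an S-adic system whose substitutions share the matrix $M$, it is $C$-balanced with letter frequency vector equal to the normalised right Perron--Frobenius eigenvector $\mathbf R$, and hence $|w_{[1,n]}|_b / n \to R_b$. Therefore
\begin{align*}
\frac{m_n}{n} \to \sum_{b \in \mc A} R_b\, |\theta(b)| = \mathbf{1}^T M \mathbf{R} = \lambda\, \mathbf{1}^T \mathbf R = \lambda,
\end{align*}
using $\|\mathbf R\|_1 = 1$. The hypothesis $n^{-1}\mu_b(w_{[1,n]}) \to \mu_b(w)$ weakly, combined with the continuity of each affine map $f_p$ (under which pushforward preserves weak convergence of uniformly compactly supported measures), then yields
\begin{align*}
\lim_{n \to \infty} \frac{1}{m_n}\, \mu_a(\theta(w)_{[1,m_n]}) = \frac{1}{\lambda} \sum_{(p,b,s):\, pas = \theta(b)} \mu_b(w) \circ f_p^{-1}.
\end{align*}

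The final task is to upgrade this subsequential convergence to convergence along all $m \to \infty$. For $m_n \leqslant m < m_{n+1}$, the measure $\mu_a(\theta(w)_{[1,m]}) - \mu_a(\theta(w)_{[1,m_n]})$ is a sum of at most $\max_{b \in \mc A} |\theta(b)|$ Dirac masses located in a uniformly bounded region of $\mathbb H$ (using that $\theta(w)$ is $C$-balanced as the image of a $C$-balanced sequence under a primitive substitution). Since $m_n/m \to 1$, the $m^{-1}$-rescaled discrepancy between $\mu_a(\theta(w)_{[1,m]})$ and $\mu_a(\theta(w)_{[1,m_n]})$ vanishes weakly, so the full weak limit exists and agrees with the subsequential one, producing the claimed identity. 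The main obstacle is this interpolation step: one must verify $C$-balancedness of $\theta(w)$ and use the uniform bound on $|\theta(b)|$ to ensure that the Dirac masses accumulating between consecutive values $m_n$ do not disrupt the convergence, while all other ingredients reduce to the eigenvector computation and an application of Lemma~\ref{LEM:f_p-measure}.
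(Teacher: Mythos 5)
Your proposal is correct and takes essentially the same route as the paper: the paper's proof likewise specialises the inflation-word recursion (\Cref{COR:measure-gifs-iteration}, equivalently \Cref{LEM:f_p-measure} in the deterministic case) to the prefixes $w_{[1,n]}$, divides by $|\theta(w_{[1,n]})| \sim \lambda n$, and passes to the weak limit. The only difference is that you make explicit the interpolation between consecutive prefix lengths $m_n$ (needed so that the limit defining $\mu_a(\theta(w))$ exists along all $m$, not just along the subsequence $m_n$), a step the paper leaves implicit.
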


\begin{proof}
In the special case of a deterministic substitution, \Cref{COR:measure-gifs-iteration} yields
\[
\mu_a(\theta(w_{[1,n]})) = \sum_{(p,b,s)} \mu_b(w_{[1,n]}) \circ f_p^{-1}.
\]
Dividing both sides by $|\theta(w_{[1,n]})| \sim \lambda n$, this gives the desired relation as $n \to \infty$
\end{proof}

\begin{remark}
Even if $\mu_a(w)$ does not exist as a limit, an analogous result holds for accumulation points.
More precisely, there is an increasing sequence $(n_j)_{j \in \N}$ of natural numbers such that $\mu_a(w)$ can be defined as a limit along this subsequence.
Following the same arguments as before, we find that $\mu_a(\theta(w))$ exists along the subsequence $(n_j')_{j \in \N}$ with $n_j' = |\theta(w_{[1,n_j})|$ and the relation in Lemma~\ref{LEM:sub-Rauzy-measure-recursion} persists.
\end{remark}

With this preparation at hand, let us return our attention to S-adic systems, with $\Theta$ a compatible collection of irreducible Pisot substitutions. 
As usual, let $\lambda$ be the Perron--Frobenius eigenvalue of the (uniform) substitution matrix $M$ and $\bm R$ the right Perron--Frobenius eigenvector.

\begin{prop}
\label{PROP:S-adic-recursion}
Let $\pmb \theta \in \Sub^\N$, with limiting sequence $u = u(\pmb \theta , \bm a)$ for some $\bm a \in \mc A^\N$. 
Then, for every $a \in \mc A$, the measure
\[
\mu_a(u) = \lim_{n \to \infty} n^{-1} \mu_a(u_{[1,n]})
\]
exists as a limit and is independent of $\bm a$.
Let $\mu_{\pmb \theta,a} := \mu_a(u)$ for all $\pmb \theta \in \Sub^\N$ and $a \in \mc A$. 
Then, for all $a \in \mc A$,
\begin{align}
\label{EQ:S-adic-measure-iteration}
\mu_{\pmb \theta,a} = \frac{1}{\lambda} \sum_{\substack{(p,b,s)\\ \theta_1(b) = pas}} \mu_{S \pmb \theta , b} \circ f_p^{-1}.
\end{align}
\end{prop}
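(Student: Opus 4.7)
The recursion \eqref{EQ:S-adic-measure-iteration} will follow directly from \Cref{LEM:sub-Rauzy-measure-recursion} applied with $\theta = \theta_1$ and $w = u^{(1)}$, once the existence of the limit $\mu_a(u)$ is established for every S-adic sequence $\pmb\theta \in \Sub^{\N}$. The plan is to prove existence, and independence from $\bm a$, via a non-stationary adaptation of the cocycle argument underlying \Cref{LEM:cocycle-convergence}, exploiting the fact that all $\theta \in \Sub$ share the same substitution matrix $M$.

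Specialising \Cref{LEM:f_p-measure} to deterministic substitutions yields, for every finite word $w$ and every $\theta \in \Sub$, the identity
\begin{align*}
\mu_a(\theta(w)) = \sum_{(p,b,s):\, \theta(b) = pas} \mu_b(w) \circ f_p^{-1}.
\end{align*}
Introducing the Lipschitz matrix-valued functions $B^{\theta}_{a,b}(k) := \operatorname{FT}[\mu_a(\theta(b))](k)$ for each $\theta \in \Sub$, and iterating the identity along the subsequence $n_m := |\theta_{[1,m]}(a_m)|$ (for which $u_{[1,n_m]} = \theta_{[1,m]}(a_m)$), the Fourier transforms assume the matrix-cocycle form
\begin{align*}
\operatorname{FT}[\mu_a(u_{[1,n_m]})](k) = \bigl(B^{\theta_1}(k)\, B^{\theta_2}(g k) \cdots B^{\theta_m}(g^{m-1} k)\, \bm{e}_{a_m}\bigr)_{a},
\end{align*}
where $g$ is the dual contraction of $h$ on $\mathbb{H}^{\ast}$. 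By compatibility of $\Sub$, one has $B^{\theta}(0) = M$ for every $\theta \in \Sub$, which is the decisive structural observation.

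The main step is then to show compact convergence of $\lambda^{-m} B^{\theta_1}(k) \cdots B^{\theta_m}(g^{m-1}k)$ on $\mathbb{H}^{\ast}$ to a rank-one limit $c_{\pmb\theta}(k) \mathbf{L}^T$. The argument of \Cref{LEM:cocycle-convergence} should adapt almost verbatim to this non-stationary setting: since $\Sub$ is finite, the $B^{\theta}$ enjoy a uniform Lipschitz constant near $0$, and since $g^{m-1}k \to 0$ geometrically fast, the coincidence $B^{\theta}(0) = M$ makes the product a small perturbation of the stationary Perron--Frobenius iteration $\lambda^{-m} M^{m} \to \bm{R}\mathbf{L}^T$. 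Using $n_m \sim \lambda^m L_{a_m}$ together with the rank-one structure of the limit, division by $n_m$ and contraction with $\bm{e}_{a_m}$ cancels the factor $L_{a_m}$ and yields $\lim_{m\to\infty} n_m^{-1} \operatorname{FT}[\mu_a(u_{[1,n_m]})](k) = c_{\pmb\theta}(k)_{a}$, visibly independent of $\bm a$. L\'evy's continuity theorem (applicable because the $C$-balancedness of $u$ confines all measures to a uniformly bounded subset of $\mathbb{H}$) then promotes this to weak convergence along $(n_m)_{m \in \N}$ to a measure $\mu_{\pmb\theta, a}$.

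For $n_m \leqslant n < n_{m+1}$, the word $u_{[1,n]}$ extends $u_{[1,n_m]}$ by at most $\max_{\theta \in \Sub, c \in \mc{A}} |\theta(c)|$ letters, so $\mu_a(u_{[1,n]}) - \mu_a(u_{[1,n_m]})$ consists of a uniformly bounded (in $n$) number of Dirac masses; after dividing by $n \geqslant n_m$ this error is $O(n_m^{-1})$, upgrading convergence along the subsequence to the full limit $n^{-1} \mu_a(u_{[1,n]}) \to \mu_{\pmb\theta, a}$. With existence secured for every S-adic sequence, \Cref{LEM:sub-Rauzy-measure-recursion} applied with $\theta = \theta_1$ and $w = u^{(1)}$ delivers \eqref{EQ:S-adic-measure-iteration}. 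The main technical obstacle is the non-stationary cocycle convergence; it is tamed precisely because the compatibility of $\Sub$ forces every $B^{\theta}$ to coincide with $M$ at $k = 0$, where the cocycle ultimately concentrates.
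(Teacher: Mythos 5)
Your route is genuinely different from the paper's: you extend the Fourier--cocycle machinery of \Cref{LEM:cocycle-convergence} to a non-stationary product of matrices $B^{\theta_j}$, whereas the paper works directly with the operators $\Phi_{\theta_j}$ acting on tuples of probability measures, using that each is a $\lambda^{-1}$-contraction in the Monge--Kantorovich metric on a compact space, so that $\Phi_{\theta_1}\circ\cdots\circ\Phi_{\theta_n}$ collapses all accumulation points to a single well-defined limit as $n \to \infty$. The cocycle portion of your argument is sound in outline: the identity $\operatorname{FT}[\mu_a(u_{[1,n_m]})](k)=(B^{\theta_1}(k)\cdots B^{\theta_m}(g^{m-1}k)\,\mathbf{e}_{a_m})_a$ is correct, $B^{\theta}(0)=M$ for every $\theta\in\Sub$ by compatibility, and the perturbative rank-one convergence does go through with a Lipschitz constant uniform over the finite set $\Sub$.

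However, your final step contains a genuine gap. For $n_m\leqslant n<n_{m+1}$ the word $u_{[1,n]}$ does \emph{not} extend $u_{[1,n_m]}$ by a bounded number of letters: since $n_m=|\theta_{[1,m]}(a_m)|=\mathbf{1}^TM^m\mathbf{e}_{a_m}\sim\lambda^mL_{a_m}$, the gap $n_{m+1}-n_m$ is of order $\lambda^m$, that is, comparable to $n_m$ itself (infinitely often). Consequently $\mu_a(u_{[1,n]})-\mu_a(u_{[1,n_m]})$ can contain on the order of $n_m$ Dirac masses, and after dividing by $n$ the resulting error is $O(1)$, not $O(n_m^{-1})$. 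As written, your argument only yields convergence along the sparse subsequence $(n_m)_m$, which is strictly weaker than the claimed full limit, and is also insufficient for the independence of $\bm a$, since different adapted sequences produce different subsequences. The repair requires a two-scale argument: fix a level $\ell$, decompose an arbitrary prefix $u_{[1,n]}$ into complete level-$\ell$ inflation words $\theta_{[1,\ell]}(u^{(\ell)}_{[1,j]})$ plus a boundary word of length at most $|\theta_{[1,\ell]}(c)|$, which is bounded for fixed $\ell$; let $n\to\infty$ for fixed $\ell$ and only then send $\ell\to\infty$. This is precisely the structure of the proofs of \Cref{PROP:as-convergence-markov} and \Cref{PROP:nu-as-convergence}, and it is what the paper's contraction argument accomplishes implicitly by iterating $\Phi_{\theta_1}\circ\cdots\circ\Phi_{\theta_\ell}$ on arbitrary accumulation points of the preimage sequences $u^{(\ell)}$.
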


\begin{proof}
Let $u^{(1)}$ be the preimage sequence of $u$ with $u = \theta_1(u^{(1)})$. 
We obtain from Lemma~\ref{LEM:sub-Rauzy-measure-recursion}, 
\begin{align}
\label{EQ:S-adic-measure-iteration-I}
\mu_a(u) = \frac{1}{\lambda} \sum_{\substack{(p,b,s)\\ \theta_1(b) = pas}}  \mu_b(u^{(1)}) \circ f_p^{-1},
\end{align}
for all $a \in \mc A$,
possibly in the sense of appropriate accumulation points. 
Renormalising to probability measures via $\widetilde{\mu}_a(u) = R_a^{-1} \mu_a(u)$, this means that $\{\widetilde{\mu}_a(u) \}_{a \in \mc A}$ is the image of $\{ \widetilde{\mu}_a(u^{(1)}) \}_{ a \in \mc A}$ under the operator
\[
\Phi_{\theta_1} \colon \{\nu_a \}_{a \in \mc A} \mapsto \biggl\{ \sum_{e \in E_a(\theta_1)} p^a_e \nu_{t(e)} \circ f_e^{-1} \biggr\}_{a \in \mc A},
\]
where $E_a(\theta_1) = \{ (p,b,s): \theta_1=pas \}$ and the data $(f_e)$ and $(p^a_e)$ correspond to an appropriately chosen GIFS with contraction ratio $\lambda^{-1}$; compare the discussion in \Cref{SUBSEC:expected_values}. 
By general arguments, $\Phi_{\theta}$ is a contraction for every $\theta \in \Theta$, with contraction ratio $\lambda^{-1}$ with respect to the Monge--Kantorovich metric; compare the proof of \Cref{PROP:meas-continuity}.
Since 
\[
\{ \widetilde{\mu}_a(u) \}_{a \in \mc A}
= \Phi_{\theta_1} \circ \cdots \circ \Phi_{\theta_n} \left( \{ \widetilde{\mu}_a(u^{(n)}) \}_{a \in \mc A}\right),
\]
and the space of probability tuples on $\mc R_{\vartheta}$ is compact, it follows that there is a unique solution for $\{ \widetilde{\mu}_a(u) \}_{a \in \mc A}$ that is completely determined by the sequence of substitutions $\pmb \theta  \in \Sub^\N$. 
This also proves that all accumulation points that could define $\mu_a(u)$ coincide and it actually exists as a limit. 
By the same argument, $\mu_a(u)$ does not depend on the choice of $\bm a \in \mc A^\N$. 
Hence, $\mu_{\pmb \theta ,a} := \mu_a(u)$ is well-defined.
Recalling that $u^{(1)}$ is a limiting sequence for $S\pmb \theta $, we observe that $\mu_{S \pmb \theta ,a} = \mu_{a}(u^{(1)})$ and the final claim follows from \eqref{EQ:S-adic-measure-iteration-I}.
\end{proof}

\begin{remark}
If the union in \eqref{EQ:S-adic-GIFS} is (Lebesgue-) measure-disjoint for all $\pmb \theta \in \Theta^\N$, it is straightforward to verify that the self-consistency relation in \eqref{EQ:S-adic-measure-iteration} is solved by taking $\mu_{\pmb \theta,a}$ to be Lebesgue measure restricted to $\mc R_{\pmb \theta,a}$ for all $a \in \mc A$.
Since the solution to \eqref{EQ:S-adic-measure-iteration} is unique up to normalisation, this shows that $\mu_{\pmb \theta, a}$ in fact \emph{is} the (normalised) Lebesgue measure restricted to its support in this case. 
The condition that the union in \eqref{EQ:S-adic-GIFS} is measure-disjoint certainly holds if $\Theta$ is a singleton, but we are not aware of a result in this direction for the general S-adic case. 
Similarly, if the periodic repetition of $\mc R_{\pmb \theta}$ along the lattice $\mc J$ has only overlaps of measure $0$, we obtain from the Lebesgue covering property in \Cref{COR:Lebesgue-sum} that $\mu_{\pmb \theta}$ coincides with the normalised Lebesgue measure on $\mc R_{\pmb \theta}$.
\end{remark}

Just as for the Rauzy fractal $\mc R_{\pmb \theta ,a}$, the associated Rauzy measure $\mu_{\pmb \theta ,a}$ will in general depend on $\pmb \theta  \in \Sub^\N$. 
Along the same lines as in \cite{Arnoux}, we can show that the map $\pmb \theta  \mapsto \mc R_{\pmb \theta ,a}$ is continuous.
To be more precise, let $\mc M$ be the space of probability measures on $\mc R_{\vartheta}$ and, with $d = \# \mc A$, we extend the definition of $\dmk$ to $\mc M^d$ via
\[
\dmk(\{\mu_a \}_{a \in \mc A}, \{\nu_a \}_{a \in \mc A}) = \max_{a \in \mc A} \dmk(\mu_a, \nu_a).
\]

\begin{prop}
For every $a \in \mc A$, the map $\pmb \theta  \mapsto \mu_{\pmb \theta ,a}$ is continuous with respect to the weak topology.
\end{prop}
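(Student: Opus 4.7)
The plan is to leverage the contraction property of the operators $\Phi_\theta$ introduced in the proof of \Cref{PROP:S-adic-recursion}, combined with the fact that convergence in $\Sub^\N$ (equipped with the product topology of the discrete topology on the finite set $\Sub$) amounts to agreement on arbitrarily long initial segments.

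First, I would work with the renormalised probability vectors $\widetilde{\mu}_{\pmb \theta} = \{R_a^{-1} \mu_{\pmb \theta,a}\}_{a \in \mc A}$ so that each $\widetilde{\mu}_{\pmb \theta}$ lies in the compact space $\mc M^d$ of tuples of probability measures supported on $\mc R_\vartheta$. By \Cref{PROP:S-adic-recursion}, the recursion $\widetilde{\mu}_{\pmb \theta} = \Phi_{\theta_1}(\widetilde{\mu}_{S\pmb \theta})$ holds, and iteration yields
\begin{align*}
\widetilde{\mu}_{\pmb \theta} = \Phi_{\theta_1} \circ \cdots \circ \Phi_{\theta_n} (\widetilde{\mu}_{S^n \pmb \theta}),
\end{align*}
for all $n \in \N$. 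As noted in the proof of \Cref{PROP:S-adic-recursion} (cf.\ \Cref{PROP:meas-continuity}), each $\Phi_\theta$ is a contraction on $(\mc M^d, \dmk)$ with common ratio $r < 1$, where $r$ is the contraction ratio of $h$ on $\mathbb H$.

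Next, given a sequence $(\pmb \theta^{(k)})_{k \in \N}$ converging to $\pmb \theta$ in $\Sub^\N$, I would fix $n \in \N$ and choose $k_0 = k_0(n)$ such that for all $k \geqslant k_0$, we have $\theta^{(k)}_i = \theta_i$ for $i = 1, \ldots, n$. Applying the iterated recursion to both sequences and using the contraction property,
\begin{align*}
\dmk(\widetilde{\mu}_{\pmb \theta}, \widetilde{\mu}_{\pmb \theta^{(k)}})
= \dmk\bigl(\Phi_{\theta_1} \cdots \Phi_{\theta_n}(\widetilde{\mu}_{S^n \pmb \theta}), \; \Phi_{\theta_1} \cdots \Phi_{\theta_n}(\widetilde{\mu}_{S^n \pmb \theta^{(k)}})\bigr)
\leqslant r^n \dmk(\widetilde{\mu}_{S^n \pmb \theta}, \widetilde{\mu}_{S^n \pmb \theta^{(k)}})
\leqslant r^n D,
\end{align*}
where $D$ is the (finite) $\dmk$-diameter of $\mc M^d$, which is bounded because $\mc R_\vartheta$ is compact. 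Since $n$ may be chosen arbitrarily large, this shows $\dmk(\widetilde{\mu}_{\pmb \theta}, \widetilde{\mu}_{\pmb \theta^{(k)}}) \to 0$ as $k \to \infty$.

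Finally, undoing the normalisation $\mu_{\pmb \theta,a} = R_a \widetilde{\mu}_{\pmb \theta,a}$ (where $R_a$ is independent of $\pmb \theta$) transfers this convergence to $\mu_{\pmb \theta,a}$, and since convergence in the Monge--Kantorovich metric implies weak convergence of measures, the assertion follows. No substantial obstacle is anticipated: the entire argument is essentially bookkeeping around the uniform contraction of $\Phi_\theta$ already established, and the mild point requiring care is only to verify that the diameter bound $D$ is uniform in $\pmb \theta$, which is immediate from the compactness of $\mc R_\vartheta$.
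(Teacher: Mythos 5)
Your proof is correct, and it takes a genuinely different (and more direct) route than the paper's. Both arguments rest on the same two ingredients — the recursion $\widetilde{\mu}_{\pmb \theta } = \Phi_{\theta_1}(\widetilde{\mu}_{S\pmb \theta })$ from \Cref{PROP:S-adic-recursion} and the uniform contraction of each $\Phi_\theta$ on $(\mc M^d,\dmk)$ — but the paper packages them through a Banach fixed point argument on the space $\mc F$ of \emph{continuous} maps $\Sub^\N \to \mc M^d$ with the uniform metric: the operator $(\Phi F)(\pmb \theta ) = \Phi_{\theta_1}F(S\pmb \theta )$ is a contraction on $\mc F$, its unique fixed point must coincide with $\pmb \theta  \mapsto \widetilde{\mu}_{\pmb \theta }$ by uniqueness of the solution to the consistency relation, and continuity follows because the fixed point lives in $\mc F$ by construction. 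You instead iterate the recursion $n$ times, use that convergence in $\Sub^\N$ means agreement of arbitrarily long prefixes, and bound $\dmk(\widetilde{\mu}_{\pmb \theta },\widetilde{\mu}_{\pmb \theta ^{(k)}}) \leqslant r^n D$ using that all measures involved are supported in the common compact set $\mc R_\vartheta$, so that $D < \infty$. Your version is more elementary (no completeness of $(\mc F,d)$ or fixed point theorem needed) and yields more: an explicit modulus of continuity, namely H\"older continuity with respect to the standard ultrametric on $\Sub^\N$, with exponent governed by the contraction ratio. The only cosmetic inaccuracy is attributing the common ratio $r$ to the contraction of $h$ on $\mathbb H$ where the paper quotes $\lambda^{-1}$; either way one only needs a uniform ratio strictly less than $1$, which holds, so nothing is affected. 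Note also that sequential continuity suffices here since $\Sub^\N$ is metrisable, as you implicitly use.
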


\begin{proof}
Let $\widetilde{\mu}_{\pmb \theta } = \{ \widetilde{\mu}_{\pmb \theta ,a} \}_{a \in \mc A}$, where $\widetilde{\mu}_{\pmb \theta ,a} = R_a^{-1} \mu_{\pmb \theta ,a}$ is the normalisation to a probability measure.
Since $\dmk$ induces the weak topology, it suffices to show that $\pmb \theta  \mapsto \widetilde{\mu}_{\pmb \theta }$ is continuous as a map from $\Sub^\N$ to the metric space $(\mc M^d, \dmk)$. 
In the notation of the proof of Proposition~\ref{PROP:S-adic-recursion}, we have
\begin{align}
\label{EQ:S-adic-vector-mu}
\widetilde{\mu}_{\pmb \theta } = \Phi_{\theta_1}(\widetilde{\mu}_{S\pmb \theta }),
\end{align}
where $\Phi_{\theta_1}$ is a contraction.
By the same arguments as in the proof of Proposition~\ref{PROP:S-adic-recursion}, the function $\pmb \theta  \to \widetilde{\mu}_{\pmb \theta }$ is uniquely determined by the relation in \eqref{EQ:S-adic-vector-mu}.  
Let $\mc F$ be the space of continuous functions from $\Sub^\N$ to $\mc M^d$ equipped with the uniform metric
\[
d(F,G) = \max_{\pmb \theta  \in \Sub^\N} \dmk(F(\pmb \theta ), G\pmb \theta )),
\]
for $F,G \in \mc F$. 
Note that $(\mc M^d,\dmk)$ is a compact metric space and hence complete. 
It is straightforward to verify that this also implies that $(\mc F,d)$ is complete; compare the proof of \cite[Lemma~4.3]{Arnoux}. 
We define $\Phi \colon \mc F \to \mc F$ via
$
(\Phi F)(\pmb \theta ) = \Phi_{\theta_1} F(S \pmb \theta ),
$
for all $\pmb \theta  \in \Sub^\N$. 
We verify that $\Phi$ is a contraction on $\mc F$.
Let $0<c<1$ be a uniform upper bound for the contraction rates of $\Phi_{\theta}$ with $\theta \in \Sub$. 
Then,
\[
d(\Phi(F),\Phi(G)) = \max_{\pmb \theta  \in \Sub^\N} \dmk(\Phi_{\theta_1} F(S \pmb \theta ), \Phi_{\theta_1} G(S \pmb \theta )) \leqslant c d(F,G),
\]
for all $F,G \in \mc F$ and hence $\Phi$ is indeed a contraction.
By Banach's fixed point theorem, it follows that there is a unique function $F^{\ast} \in \mc F$ that is invariant under $\Phi$. 
This means that
\[
F^\ast(\pmb \theta ) = \Phi_{\theta_1} F(S\pmb \theta ),
\]
and hence $F^\ast$ satisfies the consistency relation in \eqref{EQ:S-adic-vector-mu}. 
Since the solution to this relation is unique, it follows that $F^\ast (\pmb \theta ) = \widetilde{\mu}_{\pmb \theta }$ for all $\pmb \theta  \in \Sub^\N$. 
It follows that $\pmb \theta  \to \widetilde{\mu}_{\pmb \theta }$ is continuous.
\end{proof}

At this point, we have shown that there is a canonical way to continuously assign a Rauzy measure $\mu_{\pmb \theta ,a}$ to each $\pmb \theta  \in \Sub^\N$ and $a \in \mc A$.
Taking an average over all $\pmb \theta  \in \Sub^\N$ we obtain an \emph{expected Rauzy measure} with respect to an appropriate measure $\nu$ on $\Sub^\N$.
Arguably, the simplest possible choice is to assume that $\nu$ is a Bernoulli measure on $\Sub^\N$.
More precisely, we pick a probability vector $\bm p = (p_{\theta})_{\theta \in \Sub}$ with $\sum_{\theta \in \Sub} p_{\theta} =1$ and take $\nu$ to be the direct product $\nu = \bigotimes_{i=1}^\infty \bm p$ on $\Sub^\N$.
Since $\mu_{\pmb \theta ,a}$ is continuous in $\pmb \theta $, it is measurable with respect to the product topology on $\Sub^\N$ and we can define
\[
\mathbb{E}_{\nu}[\mu_{\pmb \theta ,a}] = \int_{\Sub^\N} \mu_{\pmb \theta ,a} \dd \nu(\pmb \theta ).
\]
We will show that this averaged Rauzy measure coincides with the Rauzy measure of the enveloping random substitution $(\vartheta,\bm P)$, for an appropriate choice of $\bm P$.
Recall that the parameters in $\bm P$ give precisely the probabilities $\mathbb{P}[\vartheta_{\bm P}(a) = u]$ for all $a \in \mc A$ and all possible realisations $u$.

\begin{definition}
Let $\vartheta$ be the enveloping random substitution of $\Sub$.
The data $\bm P$ \emph{derived} from a probability vector $\bm p = (p_\theta)_{\theta \in \Sub}$ is given by
\[
\mathbb{P}[\vartheta_{\bm P}(a) = u] = \sum_{\theta: \theta(a) = u} p_{\theta},
\]
for all $a \in \mc A$ and $u \in \vartheta(a)$. 
\end{definition}

\begin{theorem}
Let $\Sub$ be a compatible collection of irreducible Pisot substitutions, with enveloping random substitution $\vartheta$.
Assume $\nu = \bigotimes_{i=1}^\infty \bm p$ and let $\bm P$ be the data derived from $\bm p$. 
Further, let $\overline{\mu}= \{\overline{\mu}_a \}_{a \in \mc A}$ be the vector of Rauzy measures associated with $(\vartheta,\bm P)$. 
Then, for all $a \in \mc A$,
\[
\mathbb{E}_{\nu}[\mu_{\pmb \theta ,a}] = \overline{\mu}_a.
\]
\end{theorem}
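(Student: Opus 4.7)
The plan is to show that the vector $\{\mathbb{E}_\nu[\mu_{\pmb \theta , a}]\}_{a \in \mc A}$ satisfies the same self-consistency relation as $\overline{\mu}$ from \Cref{PROP:measure-GIFS}, and then to invoke uniqueness of the self-similar measure of the associated GIFS $\mc G'$.

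First, I would integrate the S-adic recursion from \Cref{PROP:S-adic-recursion} against $\nu$. Since $\pmb \theta  \mapsto \mu_{\pmb \theta , b}$ is continuous into the weakly compact space of measures on $\mc R_{\vartheta}$, and each $f_p$ is a fixed affine map, the expectation commutes with the finite sum and with the pushforward $(\cdot) \circ f_p^{-1}$, giving
\begin{align*}
\mathbb{E}_\nu[\mu_{\pmb \theta , a}] = \frac{1}{\lambda} \sum_{\theta \in \Sub} p_\theta \sum_{\substack{(p,b,s)\\ \theta(b) = pas}} \mathbb{E}_\nu[\mu_{S\pmb \theta , b}] \circ f_p^{-1}.
\end{align*}
Because $\nu$ is the Bernoulli product $\bigotimes_{i=1}^{\infty} \bm p$, the coordinate $\theta_1$ is independent of $S\pmb \theta $, and $S\pmb \theta $ has the same distribution as $\pmb \theta $; hence $\mathbb{E}_\nu[\mu_{S\pmb \theta , b}] = \mathbb{E}_\nu[\mu_{\pmb \theta , b}]$. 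Swapping the order of summation and grouping over triples $(p,b,s)$, the resulting coefficient is $\sum_{\theta \in \Sub : \theta(b) = pas} p_\theta$, which, by definition of the data $\bm P$ derived from $\bm p$, equals $\mathbb{P}[\vartheta_{\bm P}(b) = pas]$. Thus $\mathbb{E}_\nu[\mu_{\pmb \theta , a}]$ satisfies exactly \eqref{EQ:measure-GIFS}.

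To invoke uniqueness, it remains to match total masses. Since every limiting sequence $u(\pmb \theta , \bm a)$ is $C$-balanced with letter-frequency vector equal to the common right Perron--Frobenius eigenvector $\mathbf{R}$ of $M$, we have $\|\mu_{\pmb \theta , a}\| = R_a$, so $\|\mathbb{E}_\nu[\mu_{\pmb \theta , a}]\| = R_a = \|\overline{\mu}_a\|$ by \Cref{PROP:limit-measure-vector}. Dividing both vectors through by $(R_a)_{a \in \mc A}$ produces probability measure vectors that are self-similar for $\mc G'$, and the uniqueness of the invariant probability vector of $\mc G'$ established after \Cref{PROP:measure-GIFS} forces $\mathbb{E}_\nu[\mu_{\pmb \theta , a}] = \overline{\mu}_a$ for every $a \in \mc A$. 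The only mildly delicate point is justifying the initial exchange of expectation with the pushforward; this is routine by testing against a bounded continuous function $g$ and applying Fubini to $\mu_{\pmb \theta ,b}(g \circ f_p)$, which is bounded and measurable by continuity of $\pmb \theta  \mapsto \mu_{\pmb \theta , b}$ in the weak topology.
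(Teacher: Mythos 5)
Your proposal is correct and follows essentially the same route as the paper: integrate the S-adic recursion of \Cref{PROP:S-adic-recursion} against the Bernoulli measure $\nu$, regroup using the definition of the derived data $\bm P$ to recover the self-consistency relation \eqref{EQ:measure-GIFS}, and conclude by uniqueness of the self-similar measure vector after matching total masses $R_a$. Your explicit justification of exchanging expectation with the pushforward and of the stationarity/independence step is a harmless elaboration of what the paper does implicitly.
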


\begin{proof}
For each $a \in \mc A$ let $\kappa_a = \mathbb{E}_{\nu}[\mu_{\pmb \theta ,a}]$.
Since each of the measures $\mu_{\pmb \theta ,a}$ has total mass $R_a$, the same holds for $\kappa_a$. 
Integrating the relation in \eqref{EQ:S-adic-measure-iteration} with respect to $\nu$ gives
\begin{align*}
\kappa_a & = \frac{1}{\lambda} \sum_{\theta_1 \in \Sub} p_{\theta_1} \sum_{\substack{(p,b,s) \\ \theta_1(b) = pas}} \kappa_b \circ f_p^{-1}
= \frac{1}{\lambda} \sum_{(p,b,s)} \sum_{\theta_1:\theta_1(b) = pas} p_{\theta_1} \kappa_b \circ f_p^{-1}
\\& = \frac{1}{\lambda} \sum_{(p,b,s)} \mathbb{P}[\vartheta_{\bm P}(b) = pas] \kappa_b \circ f_p^{-1},
\end{align*}
using that $\bm P$ is derived from $\bm p$ in the last step.
This is the same self-consistency relation as the one for the vector $\overline{\mu}$ in \Cref{PROP:measure-GIFS}. 
Up to a renormalisation to probability measures, this defines the unique self-similar solution for a particular GIFS. 
Since $\kappa_a$ and $\overline{\mu}_a$ have the same total mass $R_a$, it follows that $\kappa_a = \overline{\mu}_a$ for all $a \in \mc A$.
\end{proof}

\section{Examples}\label{S:examples}

The Fibonacci substitution $\theta \colon 1 \mapsto 12, \, 2 \mapsto 1$ is the prototypical example of a unimodular, irreducible Pisot substitution on two letters.
Its associated Rauzy fractal is an interval, with subtiles $\mc R_{\theta, 1} = [-\tau^{-2},\tau^{-3}]$ and $\mc R_{\theta,2} = [\tau^{-3},\tau^{-1}]$.
The \emph{random Fibonacci substitution} is the compatible random substitution obtained by locally mixing $\theta$ with the substitution $\tilde{\theta} \colon 1 \mapsto 21, \, 2 \mapsto 1$, which gives rise to the Rauzy fractal with tiles $\mc R_{\tilde{\theta},1} = [-\tau^{-1},0]$, $\mc R_{\tilde{\theta},2} = [0,\tau^{-2}]$.

\begin{example}\label{EX:rand-fib}
Given $p \in (0,1)$, let $\vartheta_{\bm P}$ be the random Fibonacci substitution
\begin{align*}
    \vartheta_{\bm P} \colon 
    \begin{cases}
    1 \mapsto
    \begin{cases}
        12 &\text{with probability $p$,}\\
        21 &\text{with probability $1-p$,}
    \end{cases}\\
    2 \mapsto 1 \quad \text{with probability $1$.}
    \end{cases}
\end{align*}
By \Cref{THM: r-Rauzy GIFS}, the tiles of the Rauzy fractal associated with $\vartheta_{\bm P}$ are the unique, non-empty, compact sets $\mc R_1$ and $\mc R_2$ satisfying
\begin{align*}
    \mc R_1 &= \left( -\frac{1}{\tau} \mc R_1 \right) \cup \left( -\frac{1}{\tau} \mc R_1 - \frac{1}{\tau} \right) \cup \left(-\frac{1}{\tau} \mc R_2 \right)\\
    \mc R_2 &= \left( - \frac{1}{\tau} \mc R_1 \right) \cup \left( -\frac{1}{\tau} \mc R_1 + \frac{1}{\tau^2} \right) \text{.}
\end{align*}
In particular, $\mc R_1 = [-1,\tau^{-1}]$ and $\mc R_2 = [-\tau^{-2},1]$.
We highlight that there exist points in these intervals which do not appear in the corresponding intervals of the Rauzy fractal of either marginal of $\vartheta_{\bm P}$.
Further, note that $\mc R_1$ and $\mc R_2$ intersect on a set of positive Lebesgue measure.
The above graph-directed iterated function system can be rewritten as an ordinary iterated function system (IFS) for $\mc R_1$:
\begin{align*}
    \mc R_1 = \left( -\frac{1}{\tau} \mc R_1 \right) \cup \left( -\frac{1}{\tau} \mc R_1 - \frac{1}{\tau} \right) \cup \left(\frac{1}{\tau^2} \mc R_1 \right) \cup \left(\frac{1}{\tau^2} \mc R_1 - \frac{1}{\tau^3}\right)\tp
\end{align*}
This system does not satisfy the open set condition, since the similarity dimension does not equal $1$ and so Hutchinson's formula is not satisfied.
It follows that the graph-directed iterated function system also does not satisfy the open set condition, again in contrast to the deterministic setting.
However, since the iterated function system for $\mc R_1$ is an IFS in $\R$ consisting of contraction ratios that are all positive integer powers of $\tau^{-1}$, it follows by \cite[Theorem~2.5]{ngai-wang} that the IFS satisfies the \emph{finite type condition}, a weak separation condition that is desirable in the absence of the open set condition.
For a detailed introduction to separation conditions weaker than the open set condition, we refer the reader to \cite{hare-hare-rutar}.
\begin{figure}
    \centering
    \includegraphics[scale=0.14]{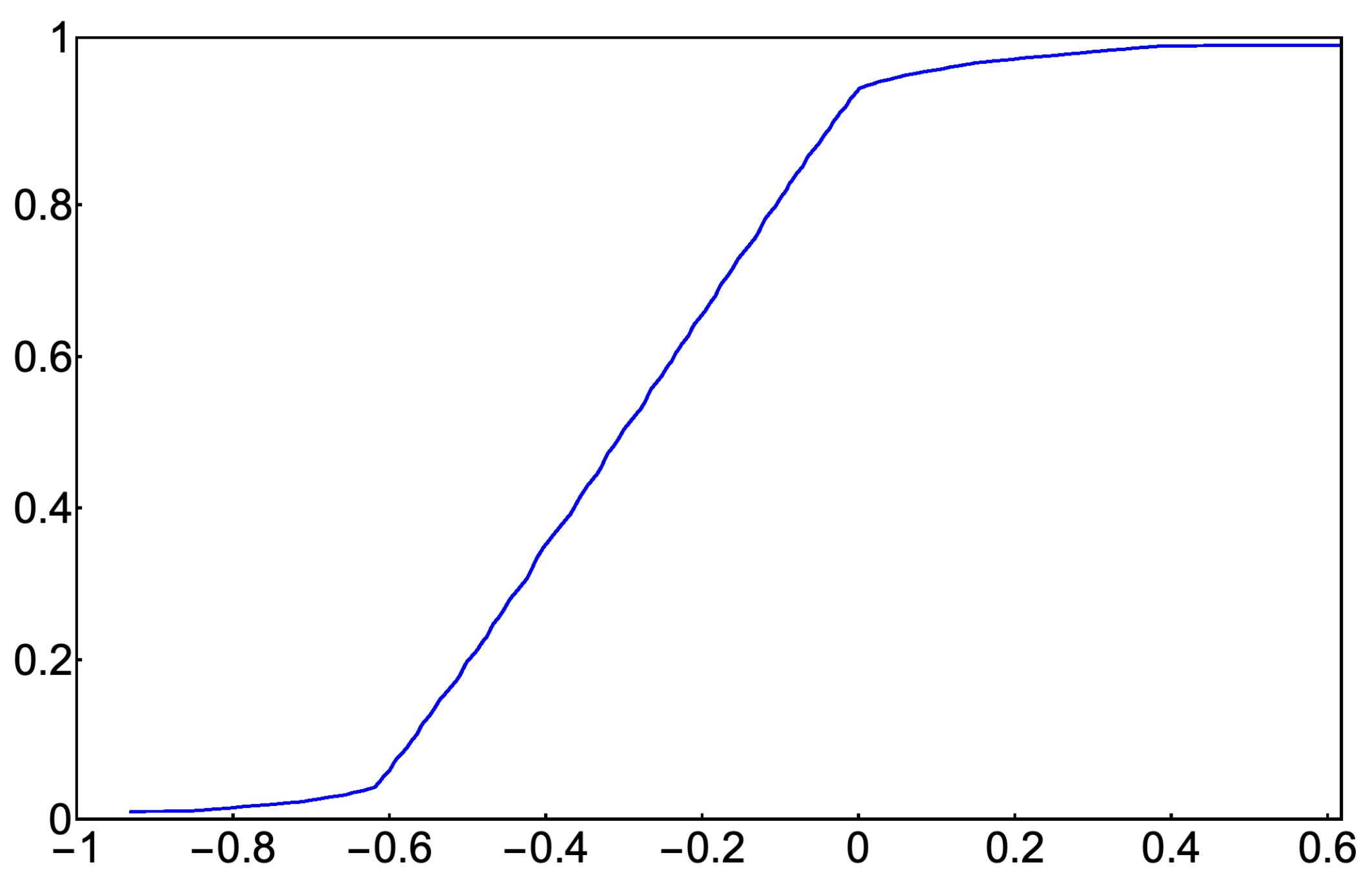}
    \hfill
    \includegraphics[scale=0.14]{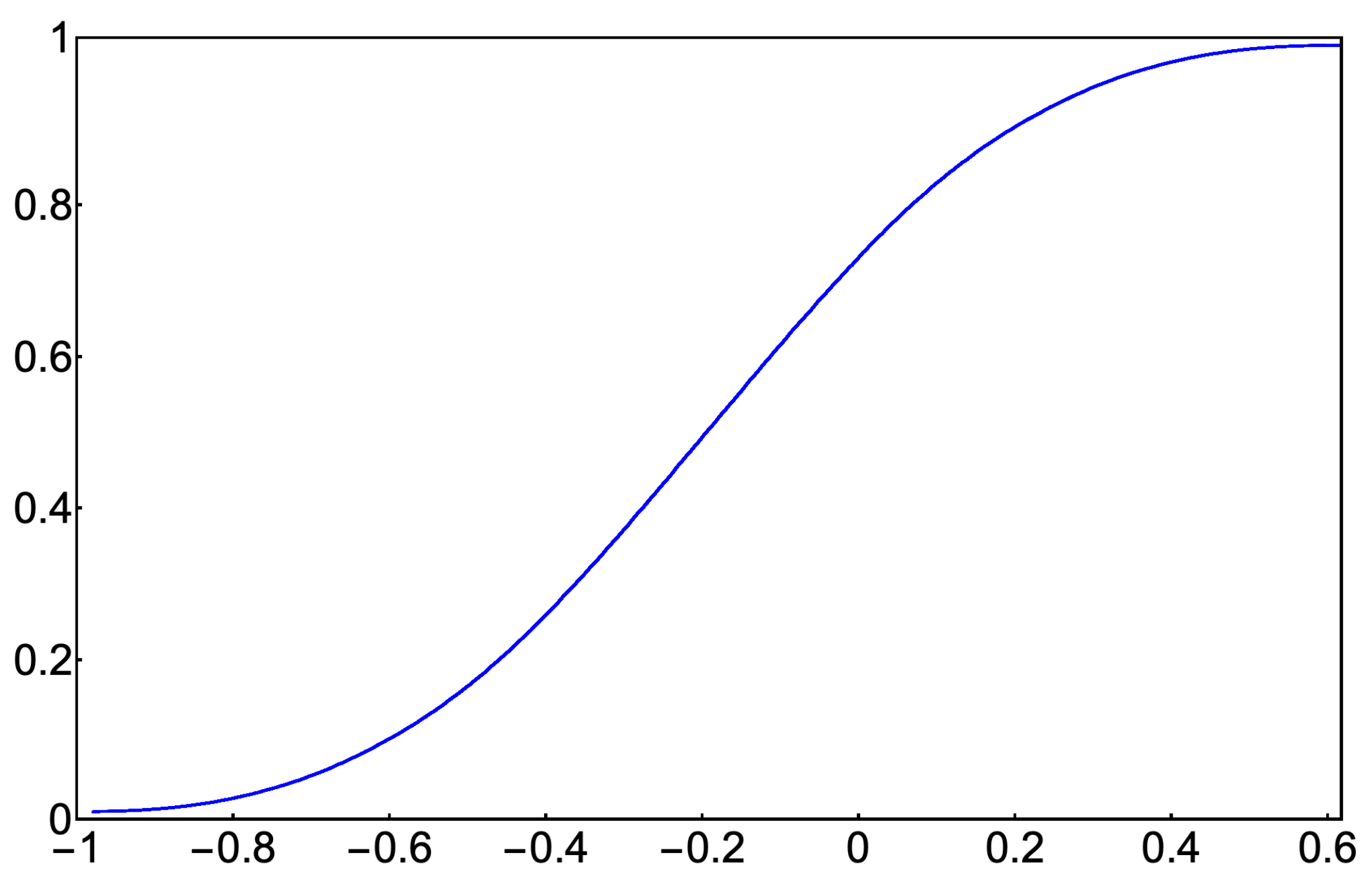}
    \hfill
    \includegraphics[scale=0.14]{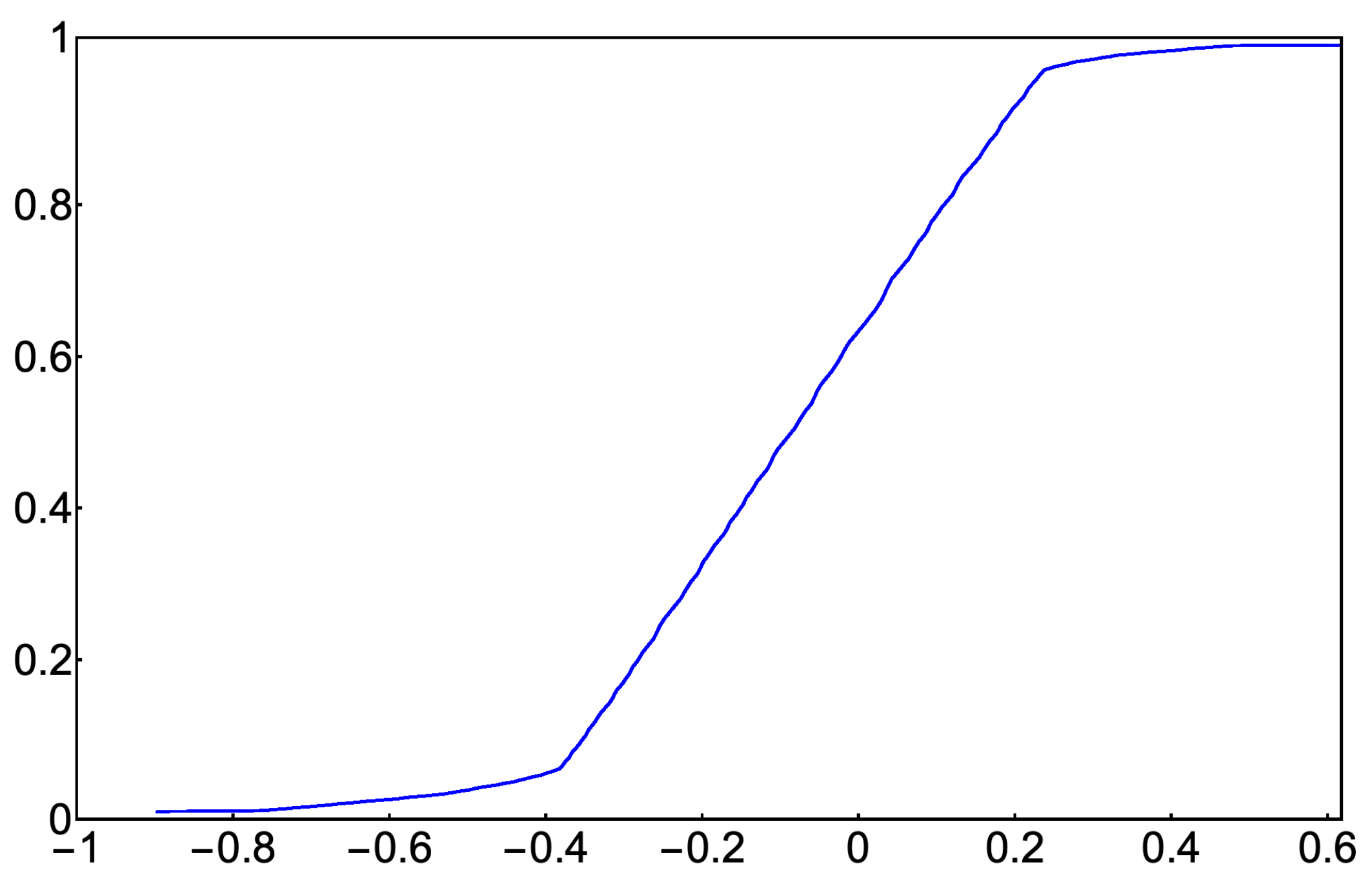}
    \caption{Computer simulations of the distribution functions for the measure $\mu_{a,p}$ for random Fibonacci, for $p=1/16$ (left), $p=1/2$ (middle) and $p=15/16$ (right).}
    \label{fig:distributions}
\end{figure}

Let $\mu_{1,p}$ and $\mu_{2,p}$ be the Rauzy measures on $\mc R_1$ and $\mc R_2$, respectively, induced by the random substitution $\vartheta_{\bm P}$.
By \Cref{PROP:meas-continuity}, the measures $\mu_{1,p}$ and $\mu_{2,p}$ are continuous with respect to $p$.
The mass distribution function of the measure $\mu_{1,p}$ is plotted in Figure \ref{fig:distributions} for several values of $p$.
We note that when $p=0$ or $p=1$, the measure $\mu_{1,p}$ is Lebesgue measure supported on the $a$ tile of the respective marginal.
However, when $0 < p < 1$, $\mu_{1,p}$ has support $\mc R_1 = [-1,\tau^{-1}]$.
\end{example}

A broad class of Rauzy fractals in one dimension can be obtained by considering ``reshuffled'' versions of powers of the Fibonacci substitution.
Namely, by changing the ordering of the letters in realisations of $\theta^m$, for some $m \in \N$.
There are six different deterministic substitutions that have the same substitution matrix as the square of the Fibonacci substitution and the Rauzy fractals associated with four of these substitutions are intervals.
However, the substitutions $\theta_1 \colon 1 \mapsto 112, \, 2 \mapsto 21$ and $\theta_2 \colon 1 \mapsto 211, \, 2 \mapsto 12$, give rise to an enantiomporphic (mirror) pair of Rauzy fractals with a Cantorval structure, the boundary of which have Hausdorff dimension equal to $\log(1+\sqrt{2})/2\log(\tau) \approx 0. 915785$---see \cite{bgm-survey,baake-gorodetski-mazac} for more details.
For each $m \in \N$, locally mixing two or more substitutions with the same substitution matrix as $\theta^m$ gives rise to a compatible, unimodular, irreducible Pisot random substitution.

\begin{example}\label{ex:additional_note_to_cor:new}
    Let $\vartheta_{\bm P}$ be any (non-degenerate) random substitution defined over the set-valued substitution $\vartheta \colon 1 \mapsto \{112\}, \, 2 \mapsto \{12,21\}$, obtained by locally mixing a substitution whose Rauzy fractal is an interval and one whose Rauzy fractal has boundary with positive Hausdorff dimension.
    The Rauzy fractal of the random substitution $\vartheta_{\bm P}$ is an interval, with overlapping subtiles $\mc R_1 = [-\tau^{-1},\tau^{-1}]$ and $\mc R_2 = [0,1]$.
    In fact, it can be verified that all random substitutions with the same substitution matrix as the square of Fibonacci give rise to Rauzy fractals that are intervals, with the exception of the enantiomorphic pair of deterministic substitutions whose intervals have fractal boundary.
    However, it is possible to construct non-deterministic random substitutions that give rise to a Rauzy fractal with fractal boundary.
    This is the case for any random substitution defined over the set-valued substitution $\vartheta \colon 1 \mapsto \{12211211\}, \, 2 \mapsto \{12112,21112\}$, obtained by locally mixing two reshuffled versions of the fourth power of the Fibonacci substitution.

    We highlight that it is possible to delete a realisation from a random substitution without changing its Rauzy fractal. 
    For example, if $\vartheta_{\bm P}'$ is a (non-degenerate) random substitution defined over the set-valued substitution $\vartheta' \colon 1 \mapsto \{112,121\}, \, 2 \mapsto \{12,21\}$, then the Rauzy fractal associated with $\vartheta_{\bm P}'$ is the same as the Rauzy fractal associated with the random substitution $\vartheta_{\bm P}$.

    Note, the length of $\mathcal{R}_\vartheta$ is $1 + \tau^{-1}=\tau$ and the volume of the fundamental domain of $\mathcal{J}$ is $1$. Hence, by \Cref{cor:new}, the recurrent directive sequences built from the marginals of $\vartheta$ give rise to S-adic subshifts with pure-point spectrum. 
    We remark, it was shown in \cite{BMST16} that (under appropriate conditions satisfied by this example) all two-letter Pisot S-adic subshifts have pure-point spectrum. 
    We only highlight that our methods can be used to prove similar statements.
\end{example}

Finally, we present some consequences of our results for the random tribonacci substitution, which we recall gives rise to a Rauzy fractal in two dimensions.

\begin{example}
    Given $p \in (0,1)$, let $\vartheta_{\bm P} = (\vartheta, \bm P)$ be the random tribonacci substitution
    \begin{align*}
        \vartheta_{\bm P} \colon 
        \begin{cases}
            1 \mapsto 
            \begin{cases}
                12 &\text{with probability $p$,}\\
                21 &\text{with probability $1-p$,}
            \end{cases}\\
            2 \mapsto 13 \quad \text{with probability $1$,}\\
            3 \mapsto 1\phantom{3} \quad \text{with probability $1$.}
        \end{cases}
    \end{align*}
    Let $h \colon \mathbb H \mapsto \mathbb H$ denote the action of the substitution matrix on the contracting plane and, for each $i \in \mc A$, let $h_{i} = h(x) + \pi (\psi (i))$.
    By \Cref{THM: r-Rauzy GIFS}, the subtiles $\mc R_1$, $\mc R_2$ and $\mc R_3$ of the Rauzy fractal of $\vartheta_{\bm P}$ are the unique, non-empty, compact sets satisfying the graph-directed iterated function system
    \begin{align*}
        \mc R_1 = h (\mc R_1) \cup h_2 (\mc R_1) \cup h (\mc R_2) \cup h (\mc R_3) \qquad
        \mc R_2 = h (\mc R_1) \cup h_1 (\mc R_1) \qquad
        \mc R_3 = h_1 (\mc R_2) \tp
    \end{align*}
    In an analogous manner to random Fibonacci, the above system can be rewritten as an ordinary iterated function system for $\mc R_1$:
    \begin{align*}
        \mc R_1 = h (\mc R_1) \cup h_2 (\mc R_1) \cup h \circ h (\mc R_1) \cup h \circ h_1 (\mc R_1) \cup h \circ h_1 \circ h (\mc R_1) \cup h \circ h_1 \circ h_1 (\mc R_1) .
    \end{align*}
    Now, let $\mu_{1,p}$, $\mu_{2,p}$ and $\mu_{3,p}$ denote the Rauzy measures associated with the letters $1$, $2$ and $3$, respectively.
    By \Cref{PROP:meas-continuity}, these measures are weakly continuous with respect to $p$.
    Their mass distributions are plotted for a selection of $p$ in Figure \ref{fig:rauzy-measure-trib}. 
    \begin{figure}[t]
    \parbox[m]{0.235\textwidth}{\includegraphics[trim={0 60pt 0 60pt},clip,width=0.235\textwidth]{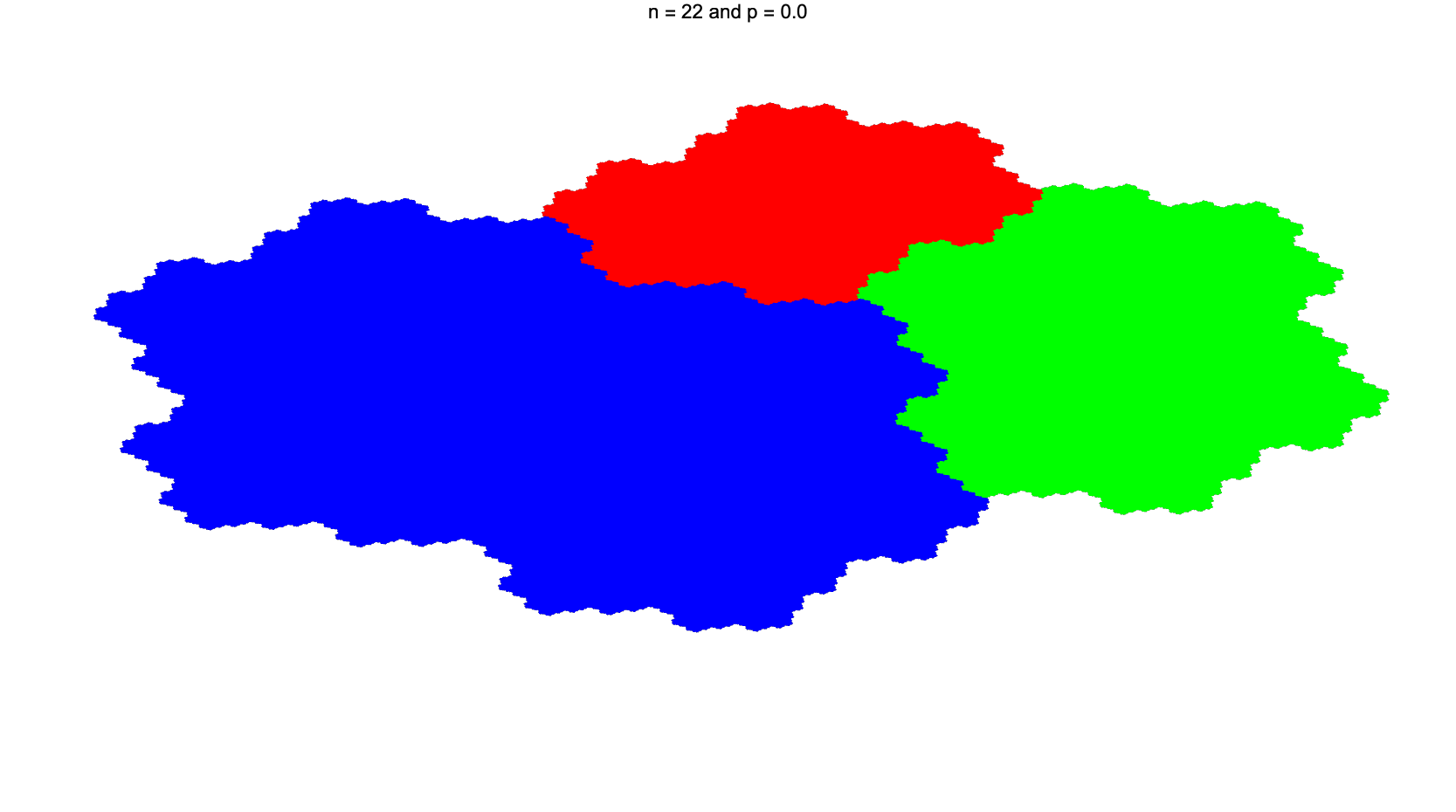
    }}
    \parbox[m]{0.375\textwidth}{
    \includegraphics[trim={0 60pt 0 60pt},clip,width=0.375\textwidth]{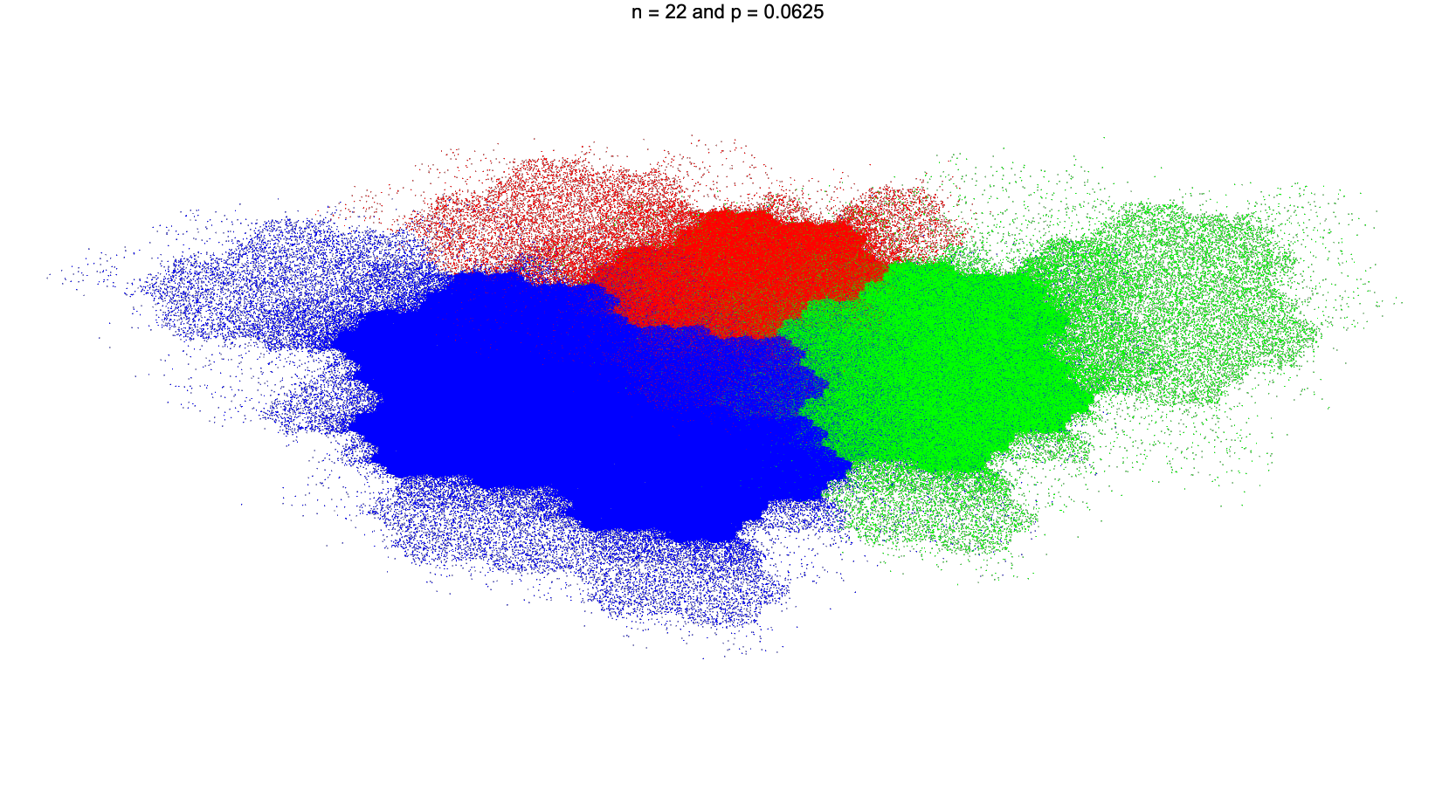}}
    \parbox[m]{0.375\textwidth}{
    \includegraphics[trim={0 60pt 0 60pt},clip,width=0.375\textwidth]{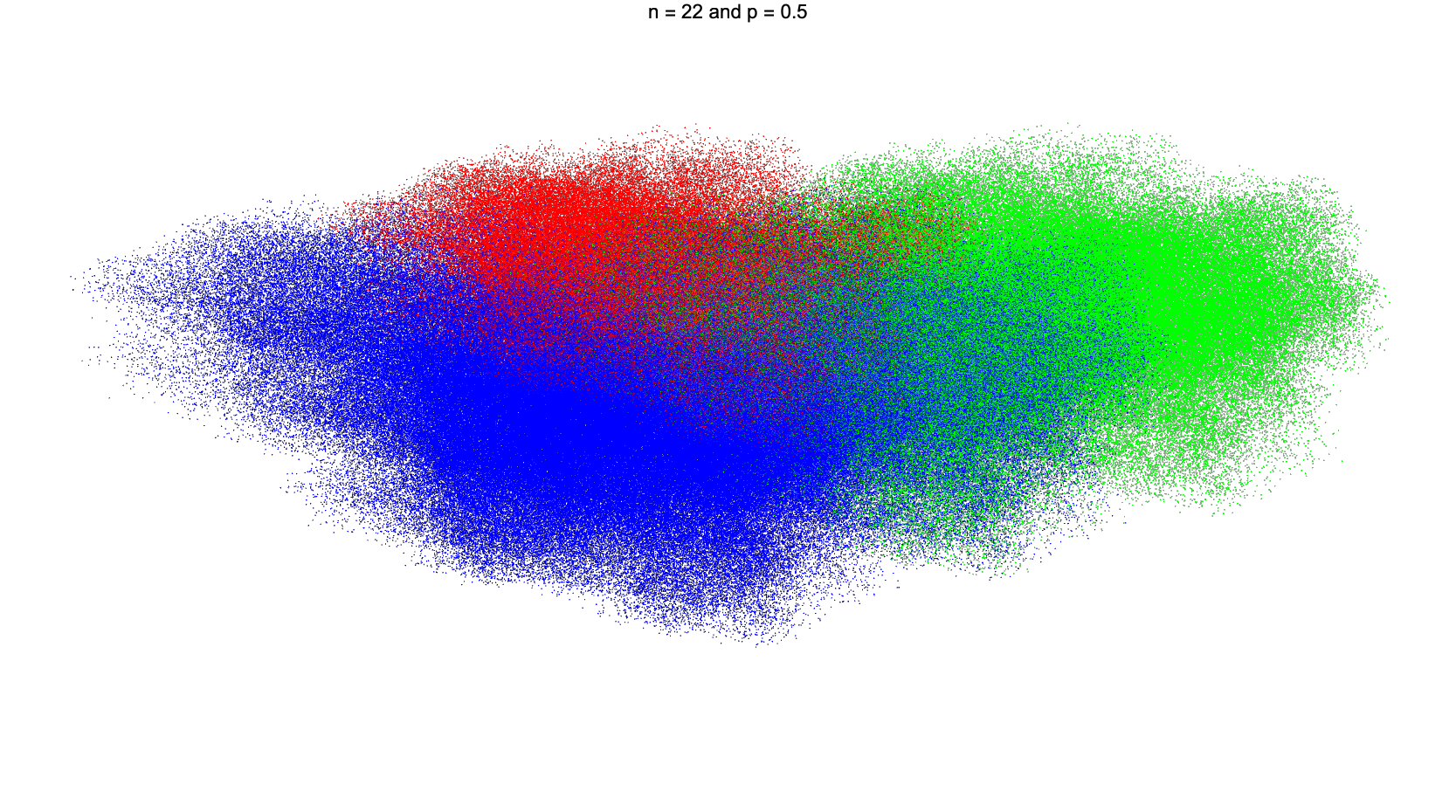}}\\
    \parbox[m]{0.375\textwidth}{
    \includegraphics[trim={0 60pt 0 60pt},clip,width=0.375\textwidth]{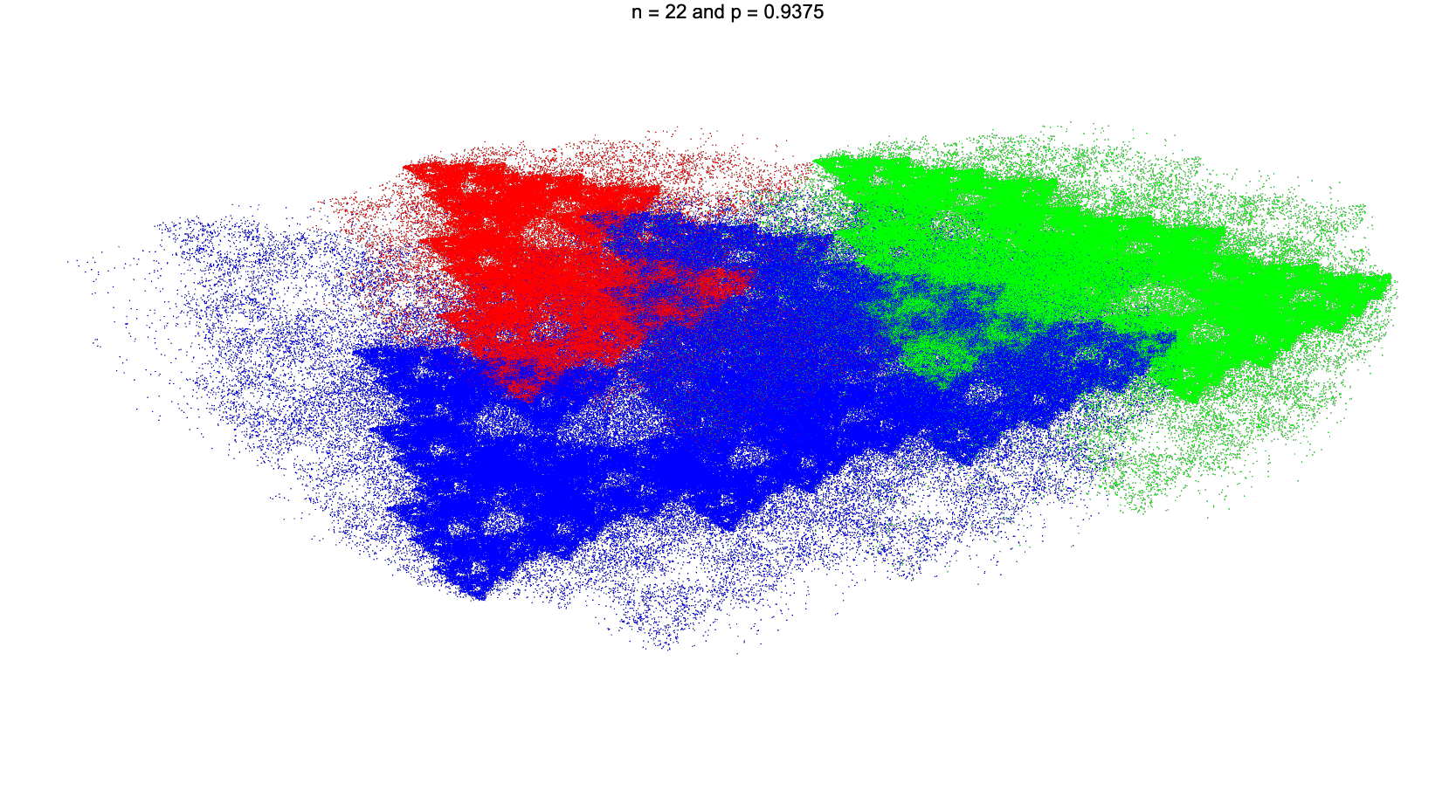}}
    \parbox[m]{0.3\textwidth}{
    \includegraphics[trim={0 60pt 0 60pt},clip,width=0.3\textwidth]{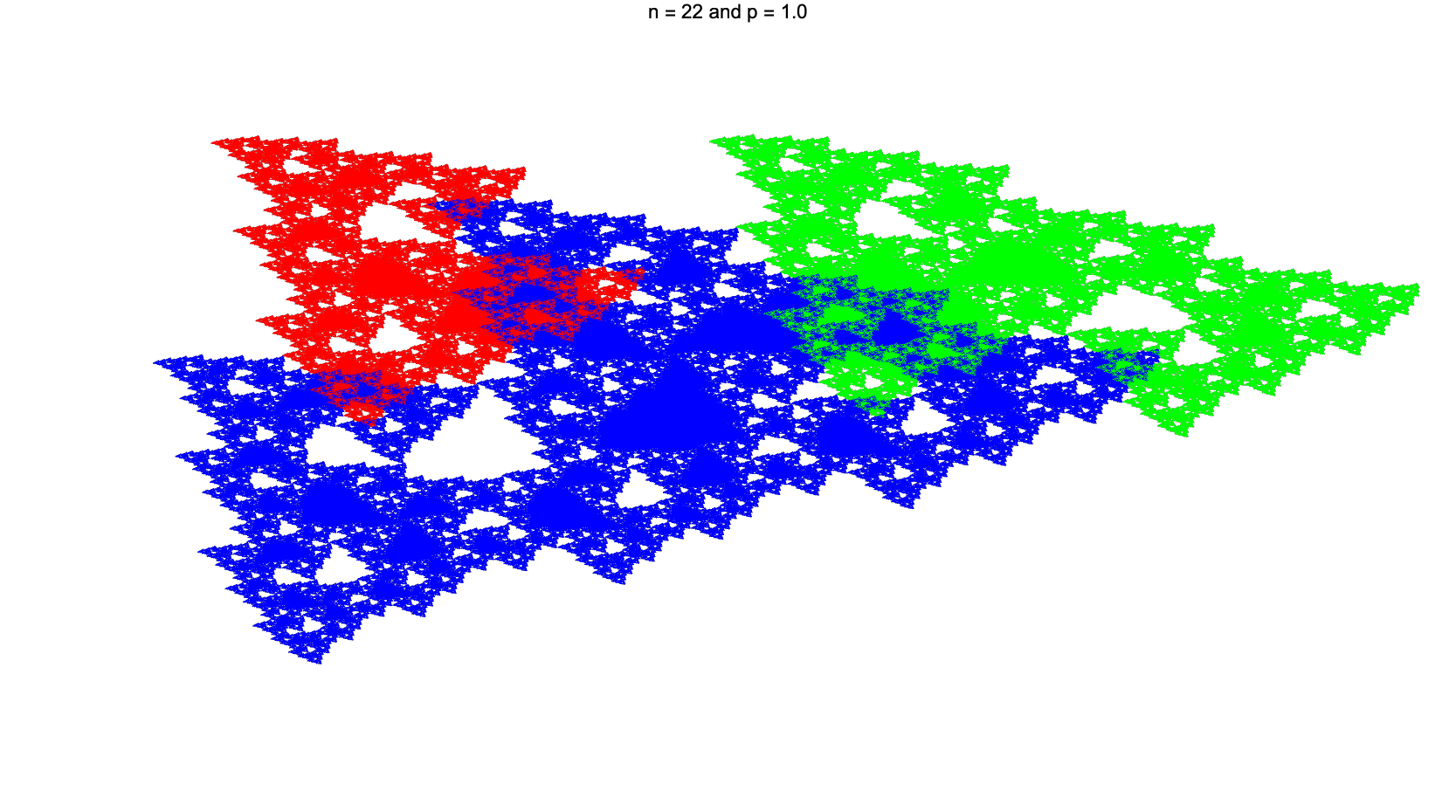}}
    \caption{
    Evolution of Rauzy measures for the random tribonacci substitution as $p$ goes from $1$ to $0$. 
    From top-left to bottom-right, the values of $p$ are $1$, $15/16$, $1/2$, $1/16$ and $0$ respectively.
    Points were generated experimentally (justified by \Cref{PROP:as-convergence-markov}) according to the generating probabilities in such a way that point-densities approximate the mass distributions of each measure.
    }
    \label{fig:rauzy-measure-trib}
\end{figure}
\end{example}

\section{Discussion and open questions}\label{S:open-probs}

Several of our results prompt natural followup questions that we feel warrant further investigation.
As usual, we let $\vartheta$ denote a compatible, irreducible Pisot random substitution throughout this section.

An important feature that distinguishes the Rauzy fractals of random substitutions from their deterministic counterparts is the occurrence of positive-measure overlaps in the self-similarity relation for the subtiles in \eqref{EQ:GIFS-sets}. Nevertheless, it might still be possible to dissect the subtiles further and to find a corresponding GIFS that produces the same Rauzy fractal without such overlaps, possibly tied to a deterministic substitution. 
For general (graph-directed) iterated function systems such a removal of overlaps is a subtle problem, and its success seems to depend on appropriate (weak) separation conditions being fulfilled by the original system. 
For a recent result in this direction that treats the case of equicontractive IFSs with the finite type property, see \cite{bandt-barnsley}. 
For more on the role of weak separation and finite type conditions, we refer the reader to \cite{hare-hare-rutar} and the references therein.
Of course, the additional structure provided by the framework of (random) Pisot substitution systems might make this problem more tractable, motivating the following questions.

\begin{question}
Does the GIFS $\mc G(\vartheta)$ corresponding to a compatible irreducible Pisot random substitution $\vartheta$ satisfy a weak separation condition or some version of a finite type condition? 
\end{question}

\begin{question}\label{QN:mother-sub}
Given $\vartheta$ on $\mc A$, is it possible to find a \emph{deterministic} Pisot substitution $\theta$ on some alphabet $\mc A'$ such that $\mc R_{\vartheta} = \mc R_{\theta}$? Further, can $\theta$ be chosen such that for each $a \in \mc A$ there exists a $\mc B \subset \mc A'$ with $\mc R_{\vartheta,a} = \cup_{b \in \mc B} \mc R_{\theta,b}$?
\end{question}

For some explicit examples, solutions to the above question can be found. For example, the Rauzy fractal of the random Fibonacci substitution $\vartheta$ can be shown to coincide with that of the (reducible) Pisot substitution $1\mapsto 232,\ 2\mapsto 1,\ 3\mapsto 2$, where each letter corresponds to an element of a particular partition of $\mc R_\vartheta$.
The same holds for random tribonacci and other related three-letter examples, however we highlight that in these cases the cardinality of the alphabet over which $\theta$ is defined is considerably larger than the alphabet on which $\vartheta$ is defined.
We thank Paul Mercat for providing computer-assisted calculations of the substitution $\theta$ in these cases.

A positive solution to Question \ref{QN:mother-sub} would provide a method for calculating the dimension of the boundary of $\mc{R}_\vartheta$, as there are well-known methods that apply to Rauzy fractals of deterministic substitutions --- see \cite{Siegel-Thuswaldner} and the references therein.
Likewise, topological properties of $\mc R_\vartheta$ such as connectedness and simple-connectedness can be determined. However, the question remains if these properties can be determined in the general case, and if more efficient methods exist that do not require calculating auxiliary deterministic substitutions on large alphabets.

\begin{question}
    What is the dimension of the boundary of $\mc R_{\vartheta}$?
\end{question}
\begin{question}
    What are the topological properties of $\mc R_{\vartheta}$?
\end{question}

Random substitutions provide a means of interpolating between two (or more) deterministic substitutions sharing the same abelianisation.
Thus, it is possible that the theory we have developed will provide a new approach to open problems concerning Pisot substitutions, such as the Pisot substitution conjecture.
In particular, the extra freedom afforded by being able to locally mix deterministic substitutions and interpolate between them could allow one to transfer known properties of one substitution (such as a substitution whose spectrum is known to be pure point) to some other target substitution whose properties remain unknown. 

\begin{question}
    Given a (deterministic) irreducible Pisot substitution $\theta$, is there a power $\theta^k$ and a substitution $\theta'$ with the same substitution matrix such that the following holds: the local mixture of $\theta^k$ and $\theta'$ yields a random substitution $\vartheta$ with $\Leb(\mc R_\vartheta)$ being smaller than two times the measure of the fundamental domain in $\mc J$?
\end{question}

The intuition behind this question is that if $\theta'$ is known to be pure point and $\theta^k$ is reasonably close combinatorially, there is hope that the Rauzy fractal does not grow too much when both substitutions are mixed, such as was the case in \Cref{ex:additional_note_to_cor:new}.
In light of \Cref{cor:new}, an affirmative answer to this question would in particular imply the Pisot substitution conjecture, and even a partial answer could show pure point spectrum for some appropriate class of substitutions.
A first step in this direction would be to gain a better understanding of how the Rauzy fractals of the substitutions in a set $\Theta$ are related to the Rauzy fractal of the enveloping random substitution $\vartheta$.

\begin{question}
Let $\vartheta$ be the enveloping random substitution of a compatible collection $\Theta$ of irreducible Pisot substitutions. Given $\theta \in \Theta$, under which conditions is $\mc R_\theta$ contained in the interior of $\mc R_\vartheta$? Is there a bound for $\Leb(\mc R_\vartheta)$ in terms of $\{\Leb(\mc R_\theta)\}_{\theta \in \Theta}$ and other data relating to the collection $\Theta$?
\end{question}

We have already seen that in the case of the random Fibonacci substitution $\vartheta$ discussed in \Cref{EX:rand-fib}, the Rauzy fractals of the marginals of $\vartheta$ are contained in the interior of $\mc R_\vartheta$. Conversely, for a variant of the random tribonacci substitution, given by $\vartheta \colon 1 \mapsto \{12\}, 2 \mapsto \{ 13,31 \}, 3 \mapsto \{1\}$, both marginals seem to produce Rauzy fractals that intersect the boundary of $\mc R_\vartheta$ (see Figure~\ref{fig:position-of-marginals}). 

\begin{figure}
    \centering
    \includegraphics[width=0.4\linewidth]{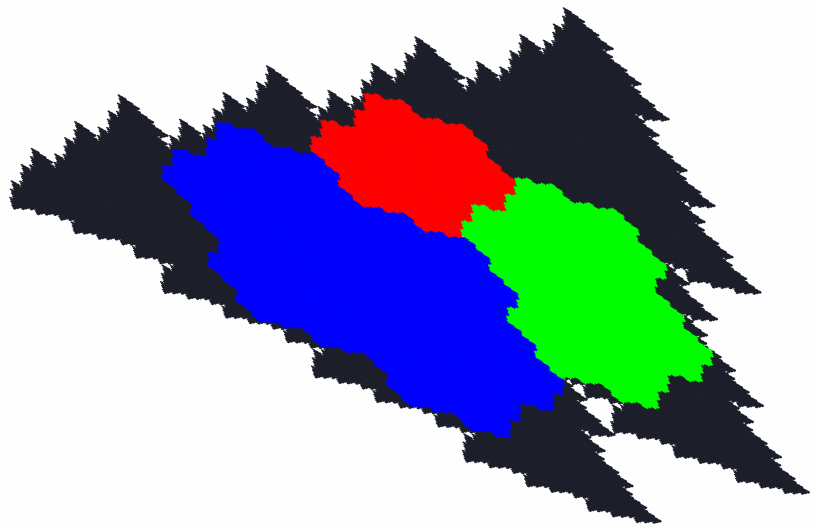}
    \qquad
    \includegraphics[width=0.4\linewidth]{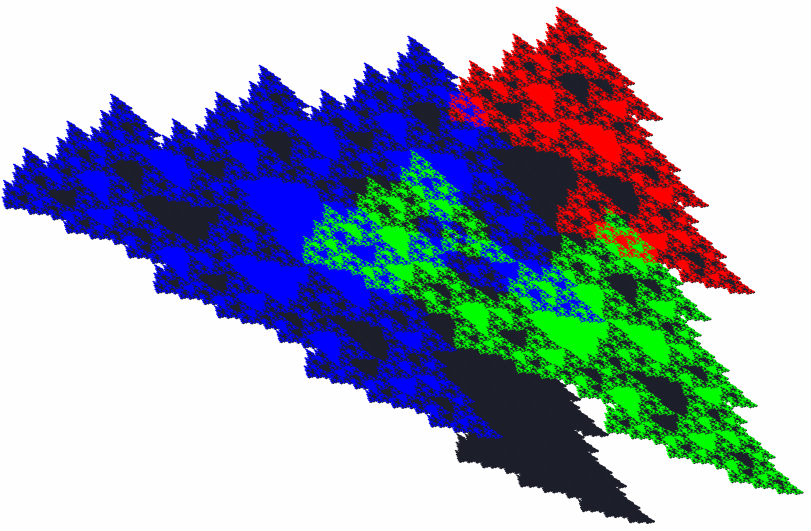}
    \vspace{0.5em}
    \caption{Rauzy fractal of the random substitution $\vartheta \colon 1 \mapsto \{12\}, 2 \mapsto \{13,31\}, 3 \mapsto \{1\}$ and positions of the Rauzy fractals of the marginals.}
    \label{fig:position-of-marginals}
\end{figure}

Even though $(X_\vartheta,S)$ cannot be expected to have purely pure point spectrum, it appears natural to ask for how much of the pure point component the Rauzy fractal contributes in the random setting. In \Cref{COR:generic-factor}, we established that, up to a projection to the torus $\mathbb{H}/\mc J$, the Rauzy fractal $\mc R_\vartheta$, equipped with a rotation $T$, provides a natural generic factor of $(X_\vartheta,S)$. The maximality of this factor has been established for the special case of the random Fibonacci substitution in \cite{baake-spindeler-strungaru}; compare also the discussion in \Cref{REM:MEGF}.

\begin{question}
Given a compatible, irreducible Pisot random substitution $\vartheta$, is $(\mathbb{H}/\mc J,T)$ always the MEGF of $(X_\vartheta,S)$? Under which conditions are there (non-constant) continuous eigenfunctions of $(X_{\vartheta},S)$?
\end{question}

A spectral measure that is particularly relevant in the context of physical applications is the diffraction measure, due to its property of being observable in scattering experiments. In fact it has been established under quite general conditions that pure spectrum in the sense of dynamical spectrum and diffraction spectrum are equivalent \cite{BLvE}. In the context of deterministic substitutions it is also known that diffraction measures often produce a measure of maximal spectral type, thus capturing all spectral information \cite{queffelec}. Whether this holds true for random substitutions as well remains to be determined.

The first calculation of the diffraction measure for a random substitution (random Fibonacci) was achieved by Godr\`{e}che and Luck in \cite{godreche-luck}. A key insight in this calculation was that the diffraction measure can be split appropriately into an expectation part and a variance part, which corresponds to a splitting into a pure point component and an absolutely continuous component. The observation that the variance part produces an absolutely continuous measure holds under fairly general conditions, and in particular for every compatible primitive random substitution \cite[Thm.~4.3.4]{gohlke_diss}. The problem of determining the spectral types that arise from the expectation part is in general more subtle, although criteria for it being pure point exist in the setting of constant length random substitutions \cite[Thm.~4.3.24]{gohlke_diss}, generalising the work of Dekking~\cite{dekking}. 

For a family of random substitutions that includes the random Fibonacci substitution, it was shown in \cite{baake-spindeler-strungaru} that the expectation part coincides with the diffraction of a weighted model set with continuous density, proving its pure point nature. Here, the weights are implicitly given by the Rauzy measure and can be used to yield an explicit expression for the pure point component.
This approach seems promising for more general random substitutions.

\begin{question}
Given a compatible, irreducible Pisot random substitution, is the expectation part in the Godr\`{e}che--Luck splitting always the diffraction of a weighted model set, with weights provided by the Rauzy measure?
\end{question}

An essential step for concluding that the expectation part is pure point in \cite{baake-spindeler-strungaru} was to prove that the weights on the Rauzy fractal are given by a continuous function. In our setting, this translates to the following problem.

\begin{question}
    How smooth is the density function of a Rauzy measure? Which general criteria ensure that this density function is continuous?
\end{question}

Establishing continuity of the density would thus prove pure pointedness of the expectation part and absence of a singular continuous component in the complete diffraction measure. Since a non-trivial absolutely continuous component always appears under some non-degeneracy condition, this can be seen as a generalisation of the Pisot substitution conjecture to the random setting. A different variant of generalising the Pisot substitution conjecture can be formulated without reference to diffraction.

\begin{question}
    Given a compatible, irreducible Pisot random substitution $\vartheta$, is every minimal subsystem of $(X_\vartheta,S)$ a pure point system?
\end{question}

This is motivated by the fact that all marginals of $\vartheta^n$ with $n \in \N$ produce minimal substitutive subsystems (and corresponding S-adic subsystems).
If the Pisot substitution conjecture and S-adic Pisot conjecture hold, then all of these subsystems are pure point.
It is therefore natural to ask whether the same holds for \emph{all} minimal subsystems of $(X_{\vartheta},S)$.

\section*{Acknowledgements}

The authors thank Pierre Arnoux for making them aware of connections between Rauzy measures and S-adic systems. 
They are also grateful to Michael Baake, Jan Maz\'a\v{c} and Alex Rutar for helpful discussions and to Dan Herring for his assistance in generating the images in \Cref{fig:position-of-marginals}.
In addition, the authors would like to thank the anonymous referee for numerous thought-provoking comments that significantly improved the presentation of this work.

PG acknowledges support from DFG grant 509427705. 
AM thanks EPSRC DTP and the University of Birmingham for financial support, and Collaborative Research Center 1283 of Bielefeld University for funding research visits to Bielefeld, where part of this work was undertaken. 
AM and TS would also like to thank the Birmingham--Leiden collaboration fund of the University of Birmingham and EPSRC grant EP/Y023358/1, which also supported this work.

\bibliographystyle{abbrv}
\bibliography{ref}

@article{Arnoux,
    title={Random product of substitutions with the same incidence matrix},
    author={Arnoux, Pierre and Mizutani, Masahiro and Sellami, Tarek},
    journal={Theoret. Comput. Sci.},
    volume={543},
    pages={68--78},
    year={2014},
    publisher={Elsevier}
}

@article{arnoux-ito,
    title = {Pisot Substitutions and {R}auzy Fractals},
    Author = {Arnoux, P. and Ito, S.},
    Journal = {Bull. Belg. Math. Soc},
    Volume = {8},
    Year = {2001},
    Pages = {181--207}
}

@article{Adamcewski_03,
    Author = {Adamczewski, B.},
    Title = {Balances for fixed points of primitive substitutions},
    Journal = {Theoret. Comput. Sci.},
    Volume = {307},
    Pages = {47-75},
    Year = {2003}
}

@book{baake-grimm,
    title={Aperiodic Order. Vol. 1: A Mathematical Invitation},
    author={Baake, Michael and Grimm, Uwe},
    year={2013},
    publisher={Cambridge University Press}
}

@book{beer,
    title={Topologies on {Closed} and {Closed} {Convex} {Sets}},
    author={Beer, G.},
    year={1993},
    publisher={Kluwer Academic Publishers}
}

@article {BMST16,
    AUTHOR = {Berth\'{e}, Val\'{e}rie and Minervino, Milton and Steiner, Wolfgang and Thuswaldner, J\"{o}rg},
    TITLE = {The {S}-adic {P}isot conjecture on two letters},
    JOURNAL = {Topology Appl.},
    VOLUME = {205},
    YEAR = {2016},
    PAGES = {47--57}
}

@article {BST19,
    AUTHOR = {Berth\'{e}, Val\'{e}rie and Steiner, Wolfgang and Thuswaldner, J\"{o}rg M.},
    TITLE = {Geometry, dynamics, and arithmetic of {S}-adic shifts},
    JOURNAL = {Ann. Inst. Fourier (Grenoble)},
    VOLUME = {69},
    YEAR = {2019},
    PAGES = {1347--1409}
}

@article {BST23,
    AUTHOR = {Berth\'{e}, Val\'{e}rie and Steiner, Wolfgang and Thuswaldner, J\"{o}rg M.},
    TITLE = {Multidimensional continued fractions and symbolic codings of toral translations},
    JOURNAL = {J. Eur. Math. Soc. (JEMS)},
    VOLUME = {25},
    YEAR = {2023},
    PAGES = {4997--5057}
}

@article{centore1994continuity,
    title={Continuity of attractors and invariant measures for iterated function systems},
    author={Centore, PM and Vrscay, ER},
    journal={Canad. Math. Bull.},
    volume={37},
    pages={315--329},
    year={1994},
    publisher={Cambridge University Press}
}

@article{FN20,
    title = {Symbolic coding of linear complexity for generic translations on the torus, using continued fractions},
    author = {N. Pytheas Fogg and Camille No{\^{u}}s},
    journal = {Journal of Modern Dynamics},
    volume = {20},
    pages = {527--596},
    year = {2024},
}

@article{godreche-luck,
	Author ={Godr\`{e}che, C. and Luck, J.},
	Title = {Quasiperiodicity and randomness in tilings of the plane},
	Journal = {J. Stat. Phys.},
	Volume = {55},
	Pages = {1--28},
	Year = {1989}
}

@article{gohlke,
	Author = {Gohlke, P.},
	Title = {Inflation word entropy for semi-compatible random substitutions},
	Journal = {Monatsh. Math.},
	Volume = {192},
	Year = {2020},
	Pages = {93--110}
}

@phdthesis{gohlke_diss,
    title={Aperiodic Order and Singular Spectra},
    author={Gohlke, Philipp},
    year={2021},
    school={Bielefeld University}
}

@article{MT-entropy,
    Author = {Gohlke, P. and Mitchell, A. and Rust, D. and Samuel, T.},
    Title = {Measure theoretic entropy of random substitution subshifts},
    Journal = {Ann. Henri Poincaré},
    Volume = {24},
    Year = {2023},
    Pages = {277--323}
}

@article{mitchell-rutar,
    AUTHOR = {Mitchell, Andrew and Rutar, Alex},
     TITLE = {Multifractal analysis of measures arising from random
              substitutions},
   JOURNAL = {Comm. Math. Phys.},
    VOLUME = {405},
      YEAR = {2024},
     PAGES = {\,paper no. 63},
}

@article{mitchell,
    Author = {Mitchell, A.},
    Title = {On word complexity and topological entropy of random substitution subshifts},
    Journal = {Proc. Amer. Math. Soc.},
    Volume = {152},
    Pages = {4361--4377},
    Year = {2024}
}

@article{gohlke-spindeler,
	AUTHOR = {Gohlke, P. and Spindeler, T.},
	TITLE = {Ergodic frequency measures for random substitutions},
	JOURNAL = {Studia Math.},
	VOLUME = {255},
	YEAR = {2020},
	PAGES = {265--301}
}

@incollection {huang-ye,
    AUTHOR = {Huang, Wen and Ye, Xiangdong},
    TITLE = {Generic eigenvalues, generic factors and weak disjointness},
    BOOKTITLE = {Dynamical systems and group actions},
    SERIES = {Contemp. Math.},
    VOLUME = {567},
    PAGES = {119--142},
    PUBLISHER = {Amer. Math. Soc., Providence, RI},
    YEAR = {2012}
}

@article {keller,
    AUTHOR = {Keller, Gerhard},
    TITLE = {Maximal equicontinuous generic factors and weak model sets},
    JOURNAL = {Discrete Contin. Dyn. Syst.},
    VOLUME = {40},
    YEAR = {2020},
    PAGES = {6855--6875}
}

@article{koslicki-denker,
	title={Substitution {M}arkov chains and {M}artin boundaries},
	author={Koslicki, D. and Denker, M.},
	journal={Rocky Mountain J. Math},
	volume={46},
	pages={1963--1985},
	year={2016},
	publisher={Rocky Mountain Mathematics Consortium}
}

@article{kurtz1997conceptual,
    title={A conceptual proof of the {Kesten-Stigum} theorem for multi-type branching processes},
    author={Kurtz, Thomas and Lyons, Russell and Pemantle, Robin and Peres, Yuval},
    journal={Classical and Modern Branching Processes},
    pages={181--185},
    year={1997},
    publisher={Springer}
}

@article{miro-et-al,
    Author = {Miro, E. D. P. and Rust, D. and Sadun, L. and Tadeo, G. S.},
    Title = {Topologiocal mixing of random substitutions},
    Journal = {Israel J. Math.},
    Year = {2023},
    Volume = {255},
    Pages = {123--153}
}

@article{peyriere,
	title={Substitutions al{\'e}atoires it{\'e}r{\'e}es},
	author={Peyri{\`e}re, J.},
	journal = {S\'{e}minaire de théorie des nombres de {B}ordeaux},
	volume = {80--81},
	pages = {1701--1709},
    publisher = {Univ. Bordeaux I, Talence},
	year = {1981}
}

@book{queffelec,
	Author = {Queff\'{e}lec, M.},
	Title = {Substitution {Dynamical} {Systems}:\ {Spectral} {Analysis}},
	Edition = {2nd},
	Publisher = {Springer},
	Year = {2010}
}

@article{rust-periodic-points,
    Author = {Rust, D.},
    Title = {Periodic points in random substitution subshifts},
    Journal = {Monatsh. Math},
    Year = {2020},
    Volume = {193},
    Pages = {683--704}
}

@incollection {siegel,
    AUTHOR = {Siegel, A.},
    TITLE = {Spectral theory and geometric representation of substitutions},
    BOOKTITLE = {Substitutions in {Dynamics}, {Arithmetics} and        {Combinatorics}},
    SERIES = {Lecture Notes in Math.},
    VOLUME = {1794},
    PAGES = {199--252},
    PUBLISHER = {Springer, Berlin},
    YEAR = {2002}
}

@article{fokkink-rust-salo,
    Author = {Fokkink, R. and Rust, D. and Salo, V.},
    Title = {Automorphism groups of random substitution subshifts},
    Journal = {Indag. Math.},
    Volume = {34},
    Issue = {5},
	Year = {2024},
    Pages = {931--958}
}

@article{rust-spindeler,
	Author = {Rust, D. and Spindeler, T.},
	Title = {Dynamical systems arising from random substitutions},
	Journal = {Indag. Math.},
	Volume = {29},
	Year = {2018},
	Pages = {1131--1155}
}

@article{baake-spindeler-strungaru,
	AUTHOR = {Baake, M. and Spindeler, T. and Strungaru, N.},
	TITLE = {Diffraction of compatible random substitutions in one dimension},
	JOURNAL = {Indag. Math.},
	VOLUME = {29},
	YEAR = {2018},
	PAGES = {1031--1071},
}

@phdthesis{koslicki,
	title={Substitution {M}arkov chains with applications to molecular evolution},
	author={Koslicki, D.},
	School = {Pennsylvania State Univ.},
	year={2012}
}

@book{Siegel-Thuswaldner,
     author = {Siegel, Anne and Thuswaldner, J\"org M.},
     title = {Topological properties of {R}auzy fractals},
     series = {M\'emoires de la Soci\'et\'e Math\'ematique de France},
     publisher = {Soci\'et\'e math\'ematique de France},
     year = {2009}
}

@article{BG20,
    Author = {Baake, M. and Grimm, U.},
    Title = {Fourier transform of {R}auzy fractals and point spectrum of 1{D} {P}isot inflation tilings},
    Journal = {Doc. Math.},
    Year = {2020},
    Volume = {25},
    Pages = {2303--2337}
}

@article{BST10,
    Author = {Berth\'{e}, V. and Siegel, A. and Thuswaldner, J.},
    Title = {Substitutions, {R}auzy fractals and tilings},
    Journal = {Combinatorics, automata and number theory},
    Year = {2010},
    Pages = {248--323}
}

@article{Hof98,
    Author = {Hof, A.},
    Title = {Uniform distribution and the projection method},
    Journal = {Quasicrystals and discrete geometry},
    Year = {1998},
    Pages = {201--206}
}

@article{Schl98,
    Author = {Schlottmann, M.},
    Title = {Cut-and-project sets in locally compact Abelian groups},
    Journal = {Quasicrystals and Discrete Geometry},
    Year = {1998},
    Pages = {247--264}
}

@article{BD14,
    Author = {Berth\'e, V. and Delecroix, V.},
    Title = {Beyond substitutive dynamical systems: S-adic expansions},
    Journal = {Numeration and {S}ubstitution 2012},
    Year = {2014},
    Publisher = {RIMS K\^oky\^uroku Bessatsu},
    Pages = {81--123}
    }

@article{escolano-manibo-miro,
    Author = {Escolano, G. B. and Mañibo, N. and Miro, E. D. P.},
    Title = {Mixing properties and entropy bounds of a family of {P}isot random substitutions},
    Journal = {Indag. Math.},
    Volume = {33},
    Pages = {965--991},
    Year = {2022}
}

@book{mauldin_urbanski_2003,
    place={Cambridge}, 
    series={Cambridge Tracts in Mathematics},
    title={Graph Directed Markov Systems: Geometry and Dynamics of Limit Sets}, 
    publisher={Cambridge University Press},
    author={Mauldin, R. Daniel and Urbanski, M.},
    year={2003},
    collection={Cambridge Tracts in Mathematics}
}

@book{Akiyama_Arnoux_2017,
    title = {Substitution and Tiling Dynamics: Introduction to Self-inducing Structures},
    author = {Akiyama, S. and Arnoux, P.},
    publisher = {Springer Cham},
    year = {2017},
    series = {Lecture Notes in Mathematics}
}

@article{Solomyak_1997,
    author = {Solomyak, B.},
    title = {Dynamics of self-similar tilings},
	journal = {Ergodic Theory Dynam. Systems},
    volume = {17},
    pages = {695--738},
    year = {1997}
}

@article{Rauzy_1982,
    author = {G. Rauzy},
    title = {Nombres algébriques et substitutions},
    journal = {Bull. Soc. Math. France},
    volume = {110},
    year = {1982},
    pages = {147--178}
}

@incollection{Akiyama_2015,
    author = {Akiyama, S. and Barge, M. and Berth{\'e}, V. and Lee, J.-Y. and Siegel, A.},
    editor = {Kellendonk, J. and Lenz, D. and Savinien, J.},
    title = {On the {Pisot} {Substitution} {Conjecture}},
    bookTitle = {Mathematics of Aperiodic Order},
    year = {2015},
    publisher = {Springer Basel},
    pages = {33--72}
}

@article{bgm-survey,
    Author = {Baake, M. and G\"{a}hler, F. and Maz\'{a}c, J.},
    Title = {On the {Fibonacci} tiling and its modern ramifications},
    Journal = {Israel J. Chem.},
    Volume = {64},
    Pages = {\,e202300155},
    Year = {2024}
}

@article{Moll_2014,
    author = {Moll, M.},
    title = {Diffraction of random noble means words},
    journal = {J. Stat. Phys.},
    volume = {156},
    year = {2014},
    pages = {1221--1236}
}

@article {Barge_2016,
    author = {Barge, M.},
    title = {Pure discrete spectrum for a class of one-dimensional substitution tiling systems},
    journal = {Discrete Contin. Dyn. Syst.},
    volume = {36},
    year = {2016},
    pages = {1159--1173},
}

@article{baake-gorodetski-mazac,
    Author = {Baake, M. and Gorodetski, A. and Maz\'{a}c, J.},
    Title = {A naturally appearing family of {Cantorvals}},
    Journal = {Lett. Math. Phys.},
    Volume = {114},
    Pages = {\, article no. 101},
    Year = {2024}
}

@article{ngai-wang,
    Author = {Ngai, Sze-Man and Wang, Yang},
    Title = {Hausdorff dimension of self-similar sets with overlaps},
    Journal = {J. Lond. Math. Soc., II. Ser.},
    Volume = {63},
    Pages = {655--672},
    Year = {2001}
}

@article{hare-hare-rutar,
    Author = {Hare, K. E. and Hare, K. G. and Rutar, A.},
    Title = {When the Weak Separation Condition implies the Generalized Finite Type Condition},
    Journal = {Proc. Amer. Math. Soc.},
    Volume = {149},
    Year = {2021},
    Pages = {1555--1568}
}

@article{Lind_1982,
    title={Dynamical properties of quasihyperbolic toral automorphisms},
    volume={2},
    journal={Ergodic Theory Dynam. Systems}, author={Lind, D. A.},
    year={1982},
    pages={49–68}
}

@article{Berthe_etal_chairman,
	Author = {Berth\'{e}, V. and Carton, O. and Chevallier, N. and Steiner, W. and Yassawi, R.},
	Title = {A dynamical view of {T}ijdeman's solution of the chairman assignment problem},
	Journal = {Combinatorics and Number Theory},
    Volume = {13},
    Issue = {4},
    Year = {2024},
    Pages = {377--412}
}

@article{choi_etal_self_replicating,
    title={The boundary of {R}auzy fractal and discrete tilings}, 
    author={Kang, H. and Choi, W.  and Rhee, J. and Oh, Y.},
    journal = {Chaos, Solitons {\&} Fractals: X},
    volume = {14},
    year = {2025},
    pages = {article no. 100126}
}

@article{berthe_shift_radix,
    title = {Fractal tiles associated with shift radix systems},
    journal = {Adv. Math.},
    volume = {226},
    pages = {139--175},
    year = {2011},
    author = {Berth\'{e}, V. and Siegel, A. and Steiner, W. and Surer, P. and Thuswaldner, J. M.}
}

@book{billingsley,
  title={Probability and measure},
  author={Billingsley, Patrick},
  year={1995},
  publisher={John Wiley \& Sons}
}

@article{mauldin-williams,
    title={Hausdorff dimension in graph directed constructions},
    author={Mauldin, R. Daniel and Williams, S. C.},
    journal={Trans. Amer. Math. Soc.},
    volume={309},
    year={1988},
    pages={811--829}
}

@article{sirvent-wang,
    title={Self-affine tiling via substitution dynamical systems and {Rauzy} fractals},
    author={Sirvent, Victor F. and Wang, Yang},
    journal = {Pacific J. Math.},
    volume = {206},
    year={2002},
    pages={465--485}
}

@article{canterini-siegel,
    title={Automate des préfixes-suffixes associé à une substitution primitive},
    author={Canterini, Vincent and Siegel, Anne},
    journal = {J. Th{\'{e}}or. Nombres Bordeaux},
    volume={13},
    year={2001},
    pages={353--369}
}

@article{bandt-barnsley,
    title={Elementary fractal geometry. 5. {W}eak separation is strong separation},
    author={Bandt, Christoph and Barnsley, Michael F},
    journal = {Nonlinearity},
    volume = {38},
    year = {2025},
    pages = {article no. 065004}
}

@article{BLvE,
  title={Dynamical versus diffraction spectrum for structures with finite local complexity},
  author={Baake, Michael and Lenz, Daniel and van Enter, Aernout},
  journal={Ergodic Theory and Dynamical Systems},
  volume={35},
  number={7},
  pages={2017--2043},
  year={2015},
  publisher={Cambridge University Press}
}

@article{dekking,
  title={The spectrum of dynamical systems arising from substitutions of constant length},
  author={Dekking, Frederik Michel},
  journal={Zeitschrift f{\"u}r Wahrscheinlichkeitstheorie und Verwandte Gebiete},
  volume={41},
  number={3},
  pages={221--239},
  year={1978},
  publisher={Springer}
}

\end{document}